\newif\ifdraft
\newtheorem{theorem}{Theorem}[section]
\newtheorem{lemma}[theorem]{Lemma}
\newtheorem{corollary}[theorem]{Corollary}
\newtheorem{definition}[theorem]{Definition}
\newtheorem{proposition}[theorem]{Proposition}
\newtheorem{assumption}[theorem]{Assumption}
\newtheorem{claim}[theorem]{Claim}
\newcommand{\W}{W}
\newcommand{\Wh}{\hat{W}}
\newcommand{\Ind}{I}
\newcommand{\Vsize}{N}
\newcommand{\pmax}{p}
\newcommand{\Slc}{\mathbf{S}}
\newcommand{\Tcons}{T_{\mathrm{cons}}}
\newcommand{\Vs}{U}
\newcommand{\Es}{F}
\newcommand{\Bthree}{\mathrm{Bo3}}
\newcommand{\Btwo}{\mathrm{Bo2}}
\newcommand*{\absp}[1]{\langle #1 \rangle_+}
\tikzset{every picture/.style={font issue=\footnotesize},
            font issue/.style={execute at begin picture={#1\selectfont}}
           }
\crefname{equation}{}{}
\crefname{item}{}{}
\crefname{assumption}{Assumption}{Assumption}
\crefname{figure}{Figure}{Figure}
\renewcommand\Pr{\mathop{\mathbf{Pr}}}
\DeclareMathOperator{\E}{\mathbf{E}}
\newcommand{\Var}{\mathop{\mathbf{Var}}}
\def\defeq{\mathrel{\mathop:}=}
\newcommand{\Proofname}{Proof}
\def\BOXSYMBOL{\RIfM@\bgroup\else$\bgroup\aftergroup$\fi
  \vcenter{\hrule\hbox{\vrule height.6em\kern.6em\vrule}\hrule}\egroup}
\newcommand{\BOX}{%
  \ifmmode\else\leavevmode\unskip\penalty9999\hbox{}\nobreak\hfill\fi
  \quad\hbox{\BOXSYMBOL}}
\newcommand\QED{\BOX}
\title{Phase Transitions of Best-of-Two and Best-of-Three\\ on Stochastic Block Models}
\author{Nobutaka Shimizu\thanks{The University of Tokyo, Japan. {\ttfamily nobutaka\_shimizu@mist.i.u-tokyo.ac.jp}} \and Takeharu Shiraga\thanks{Chuo University, Japan. {\ttfamily shiraga.076@g.chuo-u.ac.jp}}}
\date{\today}
\begin{document}
\maketitle


\begin{abstract}
This paper is concerned with voting processes on graphs where each vertex holds one of two different opinions.
In particular, we study the \emph{Best-of-two} and the \emph{Best-of-three}.
Here at each synchronous and discrete time step, each vertex updates its opinion to match the majority among the opinions of two random neighbors and itself (the Best-of-two) or the  opinions of three random neighbors (the Best-of-three).
Previous studies have explored these processes on complete graphs and expander graphs, 
but we understand significantly less about their properties on graphs with more complicated structures.

In this paper, we study the Best-of-two and the Best-of-three on the stochastic block model $G(2n,p,q)$, which is a random graph consisting of two distinct Erd\H{o}s-R\'enyi graphs $G(n,p)$ joined by random edges with density $q\leq p$.
We obtain two main results.
First, if $p=\omega(\log n/n)$ and $r=q/p$ is a constant, we show that there is a phase transition in $r$ with threshold $r^*$ (specifically, $r^*=\sqrt{5}-2$ for the Best-of-two, and $r^*=1/7$ for the Best-of-three).
If $r>r^*$, the process reaches consensus within $O(\log \log n+\log n/\log (np))$ steps for any initial opinion configuration with a bias of $\Omega(n)$.
By contrast, if $r<r^*$, then there exists an initial opinion configuration with a bias of $\Omega(n)$ from which the process requires at least $2^{\Omega(n)}$ steps to reach consensus.
Second, if $p$ is a constant and  $r>r^*$, we show that, for any initial opinion configuration, the process reaches consensus within $O(\log n)$ steps.
To the best of our knowledge, this is the first result concerning multiple-choice voting for arbitrary initial opinion configurations on non-complete graphs.

\ 

\noindent {\bf Key words}: Distributed voting, consensus problem, random graph

\end{abstract}


\section{Introduction}
This paper is concerned with {\em voting processes} 
on distributed networks.
Consider an undirected connected graph $G=(V,E)$ where each vertex $v\in V$ initially holds an opinion from a finite set.
A~voting~process is defined by a local updating rule:~Each vertex updates its opinion according to the rule.
Voting processes appear as simple mathematical models in a wide range of fields, e.g.~social behavior, physical phenomena and biological systems~\cite{MNT14,Liggett85,AABHBB11}.
In distributed computing, voting~processes are known as a simple approach for consensus problems~\cite{FLM86,GK10}.

\subsection{Previous work}
The synchronous {\em pull voting} (a.k.a.~the {\em voter model}) is a simple and well-studied voting process~\cite{NIY99,HP01}.
In the pull voting, at each synchronous and discrete time step, each vertex adopts the opinion of a randomly selected neighbor. 
Here, the main quantity of interest is the {\em consensus time}, which is the number of steps required to reach consensus (i.e.~the configuration where all vertices hold the same opinion).
Hassin and Peleg~\cite{HP01} showed that the expected consensus time is $O(n^3\log n)$ for all non-bipartite graphs and for all initial opinion configurations, where $n$ is the number of vertices.
Note that, for bipartite graphs, there exists an initial opinion configuration that never reaches consensus.

The pull voting has been extended to develop voting processes where each vertex queries multiple neighbors at each step.
The simplest multiple-choice voting process is the {\em Best-of-two} ({\em two sample voting}, or {\em 2-Choices}), where each vertex $v\in V$ randomly samples two neighbors (with replacement) and, if both hold the same opinion, adopts it\footnote{If the graph initially involves two possible opinions, this definition matches the rule described in Abstract.}.
Doerr et al.~\cite{DGMSS11} showed that, for complete graphs initially involving two possible opinions, the consensus time of the Best-of-two is $O(\log n)$ with high probability\footnote{In this paper ``with high probability'' (w.h.p.) means probability at least $1-n^{-c}$ for a constant $c>0$.}.
Likewise, the {\em Best-of-three} (a.k.a.~{\em 3-Majority}) is another simple multiple-choice voting process where each vertex adopts the majority opinion among those of three randomly selected neighbors.
Several researchers have studied this model on complete graphs initially involving $k\geq 2$ opinions~\cite{BCNPT16, BCNPST17, BCEKMN17, GL18}.
For example, Ghaffari and Lengler~\cite{GL18} showed that the consensus time of the Best-of-three is $O(k\log n)$ if $k=O(n^{1/3}/\sqrt{\log n})$.

Several studies of multiple-choice voting processes on non-complete graphs have considered expander graphs with an {\em initial bias}, i.e.~a difference between the initial sizes of the largest and the second largest opinions.
Cooper et al.~\cite{CER14} showed that, for any regular expander graph initially involving two opinions, the Best-of-two reaches consensus within $O(\log n)$ steps w.h.p.~if the initial bias is $\Omega(n\lambda_2)$, where $\lambda_2$ is the second largest eigenvalue of the graph's transition matrix. 
This result was later extended to general expander graphs, including Erd\H{o}s-R\'enyi random graphs $G(n,p)$, under milder assumptions about the initial bias~\cite{CERRS15}.
Recall that the Erd\H{o}s-R\'enri graph $G(n,p)$ is a graph on $n$ vertices where each vertex pair is joined by an edge with probability $p$, independent of any other pairs.
In \cite{CRRS17}, the authors studied the Best-of-two and the Best-of-three on regular expander graphs initially involving more than two opinions.
In \cite{AD15,KR19}, the authors studied multiple-choice voting processes on non-complete graphs with random initial configuration.

Recently, the Best-of-two on richer classes of graphs involving two opinions have been studied.
Previous works proved interesting results which do not hold on complete graphs or expander graphs.
Cruciani et al.~\cite{CNNS18} studied the Best-of-two on the {\em core periphery network}, namely a graph consisting of core vertices and periphery vertices.
They showed that a phase transition can occur, depending on the density of edges between  core and periphery vertices: 
Either the process reaches consensus within $O(\log n)$ steps, 
or remains a configuration where both opinions coexist for at least $\Omega(n)$ steps.
Cruciani et al.~\cite{CNS19} studied the Best-of-two on the $(a,b)$-{\em regular stochastic block model}, which is a graph consisting of two $a$-regular graphs connected by a $b$-regular bipartite graph.
Under certain assumptions including $b/a=O(n^{-0.5})$, they showed that, starting from a random initial opinion configuration, the process reaches an almost {\em clustered} configuration (e.g.~both communities are in almost consensus but the opinions are distinct) within $O(\log n)$ steps with constant probability, then stays in that configuration for at least $\Omega(n)$ steps w.h.p.
They also proposed a distributed community detection algorithm based on this property. 
\subsection{Our results}
This paper considers the \emph{stochastic block model}, a well-known random graph model that forms multiple communities.
This model has been well-explored in a wide range of fields, including biology~\cite{CY06, MPNRYE99}, network analysis~\cite{BDLB17,GZFA10} and machine learning~\cite{AS15,Abbe18}, where it serves as a benchmark for community detection algorithms.
The study of the voting processes on the stochastic block model has a potential application in distributed community detection algorithms~\cite{BCMNPRT18,BCNPT17,CNS19}.
In this paper, we focus on the following model which admits two communities of equal size.
\begin{definition}[Stochastic block model]
\label{def:SBM}
For $n\in\mathbb{N}$ and $p,q\in[0,1]$ with $q\leq p$, the stochastic block model $G(2n,p,q)$ is a graph on a vertex set $V=V_1\cup V_2$, where $|V_1|=|V_2|=n$ and $V_1\cap V_2=\emptyset$.
In addition, each pair $\{u,v\}$ of distinct vertices $u\in V_i$ and $v\in V_j$ forms an edge with probability $\theta$, independent of any other edges, where
\begin{align*}
\theta = \begin{cases}
p & \text{if $i=j$},\\
q & \text{otherwise}.
\end{cases}
\end{align*}
\end{definition}
Note that $G(2n,p,q)$ is not connected w.h.p.~if $p=o(\log n/n)$~\cite{FK16}.
Throughout this paper, we assume $p=\omega(\log n/n)$, in which regime each community is connected w.h.p.

In this paper, we first generate a random graph $G(2n,p,q)$, 
and then set an initial opinion configuration from $\{1,2\}$.
Let $A^{(0)}, A^{(1)}, \ldots$ be a sequence of random vertex subsets where $A^{(t)}$ is the set of vertices of opinion $1$ at step $t$.
For any $A\subseteq V$, the consensus time $\Tcons(A)$ is defined as
\begin{align*}
\Tcons(A)\defeq \min\left\{t\geq 0\,:\,A^{(t)}\in\{\emptyset,V\},\,A^{(0)}=A\right\}.
\end{align*}
We obtain two main results, described below.
\paragraph*{Result I: phase transition.}
Observe that, if $p=q=1$, then $G(2n,1,1)$ is a complete graph and the consensus time of the Best-of-two is $O(\log n)$, from the results of \cite{DGMSS11}.
On the other hand, the graph $G(2n,1,0)$ consists of two disjoint complete graphs, each of size $n$,
meaning that, depending on the initial state, it may not reach consensus. 
This naturally raises the following question:
{\em Where is the boundary between these two phenomena?}
This motivated us to study the consensus times of the Best-of-two and the Best-of-three on $G(2n,p,q)$ for a wide range of $r\defeq q/p$, and led us to propose the following answers.
\begin{theorem}[Phase transition of the Best-of-three on $G(2n,p,q)$] \label{thm:phasetransition_3M}
Consider the Best-of-three on $G(2n,p,q)$ such that $r\defeq \frac{q}{p}$ is a constant.

\begin{enumerate}[label=(\roman*)]
\item\label{state:abovethroshold_3M}
If $r>\frac{1}{7}$, then $G(2n,p,q)$ w.h.p.~satisfies the following property:
There exist two positive constants $C, C'>0$ such that
\begin{align*}
&\forall A\subseteq V\text{ of $\bigl||A|-|V\setminus A|\bigr|= \Omega(n)$}
:\\
&\Pr\left[\Tcons(A)\leq C\left(\log\log n+\frac{\log n}{\log(np)}\right)\right] \geq 1-O\left(n^{-C'}\right).
\end{align*}
%
\item\label{state:belowthroshold_3M} 
If $r<\frac{1}{7}$, then $G(2n,p,q)$ w.h.p.~satisfies the following property:
There exist a set $A\subseteq V$ with $\bigl||A|-|V\setminus A|\bigr|= \Omega(n)$ and two positive constants $C, C'>0$ such that
\begin{align*}
\Pr\left[\Tcons(A) \geq \exp(Cn) \right] \geq 1-O\left(n^{-C'}\right).
\end{align*}
%
\end{enumerate}
\end{theorem}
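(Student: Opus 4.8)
The plan is to replace the random process by a two‑dimensional deterministic map and then control the fluctuations around it. As a preliminary step I would show that, w.h.p.\ over $G(2n,p,q)$, the graph is ``regular enough'': using $p=\omega(\log n/n)$, Chernoff bounds and a union bound over vertices, every $v\in V_i$ has $(1\pm o(1))np$ neighbours in $V_i$ and $(1\pm o(1))nq$ in $V_{3-i}$, and for every vertex $v$ and every subset $W$ that will arise, the number of neighbours of $v$ in $W$ is concentrated around its expectation. Conditioning on such a graph and on the current configuration $A^{(t)}$, write $\alpha_i^{(t)}\defeq|A^{(t)}\cap V_i|/n$. The $2n$ updates at step $t+1$ are conditionally independent, so $|A^{(t+1)}\cap V_i|$ is a sum of independent indicators with mean $(1\pm o(1))\,n\,g(\beta_i^{(t)})$, where $g(x)\defeq 3x^2-2x^3$ and $\beta_i^{(t)}\defeq(\alpha_i^{(t)}+r\alpha_{3-i}^{(t)})/(1+r)$ is, up to $o(1)$, the probability that a uniform random neighbour of a vertex of $V_i$ has opinion $1$. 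Hence $(\alpha_1^{(t+1)},\alpha_2^{(t+1)})$ is, given $A^{(t)}$, concentrated around $F(\alpha^{(t)})\defeq(g(\beta_1^{(t)}),g(\beta_2^{(t)}))$ with additive error $O(\sqrt{\log n/n})$ w.h.p.\ and sub‑Gaussian tails of scale $\Theta(1/\sqrt n)$.

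Next I would analyse $F$ on $[0,1]^2$. Besides $(0,0),(1,1),(1/2,1/2)$, it has --- exactly when $r<1/5$ --- a pair of ``clustered'' fixed points $(\alpha^\ast,1-\alpha^\ast),(1-\alpha^\ast,\alpha^\ast)$ on the line $\{\alpha_1+\alpha_2=1\}$, which is invariant since $g(x)+g(1-x)=1$. In the bias/cluster coordinates $m\defeq\alpha_1+\alpha_2$ (the signed bias of $A$ equals $2n(m-1)$) and $d\defeq\alpha_1-\alpha_2$, a direct computation gives, with $\rho\defeq(1-r)/(1+r)$,
\begin{align*}
\E\!\left[m^{(t+1)}-1\mid A^{(t)}\right]=(1\pm o(1))\cdot\frac{m^{(t)}-1}{2}\Bigl(2+2m^{(t)}-(m^{(t)})^2-3\rho^2(d^{(t)})^2\Bigr),
\end{align*}
so $m-1$ never changes sign, $d$ relaxes geometrically towards its value $d^\ast=2\alpha^\ast-1$ at the clustered fixed point, and --- since a fixed point with $m\ne\{0,1,2\}$ would force $m(2-m)=\tfrac{3}{4\rho}$, which exceeds $1$ precisely when $r>1/7$ --- for $r>1/7$ the map $F$ has no fixed point in the open strip $\{1<m<2\}$. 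The Jacobian of $F$ at $(\alpha^\ast,1-\alpha^\ast)$ equals $\frac{G}{1+r}\begin{pmatrix}1&r\\r&1\end{pmatrix}$ with $G\defeq g'(\beta_1^\ast)=6\beta_1^\ast(1-\beta_1^\ast)$, with eigenvalues $G$ (bias direction) and $G\rho<G$ (cluster direction); imposing $G=1$ together with the fixed‑point equation for $\beta_1^\ast$ again forces $r=1/7$. Thus for $r>1/7$ the clustered fixed point, when it exists, is a saddle whose stable manifold lies in $\{m=1\}$ and whose unstable direction is the bias direction, whereas for $r<1/7$ it is a stable node.

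For part~(i) ($r>1/7$) I would run the coupled process through three phases and union‑bound over all (polylogarithmically many) steps. \emph{Drift phase}: from bias $\Omega(n)$, i.e.\ $|m^{(0)}-1|\ge\delta_0/2$, the drift formula --- combined with sign preservation of $m-1$, the geometric relaxation of $d$ towards $d^\ast$, and $G>1$ at any clustered saddle the trajectory approaches --- shows that $|m^{(t)}-1|$ stays above a constant $\delta_1=\delta_1(r,\delta_0)>0$ and that $m^{(t)}$ reaches $2-\epsilon$ (WLOG $m$ is on the larger side) within $O(1)$ steps, over which the $O(\sqrt{\log n/n})$ coupling error is negligible. \emph{Doubly‑exponential phase}: once the minority count $Z^{(t)}$ satisfies $Z^{(t)}\le\epsilon n$ (with $\epsilon$ a small constant we may fix in the drift phase), each community sees at most an $\epsilon$ fraction of minority neighbours, so $\E[Z^{(t+1)}\mid A^{(t)}]=O((Z^{(t)})^2/n)$; hence w.h.p.\ $Z^{(t+1)}\le c(Z^{(t)})^2/n$ and $Z^{(t)}=O(1/p)$ after $O(\log\log n)$ steps. \emph{$(np)$‑geometric phase}: once $Z^{(t)}=O(1/p)$, most vertices have no minority neighbour and a vertex with exactly one flips with probability $\Theta(1/(np)^2)$, so $\E[Z^{(t+1)}\mid A^{(t)}]=O(Z^{(t)}/(np))$; iterated Chernoff/absorption estimates then give $Z^{(t)}=0$ within a further $O(\log n/\log(np))$ steps with probability $1-n^{-\Omega(1)}$. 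Summing the phase lengths gives $O(\log\log n+\log n/\log(np))$, and the union bound over the steps keeps the failure probability $O(n^{-C'})$.

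For part~(ii) ($r<1/7$) I would pick an explicit configuration of bias $\Omega(n)$ lying in the basin of $(\alpha^\ast,1-\alpha^\ast)$ --- for instance all of $V_1$ together with a $\tfrac12(1-\alpha^\ast)$ fraction of $V_2$ holding opinion $1$, so the mean‑field trajectory has $\alpha_1$ decreasing to $\alpha^\ast$ and $\alpha_2$ increasing to $1-\alpha^\ast$ --- and show that within $O(1)$ steps the mean‑field trajectory, hence w.h.p.\ the process, enters a small disc $D$ around the fixed point. Since $\|DF\|_2=G<1$ on $D$, there is a Lyapunov function $\Phi\ge0$ vanishing at the fixed point with $\E[\Phi^{(t+1)}\mid A^{(t)}]\le\lambda\Phi^{(t)}+O(1/n)$ for a constant $\lambda<1$ whenever $\alpha^{(t)}\in D$, while a one‑vertex change moves $\Phi$ by $O(1/n)$; a sub‑Gaussian (McDiarmid) bound then shows the process leaves $D$ in any given step with probability $e^{-\Omega(n)}$, so a union bound over $e^{Cn}$ steps for small $C$ gives $\Tcons(A)\ge e^{Cn}$ with probability $1-e^{-\Omega(n)}=1-O(n^{-C'})$, since reaching consensus requires leaving $D$. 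The main obstacle is the drift phase of~(i) when $1/7<r<1/5$, where a clustered \emph{saddle} is present: one must show that every bias‑$\Omega(n)$ trajectory escapes to a consensus corner in $O(1)$ steps and, crucially, that its bias stays $\Omega(n)$ throughout (if it were allowed to shrink to $o(\sqrt n)$, the $\Theta(1/\sqrt n)$‑scale fluctuations would dominate and the escape could cost $\Theta(\log n)$ steps) --- this is exactly where sign preservation of $m-1$, the relaxation of $d$ towards $d^\ast$, and the inequality $G>1$ must be combined into a quantitative lower bound on $|m^{(t)}-1|$; a secondary technical difficulty is the rigorous treatment of the last two phases, where a single vertex's behaviour depends delicately on the local structure of the random graph and the relevant probabilities must be controlled uniformly over the few relevant vertices.
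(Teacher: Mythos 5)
The high-level structure of your proposal---a two-dimensional mean-field map, the fixed-point analysis producing the threshold $r^*=1/7$ via $m(2-m)=3/(4\rho)$, the three-phase fast-consensus argument, and the Lyapunov/McDiarmid exponential lower bound---is exactly right and essentially matches the paper's. However, there are two genuine gaps.

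The more serious gap is the concentration step. You write that Chernoff bounds plus a union bound over vertices give, ``for every vertex $v$ and every subset $W$ that will arise,'' concentration of $\deg_W(v)$. No graph event of that form holds with high probability: for any fixed $v$ one can choose $W$ adversarially (e.g.~the non-neighbours of $v$) to make $\deg_W(v)$ arbitrarily far from its mean, and the subsets $A^{(t)}$ appearing in the process depend on the graph, so one cannot simply condition them away. What is actually needed---and what the paper proves as its main technical theorem (``$f$-goodness'', \cref{thm:fgood}) via Janson's inequality and the Kim--Vu polynomial concentration---is concentration of the \emph{sums} $\sum_{v\in S\cap V_i}f(\deg_A(v)/\deg(v))$ \emph{uniformly over all} $A,S\subseteq V$, without requiring per-vertex control. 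This also gives you a second thing your sketch silently relies on: the refined bound \ref{pro:gomiA} with error $O(|A|\sqrt{\log n/(np)})$, i.e.~a \emph{relative} error for small $|A|$. Your doubly-exponential and ``$(np)$-geometric'' phases implicitly assume such a relative error (you need $\E[Z']\le CZ^2/n+\epsilon Z$ down to $Z=O(1/p)$ and beyond), but a raw Hoeffding-style bound only yields an additive error $\Theta(\sqrt{n/p})$ per coordinate, which kills the recursion once $Z\lesssim n^{3/4}p^{-1/4}$. The paper's \cref{lem:WIdiscrepancy2} is precisely the lemma that makes your last two phases go through, and it is not a union-bounded Chernoff statement.

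The second, smaller gap is the drift-phase / orbit-convergence argument. You correctly observe sign-preservation of $m-1$ and compute the clustered fixed point and its Jacobian, but the claim that the deterministic orbit reaches a neighbourhood of the consensus corner in $O(1)$ steps from \emph{every} initial condition with $|m^{(0)}-1|\ge\delta_0$, while keeping $|m^{(t)}-1|$ bounded below by a constant, needs a global argument: the factor $\tfrac12\bigl(3-(m-1)^2-3(ud)^2\bigr)$ can be less than $1$ when $d$ is large, and $d$ need not relax monotonically towards $d^*$ (for $r>1/5$ there is no $d^*$, and near $(0,0)$ the coordinate $d_1$ initially grows when $u>2/3$). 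The paper sidesteps this by invoking the theory of competitive planar dynamical systems (\cref{prop:3Mvectorfield}, via a result of Hirsch--Smith), which gives convergence of every orbit to a fixed point and, combined with compactness, the uniform $O(1)$ bound. Your explicit drift formulas are a reasonable substitute for the ``easy'' regime but do not by themselves deliver the uniform statement. You do correctly flag this as the hardest point; I am noting that it is not just an obstacle but a place where a different tool (a monotonicity/competitivity argument, or an explicit Lyapunov function on all of $S$) is required.

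For comparison, your Lyapunov/McDiarmid argument for part~(ii) is a legitimate alternative to the paper's singular-value contraction lemma (\cref{lem:sinkdistance}) plus Hoeffding; both give the same $e^{-\Omega(n)}$ per-step escape probability and a union bound over $e^{Cn}$ steps. So the lower-bound half, modulo the concentration issue above, is essentially sound and takes a marginally different but equivalent route.
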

\begin{theorem}[Phase transition of the Best-of-two on $G(2n,p,q)$] \label{thm:phasetransition_2C}
Consider the Best-of-two on $G(2n,p,q)$ such that $r\defeq \frac{q}{p}$ is a constant.

\begin{enumerate}[label=(\roman*)]
\item\label{state:abovethroshold_2C}
If $r>\sqrt{5}-2$, then $G(2n,p,q)$ w.h.p.~satisfies the following property:
There exist two positive constants $C, C'>0$ such that
\begin{align*}
&\forall A\subseteq V\text{ of $\bigl||A|-|V\setminus A|\bigr|= \Omega(n)$}
:\\
&\Pr\left[\Tcons(A)\leq C\left(\log\log n+\frac{\log n}{\log(np)}\right)\right] \geq 1-O\left(n^{-C'}\right).
\end{align*}

\item\label{state:belowthroshold_2C}
If $r<\sqrt{5}-2$, then $G(2n,p,q)$ w.h.p.~satisfies the following property:
There exist a set $A\subseteq V$ with $\bigl||A|-|V\setminus A|\bigr|= \Omega(n)$ and two positive constants $C, C'>0$ such that
\begin{align*}
\Pr\left[\Tcons(A) \geq \exp(Cn) \right] \geq 1-O\left(n^{-C'}\right).
\end{align*}
\end{enumerate}
\end{theorem}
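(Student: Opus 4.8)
The plan is to reduce the stochastic dynamics on $G=G(2n,p,q)$ to a two-dimensional deterministic map together with sharp concentration estimates, and to read the dichotomy off the fixed-point structure of that map; the argument parallels the Best-of-three case (\cref{thm:phasetransition_3M}). \textbf{Reduction to a mean field.} For $i\in\{1,2\}$ write $x_i^{(t)}=|A^{(t)}\cap V_i|/n$. Since $p=\omega(\log n/n)$, hence $np=\omega(\log n)$, a Chernoff/union-bound argument shows that w.h.p.\ $G$ is ``regular'': every $v\in V_i$ has $(1\pm o(1))np$ neighbours inside $V_i$ and $(1\pm o(1))nq$ inside $V_j$, and for every $S\subseteq V$ meeting each $V_i$ in $\Omega(n)$ vertices the count $|N(v)\cap S|$ is within a $(1\pm o(1))$ factor of its mean. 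Condition on such a $G$. Because the Best-of-two updates are independent given $A^{(t)}$, a vertex $v\in V_1$ joins $A^{(t+1)}$ with probability $(s_v^{(t)})^2+2\,\mathbf{1}[v\in A^{(t)}]\,s_v^{(t)}(1-s_v^{(t)})$, where $s_v^{(t)}=|N(v)\cap A^{(t)}|/\deg(v)$; and while $x_1^{(t)},x_2^{(t)}=\Omega(1)$ we have $s_v^{(t)}=s_i^{(t)}+o(1)$ for $v\in V_i$, with $s_1^{(t)}=\tfrac{x_1^{(t)}+r x_2^{(t)}}{1+r}$ and $s_2^{(t)}=\tfrac{r x_1^{(t)}+ x_2^{(t)}}{1+r}$. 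Thus $\E[x_i^{(t+1)}\mid A^{(t)}]=G_i(x_1^{(t)},x_2^{(t)})+o(1)$ with
\[
 G_1(x_1,x_2)=s_1^2+2x_1s_1(1-s_1),\qquad s_1=\tfrac{x_1+rx_2}{1+r},
\]
and $G_2$ obtained by exchanging $V_1\leftrightarrow V_2$; moreover $x_i^{(t+1)}$ lies within $O(\sqrt{\log n/n})$ of its conditional mean w.h.p.\ and deviates by more than any fixed constant with probability $e^{-\Omega(n)}$. (On the main diagonal $x_1=x_2$ this is the complete-graph map $x\mapsto3x^2-2x^3$ of \cite{DGMSS11}.)

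\textbf{Fixed points and the constant $\sqrt{5}-2$.} The map $G$ commutes with the swap $(x_1,x_2)\mapsto(x_2,x_1)$ and with $(x_1,x_2)\mapsto(1-x_2,1-x_1)$; in particular the anti-diagonal $\{x_1+x_2=1\}$ is forward invariant, on which $G$ restricts to $F(a)=s^2+2as(1-s)$ with $s=\rho(a-\tfrac12)+\tfrac12$ and $\rho:=\tfrac{1-r}{1+r}\in(0,1)$. A direct computation shows the fixed points of $G$ on the two diagonals are $(0,0),(1,1),(\tfrac12,\tfrac12)$ and, when $\rho>\tfrac12$ (i.e.\ $r<\tfrac13$), a ``clustered'' point $z^*:=(c^*,1-c^*)$ with $(c^*-\tfrac12)^2=\tfrac{2\rho-1}{4\rho^2}$ together with its mirror $(1-c^*,c^*)$; one checks there are no further fixed points. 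At any point fixed by the swap the Jacobian is $\begin{pmatrix}\mu&\nu\\\nu&\mu\end{pmatrix}$, with eigenvalue $\mu-\nu=F'(a)$ along the anti-diagonal and $\mu+\nu$ transverse to it; the transverse eigenvalue equals $\tfrac32$ at $(\tfrac12,\tfrac12)$, and at $z^*$ the two eigenvalues are $\lambda_-=2(1-\rho)$ and $\lambda_+=\tfrac{1-\rho^2}{\rho}$. Since $\lambda_-\in(0,1)$ whenever $z^*$ exists, and $\lambda_+<1\iff\rho^2+\rho-1>0\iff\rho>\tfrac{\sqrt{5}-1}{2}\iff r<\sqrt{5}-2$, the clustered fixed point is an attracting node for $r<\sqrt{5}-2$ and a saddle (unstable transverse to the anti-diagonal) for $\sqrt{5}-2<r<\tfrac13$, while for $r\ge\tfrac13$ it is absent --- this is exactly the source of the threshold $r^*=\sqrt{5}-2$. (The same scheme with $G_i=3s_i^2-2s_i^3$ yields $\lambda_+=\tfrac{3(1-\rho)}{\rho}<1\iff r<\tfrac17$, matching \cref{thm:phasetransition_3M}.)

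\textbf{Upper bound (case $r>\sqrt{5}-2$).} Assume w.l.o.g.\ that opinion $1$ is the global minority, so $x_1^{(0)}+x_2^{(0)}\le1-\delta$ for the constant $\delta>0$ implicit in ``bias $\Omega(n)$''. The set $\{x_1+x_2\le1\}\cap[0,1]^2$ is forward invariant and, by the previous paragraph, its only fixed point with $x_1+x_2<1$ is $(0,0)$; a Lyapunov/phase-space argument (the anti-diagonal is normally repelling except near its endpoints, and there are no further fixed points) shows the orbit of $G$ from any point of $\{x_1+x_2\le1-\delta\}$ enters a small box $[0,\varepsilon_0]^2$ within $O(1)$ steps, and transferring this to the process via the reduction (accumulated noise over $O(1)$ steps is $o(1)$) gives $\max(x_1^{(t)},x_2^{(t)})\le\varepsilon_0$ after $O(1)$ steps w.h.p. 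From there, writing $\mu^{(t)}$ for the minority fraction, a second-moment estimate gives $\mu^{(t+1)}=O\bigl(\mu^{(t)}(\mu^{(t)}+\tfrac1{np})\bigr)$ together with matching concentration, the $\tfrac1{np}$ term arising from the graph's bulk spectral error once $A^{(t)}$ is sublinear and from sampling neighbours with replacement: so essentially quadratic shrinkage over $O(\log\log n)$ steps brings $\mu^{(t)}$ below $\tfrac1{np}$, after which $\mu^{(t)}$ shrinks geometrically with ratio $O(\tfrac1{np})$ and reaches $0$ within a further $O\bigl(\tfrac{\log n}{\log(np)}\bigr)$ steps. Summing the phases gives $\Tcons(A)=O\bigl(\log\log n+\tfrac{\log n}{\log(np)}\bigr)$ with probability $1-O(n^{-C'})$.

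\textbf{Lower bound (case $r<\sqrt{5}-2$) and the main obstacle.} Let $z^*$ be the attracting clustered fixed point; its Jacobian is symmetric with spectrum $\{\lambda_+,\lambda_-\}\subset(0,1)$, so there are constants $\eta>0$, $\gamma<1$ with $\|G(z)-z^*\|\le\gamma\|z-z^*\|$ on $\mathcal N:=\{z:\|z-z^*\|\le\eta\}$; shrink $\eta$ so that $\mathcal N\subseteq(0,1)^2$ with both coordinates bounded away from $\{0,1\}$ (possible since $c^*\in(\tfrac12,1)$ is a fixed constant) and so that the reduction's graph error is $\le\eta(1-\gamma)/10$ on $\mathcal N$. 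Take $A$ with $x_1^{(0)}=c^*+\delta/2$ and $x_2^{(0)}=1-c^*+\delta/2$ for a small constant $\delta<\eta$: then $(x_1^{(0)},x_2^{(0)})\in\mathcal N$ and $\bigl||A|-|V\setminus A|\bigr|=2\delta n=\Omega(n)$. If $(x_1^{(t)},x_2^{(t)})\in\mathcal N$ then $\E[(x_1^{(t+1)},x_2^{(t+1)})\mid A^{(t)}]$ lies within $\gamma\eta+\eta(1-\gamma)/10<\eta$ of $z^*$, so leaving $\mathcal N$ in one step forces an $\Omega(1)$ deviation from the conditional mean, which has probability $e^{-\Omega(n)}$; a union bound over $t\le T:=e^{cn}$ (with $c$ a small constant) keeps $(x_1^{(t)},x_2^{(t)})\in\mathcal N$ for all such $t$ with probability $1-e^{-\Omega(n)}$, and on that event both opinions are present at every step, so $\Tcons(A)>T=\exp(\Omega(n))$. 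The bifurcation bookkeeping and this metastability argument are routine once the reduction is in hand; the bulk of the work is in the upper bound --- establishing the \emph{uniform} drift that prevents the deterministic dynamics from lingering near the saddle $(\tfrac12,\tfrac12)$ or the clustered saddles present for $\sqrt{5}-2<r<\tfrac13$, and carrying the final sublinear-minority phase all the way down to $np=\omega(\log n)$, where global within-community regularity fails and only the bulk spectral error (of order $\tfrac1{np}$) controls the per-step shrinkage, so as to extract the $\tfrac{\log n}{\log(np)}$ term.
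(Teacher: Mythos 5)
Your fixed-point and eigenvalue calculations are correct and match the paper exactly: after the linear coordinate change $(\delta_1,\delta_2)=(\alpha_1-\alpha_2,\alpha_1+\alpha_2-1)$, your clustered point $z^*=(c^*,1-c^*)$ becomes the paper's $\mathbf{d}^*_2=\bigl(\sqrt{(2u-1)/u^2},0\bigr)$ with $u=\rho=(1-r)/(1+r)$, and your eigenvalues $\lambda_-=2(1-\rho)$, $\lambda_+=(1-\rho^2)/\rho$ coincide with the diagonal of $J_2$ in \cref{eqn:Jacobian_2C}; the threshold $\lambda_+<1\iff r<\sqrt5-2$ is read off identically. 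Your lower-bound construction (perturb $z^*$ by $(\delta/2,\delta/2)$ to create bias $2\delta n$, then invoke the sink contraction plus a $e^{-\Omega(n)}$ deviation bound) is exactly \cref{prop:sink_2C}/\cref{prop:sinkpoint}, and your three-phase upper bound (burn-in into a small box, quadratic shrinkage, geometric tail giving the $\log n/\log(np)$ term) mirrors \cref{prop:fastconsensus_2C}/\cref{prop:fastconsensus_Jacob}. So the conceptual structure and the constant are right. However, there are two genuine gaps.

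First, the mean-field reduction as you state it would fail. Writing ``a Chernoff/union-bound argument shows that ... for every $S\subseteq V$ meeting each $V_i$ in $\Omega(n)$ vertices the count $|N(v)\cap S|$ is within a $(1\pm o(1))$ factor of its mean'' is false for $p=\omega(\log n/n)$ but $p=o(1)$: for a single $v$ and $S$ the deviation probability is $\exp(-\Theta(np))$, and a union bound over the $2^{2n}$ choices of $S$ would need $np=\Omega(n)$, i.e.\ $p=\Omega(1)$. What one actually needs — and the paper proves as Theorem~\ref{thm:fgood} using Janson's inequality and the Kim--Vu concentration for the polynomial counts $W(S_0;S_1,\ldots,S_\ell)=\sum_v\prod_i\deg_{S_i}(v)$ — is uniform control of the \emph{aggregated} quantity $\sum_{v\in V_i}f(\deg_A(v)/\deg(v))$ over all $A$. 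You implicitly switch later to ``the graph's bulk spectral error'' when $|A|$ is sublinear, and indeed a spectral/expander-mixing-lemma route can plausibly give the needed uniform bound on $\sum_v\eta_v^2$ via $\|(A-P)\mathbf{1}_A\|_2^2\le\|A-P\|_{\mathrm{op}}^2|A|$ — but you would need to develop that for $G(2n,p,q)$ (not $G(n,p)$) and carry it uniformly through the sublinear-minority phase where the paper's property \ref{pro:gomiA} does the work. As written the reduction has a real hole.

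Second, the orbit-convergence step is named but not supplied. You write ``a Lyapunov/phase-space argument (the anti-diagonal is normally repelling except near its endpoints, and there are no further fixed points) shows the orbit ... enters a small box $[0,\varepsilon_0]^2$ within $O(1)$ steps,'' and you yourself flag ``establishing the \emph{uniform} drift ... near the saddle $(\tfrac12,\tfrac12)$ or the clustered saddles present for $\sqrt5-2<r<\tfrac13$'' as the bulk of the work. That is precisely the content of \cref{prop:2Cvectorfield}, which the paper obtains by verifying that the map $T$ is a competitive planar system (conditions~\ref{cond:map_condition1}--\ref{cond:map_condition4}, via the Jacobian sign pattern, $\det J>0$ away from $(0,1)$, and the Inverse Function Theorem) and applying Hirsch--Smith orbit convergence (\cref{thm:orbit_convergence}), combined with $d_2'\ge d_2^2$ and the fact that every fixed point other than $(0,1)$ has $d_2=0$ with an unstable $d_2$-eigendirection when $r>\sqrt5-2$. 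Without such a theorem your $O(1)$-step burn-in is an assertion, not a proof — and getting it is not routine bookkeeping. (A minor point: the symmetry of $G$ that fixes $z^*$ is $(x_1,x_2)\mapsto(1-x_2,1-x_1)$, not the coordinate swap you invoke, though the resulting Jacobian form $\begin{pmatrix}\mu&\nu\\\nu&\mu\end{pmatrix}$ and its eigenvalues are still as you claim.)
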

Note that the upper bound $\Tcons(A)= O(\log\log n+\log n/\log(np))$ is tight up to a constant factor if $\log n / \log (np) \geq \log\log n$.
To see this, observe that there exists an $A\subseteq V$ such that $\Tcons(A)$ is at least half of the diameter.
In addition, it is easy to see that the diameter of $G(2n,p,q)$ is $\Theta(\log n / \log (np))$ w.h.p.~\cite{FK16}.

We also note that the consensus time of the pull voting is $O({\rm poly}(n))$ for any non-bipartite graph~\cite{HP01}.
To the best of our knowledge, \cref{thm:phasetransition_3M} and \cref{thm:phasetransition_2C} provide the first nontrivial graphs where the consensus time of a multiple-choice voting process is exponentially slower than that of the pull voting. 

%
\paragraph*{Result I\hspace{-.1em}I: worst-case analysis.}
The most central topic in voter processes is the \emph{symmetry breaking}, i.e.~the number of iterations required to cause a small bias starting from the half-and-half state.
Here, we are interested in the worst-case consensus time with respect to initial opinion configurations.
To the best of our knowledge, all current results on worst-case consensus time of multiple-choice voting processes deal with complete graphs~\cite{DGMSS11,BCNPST17, BCEKMN17, GL18}.
All previous work on non-complete graphs has involved some special bias setting (e.g.~an initial bias~\cite{CER14,CERRS15,CRRS17}, or a random initial opinion configuration~\cite{AD15,CNS19,KR19}).
In this paper, we present the following first worst-case analysis of non-complete graphs.
\begin{theorem}[Worst-case analysis of the Best-of-three on $G(2n,p,q)$] \label{thm:worst_case_3M}
Consider the Best-of-three on  $G(2n,p,q)$ such that $p$ and $q$ are positive constants.
If $\frac{q}{p}>\frac{1}{7}$, then $G(2n,p,q)$ w.h.p.~satisfies the following property:
There exist two positive constants $C,C'>0$ such that 
\begin{align*}
\forall A\subseteq V\, :\, \Pr\left[\Tcons(A)\leq C\log n \right] \geq 1-O\left(n^{-C'}\right).
\end{align*}
\end{theorem}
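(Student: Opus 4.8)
Write $x_i^{(t)}:=|A^{(t)}\cap V_i|/n$ for $i\in\{1,2\}$, set $\bar x_i^{(t)}:=(p\,x_i^{(t)}+q\,x_{3-i}^{(t)})/(p+q)$ --- the density of opinion $1$ among the neighbours sampled by a typical $V_i$-vertex --- and $g(x):=3x^2-2x^3$, so the relevant mean-field map on $[0,1]^2$ is $\Phi(x_1,x_2)=\bigl(g(\bar x_1),g(\bar x_2)\bigr)$. The plan is: (i) show the process shadows $\Phi$ with $\Order(\sqrt{\log n/n})$ error per step while genuine $\Theta(1/\sqrt n)$ noise is injected; (ii) use $r>1/7$ to pin down the fixed points of $\Phi$ and show the only stable ones are the consensus states; (iii) conclude that from \emph{any} $A^{(0)}$ the process reaches a bias of $\Omega(n)$ within $\Order(\log n)$ steps; and (iv) finish with \cref{thm:phasetransition_3M}. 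For (i) I would condition on the w.h.p.\ event that $G(2n,p,q)$ is degree-regular up to a $1\pm o(1)$ factor within and across the communities and obeys the discrepancy (expander-mixing) bound available for constant $p,q$; under this event, for every configuration and step one gets $\E[x_i^{(t+1)}\mid\mathcal F_t]=g(\bar x_i^{(t)})+\Order(\sqrt{\log n/n})$ (the error absorbing both the gap between a vertex's local density and $\bar x_i^{(t)}$ and the cross-community degree fluctuations), $x_i^{(t+1)}$ concentrates around this conditional mean within $\Order(\sqrt{\log n/n})$ by Azuma over the $\Theta(n)$ independent sample choices in $V_i$, and when $x_i^{(t)}$ is bounded away from $\{0,1\}$ the fluctuation has variance $\Theta(1/n)$ and is anti-concentrated at scale $1/\sqrt n$. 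This is the one-step drift/concentration machinery already used for \cref{thm:phasetransition_3M,thm:phasetransition_2C}, now in the easier constant-degree regime; a union bound makes it hold for all $t=\Order(\log n)$.

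For (ii), in the coordinates $Y=x_1+x_2-1$ and $D=x_1-x_2$ the cubicity of $g$ gives
\[
g(\bar x_1)+g(\bar x_2)-1=\Bigl(\tfrac32-\tfrac12 Y^2-\tfrac{3(1-r)^2}{2(1+r)^2}D^2\Bigr)Y,\qquad
g(\bar x_1)-g(\bar x_2)=\tfrac{1-r}{1+r}\,g'\!\Bigl(\tfrac{1+Y}{2}\Bigr)D-\tfrac{(1-r)^3}{2(1+r)^3}D^3 .
\]
A short calculation then shows the fixed points of $\Phi$ in $[0,1]^2$ are exactly $(0,0),(\tfrac12,\tfrac12),(1,1)$, together --- only when $r\le\tfrac15$ --- with a ``clustered'' pair $(s^\ast,1-s^\ast),(1-s^\ast,s^\ast)$; an asymmetric one would require $a(1-a)=\tfrac{3(1+r)}{16(1-r)}$ for some $a\in[0,1]$, which has no solution precisely when $r>\tfrac17$ (the right-hand side then exceeds $\tfrac14$). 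The Jacobian of $\Phi$ at $(\tfrac12,\tfrac12)$ has eigenvalues $\tfrac32$ and $\tfrac32\cdot\tfrac{1-r}{1+r}$, and at a clustered fixed point it has eigenvalues $\tfrac{6r}{1-r}$ and $\tfrac{6r}{1+r}$; since $r>\tfrac17$ both non-consensus fixed points carry an unstable direction (eigenvalue $\tfrac32$, resp.\ $\tfrac{6r}{1-r}$, above $1$) along which $Y$ moves, so $(0,0)$ and $(1,1)$ are the only stable fixed points. (For $r<\tfrac17$ the clustered fixed point becomes attracting, which underlies the second part of \cref{thm:phasetransition_3M}.) The line $\mathcal B:=\{x_1+x_2=1\}$ is forward-invariant, contains every non-consensus fixed point, and the restriction $\Phi|_{\mathcal B}$ is a one-dimensional map all of whose orbits except $(\tfrac12,\tfrac12)$ converge to a non-consensus fixed point at which the normal-to-$\mathcal B$ multiplier $\tfrac32-\tfrac{3(1-r)^2}{2(1+r)^2}D^2$ exceeds $1$; off $\mathcal B$ the half-planes $\{Y>0\},\{Y<0\}$ are forward-invariant, with orbits converging to $(1,1)$ and $(0,0)$ respectively.

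For (iii), fix a small constant $\varepsilon_0>0$. One checks that $R^+:=\{x_1>\tfrac12,\ x_2>\tfrac12\}$ is forward-invariant under $\Phi$, and since $\min(\bar x_1,\bar x_2)\ge\min(x_1,x_2)$ and $1-g(m)=(1-m)^2(1+2m)$, the minority fraction $1-\min(x_1,x_2)$ contracts by a constant factor on $R^+\cap\{\min(x_1,x_2)\ge\tfrac12+\varepsilon_0\}$, reaching $\{\min(x_1,x_2)\ge 1-\varepsilon_0\}$ (hence bias $\Omega(n)$) in $\Order(1)$ further steps; symmetrically for $R^-:=\{x_1<\tfrac12,\ x_2<\tfrac12\}$. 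Moreover $|Y|\ge\tfrac{1-r}{2}+\varepsilon_0$ forces $\min(\bar x_1,\bar x_2)\ge\tfrac12+\tfrac{\varepsilon_0}{1+r}$ (using $|D|\le 1-|Y|$), so the next configuration lies in $R^+\cup R^-$ with $\min$ a constant above $\tfrac12$. Hence it is enough to show $|Y^{(t)}|\ge\tfrac{1-r}{2}+\varepsilon_0$ is reached within $\Order(\log n)$ steps from any $A^{(0)}$. For this I would combine the drift identity $\E[Y^{(t+1)}\mid\mathcal F_t]\approx\bigl(\tfrac32-\tfrac12 (Y^{(t)})^2-\tfrac{3(1-r)^2}{2(1+r)^2}(D^{(t)})^2\bigr)Y^{(t)}$ --- whose multiplier, by (ii) and a compactness argument, exceeds $1$ off a fixed neighbourhood of $\mathcal B\cup\{(0,0),(1,1)\}$ --- with the injected $\Theta(1/\sqrt n)$ noise, which prevents $|Y^{(t)}|$ from collapsing onto $\mathcal B$, and the standard escape estimate for a scalar recursion with multiplier $\ge 1+\Omega(1)$ and additive anti-concentrated $1/\sqrt n$-noise, which leaves any fixed neighbourhood of $0$ within $\Order(\log n)$ steps with probability $1-o(1)$. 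Once $\bigl||A^{(t)}|-|V\setminus A^{(t)}|\bigr|=\Omega(n)$, the first part of \cref{thm:phasetransition_3M} applies (a constant $p$ is $\omega(\log n/n)$ and $r>\tfrac17$) and yields consensus in $\Order(\log\log n+\log n/\log(np))=\Order(\log\log n)$ more steps; summing, $\Tcons(A)=\Order(\log n)$ w.h.p.\ for every $A$.

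The hard part is (iii) when $\tfrac17<r<\tfrac15$. There $(\tfrac12,\tfrac12)$ is a source but the clustered fixed point is a genuine saddle, so --- unlike on complete graphs, or for $r\ge\tfrac15$ where the community difference $D$ contracts and the analysis reduces to the one-dimensional dynamics of $Y$ --- one must work in the full two-dimensional state space. In particular $\mathcal B$ is \emph{not} uniformly normally repelling: near the corners $(1,0),(0,1)$ the normal multiplier $\tfrac32-\tfrac{3(1-r)^2}{2(1+r)^2}D^2$ drops below $1$, so an orbit started there is first pulled \emph{towards} $\mathcal B$, carried along $\mathcal B$ towards the clustered saddle, and only then ejected by the combination of the unstable eigenvalue $\tfrac{6r}{1-r}>1$ and the noise. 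Showing that this entire detour costs only $\Order(\log n)$ steps, uniformly over starting configurations and against the $\Order(\sqrt{\log n/n})$ mean-field error, is the step I expect to require the most care; everything else is a direct computation with the cubic $g$ or an application of the concentration estimates already developed for \cref{thm:phasetransition_3M,thm:phasetransition_2C}.
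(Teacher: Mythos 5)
Your proposal follows the paper's own route: the same coordinate change $(D,Y)=(\alpha_1-\alpha_2,\alpha_1+\alpha_2-1)$, the same cubic mean-field map, the same thresholds ($r=1/5$ for the clustered fixed point, $r=1/7$ for the asymmetric one), the same Jacobian eigenvalues (you get $\frac{6r}{1\pm r}$ at the clustered point and $\frac{3}{2},\frac{3}{2}\cdot\frac{1-r}{1+r}$ at $(\frac12,\frac12)$, matching the paper's $3(1-u)$, $3(u^{-1}-1)$, $\frac{3}{2}$, $\frac{3u}{2}$ with $u=\frac{1-r}{1+r}$), the same anti-concentration-from-$\Theta(1/n)$-variance ingredient, and the same final appeal to \cref{thm:phasetransition_3M} once $|Y|=\Omega(1)$. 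The one step you flag as unresolved --- showing the ``detour'' past the corners $(x_1,x_2)\in\{(1,0),(0,1)\}$ and through the saddle $\mathbf{d}^*_2$ (the regime $\frac17<r<\frac15$) costs only $O(\log n)$ steps --- is exactly where a purely scalar $Y$-drift argument breaks down, because the multiplier $\frac32-\frac12 Y^2-\frac{3u^2}{2}D^2$ drops below $1$ on a wedge near those corners that is not contained in a small strip around $\mathcal B$; the paper circumvents this by (a) invoking competitive/monotone planar dynamical systems (\cref{prop:3Mvectorfield}, via \cref{thm:orbit_convergence}) to guarantee the orbit lands in a ball around $\mathbf{d}^*_1$ or $\mathbf{d}^*_2$ after $O(1)$ steps, and then (b) applying a two-stage escape via \cref{prop:escape} (built on \cref{cor:nazocor}, which packages the multiplier-$1{+}\Omega(1)$ growth with Berry--Esseen anti-concentration from the variance bound) first at the source $(0,0)$ and then, after a constant-time slide along $\{Y=0\}$, at the saddle $\mathbf{d}^*_2$, rather than tracking $Y$ alone through the wedge.
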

\begin{theorem}[Worst-case analysis of the Best-of-two on $G(2n,p,q)$] \label{thm:worst_case_2C}
Consider the Best-of-two on $G(2n,p,q)$ such that $p$ and $q$ are positive constants.
If $\frac{q}{p}>\sqrt{5}-2$, then $G(2n,p,q)$ w.h.p.~satisfies the following property:
There exist two positive constants $C,C'>0$ such that 
\begin{align*}
\forall A\subseteq V\, :\, \Pr\left[\Tcons(A)\leq C\log n \right] \geq 1-O\left(n^{-C'}\right).
\end{align*}
\end{theorem}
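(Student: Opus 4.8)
The plan is to split off a ``symmetry-breaking'' estimate and then feed it into \cref{thm:phasetransition_2C}. Since $p$ is constant we have $\log n/\log(np)=\Theta(1)$, so the above-threshold case of \cref{thm:phasetransition_2C} already tells us that, on a high-probability event for $G$, every $A$ with $\bigl||A|-|V\setminus A|\bigr|=\Omega(n)$ reaches consensus within $O(\log\log n)=O(\log n)$ steps with probability $1-O(n^{-C'})$. By the Markov property it therefore suffices to prove: \emph{there is a constant $\epsilon_0>0$ such that, for every $A\subseteq V$, with probability $1-n^{-\Omega(1)}$ some time $t_0=O(\log n)$ has $\bigl||A^{(t_0)}|-|V\setminus A^{(t_0)}|\bigr|\ge\epsilon_0 n$}. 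Throughout I would condition on the (high-probability, for constant $p,q$, and already needed for \cref{thm:phasetransition_2C}) event that $G$ is ``jumbled'': all degrees are $(1\pm o(1))np$ inside and $(1\pm o(1))nq$ across a community, and the adjacency matrix of $G$ has all but its two largest eigenvalues of absolute value $O(\sqrt n)$. Writing $\alpha_i^{(t)}\defeq|A^{(t)}\cap V_i|/n$, $\boldsymbol\alpha^{(t)}=(\alpha_1^{(t)},\alpha_2^{(t)})$, $b^{(t)}\defeq\alpha_1^{(t)}+\alpha_2^{(t)}-1$ (so the bias is $2n|b^{(t)}|$), the one-step lemma --- which follows from the expander mixing lemma and hence holds uniformly over all configurations --- reads: with $s\defeq1/(1+r)$, $\beta_i\defeq s\alpha_i+(1-s)\alpha_{3-i}$ and $\Phi_i(\boldsymbol\alpha)\defeq\beta_i^2+2\alpha_i\beta_i(1-\beta_i)$ one has $\E[\alpha_i^{(t+1)}\mid\mathcal F_t]=\Phi_i(\boldsymbol\alpha^{(t)})+O(1/\sqrt n)$, and $\alpha_i^{(t+1)}$ lies within $O(\sqrt{\log n/n})$ of its conditional mean with probability $1-n^{-\omega(1)}$.

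The second step is the fixed-point analysis of $\Phi=(\Phi_1,\Phi_2)$, which is where $\sqrt5-2$ enters. The map $\Phi$ commutes with the involution $\iota(\alpha_1,\alpha_2)=(1-\alpha_2,1-\alpha_1)$, so the anti-diagonal $L\defeq\{b=0\}$ is $\Phi$-invariant; parametrising $L$ by $\delta$ via $(\alpha_1,\alpha_2)=(\tfrac12+\delta,\tfrac12-\delta)$, a short computation gives $\Phi|_L:\delta\mapsto(2s-\tfrac12)\delta-2(2s-1)^2\delta^3$, with fixed points $\delta=0$ and --- only when $r<\tfrac13$ --- $\delta=\pm\delta^\ast$, $(\delta^\ast)^2=\tfrac{4s-3}{4(2s-1)^2}$, and the transverse multiplier of $\Phi$ along $L$,
\begin{equation*}
\left.\frac{\partial b^{(t+1)}}{\partial b^{(t)}}\right|_{(\frac12+\delta,\frac12-\delta)}=\frac32-2(4s^2-1)\delta^2,
\end{equation*}
equals $\tfrac32$ at $\delta=0$ and $\tfrac{4s(1-s)}{2s-1}$ at $\delta=\pm\delta^\ast$; the latter exceeds $1$ exactly when $4s^2-2s-1<0$, i.e.\ $s<\tfrac{1+\sqrt5}{4}$, i.e.\ $r>\sqrt5-2$. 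Hence, for $r>\sqrt5-2$, every fixed point of $\Phi$ on $L$ is transversally expanding; combined with a check that $\Phi$ has no further fixed point in $[0,1]^2$ besides $(0,0),(1,1)$ and those on $L$ (again using $r>\sqrt5-2$), and with the fact that $\Phi|_L$ drives $\delta$ from anywhere into the region where the transverse multiplier is $\ge1$ within $O(1)$ steps and keeps it there, this gives the \emph{deterministic drift}: there are constants $\epsilon_0>0$, $\lambda_0>1$, $W\in\mathbb N$ so that for every $\boldsymbol\alpha$ with $0<|b|\le\epsilon_0$, within $W$ iterates of $\Phi$ one reaches a state whose bias has grown by a factor $\ge\lambda_0$.

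For the stochastic escape, set $B^{(t)}\defeq2nb^{(t)}$. As long as $|B^{(t)}|\le\epsilon_0 n$, the one-step lemma and the deterministic drift give $\E[B^{(t+1)}\mid\mathcal F_t]=\lambda_t B^{(t)}+\eta_t$ with $|\eta_t|=O(\sqrt n)$ and (after a transient of $W=O(1)$ steps) $\lambda_t\ge\lambda_0>1$, and every per-vertex recolouring probability is bounded away from $0$ and $1$, so $\xi_t\defeq B^{(t+1)}-\E[B^{(t+1)}\mid\mathcal F_t]$ has $\Var(\xi_t\mid\mathcal F_t)=\Theta(n)$ and is anti-concentrated at scale $\sqrt n$. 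Renormalising, $M^{(t)}\defeq B^{(t)}/\prod_{s<t}\lambda_s$ satisfies $M^{(t)}=B^{(0)}+\sum_{s<t}(\eta_s+\xi_s)/\prod_{u\le s}\lambda_u$; since $\prod_{u\le s}\lambda_u$ grows geometrically, the offset part $\sum_s\eta_s/\prod_{u\le s}\lambda_u$ is $O(\sqrt n)$, while the noise part (a martingale whose first increment $\xi_0/\lambda_0$ is anti-concentrated at scale $\sqrt n$, propagated through the essentially-linear recursion) keeps $M^{(t)}$ anti-concentrated at scale $\sqrt n$; so with $T=C_T\log n$ chosen so that $\prod_{s<T}\lambda_s\ge n^{1/2+c}$ for a fixed $c>0$ we get $\Pr[\,|M^{(T)}|\le\epsilon_0 n^{1/2-c}\,]\le n^{-\Omega(1)}$. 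If the bias had not yet reached $\epsilon_0 n$ --- in \emph{either} direction, which is all we need --- by time $T$, then $|B^{(T)}|\le\epsilon_0 n$ would force $|M^{(T)}|\le\epsilon_0 n/\prod_{s<T}\lambda_s\le\epsilon_0 n^{1/2-c}$; hence the bias reaches $\epsilon_0 n$ within $O(\log n)$ steps with probability $1-n^{-\Omega(1)}$, and applying \cref{thm:phasetransition_2C} from that time (Markov property) finishes the proof.

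The stochastic escape is, I expect, the main obstacle. What makes it delicate is that the deterministic-approximation error $\eta_t$ is only $O(\sqrt n)$, which is of the \emph{same} order as $|B^{(t)}|$ throughout the critical window $|B^{(t)}|=\Theta(\sqrt n)$: there is no clean multiplicative drift there, and one must instead extract everything from the anti-concentration of the accumulated noise, which in turn requires carefully propagating the anti-concentration of the first increment $\xi_0$ through the (only approximately linear) recursion. The remaining ingredients --- the full enumeration of the fixed points of $\Phi$ in $[0,1]^2$ for $r>\sqrt5-2$, and the jumbledness of $G$ together with the one-step lemma --- are routine, and presumably already established in the course of proving \cref{thm:phasetransition_2C}.
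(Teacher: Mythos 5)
Your fixed-point and threshold analysis is exactly right and matches what the paper does after the change of variables to $(\delta_1,\delta_2)=(\alpha_1-\alpha_2,\alpha_1+\alpha_2-1)$: your $\Phi|_L:\delta\mapsto(u+\tfrac12)\delta-2u^2\delta^3$ with $u=2s-1=\tfrac{1-r}{1+r}$, your transverse multiplier $\tfrac32-2u(u+2)\delta^2$, its values $\tfrac32$ at $\delta=0$ and $\tfrac{1-u^2}{u}$ at $\delta^\ast$, and the criterion $\tfrac{1-u^2}{u}>1\iff u<\tfrac{\sqrt5-1}{2}\iff r>\sqrt5-2$ are all identical (in different notation) to the Jacobians $J_1,J_2$ in the paper's \cref{eqn:Jacobian_2C}. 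Your reduction via the Markov property to ``reach bias $\epsilon_0 n$ in $O(\log n)$ steps'' and then invoke \cref{thm:phasetransition_2C} is also the same skeleton the paper uses in deriving \cref{thm:worst_case_2C} from \cref{prop:escape_2C} and \cref{thm:phasetransition_2C}.

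Where your argument genuinely diverges is in the stochastic escape, and that is where the gap sits. Your plan — define $M^{(t)}\defeq B^{(t)}/\prod_{s<t}\lambda_s$ and show $M^{(T)}$ is anti-concentrated at scale $\sqrt n$ — runs into the difficulty you yourself flag: the summands $\xi_s/\prod_{u\le s}\lambda_u$ are not independent, the multipliers $\lambda_s$ are themselves $\mathcal F_s$-measurable random variables (they depend on $\delta_1^{(s)}$ and on $B^{(s)}$ through the quadratic error), and conditioning on $\mathcal F_1$ to exploit the anti-concentration of $\xi_0$ does not isolate a single dominant anti-concentrated term, since the remaining martingale has conditional variance of the same order $\Theta(n)$. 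This is not a step that is ``presumably routine''; it is precisely what the paper sidesteps. The paper's \cref{prop:escape} instead feeds the situation into \cref{cor:nazocor} (built on Lemma 4.5 of~\cite{AMLEFG18}), which uses a phase decomposition: (i) from anywhere with bias $\le m=\Theta(\sqrt{n\log n})$, one step suffices to reach $h\sqrt n$ with probability bounded away from $0$ (verified via Berry--Esseen, \cref{cor:BEbound_cor}, using the $\Omega(1/n)$ variance lower bound from \cref{claim:mainthm_Var_2C}); (ii) from $h\sqrt n$ upward, the expansion $|\lambda_i|>1+\epsilon$ plus a Chernoff/Hoeffding bound gives multiplicative drift; (iii) a geometric-trial argument stitches these into an $O(\log n)$ escape time w.h.p. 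No renormalized martingale or propagated anti-concentration is needed. A second, more minor point: for a degree-$2$ update rule like Best-of-two the one-step expectation involves $\sum_v\deg_A(v)^2$, which is not controlled by the expander mixing lemma alone; the paper establishes the required uniform concentration (properties \ref{pro:gomiN}--\ref{pro:gomiA}) via Janson and Kim--Vu, and you would need at least a higher-moment pseudorandomness argument (rather than bare EML) to replace it.
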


Based on these theorems, an immediate but important corollary follows.
\begin{corollary}
For any constant $p>0$, the Best-of-two and the Best-of-three on the Erd\H{o}s-R\'enyi graph $G(n,p)$ reach  consensus within $O(\log n)$ steps w.h.p.~for all initial opinion configurations.
\end{corollary}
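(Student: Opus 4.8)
The plan is to obtain the corollary directly from the two worst-case theorems, \cref{thm:worst_case_3M} and \cref{thm:worst_case_2C}, by viewing $G(n,p)$ as a degenerate stochastic block model. First I would fix an arbitrary partition of the vertex set of $G(n,p)$ into two parts $V_1,V_2$ with $\bigl||V_1|-|V_2|\bigr|\le 1$. By \cref{def:SBM}, when $q=p$ (equivalently $r\defeq q/p=1$) the inter-community and intra-community edge probabilities coincide, so the law of $G(2m,p,p)$ does not depend on the chosen partition and, for $n=2m$, is exactly the law of $G(n,p)$. Hence it suffices to invoke the worst-case theorems at the parameter value $r=1$.

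Next I would observe that $r=1$ exceeds both thresholds: $1>\frac17$, the hypothesis of \cref{thm:worst_case_3M} for the Best-of-three, and $1>\sqrt5-2$, the hypothesis of \cref{thm:worst_case_2C} for the Best-of-two; moreover a constant $p>0$ satisfies the standing assumption $p=\omega(\log n/n)$. Applying each theorem with $q=p$ then yields positive constants $C,C'$ such that, w.h.p.\ over the graph, $\Pr[\Tcons(A)\le C\log n]\ge 1-O(n^{-C'})$ for every $A\subseteq V$; transferring this along the distributional identity $G(n,p)\overset{d}{=}G(2m,p,p)$ gives the same bound for $G(n,p)$ and both processes, which is exactly the statement of the corollary.

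The only point needing care — and the sole obstacle — is parity: \cref{thm:worst_case_3M} and \cref{thm:worst_case_2C} are stated for communities of exactly equal size, while for odd $n$ the partition above has parts of sizes $(n+1)/2$ and $(n-1)/2$. I would handle this either by restricting the corollary to even $n$ and noting that the odd case is verbatim identical, or, preferably, by remarking that the proofs of those theorems use equality of the community sizes only through drift estimates that depend continuously on the sizes, so that a discrepancy of $1$ in a population of order $n$ perturbs every relevant fraction by $O(1/n)$ and is absorbed into the error terms already present in the analysis. With that remark in place the corollary is immediate.
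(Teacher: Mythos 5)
Your proposal is correct and matches the paper's intended (and unstated) reasoning: the paper labels the corollary as ``immediate'' from \cref{thm:worst_case_3M,thm:worst_case_2C}, and identifying $G(n,p)$ with $G(2m,p,p)$ so that $r=1$ clears both thresholds $1/7$ and $\sqrt{5}-2$ is exactly the intended specialization. Your extra paragraph on the parity of $n$ is more careful than the paper itself, which implicitly glosses over the odd case; the $O(1/n)$ perturbation remark you give is the right way to patch it.
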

Recall that the Best-of-two and the Best-of-three on $G(n,p)$ has been extensively studied in previous works but these works put aforementioned assumptions on initial bias.
\subsection{Strategy}
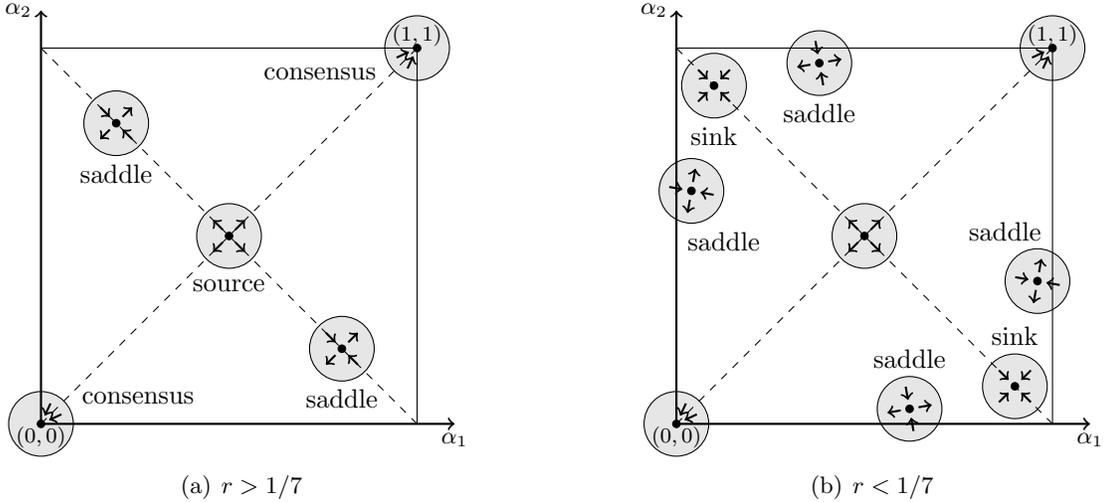
\begin{figure}[t]
\centering
	\begin{tabular}{c}
		\begin{minipage}{0.5\hsize}
		\centering
\subfigure[$r>1/7$]{\begin{tikzpicture}
	\colorlet{Area}{white!60!lightgray}
	\def\L{5};
	\def\R{0.43};
	\def\r{0.05};
	\def\over{0.5};
	\def\D{1.5};
	\def\rout{0.3};
	\def\rin{0.12};
	\def\angle{25};

	\coordinate (BB) at (0,0);
	\coordinate (AB) at (\L,0);
	\coordinate (BA) at (0,\L);
	\coordinate (AA) at (\L,\L);
	\coordinate (HH) at (\L/2, \L/2);
	\coordinate (S1) at (\L/2-\D, \L/2+\D);	
	\coordinate (S2) at (\L/2+\D, \L/2-\D);	
		
	\draw[fill=Area] (BB) circle (\R) {};  
	\draw[fill=Area] (AA) circle (\R) {};
	\draw[fill=Area] (HH) circle (\R) {};
	\draw[fill=Area] (S1) circle (\R) {};
	\draw[fill=Area] (S2) circle (\R) {};
	\draw[fill=Area] node[below = \R, anchor=north] at (S1) {\small{saddle}};
	\draw[fill=Area] node[below = \R, anchor=north] at (S2) {\small{saddle}};	
	\draw[fill=Area] node[below = \R*1.1, anchor=north] at (HH) {\small{source}};
	\draw[fill=Area] node[above right = \R*1.25 and \R, anchor=north west] at (BB.north east) {\small{consensus}};
	\draw[fill=Area] node[below left = \R*1.25 and \R, anchor=south east] at (AA.south west) {\small{consensus}};

	\draw node[below=0.17, anchor=center] at (BB.center) {\scriptsize{$(0,0)$}};
	\draw node[above=0.17, anchor=center] at (AA.center) {\scriptsize{$(1,1)$}};	
	
	\draw[fill] (BB) circle (\r) {};  
	\draw[fill] (AA) circle (\r) {};
	\draw[fill] (HH) circle (\r) {};
	\draw[fill] (S1) circle (\r) {};
	\draw[fill] (S2) circle (\r) {};

	\draw[->,thick] (BB) -- (AB) -- +(\over,0) node [anchor=north]{$\alpha_1$};
	\draw[->,thick] (BB) -- (BA) -- +(0,\over) node [anchor=east]{$\alpha_2$};
	\draw (AB) -- (AA) -- (BA);

	\draw[-,dashed] (BB)--(AA);
	\draw[-,dashed] (AB)--(BA);	

	\draw[->,thick] ({\L/2+\rin*cos(45)},{\L/2+\rin*sin(45)}) -- ({\L/2+\rout*cos(45)},{\L/2+\rout*sin(45)});
	\draw[->,thick] ({\L/2-\rin*cos(45)},{\L/2-\rin*sin(45)}) -- ({\L/2-\rout*cos(45)},{\L/2-\rout*sin(45)});
	\draw[->,thick] ({\L/2+\rin*cos(45)},{\L/2-\rin*sin(45)}) -- ({\L/2+\rout*cos(45)},{\L/2-\rout*sin(45)});
	\draw[->,thick] ({\L/2-\rin*cos(45)},{\L/2+\rin*sin(45)}) -- ({\L/2-\rout*cos(45)},{\L/2+\rout*sin(45)});
	\draw[->,thick] ({\L/2-\D+\rin*cos(45)},{\L/2+\D+\rin*sin(45)}) -- ({\L/2-\D+\rout*cos(45)},{\L/2+\D+\rout*sin(45)});
	\draw[->,thick] ({\L/2-\D-\rin*cos(45)},{\L/2+\D-\rin*sin(45)}) -- ({\L/2-\D-\rout*cos(45)},{\L/2+\D-\rout*sin(45)});	
	\draw[<-,thick] ({\L/2-\D+\rin*cos(135)},{\L/2+\D+\rin*sin(135)}) -- ({\L/2-\D+\rout*cos(135)},{\L/2+\D+\rout*sin(135)});
	\draw[<-,thick] ({\L/2-\D-\rin*cos(135)},{\L/2+\D-\rin*sin(135)}) -- ({\L/2-\D-\rout*cos(135)},{\L/2+\D-\rout*sin(135)});	
	\draw[->,thick] ({\L/2+\D+\rin*cos(45)},{\L/2-\D+\rin*sin(45)}) -- ({\L/2+\D+\rout*cos(45)},{\L/2-\D+\rout*sin(45)});
	\draw[->,thick] ({\L/2+\D-\rin*cos(45)},{\L/2-\D-\rin*sin(45)}) -- ({\L/2+\D-\rout*cos(45)},{\L/2-\D-\rout*sin(45)});	
	\draw[<-,thick] ({\L/2+\D+\rin*cos(135)},{\L/2-\D+\rin*sin(135)}) -- ({\L/2+\D+\rout*cos(135)},{\L/2-\D+\rout*sin(135)});
	\draw[<-,thick] ({\L/2+\D-\rin*cos(135)},{\L/2-\D-\rin*sin(135)}) -- ({\L/2+\D-\rout*cos(135)},{\L/2-\D-\rout*sin(135)});	
	\draw[->,thick] ({\rout*cos(\angle)},{\rout*sin(\angle)}) -- ({\rin*cos(\angle)},{\rin*sin(\angle)});
	\draw[->,thick] ({\rout*cos(90-\angle)},{\rout*sin(90-\angle)}) -- ({\rin*cos(90-\angle)},{\rin*sin(90-\angle)});
	\draw[->,thick] ({\L-\rout*cos(\angle)},{\L-\rout*sin(\angle)}) -- ({\L-\rin*cos(\angle)},{\L-\rin*sin(\angle)});
	\draw[->,thick] ({\L-\rout*cos(90-\angle)},{\L-\rout*sin(90-\angle)}) -- ({\L-\rin*cos(90-\angle)},{\L-\rin*sin(90-\angle)});	
\end{tikzpicture}
}
		\end{minipage}
		\begin{minipage}{0.5\hsize}
		\centering
		\subfigure[$r<1/7$]{\begin{tikzpicture}
	\colorlet{Area}{white!60!lightgray}
	\def\L{5};
	\def\R{0.43};
	\def\r{0.05};
	\def\over{0.5};
	\def\D{2};
	\def\rout{0.3};
	\def\rin{0.12};
	\def\angle{25};
	\def\len{0.7};
	\def\llen{0.15};	
	\def\aangle{10};

	\coordinate (BB) at (0,0);
	\coordinate (AB) at (\L,0);
	\coordinate (BA) at (0,\L);
	\coordinate (AA) at (\L,\L);
	\coordinate (HH) at (\L/2, \L/2);
	\coordinate (S1) at (\L/2-\D, \L/2+\D);	
	\coordinate (S2) at (\L/2+\D, \L/2-\D);
	
	\coordinate[above right = \llen*\D and \len*\D] (N11) at (S1);
	\coordinate[below left = \len*\D and \llen*\D] (N12) at (S1);
	\coordinate[above right = \len*\D and \D*\llen] (N21) at (S2);
	\coordinate[below left = \llen*\D and \len*\D] (N22) at (S2);

	\coordinate[above left = {\rout*cos(\aangle)} and {\rout*sin(\aangle)}] (N11s1) at (N11);
	\coordinate[above left = {\rin*cos(\aangle)} and {\rin*sin(\aangle)}] (N11t1) at (N11);
	\coordinate[above left = {\rout*cos(\aangle+90)} and {\rout*sin(\aangle+90)}] (N11s2) at (N11);
	\coordinate[above left = {\rin*cos(\aangle+90)} and {\rin*sin(\aangle+90)}] (N11t2) at (N11);
	\coordinate[above left = {\rout*cos(\aangle+180)} and {\rout*sin(\aangle+180)}] (N11s3) at (N11);
	\coordinate[above left = {\rin*cos(\aangle+180)} and {\rin*sin(\aangle+180)}] (N11t3) at (N11);
	\coordinate[above left = {\rout*cos(\aangle+270)} and {\rout*sin(\aangle+270)}] (N11s4) at (N11);
	\coordinate[above left = {\rin*cos(\aangle+270)} and {\rin*sin(\aangle+270)}] (N11t4) at (N11);

	\coordinate[above left = {\rout*cos(-\aangle)} and {\rout*sin(-\aangle)}] (N12s1) at (N12);
	\coordinate[above left = {\rin*cos(-\aangle)} and {\rin*sin(-\aangle)}] (N12t1) at (N12);
	\coordinate[above left = {\rout*cos(-\aangle+90)} and {\rout*sin(-\aangle+90)}] (N12s2) at (N12);
	\coordinate[above left = {\rin*cos(-\aangle+90)} and {\rin*sin(-\aangle+90)}] (N12t2) at (N12);
	\coordinate[above left = {\rout*cos(-\aangle+180)} and {\rout*sin(-\aangle+180)}] (N12s3) at (N12);
	\coordinate[above left = {\rin*cos(-\aangle+180)} and {\rin*sin(-\aangle+180)}] (N12t3) at (N12);
	\coordinate[above left = {\rout*cos(-\aangle+270)} and {\rout*sin(-\aangle+270)}] (N12s4) at (N12);
	\coordinate[above left = {\rin*cos(-\aangle+270)} and {\rin*sin(-\aangle+270)}] (N12t4) at (N12);
	
	\coordinate[above left = {\rout*cos(\aangle)} and {\rout*sin(\aangle)}] (N22s1) at (N22);
	\coordinate[above left = {\rin*cos(\aangle)} and {\rin*sin(\aangle)}] (N22t1) at (N22);
	\coordinate[above left = {\rout*cos(\aangle+90)} and {\rout*sin(\aangle+90)}] (N22s2) at (N22);
	\coordinate[above left = {\rin*cos(\aangle+90)} and {\rin*sin(\aangle+90)}] (N22t2) at (N22);
	\coordinate[above left = {\rout*cos(\aangle+180)} and {\rout*sin(\aangle+180)}] (N22s3) at (N22);
	\coordinate[above left = {\rin*cos(\aangle+180)} and {\rin*sin(\aangle+180)}] (N22t3) at (N22);
	\coordinate[above left = {\rout*cos(\aangle+270)} and {\rout*sin(\aangle+270)}] (N22s4) at (N22);
	\coordinate[above left = {\rin*cos(\aangle+270)} and {\rin*sin(\aangle+270)}] (N22t4) at (N22);

	\coordinate[above left = {\rout*cos(-\aangle)} and {\rout*sin(-\aangle)}] (N21s1) at (N21);
	\coordinate[above left = {\rin*cos(-\aangle)} and {\rin*sin(-\aangle)}] (N21t1) at (N21);
	\coordinate[above left = {\rout*cos(-\aangle+90)} and {\rout*sin(-\aangle+90)}] (N21s2) at (N21);
	\coordinate[above left = {\rin*cos(-\aangle+90)} and {\rin*sin(-\aangle+90)}] (N21t2) at (N21);
	\coordinate[above left = {\rout*cos(-\aangle+180)} and {\rout*sin(-\aangle+180)}] (N21s3) at (N21);
	\coordinate[above left = {\rin*cos(-\aangle+180)} and {\rin*sin(-\aangle+180)}] (N21t3) at (N21);
	\coordinate[above left = {\rout*cos(-\aangle+270)} and {\rout*sin(-\aangle+270)}] (N21s4) at (N21);
	\coordinate[above left = {\rin*cos(-\aangle+270)} and {\rin*sin(-\aangle+270)}] (N21t4) at (N21);

	\draw[fill=Area] (BB) circle (\R) {};  
	\draw[fill=Area] (AA) circle (\R) {};
	\draw[fill=Area] (HH) circle (\R) {};
	\draw[fill=Area] (S1) circle (\R) {};
	\draw[fill=Area] (S2) circle (\R) {};
	\draw[fill=Area] (N11) circle (\R) {};
	\draw[fill=Area] (N12) circle (\R) {};	
	\draw[fill=Area] (N21) circle (\R) {};
	\draw[fill=Area] (N22) circle (\R) {};	
	
	\draw node[below = \R, anchor=north] at (S1) {\small{sink}};
	\draw node[above = \R, anchor=south] at (S2) {\small{sink}};		
	\draw node[below = \R, anchor=north] at (N11) {\small{saddle}};	
	\draw node[above = \R, anchor=south] at (N22) {\small{saddle}};		
	\draw node[below right = \R and \R, anchor=north] at (N12) {\small{saddle}};
	\draw node[above left = \R and \R, anchor=south] at (N21) {\small{saddle}};			


	\draw[fill] (BB) circle (\r) {};  
	\draw[fill] (AA) circle (\r) {};
	\draw[fill] (HH) circle (\r) {};
	\draw[fill] (S1) circle (\r) {};
	\draw[fill] (S2) circle (\r) {};	
	\draw[fill] (N11) circle (\r) {};
	\draw[fill] (N12) circle (\r) {};	
	\draw[fill] (N21) circle (\r) {};
	\draw[fill] (N22) circle (\r) {};
	
	\draw node[below=0.17, anchor=center] at (BB.center) {\scriptsize{$(0,0)$}};
	\draw node[above=0.17, anchor=center] at (AA.center) {\scriptsize{$(1,1)$}};	

	\draw[->,thick] (BB) -- (AB) -- +(\over,0) node [anchor=north]{$\alpha_1$};
	\draw[->,thick] (BB) -- (BA) -- +(0,\over) node [anchor=east]{$\alpha_2$};
	\draw (AB) -- (AA) -- (BA);

	\draw[->,thick] (N11s1)--(N11t1);
	\draw[<-,thick] (N11s2)--(N11t2);
	\draw[->,thick] (N11s3)--(N11t3);
	\draw[<-,thick] (N11s4)--(N11t4);

	\draw[<-,thick] (N12s1)--(N12t1);
	\draw[->,thick] (N12s2)--(N12t2);
	\draw[<-,thick] (N12s3)--(N12t3);
	\draw[->,thick] (N12s4)--(N12t4);
	
	\draw[<-,thick] (N21s1)--(N21t1);
	\draw[->,thick] (N21s2)--(N21t2);
	\draw[<-,thick] (N21s3)--(N21t3);
	\draw[->,thick] (N21s4)--(N21t4);

	\draw[->,thick] (N22s1)--(N22t1);
	\draw[<-,thick] (N22s2)--(N22t2);
	\draw[->,thick] (N22s3)--(N22t3);
	\draw[<-,thick] (N22s4)--(N22t4);

	\draw[-,dashed] (BB)--(AA);
	\draw[-,dashed] (AB)--(BA);	

	\draw[->,thick] ({\L/2+\rin*cos(45)},{\L/2+\rin*sin(45)}) -- ({\L/2+\rout*cos(45)},{\L/2+\rout*sin(45)});
	\draw[->,thick] ({\L/2-\rin*cos(45)},{\L/2-\rin*sin(45)}) -- ({\L/2-\rout*cos(45)},{\L/2-\rout*sin(45)});
	\draw[->,thick] ({\L/2+\rin*cos(45)},{\L/2-\rin*sin(45)}) -- ({\L/2+\rout*cos(45)},{\L/2-\rout*sin(45)});
	\draw[->,thick] ({\L/2-\rin*cos(45)},{\L/2+\rin*sin(45)}) -- ({\L/2-\rout*cos(45)},{\L/2+\rout*sin(45)});
	\draw[<-,thick] ({\L/2-\D+\rin*cos(45)},{\L/2+\D+\rin*sin(45)}) -- ({\L/2-\D+\rout*cos(45)},{\L/2+\D+\rout*sin(45)});
	\draw[<-,thick] ({\L/2-\D-\rin*cos(45)},{\L/2+\D-\rin*sin(45)}) -- ({\L/2-\D-\rout*cos(45)},{\L/2+\D-\rout*sin(45)});	
	\draw[<-,thick] ({\L/2-\D+\rin*cos(135)},{\L/2+\D+\rin*sin(135)}) -- ({\L/2-\D+\rout*cos(135)},{\L/2+\D+\rout*sin(135)});
	\draw[<-,thick] ({\L/2-\D-\rin*cos(135)},{\L/2+\D-\rin*sin(135)}) -- ({\L/2-\D-\rout*cos(135)},{\L/2+\D-\rout*sin(135)});	
	\draw[<-,thick] ({\L/2+\D+\rin*cos(45)},{\L/2-\D+\rin*sin(45)}) -- ({\L/2+\D+\rout*cos(45)},{\L/2-\D+\rout*sin(45)});
	\draw[<-,thick] ({\L/2+\D-\rin*cos(45)},{\L/2-\D-\rin*sin(45)}) -- ({\L/2+\D-\rout*cos(45)},{\L/2-\D-\rout*sin(45)});	
	\draw[<-,thick] ({\L/2+\D+\rin*cos(135)},{\L/2-\D+\rin*sin(135)}) -- ({\L/2+\D+\rout*cos(135)},{\L/2-\D+\rout*sin(135)});
	\draw[<-,thick] ({\L/2+\D-\rin*cos(135)},{\L/2-\D-\rin*sin(135)}) -- ({\L/2+\D-\rout*cos(135)},{\L/2-\D-\rout*sin(135)});	
	\draw[->,thick] ({\rout*cos(\angle)},{\rout*sin(\angle)}) -- ({\rin*cos(\angle)},{\rin*sin(\angle)});
	\draw[->,thick] ({\rout*cos(90-\angle)},{\rout*sin(90-\angle)}) -- ({\rin*cos(90-\angle)},{\rin*sin(90-\angle)});
	\draw[->,thick] ({\L-\rout*cos(\angle)},{\L-\rout*sin(\angle)}) -- ({\L-\rin*cos(\angle)},{\L-\rin*sin(\angle)});
	\draw[->,thick] ({\L-\rout*cos(90-\angle)},{\L-\rout*sin(90-\angle)}) -- ({\L-\rin*cos(90-\angle)},{\L-\rin*sin(90-\angle)});
	
\end{tikzpicture}}
		\end{minipage}
	\end{tabular}	\caption{Four types of zero areas are illustrated. The sink areas do not appear if $r>1/7$. \label{fig:3M_proof_sketch}}
\end{figure}
\paragraph*{Known techniques and our technical contribution.}
Consider a voting process on a graph $G=(V,E)$ where each vertex holds an opinion from $\{1,2\}$, 
and let $A$ be the set of vertices holding opinion $1$.
In general, a voting process with two opinions can be seen as a Markov chain with the state space $\{1,2\}^V$.
For $A\subseteq V$, let $A'$ denote the set of vertices that hold opinion $1$ in the next time step.
Then, $|A'|=\sum_{v\in V}\mathbbm{1}_{v\in A'}$ is the sum of independent random variables; thus, $|A'|$ concentrates on $\E[|A'|\mid A]$.

If the underlying graph is a complete graph, the state space can be regarded as $\{0,\ldots,n \}$ (each state represents $|A|$).
Therefore, $\E[|A'| \mid A]$ is expressed as a function of $|A|$, e.g.~in the Best-of-two, 
$\E[|A'| \mid A]=f(|A|)\defeq |A|\bigl(1-\bigl(\frac{|A|}{n}\bigr)^2\bigr)+(n-|A|)\bigl(\frac{|A|}{n}\bigr)^2=n\bigl(3\bigl(\frac{|A|}{n}\bigr)^2-2\bigl(\frac{|A|}{n}\bigr)^3\bigr)$.
Doerr et al.~\cite{DGMSS11} exploited this idea for the Best-of-two and obtained the worst-case analysis for the consensus time on complete graphs.
Somewhat interestingly, we also have $\E[|A'| \mid A]=f(|A|)$ in the Best-of-three. 

Cooper et al.~\cite{CER14} extended this approach to the Best-of-two on regular expander graphs.
Specifically, they proved that $\E[|A'|\mid A]=f(|A|)\pm O(\epsilon)$ for all $A\subseteq V$, where $\epsilon=\epsilon(n,\lambda_2)=o(n)$ is some function using the \emph{expander mixing lemma}.
This argument  assumes an initial bias of size $\Omega(\epsilon)$.
In another paper, Cooper et al.~\cite{CERRS15} improved this technique and proved more sophisticated results that hold for general (i.e.~not necessarily regular) expander graphs.

In this paper, we consider $G(2n,p,q)$ on the vertex set $V=V_1\cup V_2$.
Let $A_i\defeq A\cap V_i$ for $A\subseteq V$ and $i=1,2$.
We prove that $G(2n,p,q)$ w.h.p.~satisfies $\E[|A'_i|\mid A] = F_i(|A_1|,|A_2|) \pm O(\sqrt{n/p})$ for all $A\subseteq V$ in the Best-of-three, where $F_i:\mathbb{N}^2\to\mathbb{N}$ is some function ($i=1,2$). See \cref{thm:mainthm_E} for details. 
We show the same result for the Best-of-two~\cref{thm:mainthm_E2}.
Here, our key tool is the concentration method, specifically the Janson inequality (\cref{lem:Janson}) and the Kim-Vu concentration (\cref{lem:Kim-Vu}).

\paragraph*{High-level proof sketch.}

Consider the Best-of-three on $G(2n,p,q)$, and let $A^{(0)}, A^{(1)}, \ldots$ be a sequence of random vertex subsets determined by $A^{(t+1)}\defeq (A^{(t)})'$ for each $t\geq 0$.
Consider a stochastic process ${\boldsymbol \alpha}^{(t)}=(\alpha_1^{(t)}, \alpha_2^{(t)})\in [0,1]^2$ where $\alpha^{(t)}_i=|A^{(t)}\cap V_i|/n$ for $i=1,2$.
Our technical result in the previous paragraph approximates the stochastic process ${\boldsymbol \alpha}^{(t)}$ by the {\em deterministic} process $\mathbf{a}^{(t)}$ defined as $\mathbf{a}^{(t+1)}=H(\mathbf{a}^{(t)})$ and ${\boldsymbol \alpha}^{(0)}=\mathbf{a}^{(0)}$ for some function $H: [0,1]^2\to [0,1]^2$ (See \cref{eqn:H_def,fig:3M_vectorfield}).
The function $H$ induces a two-dimensional dynamical system, which we call the \emph{induced dynamical system}.
Using this, we obtain two results concerning ${\boldsymbol \alpha}^{(t)}$.

First, we show that, for any initial configuration, the process reaches one of the {\em zero areas} (a neighbor of a fixed point of $H$) within a constant number of steps.
To show this, in addition to the approximation result, we used the theory of \emph{competitive dynamical systems}~\cite{HS05}.

Second, we characterize the behavior of ${\boldsymbol \alpha}^{(t)}$ in zero areas.
The zero areas depend only on $r=q/p$, and are classified into four types using the Jacobian matrix:
consensus, sink, saddle and source areas (see \cref{fig:3M_proof_sketch} for a description).
In consensus areas, we show that the process reaches consensus within $O(\log \log n+\log n/\log(np))$ steps.
In sink areas, we show that the process remains there for at least $2^{\Omega(n)}$ steps, and also that sink areas only appear if $r<1/7$. 
In saddle and source areas, we show that the process escapes from there within $O(\log n)$ steps if $p$ is a constant by using techniques of \cite{DGMSS11}.
Intuitively speaking, in these two kinds of areas, there are drifts towards outside.
To apply the techniques of \cite{DGMSS11}, we show that $\Var[|A'_i|]=\Omega(n)$ in the area if $p$ is constant, which leads to our worst-case analysis result.
Indeed, any previous works working on expander graphs did not investigate the worst-case due to the lack of variance estimation.

These arguments also enable us to study the Best-of-two process, which implies \cref{thm:phasetransition_2C}.

\subsection{Related work}
The consensus time of the pull voting process is investigated via its dual process, known as \emph{coalescing random walks}~\cite{HP01, CEOR13, CR16}.
Recently coalescing random walks have been extensively studied, including the relationship with properties of random walks such as the hitting time and the mixing time \cite{KMS19, OP19}.

Other studies have focused on voting processes with more general updating rules.
Cooper and Rivera~\cite{CR16} studied the {\em linear voting model}, whose updating rule is characterized by a set of $n\times n$ binary matrices.
This model covers the synchronous pull and the asynchronous push/pull voting processes. 
However, it does not cover the Best-of-two and the Best-of-three.
Schoenebeck and Yu~\cite{SY18} studied asynchronous voting processes whose updating functions are {\em majority-like} (including the asynchronous {\em Best-of-$(2k+1)$} voting processes).
They gave upper bounds on the consensus times of such models on dense Erd\H{o}s-R\'enyi random graphs using a potential technique.

\paragraph*{Organization.}
First we set notation and precise definition of the Best-of-three in \cref{sec:preliminaries}.
After explaining key properties of the stochastic block model in \cref{sec:concentrationsbm},
we show some auxiliary results of the induced dynamical system in \cref{sec:auxiliary_results}.
Then we derive \cref{thm:phasetransition_3M,thm:worst_case_3M} in \cref{sec:proofofmainthm}.
Our general framework of voting processes and results of the general induced dynamical systems are given in \cref{sec:polynomialvoting,sec:SJacobian}, respectively.
Then we give proofs for key properties of the stochastic block model in \cref{sec:randomgraphs} and 
results of the general induced dynamical system in \cref{sec:generaljacobi}.
In \cref{sec:2Choices}, we show \cref{thm:phasetransition_2C,thm:worst_case_2C},
and we conclude this paper in \cref{sec:concluding}.


%
\section{Best-of-three voting process}\label{sec:preliminaries}
For an $\ell\in \mathbbm{N}$, let $[\ell]\defeq \{1,2,\ldots, \ell\}$.
For a graph $G=(V,E)$ and $v\in V$, 
let $N(v)$ be the set of vertices adjacent to $v$.
Denote the degree of $v\in V$ by $\deg(v)=|N(v)|$.
For $v\in V$ and $S\subseteq V$, let $\deg_S(v)=|S\cap N(v)|$.
Here, we study the Best-of-three with two possible opinions from $\{1,2\}$.
\begin{definition}[Best-of-three]
Let $G=(V,E)$ be a graph where each vertex holds an opinion from~$\{1,2\}$.
Let 
\begin{align*}
f^{\Bthree}(x)\defeq \binom{3}{3}x^3+\binom{3}{2}x^2(1-x)=3x^2-2x^3.
\end{align*}
For the set $A$ of vertices holding opinion $1$,
let $A'$ denote the set of vertices that hold opinion $1$ after an update.
In the Best-of-three, $A'= \{v\in V: X_v=1\}$ where $(X_v)_{v\in V}$ are independent binary random variables satisfying 
\begin{align*}
\Pr[X_v=1]=
f^{\Bthree}\left(\frac{\deg_A(v)}{\deg(v)}\right).
\end{align*}
\end{definition}
For a given vertex subset $A^{(0)}\subseteq V$, 
we are interested in the behavior of the Markov chain $(A^{(t)})_{t=0}^{\infty}$, i.e.~the sequence of random vertex subsets determined by $A^{(t+1)}\defeq (A^{(t)})'$ for each $t\geq 0$.
Let $A_i \defeq V_i\cap A$ for $A\subseteq V$ and $i=1,2$.
Since $|A_i'|=\sum_{v\in V_i}X_v$, the Hoeffding bound (\cref{lem:Hoeffding}) implies that the following holds w.h.p for $i=1,2$: 
\begin{align}
\left||A'_i|-\E[|A_i'|]\right|=O(\sqrt{n\log n}). \label{eqn:A'concentration}
\end{align}

\section{Concentration result for the stochastic block model}\label{sec:concentrationsbm}
In this paper, we consider the Best-of-three on the stochastic block model $G(2n,p,q)$ (\cref{def:SBM}). 
Then, $\E\left[|A'_i|\right]$ in \cref{eqn:A'concentration} is a random variable since $G(2n,p,q)$ is a random graph.
Here, our key ingredient is the following general concentration result for $G(2n,p,q)$. 
\begin{definition}[$f$-good $G(2n,p,q)$]
For a given function $f:[0,1]\to [0,1]$, we say $G(2n,p,q)$ is $f$-good if $G(2n,p,q)$ satisfies the following properties.
\begin{enumerate}[label={\rm (P\arabic*)}]
\setlength{\itemsep}{-1.8pt}
\item It is connected and non-bipartite. \label{pro:connectednonbipartite}
\item A positive constant $C_1$ exists such that, for all $A, S\subseteq V$ and $i\in\{1,2\}$, 
\begin{align*}
\left| \sum_{v\in S\cap V_i}f\left(\frac{\deg_A(v)}{\deg(v)}\right) - |S\cap V_i|f\left( \frac{|A_i|p+|A_{3-i}|q}{n(p+q)} \right) \right| \leq  C_1\sqrt{\frac{n}{p}}. 
\end{align*}
\label{pro:gomiN}
\item A positive constant $C_2$ exists such that, for all $A\subseteq V$, $S\in\{A, V\setminus A, V\}$ and $i\in\{1,2\}$, 
\begin{align*}
\sum_{v\in S\cap V_i}f\left(\frac{\deg_A(v)}{\deg(v)}\right) \leq 
|S\cap V_i|f\left( \frac{|A_i|p+|A_{3-i}|q}{n(p+q)} \right) + C_2|A|\sqrt{\frac{\log n}{np}}.
\end{align*}
\label{pro:gomiA}
\end{enumerate}
\end{definition}
\begin{theorem}[Main technical theorem]
\label{thm:fgood}
Suppose that $f:[0,1]\to [0,1]$ is a polynomial function with constant degree, $p=\omega(\log n/n)$ and $q\geq \log n/n^2$. 
Then $G(2n,p,q)$ is $f$-good w.h.p.
\end{theorem}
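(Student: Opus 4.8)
The plan is to verify \ref{pro:connectednonbipartite}, \ref{pro:gomiN}, \ref{pro:gomiA} in turn; \ref{pro:gomiN} and \ref{pro:gomiA} carry essentially all of the difficulty. Property \ref{pro:connectednonbipartite} is a standard fact about sparse random graphs: for $p=\omega(\log n/n)$ each of the two induced copies of $G(n,p)$ is connected w.h.p., and the number of $V_1$--$V_2$ edges is a binomial with mean $n^2q\geq\log n\to\infty$, hence positive w.h.p., so $G(2n,p,q)$ is connected w.h.p.; and a triangle inside $V_1$ (whose expected number is $\binom n3 p^3=\omega(\log^3 n)\to\infty$) certifies non-bipartiteness, its existence following from Janson's inequality (\cref{lem:Janson}), or from a second-moment estimate when $p=\Theta(1)$.

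For \ref{pro:gomiN} and \ref{pro:gomiA}, set $\bar D:=n(p+q)$ and, for $A\subseteq V$, $x_i:=(|A_i|p+|A_{3-i}|q)/\bar D$, so that $\bar D x_i=\E[\deg_A(v)]+O(p)$ for $v\in V_i$. A Chernoff bound over the $2n$ vertices gives, w.h.p., $\deg(v)=\bar D(1+\epsilon_v)$ with $|\epsilon_v|=O(\sqrt{\log n/(np)})=o(1)$ for every $v$; and $f$, being a polynomial with range in $[0,1]$, is Lipschitz on $[0,1]$ with bounded second derivative. Bounding $\max_S$ of the left-hand side of \ref{pro:gomiN} by $\sum_{v\in V_i}|f(\tfrac{\deg_A(v)}{\deg(v)})-f(x_i)|$, using Lipschitzness and Cauchy--Schwarz, and then using $\tfrac{\deg_A(v)}{\deg(v)}-x_i=\tfrac{(\deg_A(v)-\bar D x_i)-\bar D x_i\epsilon_v}{\deg(v)}$, one reduces \ref{pro:gomiN} --- and, after an additional second-order Taylor expansion of $f$ and keeping $[\,\cdot\,]_+$ in place of $|\,\cdot\,|$, also \ref{pro:gomiA}, whose $|A|$-dependent error term comes out precisely because $x_i\le|A|/n$ --- to the single \emph{uniform} second-moment bound
\begin{align*}
\sum_{v\in V_i}\bigl(\deg_A(v)-\bar D x_i\bigr)^{2}\ =\ O\!\bigl(np\,|A|\bigr)\qquad\text{for all }A\subseteq V,\ i\in\{1,2\}
\end{align*}
(together with the $A$-independent fact that $\sum_v\epsilon_v^2=O(1/p)$ w.h.p.); for \ref{pro:gomiA} one additionally needs that the linear statistic $\sum_{v\in V_i}\deg_A(v)=\sum_{u\in A}|N(u)\cap V_i|$ is within $O(|A|\sqrt{np\log n})$ of its mean.

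The uniform second-moment bound is the heart of the matter, and the step I expect to be hardest. It cannot be proved by a per-$A$ concentration inequality followed by a union bound: such an inequality has failure probability only $n^{-\omega(1)}$ in the sparse regime, while there are $\binom{2n}{\Theta(n)}=2^{\Theta(n)}$ sets of linear size. Instead we handle all $A$ at once through operator norms. Writing $M^{(i)}$ for the adjacency matrix of the graph induced on $V_i$ with $p$ subtracted off the diagonal, $B$ for the $V_i\times V_{3-i}$ bipartite adjacency matrix, and $J$ for the all-ones matrix (so $\E B=qJ$), one has $\deg_A(v)-\bar D x_i=(M^{(i)}\mathbbm{1}_{A_i})_v+((B-qJ)\mathbbm{1}_{A_{3-i}})_v+O(p)$ for $v\in V_i$, hence
\begin{align*}
\sum_{v\in V_i}\bigl(\deg_A(v)-\bar D x_i\bigr)^{2}\ =\ O\!\bigl(\|M^{(i)}\|^{2}|A_i|+\|B-qJ\|^{2}|A_{3-i}|+p^{2}|A|\bigr),
\end{align*}
and it suffices to establish the two high-probability events $\|M^{(i)}\|=O(\sqrt{np})$ and $\|B-qJ\|=O(\sqrt{nq+\log n})$, after which the right-hand side is $O(np|A|)$ since $q\le p$ and $\log n=o(np)$. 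The within-community bound is the standard spectral-norm estimate for $G(n,p)$ with $p=\omega(\log n/n)$. The delicate piece is the bipartite bound in the genuinely sparse regime with $q$ as small as $\log n/n^2$, where $nq$ may be $o(\log n)$: there one must split off the rank-one mean of $B$ (the all-ones direction contributes only $O(\sqrt{nq})$, since $B$ and $qJ$ cancel there) and bound the operator norm off that direction --- which is governed by the largest cross-degrees --- through a moment estimate for $\mathrm{tr}((B^{\top}B)^{k})$, for which the Kim--Vu polynomial concentration (\cref{lem:Kim-Vu}) is the tool, together with Janson's inequality (\cref{lem:Janson}) to exclude vertices of atypically large cross-degree.

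Finally, the linear concentration required for \ref{pro:gomiA} follows from a one-sided Bernstein inequality for the independent-indicator sum $\sum_{u\in A}|N(u)\cap V_i|$: at the scale $O(|A|\sqrt{np\log n})$ (which becomes the claimed error $O(|A|\sqrt{\log n/(np)})$ after dividing by $\bar D$) its failure probability is $e^{-\Omega(|A|\log n)}$, which does survive a union bound over the $\binom{2n}{|A|}\le e^{O(|A|\log n)}$ sets of size $|A|$ once the constant in the error is taken large enough. Combining these ingredients establishes \ref{pro:gomiN} and \ref{pro:gomiA} uniformly over $A$, hence that $G(2n,p,q)$ is $f$-good w.h.p.
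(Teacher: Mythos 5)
Your route is genuinely different from the paper's. The paper reduces $f$ to its monomials $x^k$, interprets $\sum_{v\in S}\deg_A(v)^k$ as the number of ``crossing stars'' $W(S;A,\ldots,A)$, proves a uniform (over all $S,A$) lower bound on $W$ via Janson's inequality, obtains the matching upper bound only at the single point $W(V;V,\ldots,V)$ via the Kim--Vu inequality, and then transfers that upper bound to arbitrary $S_0,\ldots,S_\ell$ through the telescoping identity $W(S_0;\ldots,S_\ell)=W(V;V,\ldots,V)-\sum_i W(V;\ldots,V\setminus S_i,\ldots)$, followed by a separate FKG/counting argument comparing $\E[W]$ with $\hat W$. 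Your proposal bypasses all of that by replacing the per-$(S,A)$ concentration with a \emph{uniform} second-moment estimate $\sum_{v\in V_i}(\deg_A(v)-\bar D x_i)^2=O(np|A|)$, which you then feed into Cauchy--Schwarz and Lipschitz/Taylor bounds on $f$, and you obtain the second-moment estimate via operator norms of the centered adjacency blocks $M^{(i)}$ and $B-qJ$. This is a cleaner conceptual organization: it packages the entire union bound over $A$ into the two scalar events $\|M^{(i)}\|=O(\sqrt{np})$ and $\|B-qJ\|=O(\sqrt{nq+\log n})$, and it explains transparently why the paper's \ref{pro:gomiA} error term is proportional to $|A|$.

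That said, your argument has a genuine gap at the point you yourself flag as hardest. The within-community bound $\|M^{(i)}\|=O(\sqrt{np})$ is standard for $p=\omega(\log n/n)$, but the between-community bound $\|B-qJ\|=O(\sqrt{nq+\log n})$ in the regime $q$ as small as $\log n/n^2$ (so $nq$ can be $o(1)$) is a substantive and delicate claim: this is precisely the sparse Erd\H{o}s--R\'enyi regime where the spectral norm is no longer governed by the mean degree but by $\sqrt{d_{\max}}$, and the usual Feige--Ofek/F\"uredi--Koml\'os estimates do not apply off the shelf. You wave at a trace-moment computation plus Kim--Vu and Janson, but the paper's \cref{lem:Kim-Vu} and \cref{lem:Janson} are stated for polynomials in independent indicators and are not the standard tools for operator-norm upper bounds of sparse matrices (one would more naturally reach for Bandeira--van Handel, Le--Levina--Vershynin, or a careful path-counting argument that distinguishes ``light'' and ``heavy'' rows); carrying this out is a nontrivial self-contained lemma that your sketch does not supply. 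Two smaller soft spots: (i) the second-order Taylor reduction for \ref{pro:gomiA} must track the $1/\deg(v)=\bar D^{-1}(1-\epsilon_v+O(\epsilon_v^2))$ corrections carefully, and the linear statistic that needs Bernstein is not just $\sum_{u\in A}|N(u)\cap V_i|$ (the $S=V$ case) but also the restricted edge counts between $A_i$ and $A$ for $S=A$; this works by a similar union bound over sets of fixed size $m$, but it should be said. (ii) You should make explicit that for $S\subsetneq V$ the $\max_S$ step for \ref{pro:gomiN} uses the one-sided bound $|\sum_{v\in S\cap V_i}(\cdots)|\le\sum_{v\in V_i}|\cdots|$, which is fine but is doing nontrivial work in replacing the paper's telescoping trick. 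If you can nail down the sparse bipartite operator-norm lemma, this would be a legitimately shorter and more modular proof of \cref{thm:fgood} than the paper's.
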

Note that the proof of \ref{pro:connectednonbipartite} is not difficult since $p=\omega(\log n/n)$ and $q\geq \log n/n^2$~\cite{FK16}.
Proving \ref{pro:gomiN} and \ref{pro:gomiA}, however, is more challenging: we show these in \cref{sec:randomgraphs}.

From \cref{thm:fgood}, $G(2n,p,q)$ is $f^{\Bthree}$-good w.h.p.
Hence, we consider the Best-of-three on an {\em $f^{\Bthree}$-good} $G(2n,p,q)$.
From \ref{pro:gomiN} and \ref{pro:gomiA}, we have
\begin{align}
\E[|A_i'|]
&=\sum_{v\in V_i}f^{\Bthree}\left(\frac{\deg_A(v)}{\deg(v)}\right)
=nf^{\Bthree}\left(\frac{|A_i|p+|A_{3-i}|q}{n(p+q)} \right) 
\begin{cases}
\pm O\left(\sqrt{\frac{n}{p}}\right)\\
+O\left(|A|\sqrt{\frac{\log n}{np}}\right)
\end{cases}
\label{thm:mainthm_E}
\end{align}
for all $A\subseteq V$ and $i=1,2$.
Here, we remark that \ref{pro:gomiA} is stronger than \ref{pro:gomiN} if $|A|$ is sufficiently small.
This property will play a key role in the proof of \cref{prop:fastconsensus_3M}.

\paragraph*{Idea of the proof of \cref{thm:fgood}.}
We consider the property \ref{pro:gomiN}.
Note that we may assume $f(x)=x^k$ for some constant $k$ w.l.o.g.~since it suffices to obtain the concentration result for each term of $f$.
For simplicity, let us exemplify our idea on the special case of $k=3$. 
It is known that $\deg(v)=n(p+q)\pm O(\sqrt{np\log n})$ holds for all $v\in V$ w.h.p.~(see, e.g.~\cite{FK16}).
This implies that $\sum_{v\in S}\left(\frac{\deg_A(v)}{\deg(v)}\right)^3 = \frac{1\pm O(\sqrt{\log n/np})}{(n(p+q))^3}\cdot \sum_{v\in S}\deg_A(v)^3$ holds for all $S,A\subseteq V$.
Indeed, it is not difficult to see that the term $O(\sqrt{\log n/np})$ can be improved to $O(\sqrt{1/np})$ (see \cref{sec:reduction}).

The core of the proof is the concentration of $\sum_{v\in S}\deg_A(v)^3$.
Note that $\sum_{v\in S}\deg_A(v)=\sum_{v\in S}\sum_{a\in A}\mathbbm{1}_{\{s,a\}\in E}$ counts the number of cut edges between $S$ and $A$.
For fixed $S$ and $A$, the Chernoff bound yields the concentration of it since each edge appears independently.
Similarly, the summation $\sum_{v\in S}\deg_A(v)^3=\sum_{v\in S}\sum_{a,b,c\in A}\mathbbm{1}_{\{v,a\},\{v,b\},\{v,c\}\in E}$ counts the number of ``crossing stars" between $S$ and $A$.
However, the Chernoff bound does not work here due to the dependency of the appearance of crossing stars.
Fortunately, we can obtain a strong lower bound using the Janson inequality (\cref{lem:Janson}) as follows:
For $S,A,B,C\subseteq V$, let $W(S;A,B,C)\defeq \sum_{v\in S}\deg_A(v)\deg_B(v)\deg_C(v)$.
From the Janson inequality and the union bound on $S,A,B,C\subseteq V$, we can show that $W(S;A,B,C)\geq \E[W(S;A,B,C)]-O(n^{3.5}p^{2.5})$ holds for all $S,A,B,C\subseteq V$ w.h.p.
On the other hand, it is easy to check that
\begin{align*}
W(S;A,B,C) &= W(V;V,V,V) - W(V;V,V,V\setminus C) - W(V;V,V\setminus B,C) \\
 & \ \ \ - W(V;V\setminus A,B,C)-W(V\setminus S;A,B,C).
\end{align*}
The Kim-Vu concentration (\cref{lem:Kim-Vu}) yields $W(V;V,V,V)\leq \E[W(V;V,V,V)] + O(n^{3.5}p^{2.5})$ since we do not consider the union bound here.
For the other terms, we apply the lower bound by the Janson inequality.
Then, we have a strong concentration result that $\sum_{v\in S}\deg_A(v)^3 = W(S;A,A,A) =  \E[W(S;A,A,A)] \pm O(n^{3.5}p^{2.5})$ holds for all $S,A\subseteq V$ w.h.p.
Finally, we estimate the gap between $\E[W(S\cap V_i,A,A,A)]$ and $|S\cap V_i|(|A_i|p+|A_{3-i}|q)^3$. 
See \cref{sec:randomgraphs} for details.
%
\section{Induced dynamical system}\label{sec:auxiliary_results}
Let $\alpha_i \defeq \frac{|A_i|}{n}$, $\alpha'_i\defeq \frac{|A'_i|}{n}$ and $r \defeq \frac{q}{p}$.
Suppose that $r$ is a constant.
Then, for an $f^{\Bthree}$-good $G(2n,p,q)$, it holds w.h.p.~that
\begin{align}
\left|\alpha_i'-f^{\Bthree}\left(\frac{\alpha_i+r\alpha_{3-i}}{1+r}\right)\right|= O\left(\sqrt{\frac{1}{np}}+\sqrt{\frac{\log n}{n}}\right)
\label{eq:goodalphai}
\end{align}
for all $A\subseteq V$ and $i=1,2$ since \cref{eqn:A'concentration,thm:mainthm_E} hold.
\if0
From \cref{thm:mainthm_E}, we have
\begin{align}
\E[\alpha_i'\mid A]
&= \frac{1}{n}\E[|A'_i| \mid A]  = f\left(\frac{\alpha_i+r\alpha_{3-i}}{1+r}\right) + O\left(\frac{1}{\sqrt{np}}\right) \label{eqn:Ealpha}
\end{align}
for all $A\subseteq V$.
\fi

Throughout this paper, we use ${\boldsymbol \alpha}=(\alpha_1,\alpha_2)$ and ${\boldsymbol \alpha}'=(\alpha'_1,\alpha'_2)$ as vector-valued random variables.
Equation \cref{eq:goodalphai} leads us to the dynamical system $H$, where we define $H:\mathbb{R}^2\to \mathbb{R}^2$ as
\begin{align}
H:\mathbf{a} \mapsto (H_1(\mathbf{a}),H_2(\mathbf{a})), \label{eqn:H_def}
\end{align}
and $H_i(a_1,a_2)\defeq f^{\Bthree}\left(\frac{a_i+ra_{3-i}}{1+r}\right)$.

By combining \cref{eq:goodalphai} with the Lipschitz condition (see \cref{sec:lipschitzcondition}), it is not difficult to show the following result; see \cref{sec:polynomialvoting} for the proof. 
\begin{theorem} \label{thm:approximation_vectorfield}
Consider the Best-of-three on an $f^{\Bthree}$-good $G(2n,p,q)$, starting with the vertex set $A^{(0)}\subseteq V$ holding opinion $1$.
Let $({\boldsymbol \alpha}^{(t)})_{t=0}^{\infty}$ be a stochastic process given by ${\boldsymbol \alpha}^{(t)}=(\alpha^{(t)}_1,\alpha^{(t)}_2)$ and $\alpha^{(t)}_i=|A^{(t)}\cap V_i|/n$.
Let $H$ be the mapping \cref{eqn:H_def} and define $(\mathbf{a}^{(t)})_{t=0}^\infty$ as 
\begin{align}
\label{eqn:veca_def}
\begin{cases}
\mathbf{a}^{(0)}={\boldsymbol \alpha}^{(0)},\\
\mathbf{a}^{(t+1)}=H(\mathbf{a}^{(t)}).
\end{cases}
\end{align}
Then there exists a positive constant $C>0$ such that
\begin{align*}
\forall 0\leq t\leq n^{o(1)},\forall A^{(0)}\subseteq V\,:\,\Pr\left[\|{\boldsymbol \alpha}^{(t)}-\mathbf{a}^{(t)}\|_\infty \leq C^t \left(\frac{1}{\sqrt{np}}+\sqrt{\frac{\log n}{n}}\right)\right] \geq 1-n^{-\Omega(1)}.
\end{align*}
\end{theorem}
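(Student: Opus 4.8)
The plan is to prove the bound by induction on $t$, comparing the two sequences one step at a time and controlling how errors accumulate. Fix the realization of $G(2n,p,q)$ to be $f^{\Bthree}$-good, which happens w.h.p.\ by \cref{thm:fgood}; we work conditionally on this event throughout, so all remaining randomness is in the update rule. Write $\delta_t \defeq \|{\boldsymbol \alpha}^{(t)}-\mathbf{a}^{(t)}\|_\infty$ and $\eta \defeq \frac{1}{\sqrt{np}}+\sqrt{\frac{\log n}{n}}$. The base case is $\delta_0 = 0$ since $\mathbf{a}^{(0)}={\boldsymbol \alpha}^{(0)}$ by definition. For the inductive step, I would insert the intermediate point $H({\boldsymbol \alpha}^{(t)})$ and use the triangle inequality
\begin{align*}
\delta_{t+1} = \|{\boldsymbol \alpha}^{(t+1)}-H(\mathbf{a}^{(t)})\|_\infty \leq \|{\boldsymbol \alpha}^{(t+1)}-H({\boldsymbol \alpha}^{(t)})\|_\infty + \|H({\boldsymbol \alpha}^{(t)})-H(\mathbf{a}^{(t)})\|_\infty.
\end{align*}
The first term is the one-step stochastic error: $H_i({\boldsymbol \alpha}^{(t)})=f^{\Bthree}\bigl(\frac{\alpha_i^{(t)}+r\alpha_{3-i}^{(t)}}{1+r}\bigr)$ is exactly the deterministic target of $\alpha_i^{(t+1)}$, so by \cref{eq:goodalphai} applied to $A=A^{(t)}$ this term is $O(\eta)$, except that \cref{eq:goodalphai} is itself a high-probability statement — see below. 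The second term is handled by the Lipschitz property of $H$ (referenced in \cref{sec:lipschitzcondition}): since $f^{\Bthree}$ is a fixed polynomial on the compact interval $[0,1]$ and $(a_1,a_2)\mapsto \frac{a_i+ra_{3-i}}{1+r}$ is linear, $H$ is $L$-Lipschitz in $\|\cdot\|_\infty$ for some constant $L=L(r)$, hence $\|H({\boldsymbol \alpha}^{(t)})-H(\mathbf{a}^{(t)})\|_\infty \leq L\,\delta_t$. Combining, $\delta_{t+1} \leq L\,\delta_t + c\,\eta$ for a constant $c$, and unrolling this recursion from $\delta_0=0$ gives $\delta_t \leq c\,\eta\sum_{j=0}^{t-1}L^j \leq c\,\eta\,\frac{L^t}{L-1} = O(L^t \eta)$ (taking $C = \max\{L,2c\}$ or similar absorbs the constants into the stated $C^t$ form; if $L\le 1$ one uses $\delta_t\le ct\eta\le C^t\eta$ instead).

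The one genuine subtlety — and the main obstacle — is that \cref{eq:goodalphai} and the concentration bound \cref{eqn:A'concentration} are not deterministic consequences of $f^{\Bthree}$-goodness: property \ref{pro:gomiN} controls $\E[|A_i'|\mid A]$ for \emph{all} $A$ simultaneously, but the passage from the conditional expectation to the realized value $|A_i'|$ costs an additional $O(\sqrt{n\log n})$ via the Hoeffding bound, and that Hoeffding bound holds only w.h.p.\ for the \emph{particular} set $A^{(t)}$ revealed at step $t$, not for all subsets at once (a union bound over all $2^{2n}$ subsets would fail). I would resolve this by taking a union bound over the time steps only: for each fixed $t\le n^{o(1)}$, condition on $A^{(t)}$, apply Hoeffding to the $n$ independent indicators $(X_v)_{v\in V_i}$ to get $\bigl||A_i^{(t+1)}|-\E[|A_i^{(t+1)}|\mid A^{(t)}]\bigr| = O(\sqrt{n\log n})$ with probability $1-n^{-\omega(1)}$, then combine with \ref{pro:gomiN} to obtain the one-step error $O(\eta)$ with that same probability. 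Since there are only $n^{o(1)}$ steps, a union bound over $t$ keeps the total failure probability at $n^{-\Omega(1)}$, which is exactly the bound claimed; note also that $\sqrt{n\log n}/n = \sqrt{\log n/n}$ is already subsumed in $\eta$, so no loss there.

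A couple of minor points to dispatch cleanly: first, the Lipschitz constant $L$ depends on $r$ but not on $n$ (since $r$ is assumed constant), so $C$ in the statement may depend on $r$, which is fine. Second, one should check that the iterates ${\boldsymbol \alpha}^{(t)}$ and $\mathbf{a}^{(t)}$ stay in (a fixed neighborhood of) $[0,1]^2$ so that the Lipschitz bound for $H$ applies: $\mathbf{a}^{(t)}\in[0,1]^2$ automatically because $f^{\Bthree}([0,1])\subseteq[0,1]$ and the convex-combination argument, and ${\boldsymbol \alpha}^{(t)}\in[0,1]^2$ trivially since it is a normalized count; so both sequences live in the compact set where $H$ is Lipschitz. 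With these observations the induction closes and the theorem follows; the detailed verification of the Lipschitz condition and the precise constants is routine and deferred to \cref{sec:polynomialvoting} as the excerpt indicates.
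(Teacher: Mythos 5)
Your proposal is correct and mirrors the paper's proof of \cref{thm:approximation_vectorfield_general} almost line for line: the same decomposition $\|{\boldsymbol \alpha}^{(t+1)}-H(\mathbf{a}^{(t)})\|_\infty \leq \|{\boldsymbol \alpha}^{(t+1)}-H({\boldsymbol \alpha}^{(t)})\|_\infty + \|H({\boldsymbol \alpha}^{(t)})-H(\mathbf{a}^{(t)})\|_\infty$, the same use of Hoeffding plus \ref{pro:gomiN} for the one-step error, the same Lipschitz bound for $H$, and the same geometric unrolling to get $C^t$. Your explicit discussion of why the per-step Hoeffding bound must be union-bounded over $t$ (rather than over all $2^{2n}$ subsets) is a nice clarification of a point the paper passes over quickly, but it does not change the structure of the argument.
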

Broadly speaking, \Cref{thm:approximation_vectorfield} approximates the behavior of ${\boldsymbol \alpha}^{(t)}$ by the orbit $\mathbf{a}^{(t)}$ of the corresponding dynamical system $H$.
We call the mapping $H$ the \emph{induced dynamical system}.
Indeed, the same results as \cref{thm:mainthm_E} hold for the Best-of-two voting.
Therefore, analogous results of 
\cref{thm:approximation_vectorfield} hold, which enable us to analyze the Best-of-two on $G(2n,p,q)$ via its induced dynamical system.
\begin{figure}[t]
\centering
	\begin{tabular}{c}
		\begin{minipage}{0.5\hsize}
		\centering
			\subfigure[$r=1/6$]{\includegraphics[height=7cm]{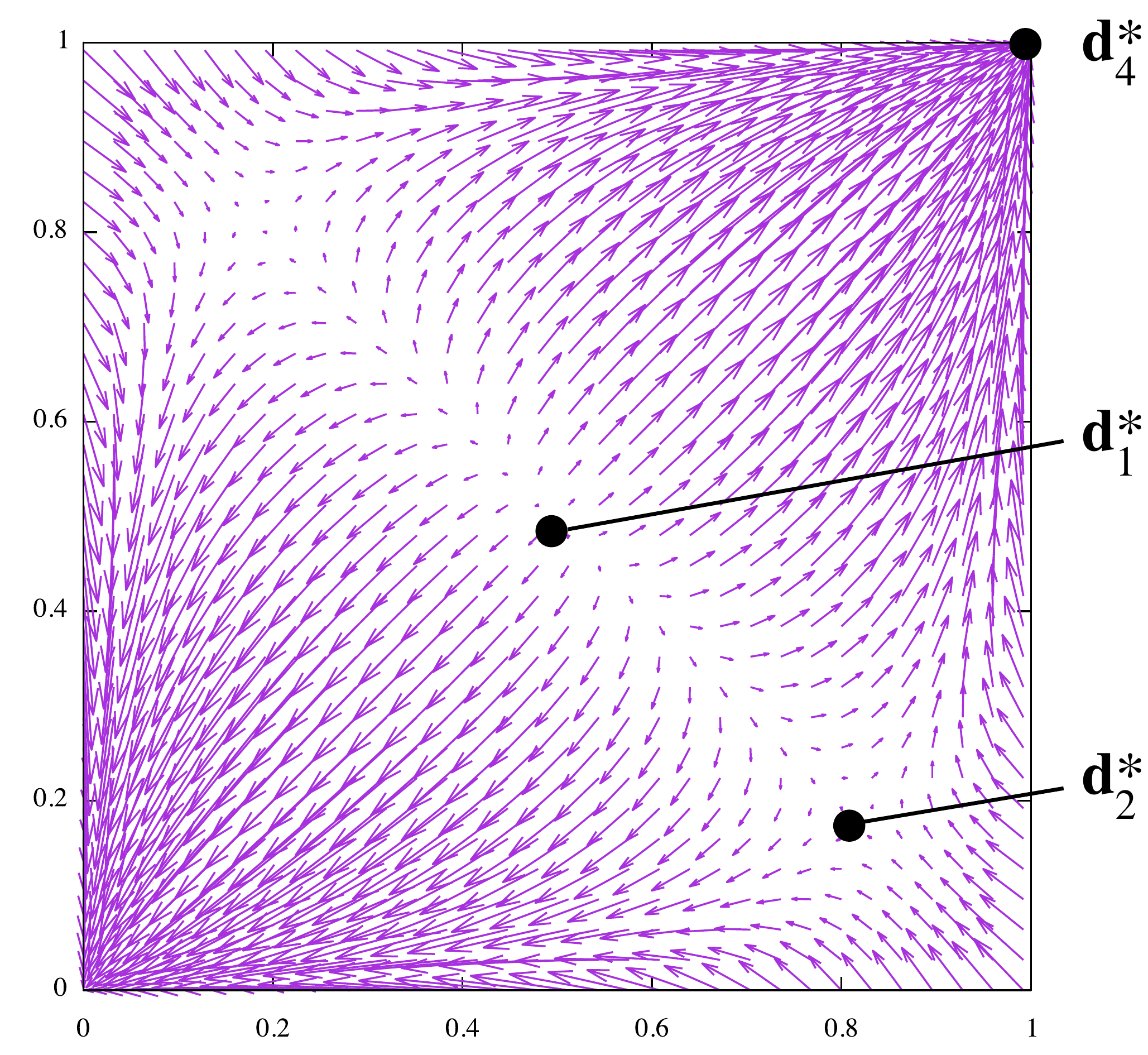}}
		\end{minipage}	
		\begin{minipage}{0.5\hsize}
		\centering
		   \subfigure[$r=1/9$]{\includegraphics[height=7cm]{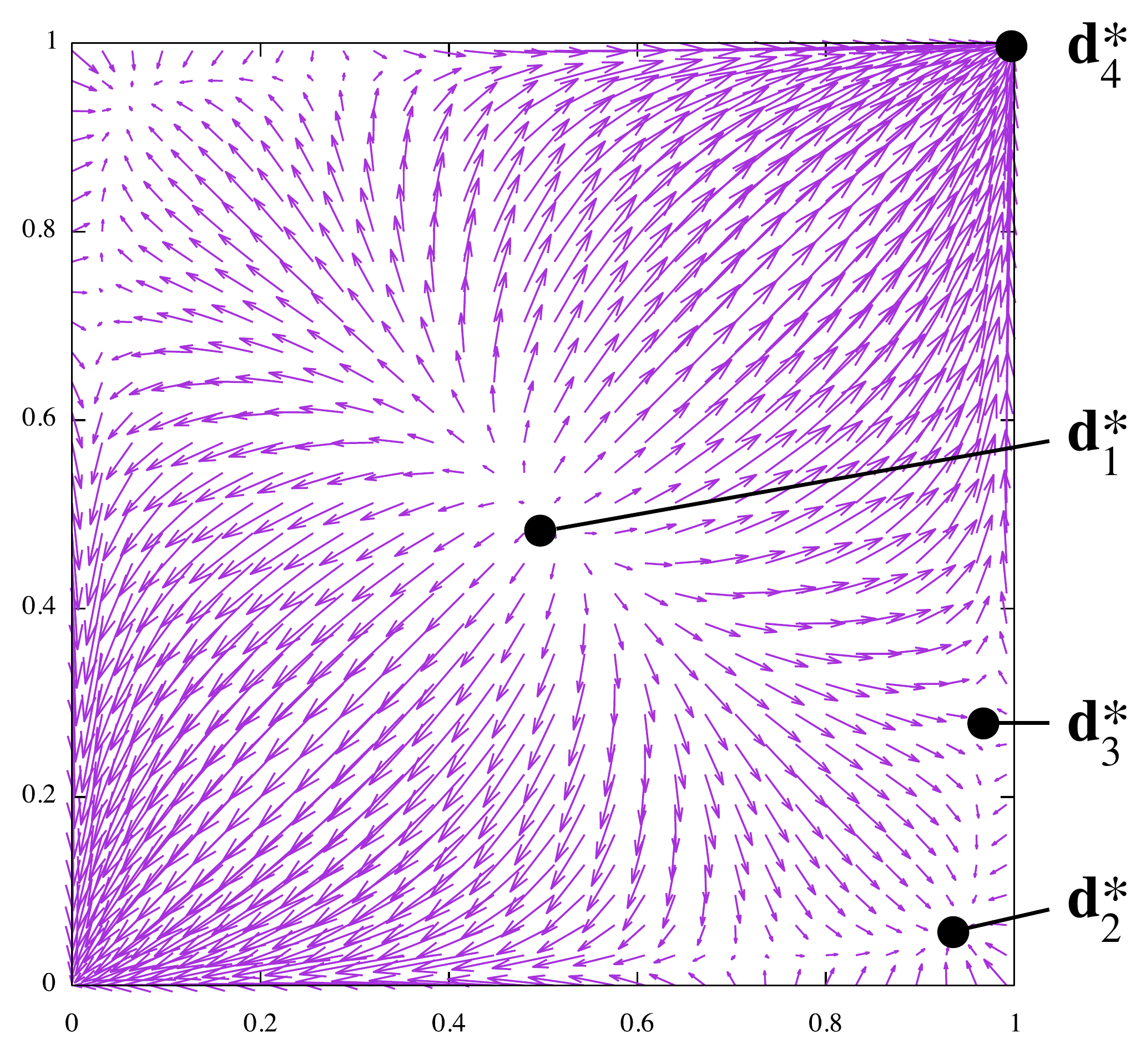}}
		\end{minipage}
	\end{tabular}
	\caption{The induced dynamical system $H$ of \cref{eqn:H_def}.
	The points $\mathbf{d}^*_i$ are the fixed points given in \cref{eqn:fixed_points_3M}.
	Here, the horizontal and vertical axes correspond to $\alpha_1$ and $\alpha_2$, respectively.
	We can observe two sink points in (b), but none in (a). \label{fig:3M_vectorfield}}
\end{figure}
The dynamical system $H$ of \cref{eqn:H_def} is illustrated in \cref{fig:3M_vectorfield}.
%

To make the calculations more convenient, we change the coordinate of $H$ by
\begin{align*}
{\boldsymbol \delta}=(\delta_1, \delta_2)\defeq (\alpha_1-\alpha_2, \alpha_1+\alpha_2-1).
\end{align*}
Note that $\delta_1$ and $\delta_2$ axes are corresponding to the dotted lines of \cref{fig:3M_proof_sketch}.
Let $u\defeq\frac{1-r}{1+r}$. Then we have
\begin{align*}
\E[\delta'_i\mid A]=T_i(\delta_1,\delta_2)+O\left(\frac{1}{\sqrt{np}}\right),
\end{align*}
where
\begin{align}
T_1(d_1,d_2)\defeq \frac{ud_1}{2}\left(3-(ud_1)^2-3d_2^2\right), \, 
T_2(d_1,d_2)\defeq\frac{d_2}{2}\left(3-3(ud_1)^2-d_2^2\right). \label{eqn:3Mvecfielddef}
\end{align}

This suggests another dynamical system $T(\mathbf{d})=(T_1(\mathbf{d}),T_2(\mathbf{d}))$.
Here, we use $\mathbf{d}=(d_1,d_2)$ as a specific point and ${\boldsymbol \delta}=(\delta_1,\delta_2)$ as a vector-valued random variable.
Consider ${\boldsymbol \delta^{(t)}}=(\delta^{(t)}_1,\delta^{(t)}_2)$ and $(\mathbf{d}^{(t)})_{t=0}^\infty$, where $\mathbf{d}^{(0)}={\boldsymbol \delta}^{(0)}$ and $\mathbf{d}^{(t+1)}=T(\mathbf{d}^{(t)})$ for each $t\geq 0$.
From \cref{thm:approximation_vectorfield}, it holds w.h.p.~that
\begin{align}
\|{\boldsymbol \delta}^{(t)}-\mathbf{d}^{(t)}\|_\infty \leq C^t \left(\frac{1}{\sqrt{np}}+\sqrt{\frac{\log n}{n}}\right) \label{eqn:delta_d_diff}
\end{align}
for sufficiently large constant $C>0$, any $0\leq t\leq n^{o(1)}$ and any initial configuration $A^{(0)}\subseteq V$.
For notational convenience, we use ${\boldsymbol \delta}'\defeq {\boldsymbol \delta}^{(t+1)}$ for ${\boldsymbol \delta}={\boldsymbol \delta}^{(t)}$.
Similarly, we refer $\mathbf{d}'$ to $T(\mathbf{d})$.

Note that ${\boldsymbol \delta}$ satisfies $|\delta_1|+|\delta_2|\leq 1$. 
In addition, the dynamical system $T$ is symmetric: Precisely, $T_1(\pm d_1,\mp d_2) = \pm T_1(d_1,d_2)$ and $T_2(\pm d_1,\mp d_2)=\mp T_2(d_1,d_2)$ hold.
In \cref{lem:3Mvector_field_lem1}, we assert that the sequence $(\mathbf{d}^{(t)})_{t=0}^\infty$ is closed in
\begin{align*}
S\defeq \{(d_1,d_2)\in[0,1]^2:d_1+d_2\leq 1\}.
\end{align*}

From now on, we focus on $S$ and consider the behavior of ${\boldsymbol \delta}$ around fixed points.
A straightforward calculation shows that $\mathbf{d}'=\mathbf{d}\in S$ if and only if $\mathbf{d}\in\{\mathbf{d}^*_1,\mathbf{d}^*_2,\mathbf{d}^*_3,\mathbf{d}^*_4\}$, where
\begin{align} \label{eqn:fixed_points_3M}
\mathbf{d}^*_i \defeq \begin{cases}
(0,0) & \text{if $i=1$},\\
\left(\sqrt{\frac{3u-2}{u^3}},0\right) & \text{if $i=2$ and $u\geq\frac{2}{3}$},\\
\left(\sqrt{\frac{1}{4u^3}},\sqrt{\frac{4u-3}{4u}} \right) & \text{if $i=3$ and $u\geq\frac{3}{4}$},\\
(0,1) & \text{if $i=4$}.
\end{cases}
\end{align}%

Here, we provide auxiliary results needed for the proofs of \cref{thm:phasetransition_3M,thm:worst_case_3M}.
The proofs of these results are presented in \cref{sec:SJacobian,sec:generaljacobi}. 
For $\mathbf{x}\in\mathbb{R}^2$ and $\epsilon>0$, let $B(\mathbf{x},\epsilon)=\{\mathbf{y}\in\mathbb{R}^2\,:\,\|\mathbf{x}-\mathbf{y}\|_\infty < \epsilon\}$ be the open ball.
For $\mathbf{d}=(d_1,d_2)\in\mathbb{R}^2$, let $\absp{\mathbf{d}}\defeq (|d_1|,|d_2|) \in \mathbb{R}^2$.
\begin{lemma}[$S$ is closed] \label{lem:3Mvector_field_lem1}
For any $\mathbf{d}\in S$, it holds that $\mathbf{d}'\in S$.
\end{lemma}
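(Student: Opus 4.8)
The plan is to show that if $\mathbf{d}=(d_1,d_2)\in S$, meaning $d_1\ge 0$, $d_2\ge 0$ and $d_1+d_2\le 1$, then $\mathbf{d}'=T(\mathbf{d})=(T_1(\mathbf{d}),T_2(\mathbf{d}))$ satisfies the same three inequalities. I would verify the three constraints $T_1\ge 0$, $T_2\ge 0$, and $T_1+T_2\le 1$ one at a time, working directly from the explicit formulas in \cref{eqn:3Mvecfielddef}, namely $T_1(d_1,d_2)=\tfrac{ud_1}{2}(3-(ud_1)^2-3d_2^2)$ and $T_2(d_1,d_2)=\tfrac{d_2}{2}(3-3(ud_1)^2-d_2^2)$, where $u=\tfrac{1-r}{1+r}\in(0,1]$ since $0\le r\le 1$.

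For nonnegativity of $T_1$: since $d_1\ge 0$ and $u>0$, it suffices to check the bracket $3-(ud_1)^2-3d_2^2\ge 0$. Using $u\le 1$ and $d_1+d_2\le 1$ with $d_1,d_2\ge 0$, we have $(ud_1)^2\le d_1^2\le 1$ and $3d_2^2\le 3d_2$, but a cleaner route is: $(ud_1)^2+3d_2^2\le d_1^2+3d_2^2$, and on the simplex $\{d_1,d_2\ge 0, d_1+d_2\le 1\}$ one maximizes $d_1^2+3d_2^2$ at a vertex, giving value $\max\{0,1,3\}=3$; hence the bracket is $\ge 0$. The same vertex argument handles $T_2\ge 0$ via $3(ud_1)^2+d_2^2\le 3d_1^2+d_2^2\le 3$. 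The main work is the third inequality $T_1(\mathbf{d})+T_2(\mathbf{d})\le 1$. A natural approach is to substitute back into the original $(\alpha_1,\alpha_2)$ coordinates: recall $\delta_1=\alpha_1-\alpha_2$, $\delta_2=\alpha_1+\alpha_2-1$, so $T_1+T_2$ corresponds (up to the affine change) to $2\alpha_1'-1$ where $\alpha_i'=f^{\Bthree}\!\left(\tfrac{\alpha_i+r\alpha_{3-i}}{1+r}\right)$ and $f^{\Bthree}(x)=3x^2-2x^3$ maps $[0,1]$ into $[0,1]$. Indeed $d_1+d_2\le 1$ together with $d_1,d_2\ge 0$ is equivalent to $\alpha_1,\alpha_2\in[0,1]$, and then $\tfrac{\alpha_i+r\alpha_{3-i}}{1+r}$ is a convex combination lying in $[0,1]$, so $\alpha_i'=f^{\Bthree}(\cdot)\in[0,1]$; thus $\alpha_1'+\alpha_2'\le 2$, i.e. $T_1+T_2=2\alpha_1'-1\le 1$ (and $\ge -1$, consistent with $T_1,T_2\ge 0$). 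This also re-derives $T_1\pm T_2$ bounds more transparently than a raw polynomial estimate.

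I expect the only mild obstacle is bookkeeping the coordinate change cleanly: one should check that $S=\{(d_1,d_2)\in[0,1]^2: d_1+d_2\le 1\}$ corresponds exactly to $\{(\alpha_1,\alpha_2)\in[0,1]^2\}$ under $\mathbf{d}\mapsto\mathbf{d}$, using the symmetry $T_1(\pm d_1,\mp d_2)=\pm T_1$, $T_2(\pm d_1,\mp d_2)=\mp T_2$ noted just before the lemma (which lets us reduce to the quadrant $d_1,d_2\ge 0$ from the full region $|d_1|+|d_2|\le 1$). Once the correspondence with $f^{\Bthree}:[0,1]\to[0,1]$ is in hand, every constraint follows from the elementary fact that $f^{\Bthree}$ preserves $[0,1]$ and that $\tfrac{\alpha_i+r\alpha_{3-i}}{1+r}$ is a weighted average of $\alpha_1,\alpha_2$. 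If one prefers to stay in $\mathbf{d}$-coordinates, the fallback is the vertex/extreme-point argument above applied to each of $T_1\ge0$, $T_2\ge0$, $T_1+T_2\le1$ separately, checking the handful of boundary cases $d_2=0$, $d_1=0$, $d_1+d_2=1$ directly.
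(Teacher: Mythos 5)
Your proposal takes essentially the same route as the paper: nonnegativity of $T_1$ and $T_2$ by bounding $(ud_1)^2+3d_2^2\le 3$ and $3(ud_1)^2+d_2^2\le 3$ on the simplex, and $T_1+T_2\le 1$ by reverting to $(\alpha_1,\alpha_2)$ coordinates and observing that $f^{\Bthree}$ maps $[0,1]$ into $[0,1]$. One cosmetic nit: $S$ corresponds to $\{\alpha_1,\alpha_2\in[0,1],\ \alpha_1\ge\alpha_2,\ \alpha_1+\alpha_2\ge 1\}$ rather than all of $[0,1]^2$, but you only use the implication $\mathbf{d}\in S\Rightarrow\alpha_1,\alpha_2\in[0,1]$, so the argument stands.
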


\begin{proposition}[Orbit convergence]\label{prop:3Mvectorfield}
For any sequence $(\mathbf{d}^{(t)})_{t=0}^\infty$, $\lim_{t\to\infty} \absp{\mathbf{d}^{(t)}}=\mathbf{d}^*_i$ for some $i\in\{1,2,3,4\}$.
In addition, if $u<\frac{3}{4}$ and a positive constant $\kappa>0$ exists such that the initial point $\mathbf{d}^{(0)}=(d^{(0)}_1,d^{(0)}_2)\in S$ satisfies $|d^{(0)}_2|>\kappa$, then $\lim_{t\to\infty} \absp{\mathbf{d}^{(t)}} =\mathbf{d}^*_4$.
\end{proposition}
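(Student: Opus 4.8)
The plan is to analyze the two-dimensional dynamical system $T$ on the triangle $S$ as two coupled one-dimensional systems, exploiting the symmetry $T_1(\pm d_1, \mp d_2) = \pm T_1(d_1,d_2)$, $T_2(\pm d_1,\mp d_2) = \mp T_2(d_1,d_2)$, which lets us reduce everything to the orbit of $\absp{\mathbf{d}^{(t)}}$ living in $S$. First I would observe that, by \cref{lem:3Mvector_field_lem1} and this symmetry, it suffices to work with a nonnegative point $\mathbf{d}\in S$; write $d_1,d_2\geq 0$ throughout. The key structural fact is that the second coordinate map has the form $d_2' = \frac{d_2}{2}\bigl(3 - 3(ud_1)^2 - d_2^2\bigr)$, so $d_2'$ depends on $d_1$ only through the ``damping'' factor $1 - (ud_1)^2$; since $ud_1 \le u \le 1$ on $S$ and in fact $u < 3/4$ in the regime of interest, one checks that the factor $3 - 3(ud_1)^2 - d_2^2$ stays safely above $1$ whenever $d_2$ is bounded away from $1$, which forces $d_2^{(t)}$ to be strictly increasing and bounded, hence convergent; a symmetric but easier monotonicity argument governs $d_1^{(t)}$ once $d_2^{(t)}$ has settled. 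From convergence of both coordinates, $\mathbf{d}^{(t)}\to\mathbf{d}^\infty$ with $T(\mathbf{d}^\infty) = \mathbf{d}^\infty$, and the list of fixed points \cref{eqn:fixed_points_3M} (restricted to $S$, and to those that exist when $u<3/4$, namely $\mathbf{d}^*_1 = (0,0)$ and $\mathbf{d}^*_4 = (0,1)$, plus possibly $\mathbf{d}^*_2$ when $u \ge 2/3$) pins down $\mathbf{d}^\infty$ to one of the $\mathbf{d}^*_i$. Translating back through $\absp{\cdot}$ gives $\lim_t \absp{\mathbf{d}^{(t)}} = \mathbf{d}^*_i$, which is the first assertion.

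For the second assertion, assume $u < 3/4$ and $|d^{(0)}_2| > \kappa$; by symmetry take $d^{(0)}_2 > \kappa > 0$ (and WLOG $d^{(0)}_1\ge 0$, staying in $S$ by \cref{lem:3Mvector_field_lem1}). I would first argue that $d_2^{(t)}$ cannot converge to $0$: the factor $3 - 3(ud_1)^2 - d_2^2 \ge 3 - 3u^2 - 1 = 2 - 3u^2 > 2 - 3(3/4)^2 = 11/16 > 1$ as long as $d_2$ is bounded away from $1$ — wait, more carefully, one needs $d_1 \le$ something; but on $S$ we have $d_1 + d_2 \le 1$, so $d_1 \le 1 - d_2$, and for $d_2$ bounded below by $\kappa$ this gives a quantitative lower bound on the multiplier that exceeds $1$ by a constant, hence $d_2^{(t)}$ strictly increases by at least a geometric-type amount until it approaches $1$; since it is bounded by $1$ and monotone it converges, and the only fixed point with positive second coordinate reachable this way is $d_2 = 1$, which together with $d_1 + d_2 \le 1$ forces $d_1 \to 0$, i.e.\ the limit is $\mathbf{d}^*_4 = (0,1)$. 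Transporting back via $\absp{\cdot}$ yields $\lim_t \absp{\mathbf{d}^{(t)}} = \mathbf{d}^*_4$.

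The main obstacle I anticipate is making the monotonicity of $d_2^{(t)}$ fully rigorous near the boundary of $S$ and near $d_2 = 1$: the multiplier $\frac{1}{2}(3 - 3(ud_1)^2 - d_2^2)$ equals exactly $1$ at $d_2 = 1, d_1 = 0$, so the increase degenerates precisely at the target fixed point, and one must rule out that the orbit stalls or oscillates before reaching a neighborhood of $\mathbf{d}^*_4$ — this requires a careful case split on whether $d_1^{(t)}$ is itself shrinking (which it does once $d_2$ is large, since then $3 - (ud_1)^2 - 3d_2^2$ can go negative, pushing $d_1$ down or even reflecting its sign, handled by the $\absp{\cdot}$ formulation) and a Lyapunov-type argument showing $d_2^{(t)} \to 1$. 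A secondary technical point is verifying that when $2/3 \le u < 3/4$ the extra fixed point $\mathbf{d}^*_2 = (\sqrt{(3u-2)/u^3},0)$ is not a limit point under the hypothesis $|d_2^{(0)}| > \kappa$ — but this is immediate from the monotone growth of $d_2^{(t)}$ away from $0$. I would also lean on the general competitive-dynamical-systems results of \cref{sec:SJacobian,sec:generaljacobi} to supply the convergence-of-orbits backbone, so that the work here is mostly the quantitative boundary analysis.
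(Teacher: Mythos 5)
There is a genuine gap at the heart of the proposal: the claimed monotonicity of $d_2^{(t)}$ is false. You write that the bracket $3 - 3(ud_1)^2 - d_2^2$ ``stays safely above $1$'' and that this ``forces $d_2^{(t)}$ to be strictly increasing,'' but recall $d_2' = \tfrac{d_2}{2}\bigl(3 - 3(ud_1)^2 - d_2^2\bigr)$, so $d_2' > d_2$ requires the bracket to exceed $2$, i.e.\ $3(ud_1)^2 + d_2^2 < 1$. Your own lower bound on the bracket, $2 - 3u^2$ (which for $u$ near $3/4$ is $5/16$, not $11/16$), is well below $2$. Concretely, take $u = 0.7 < 3/4$ and $(d_1,d_2) = (0.9,\,0.1) \in S$: then $3(ud_1)^2 + d_2^2 \approx 1.20 > 1$, so $d_2' < d_2$, and $d_2^{(t)}$ initially decreases. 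More generally, for any $u > 1/\sqrt{3}$ the region of $S$ near the $d_1$-axis has $d_2' < d_2$, so the ``increasing and bounded hence convergent'' argument fails for both parts of the proposition. The constraint $d_1 \le 1 - d_2$ does not rescue this: $3u^2(1-\kappa)^2 + \kappa^2 < 1$ still fails for small $\kappa$ and $u > 1/\sqrt{3}$. Because the dynamics in $d_1$ and $d_2$ are genuinely coupled (an orbit can drift toward the saddle $\mathbf{d}^*_2$ on the $d_1$-axis while $d_2$ shrinks, before the unstable direction eventually kicks in), the plan of decoupling into two one-dimensional monotone systems does not go through.

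You do mention, almost as an afterthought, leaning on the competitive-dynamics machinery of \cref{sec:SJacobian,sec:generaljacobi} — and that is in fact the load-bearing idea in the paper's proof, not a backup. The paper's argument is: for $u = 1$, diagonalize via $d_1 \pm d_2$ so both coordinates evolve by the scalar map $z\mapsto \tfrac12 z(3-z^2)$; for $0\le u<1$, verify from \cref{eqn:general_Jacobian_3M} that $T$ is competitive (diagonal entries $\ge 0$, off-diagonal $\le 0$), injective (via $\det J > 0$ and the inverse function theorem), and monotone-inverse, then invoke \cref{thm:orbit_convergence} to get convergence of every orbit to a fixed point. For the second claim the paper does \emph{not} establish monotonicity of $d_2$; it only shows $d_2' \ge d_2^2 > 0$ (so $d_2$ stays strictly positive), and combines this with the already-established convergence and the fact that, for $u < 3/4$, $(0,1)$ is the only fixed point in $S$ with positive second coordinate that can be a limit. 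If you want to salvage your writeup, replace the monotonicity claim with this two-step structure: competitive-map convergence first, then positivity of $d_2$ plus the fixed-point classification in \cref{eqn:fixed_points_3M,tbl:eigentable} to pin down the limit.
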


\begin{proposition}[Dynamics around $\mathbf{d}^*_2$] \label{prop:sink_3M}
Consider the Best-of-three on an $f^{\Bthree}$-good $G(2n,p,q)$ such that $r=q/p<1/7$ is a constant.
Then there exists a positive constant $\epsilon=\epsilon(r)$ satisfying
\begin{align*}
\Pr\left[\absp{{\boldsymbol \delta'}}\not\in B(\mathbf{d}_2^*,\epsilon) \,\middle|\, \absp{{\boldsymbol \delta}}\in B(\mathbf{d}_2^*,\epsilon)\right] \leq \exp(-\Omega(n)).
\end{align*}

In particular, 
$\Tcons(A) = \exp(\Omega(n))$ w.h.p.~for any $A$ satisfying $\absp{{\boldsymbol \delta}} \in B(\mathbf{d}_2^*,\epsilon)$.
\end{proposition}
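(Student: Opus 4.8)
The plan is to establish that $\mathbf d_2^*$ is an attracting fixed point (a sink) of the induced dynamical system $T$ precisely in the regime $r<1/7$, and then to convert this deterministic fact into the stated trapping estimate by adding the one‑step fluctuations of the process, which are exponentially concentrated because $G(2n,p,q)$ is $f^{\Bthree}$‑good.

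For the deterministic part, write $u=\frac{1-r}{1+r}$, so that $r<1/7$ is equivalent to $u>\frac34$ and $\mathbf d_2^*=(d_1^*,0)$ with $d_1^*=\sqrt{(3u-2)/u^3}\in(0,1)$ is well defined and lies in $S$ by \cref{eqn:fixed_points_3M}. I would compute the Jacobian of $T$ at $\mathbf d_2^*$. Since $T_1$ is odd in $d_1$ and even in $d_2$ while $T_2$ is even in $d_1$ and odd in $d_2$, the entries $\partial_{d_2}T_1$ and $\partial_{d_1}T_2$ each carry a factor $d_2$ and vanish on $\{d_2=0\}$; using $u^2(d_1^*)^2=(3u-2)/u$ a short computation gives
\[
DT(\mathbf d_2^*)=\begin{pmatrix} 3(1-u) & 0\\[2pt] 0 & 3(1-u)/u\end{pmatrix}.
\]
For $u\in(3/4,1)$ both diagonal entries lie in $(0,1)$; the transverse entry $3(1-u)/u$ is exactly where the threshold enters, equalling $1$ at $u=3/4$ and being strictly below $1$ for $u>3/4$ (this is precisely where the sink areas appear in \cref{fig:3M_proof_sketch}, and it can also be read off from the general Jacobian analysis of \cref{sec:SJacobian,sec:generaljacobi}). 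Hence $\mathbf d_2^*$ is a sink. By continuity of $DT$ I then fix constants $\lambda\in\bigl(3(1-u)/u,1\bigr)$ and $\epsilon_0\in(0,d_1^*)$ small enough that $\|T(\mathbf d)-\mathbf d_2^*\|_\infty\le\lambda\|\mathbf d-\mathbf d_2^*\|_\infty$ for every $\mathbf d$ with $d_2\ge0$ and $\|\mathbf d-\mathbf d_2^*\|_\infty<\epsilon_0$ (that set is convex, so the mean value inequality applies). Finally, $T$ commutes with sign changes of the coordinates, so $\absp{T(\mathbf d)}=\absp{T(\absp{\mathbf d})}$; together with $\epsilon_0<d_1^*$, which keeps the first coordinate of $T(\absp{\mathbf d})$ positive, this yields the deterministic contraction $\absp{\mathbf d}\in B(\mathbf d_2^*,\epsilon)\Rightarrow\absp{T(\mathbf d)}\in B(\mathbf d_2^*,\lambda\epsilon)$ for all $\epsilon\le\epsilon_0$.

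Now I add the stochastic error and set $\epsilon=\epsilon_0$. Because $G(2n,p,q)$ is $f^{\Bthree}$‑good, \cref{thm:mainthm_E} and the linear change of coordinates ${\boldsymbol\delta}=(\alpha_1-\alpha_2,\alpha_1+\alpha_2-1)$ give $\|\E[{\boldsymbol\delta}'\mid{\boldsymbol\delta}]-T({\boldsymbol\delta})\|_\infty=O(1/\sqrt{np})$, which is below any prescribed constant for large $n$ since $p=\omega(\log n/n)$; moreover $|A_i'|=\sum_{v\in V_i}X_v$ is a sum of $n$ independent indicators, so Hoeffding's inequality (\cref{lem:Hoeffding}) gives $\Pr[\|{\boldsymbol\delta}'-\E[{\boldsymbol\delta}'\mid{\boldsymbol\delta}]\|_\infty>\gamma]\le e^{-\Omega(n)}$ for every constant $\gamma>0$. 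Fixing $\gamma$ with $2\gamma\le(1-\lambda)\epsilon$, then for every configuration with $\absp{{\boldsymbol\delta}}\in B(\mathbf d_2^*,\epsilon)$ we have, on an event of probability $1-e^{-\Omega(n)}$, that $\|{\boldsymbol\delta}'-T({\boldsymbol\delta})\|_\infty\le2\gamma$, whence $\absp{{\boldsymbol\delta}'}$ lies within $2\gamma$ of $\absp{T({\boldsymbol\delta})}\in B(\mathbf d_2^*,\lambda\epsilon)$, so $\absp{{\boldsymbol\delta}'}\in B(\mathbf d_2^*,\lambda\epsilon+2\gamma)\subseteq B(\mathbf d_2^*,\epsilon)$. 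This is the first assertion, and it holds pointwise in $A$. For the ``in particular'' part I iterate it: by a union bound over $T^*=\lceil e^{cn}\rceil$ steps, with $c>0$ a small enough constant, the process stays in $B(\mathbf d_2^*,\epsilon)$ for all $0\le t\le T^*$ except with probability $T^*e^{-\Omega(n)}=e^{-\Omega(n)}$; on that event $|\delta_2^{(t)}|<\epsilon<1$ for all such $t$, whereas any consensus configuration $A^{(t)}\in\{\emptyset,V\}$ has $|\delta_2^{(t)}|=1$, so $\Tcons(A)>e^{cn}=\exp(\Omega(n))$ w.h.p.

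I expect the main obstacle to be the bookkeeping forced by $\absp{\cdot}$ and by the position of $\mathbf d_2^*$ on the boundary line $\{d_2=0\}$ of $S$: one has to verify that the local contraction of $T$ toward $\mathbf d_2^*$ genuinely holds on the one‑sided neighbourhood $\{d_2\ge0\}$ inhabited by $\absp{{\boldsymbol\delta}}$ and is not destroyed by taking absolute values — this is exactly where the sign symmetry of $T$ and the strict positivity of the first coordinate of $\mathbf d_2^*$ are used. The only other delicate point is that one must invoke the \emph{exponential} one‑step tail (Hoeffding with a constant‑fraction deviation $\gamma n$), rather than the weaker $O(\sqrt{n\log n})$ estimate of \cref{eqn:A'concentration}, in order to afford the union bound over $\exp(\Omega(n))$ steps.
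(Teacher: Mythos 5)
Your argument is correct and follows essentially the same route as the paper: verify that the Jacobian $J_2=3\,\mathrm{diag}(1-u,\,(1-u)/u)$ has both (singular) values strictly below $1$ exactly when $u>3/4$ (i.e.\ $r<1/7$), convert this into a one-step local contraction of $T$ near $\mathbf d_2^*$, add the exponentially concentrated one-step fluctuation from Hoeffding, and union-bound over $\exp(\Theta(n))$ steps while noting $|\delta_2|=1$ at consensus. The paper simply packages steps two through four into the general \cref{prop:sinkpoint} (via \cref{lem:constantfar,lem:sinkdistance}) and invokes it after observing $\pm\mathbf d_2^*$ are sinks, whereas you carry out that machinery inline with a slightly more explicit treatment of the $\absp{\cdot}$ symmetry.
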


\begin{proposition}[Towards consensus] \label{prop:fastconsensus_3M}
Consider the Best-of-three on an $f^{\Bthree}$-good $G(2n,p,q)$ such that $r=q/p$ is a constant.
Then, there exists a universal constant $\epsilon=\epsilon(r)>0$ satisfying the following: $\Tcons(A) \leq O(\log\log n+\log n/\log (np))$ holds w.h.p.~for all $A\subseteq V$ with $\min\{|A|,2n-|A|\}\leq \epsilon n$.
\end{proposition}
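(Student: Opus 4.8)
The plan is first to reduce to the case $|A|\le\epsilon n$: the Best-of-three rule is invariant (after exchanging the labels $1$ and $2$) under complementing the opinion set, so $\Tcons(A)$ and $\Tcons(V\setminus A)$ have the same distribution, and it suffices to treat $|A|\le\epsilon n$. Fix $\epsilon=\epsilon(r)$ with $\epsilon<(1+r)^2/24$ and track the scalar $a_t\defeq|A^{(t)}|$. Since $f^{\Bthree}(x)=3x^2-2x^3\le 3x^2$ and $\frac{|A_i|p+|A_{3-i}|q}{n(p+q)}\le\frac{a_t}{n(1+r)}$ (using $q\le p$), property \ref{pro:gomiA} with $S=V$ (i.e.\ \cref{thm:mainthm_E}) gives, while $a_t\le\epsilon n$,
\[
\E[a_{t+1}\mid A^{(t)}]\;\le\;\frac{6a_t^2}{n(1+r)^2}+2C_2\,a_t\sqrt{\tfrac{\log n}{np}}\;\le\;\tfrac12 a_t ,
\]
the last inequality because $\frac{6a_t}{n(1+r)^2}\le\frac{6\epsilon}{(1+r)^2}<\frac14$ and $2C_2\sqrt{\log n/(np)}=o(1)$ as $p=\omega(\log n/n)$. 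Thus the minority never recovers, and only the rate of decay and the exact hitting time of $0$ remain.

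\emph{Phase 1 (doubly-exponential shrinking).} While $a_t\ge\tau\defeq K\sqrt{n\log n/p}$ with $K$ a large constant, the quadratic term above dominates the linear one, so $\E[a_{t+1}\mid A^{(t)}]\le\frac{12a_t^2}{n(1+r)^2}$; a multiplicative Chernoff bound, with deviation $\max\{\E[a_{t+1}\mid A^{(t)}],\,\Theta(\log n)\}$, then gives $a_{t+1}\le\frac{24a_t^2}{n(1+r)^2}$ with probability $1-n^{-\Omega(1)}$ (the additive $\Theta(\log n)$ is absorbed since $a_t\ge\tau\ge\sqrt{n\log n}$ and $K$ is large). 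Writing $b_t=a_t/n$ this reads $b_{t+1}\le Cb_t^2$, whence $Cb_t\le(C\epsilon)^{2^t}$; because $n/\tau=\Theta(\sqrt{np/\log n})\le\sqrt n$, after $T_1=O(\log\log n)$ steps (union bound over these steps) we reach $a_{T_1}\le\tau$. This is precisely the doubly-exponential regime predicted by the induced dynamical system: writing $\delta_2=-(1-\eta)$ near the consensus fixed point $\mathbf d^*_4$, the map $T$ of \cref{eqn:3Mvecfielddef} satisfies $\eta'=\tfrac32(\eta^2+u^2\delta_1^2)+O(\eta^3)=\Theta(\eta^2)$.

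\emph{Phase 2 (linear shrinking and Markov's inequality).} Once $a_t$ is bounded by a polynomial in $n$ I would switch to a sharper estimate on $\E[a_{t+1}\mid A^{(t)}]$ for small $|A^{(t)}|$ — this is the role of the remark that \ref{pro:gomiA} beats \ref{pro:gomiN} there, sharpened through the codegree / crossing-star concentration behind \cref{thm:fgood} — roughly of the form $\E[a_{t+1}\mid A^{(t)}]=O\!\big(\frac{a_t}{np}+\frac{a_t^2}{n}\big)$. For $a_t$ above a threshold of order $1/p$ the $a_t^2/n$ term again forces an exponential decrease (costing only $O(\log\log n)$ further steps to reach $a_t=O(1/p)$), and below that threshold the first term yields $\E[a_{t+1}\mid A^{(t)}]\le\rho\,a_t$ with ratio $\rho=\Theta(1/(np))$. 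Iterating, $\E[a_{T_1+k}]\le\rho^{\,k}\cdot n^{O(1)}$, so taking $k=O(\log n/\log(np))$ makes $\E[a_{T_1+k}]<n^{-C'}$; since $a_{T_1+k}$ is a non-negative integer, Markov's inequality gives $a_{T_1+k}=0$ (consensus) with probability $1-n^{-C'}$. Summing the phases yields $\Tcons(A)=O(\log\log n+\log n/\log(np))$ w.h.p.

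The hard part is Phase 2. Property \ref{pro:gomiA} as stated carries an additive error $C_2|A|\sqrt{\log n/(np)}$, which only buys a per-step shrink factor $\Theta(\sqrt{\log n/(np)})$; its logarithm is $\Theta(\log(np)-\log\log n)$, which is too small to deliver $\log n/\log(np)$ steps when $np$ is only mildly superlogarithmic. One therefore has to extract the genuinely sharper $\Theta(1/(np))$ factor for small minorities from the random-graph machinery of \cref{sec:concentrationsbm}, and — more delicately — to bridge the intermediate range of $a_t$ (roughly from $\Theta(\sqrt{n\log n/p})$ down to $\Theta(np)$), where neither the global concentration of \cref{thm:mainthm_E} nor a crude pairwise codegree bound pins $a_{t+1}$ down tightly; handling this calls on the contracting behaviour of $T$ near $\mathbf d^*_4$ together with second-moment (crossing-star) estimates, after which all per-step Chernoff/Markov estimates must be pushed simultaneously through a union bound over the $O(\log\log n+\log n/\log(np))$ steps of the process.
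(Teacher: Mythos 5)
Your Phase~1 (doubly-exponential contraction of $|A^{(t)}|/n$ down to $O(\sqrt{\log n/(np)})$, after the symmetry reduction to $|A|\le\epsilon n$) matches the paper's route almost step for step: the paper proves this statement by invoking \cref{prop:fastconsensus_Jacob}, which is derived from the general \cref{prop:fastconsensus} after verifying $\E[|A'|]\le C|A|^2/n+\epsilon|A|$ with $\epsilon=\Theta(\sqrt{\log n/(np)})$ via property~\ref{pro:gomiA}, and Case~I of the proof of \cref{prop:fastconsensus} is exactly your doubly-exponential phase. The divergence is at Phase~2, and the worry you raise at the end is a genuine one. \Cref{prop:fastconsensus} yields $\Tcons(A)=O(\log\log n+\log n/\log\epsilon^{-1})$, and with the paper's $\epsilon$ we have $\log\epsilon^{-1}=\tfrac12\log(np/\log n)$, whereas \cref{prop:fastconsensus_Jacob} (and the present proposition) asserts $\log(np)$ in the denominator. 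These are comparable only when $np\ge(\log n)^{1+\Omega(1)}$. Take $np=\log n\cdot\log\log\log n$, which satisfies $p=\omega(\log n/n)$: then $\log(np/\log n)=\Theta(\log\log\log\log n)$ while $\log(np)=\Theta(\log\log n)$, so $\log n/\log\epsilon^{-1}$ exceeds $\log\log n+\log n/\log(np)$ by a factor of order $\log\log n/\log\log\log\log n\to\infty$. So the chain \cref{prop:fastconsensus}~$\Rightarrow$~\cref{prop:fastconsensus_Jacob}~$\Rightarrow$~\cref{prop:fastconsensus_3M} does not, as written, give the claimed rate when $np$ is only mildly superlogarithmic; your proposed per-step factor $\rho=\Theta(1/(np))$, with $\log\rho^{-1}=\Theta(\log(np))$, is exactly what would be needed to repair it.

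However, your Phase~2 rests entirely on the unproven estimate $\E[|A'|]=O(|A|/(np)+|A|^2/n)$ for small $|A|$, and this does not follow from anything in the paper. Property~\ref{pro:gomiA} carries an $O(|A|\sqrt{\log n/(np)})$ error, and the $\sqrt{\log n}$ is the genuine cost of the union bound over $A$ in \cref{lem:WIdiscrepancy2}; worse, the ``ideal'' quantity $\hat{W}(V;A,A)=\sum_v\E[\deg_A(v)]^2\approx|A|^2np^2$ that \ref{pro:gomiA} targets already discards the diagonal contribution $\approx|A|np$ which lives in $\E[W]-\hat{W}$ (handled separately in \cref{lem:EWIdiscrepancy}), and it is exactly that term, divided by $(np)^2$, that produces your $|A|/(np)$. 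So one would have to retool the Janson/Kim--Vu machinery to pin $\sum_v\deg_A(v)^2$ to $(1+o(1))(|A|np+|A|^2np^2)$ uniformly over small $A$, and also bridge the intermediate window $\Theta(1/p)\le|A|\le\Theta(\sqrt{n\log n/p})$ where the $\Theta(\log n)$ additive Chernoff slack can dominate $|A|^2/n$. You correctly label this ``the hard part'', but as written the proposal has a genuine gap there --- though, notably, the same regime appears to be a gap in the paper's own derivation.
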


\begin{proposition}[Escape from fixed points] \label{prop:escape_3M}
Consider the Best-of-three on an $f^{\Bthree}$-good $G(2n,p,q)$ such that $p$ and $q$ are constants.
If $q/p>1/7$ and $|\delta_2^{(0)}|=o(1)$, then it holds w.h.p.~that $|\delta_2^{(\tau)}|>\kappa$ for some $\tau=O(\log n)$ and some constant $\kappa>0$.
\end{proposition}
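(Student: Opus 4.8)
\emph{Potential and overall scheme.} The plan is to follow a single real potential, the rescaled global bias $Y_t \defeq |A_1^{(t)}| + |A_2^{(t)}| - n = n\,\delta_2^{(t)}$, and to show that $|Y_t|$ reaches $\kappa n$ within $O(\log n)$ steps w.h.p.\ for a suitable constant $\kappa>0$ (which then yields $|\delta_2^{(\tau)}|>\kappa$ with $\tau=O(\log n)$, since $|\delta_2^{(0)}|=o(1)$). The argument will have three parts: an $O(1)$-step burn-in driving the process into a ``good region'' of configurations; a cold-start phase in which the $\Omega(n)$ variance of one update pulls $|Y_t|$ above the noise scale $\Theta(\sqrt n)$; and a growth phase in which a constant multiplicative drift $1+c$ carries $|Y_t|$ up to $\kappa n$. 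The last two parts are the drift analysis of Doerr et al.~\cite{DGMSS11}, run inside the general framework of \cref{sec:polynomialvoting}; \cref{thm:approximation_vectorfield}, \cref{eqn:3Mvecfielddef} and the one-step Hoeffding bound \cref{eqn:A'concentration} are the main tools.

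\emph{Burn-in and where the threshold $1/7$ enters.} Call a configuration \emph{good} if $|\delta_2|\le\rho$ and $(u\delta_1)^2 \le \tfrac12((x^\ast)^2+\tfrac13)$, where $x^\ast\defeq\sqrt{\max\{0,\,3-2/u\}}$ is the limit of every orbit in $[0,u]$ of the one-dimensional map $x\mapsto\tfrac{ux}{2}(3-x^2)$ that $T$ induces on the invariant line $\delta_2=0$ (in the variable $x=u|\delta_1|$), and $\rho>0$ is a small constant. Since $r>1/7$ is \emph{equivalent} to $u<3/4$, and $u<3/4\iff\max\{0,3-2/u\}<1/3$, one has $x^\ast<1/\sqrt3$ strictly, so the good region is an honest neighbourhood of the relevant part of the $\delta_1$-axis and carries the drift estimate below; at $u=3/4$ we would have $x^\ast=1/\sqrt3$ and that estimate would degenerate to $1$, which is why the threshold is exactly $1/7$. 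The one-dimensional map is an increasing self-map of $[0,u]$, so its orbits enter the good range of $\delta_1$ within a constant number of steps; using \cref{thm:approximation_vectorfield} and \cref{eqn:A'concentration}, together with the facts that $\delta_2$ perturbs the $\delta_1$-dynamics by only $O(\rho^2)$ and the per-step noise is $O(\sqrt{\log n/n})$, I would conclude that from any start with $|\delta_2^{(0)}|=o(1)$ the process becomes good within $O(1)$ steps and stays good at every later step at which $|\delta_2^{(t)}|<\rho$ still holds. If $|\delta_2^{(t)}|$ ever reaches $\rho$ we are already done, taking $\kappa:=\rho/2$.

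\emph{Drift and variance on the good region.} For a good $A$ I would prove two one-step estimates. For the drift: from $\E[\delta_2'\mid A]=T_2(\boldsymbol\delta)+O(1/\sqrt{np})$ and \cref{eqn:3Mvecfielddef}, $T_2(\boldsymbol\delta)/\delta_2=\tfrac12(3-\delta_2^2-3(u\delta_1)^2)\ge 1+c$ on the good region for a constant $c>0$, because there $3(u\delta_1)^2\le\tfrac32((x^\ast)^2+\tfrac13)<1$; hence $|\E[Y'\mid A]|\ge(1+c)|Y|-O(\sqrt{n/p})$ with $Y'$ on the same side of $0$ as $Y$ once $|Y|\gg\sqrt n$. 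For the variance: $|A'|=|A_1'|+|A_2'|$ is a sum of independent Bernoulli variables, so $\Var[Y'\mid A]=\sum_{v\in V}f^{\Bthree}(\deg_A(v)/\deg(v))\,(1-f^{\Bthree}(\deg_A(v)/\deg(v)))$, and applying property \ref{pro:gomiN} with $f(x)=x$ and with $f(x)=x^2$ (both hold w.h.p.\ by \cref{thm:fgood}) gives $\sum_{v\in V_i}(\deg_A(v)/\deg(v)-(\alpha_i+r\alpha_{3-i})/(1+r))^2=O(\sqrt{n/p})$; since $p$ is a \emph{constant} this is $o(n)$, so all but $o(n)$ vertices have $\deg_A(v)/\deg(v)$ within a fixed $\epsilon$ of $(\alpha_i+r\alpha_{3-i})/(1+r)$, which on the good region lies in a compact subinterval of $(0,1)$; there $f^{\Bthree}$ is bounded away from $0$ and $1$, so $\Var[Y'\mid A]=\Omega(n)$, with matching Berry--Esseen anti-concentration.

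\emph{Escape and the main obstacle.} With these estimates, after the burn-in $|Y_t|$ is a nonnegative process that, while $|Y_t|\le\rho n/2$, never drops in conditional expectation by more than $O(\sqrt{n/p})$ and has conditional variance $\Omega(n)$; the drift lemma of \cite{DGMSS11} then lifts $|Y_t|$ to $\Theta(\sqrt{n\log n})$ within $O(\log n)$ steps w.h.p., after which the multiplicative drift dominates and a one-step Hoeffding bound (now dwarfing the $O(\sqrt n)$ slippage) gives $|Y_{t+1}|\ge(1+c/2)|Y_t|$ at each of the next $O(\log n)$ steps w.h.p., reaching $|Y_t|=\kappa n$; since $|\delta_2^{(t)}|<\kappa=\rho/2<\rho$ throughout, the process stays good, and this is the claim. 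The hard part will be the cold start: when $\delta_2^{(0)}=0$ the deterministic orbit never leaves the line $\delta_2=0$, so escape is powered entirely by the variance, and turning ``$\Var[Y'\mid A]=\Omega(n)$'' into an $O(\log n)$-step, w.h.p.\ escape uniformly over all good configurations the process may visit is the delicate step. It is also where $p$ being a constant is essential: not because the variance bound fails for $p=o(1)$ (it does not), but because then the error term $O(\sqrt{n/p})$ in the drift estimate already exceeds the fluctuation scale $\sqrt{\Var}=\Theta(\sqrt n)$, so drift can no longer be weighed against noise --- matching the fact that for non-constant $p$ the phase-transition theorem only handles configurations with initial bias $\Omega(n)$.
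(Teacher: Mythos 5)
The proposal takes a genuinely different route from the paper, and the conceptual analysis is largely correct, but the hardest step is not executed.

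\textbf{Comparison of routes.} The paper proves \cref{prop:escape_3M} by feeding the Best-of-three into the generic escape machinery (\cref{prop:escape} / \cref{cor:nazocor}): it linearizes $T$ at each relevant fixed point, checks eigenvalue conditions (\ref{asm:asm1}--\ref{asm:asm6}), and performs a case analysis on $u$ (separately handling $u<2/3$, $u>2/3$ where one must hop from $\mathbf{d}^*_1$ to $\mathbf{d}^*_2$, and the degenerate $u=2/3$ where the contracting eigenvalue equals $1$). Your approach instead collapses the problem to a single one-dimensional potential $\delta_2$ over a fixed ``good region'' that covers the whole relevant stretch of the $\delta_1$-axis and on which the drift factor $T_2/\delta_2=\tfrac12\bigl(3-3(u\delta_1)^2-\delta_2^2\bigr)$ is $\ge 1+c$ uniformly. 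This is a real structural simplification: it avoids the $u<2/3$ vs.\ $u>2/3$ split entirely, makes the role of the threshold $u<3/4\Leftrightarrow r>1/7$ transparent through $(x^\ast)^2<1/3$, and never needs to diagonalize a Jacobian. What it gives up is the mechanical reusability the paper gets from \cref{prop:escape} (which is invoked verbatim for Best-of-two via \cref{prop:escape_2C}), and — more importantly — it pushes the entire technical weight onto a single drift argument that you have not supplied.

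\textbf{Gaps.} Two steps are asserted rather than proved. (1) \emph{Stays-good.} The claim that the process ``stays good at every later step at which $|\delta_2^{(t)}|<\rho$'' is exactly what \cref{cor:nazocor}'s condition~\ref{cond:whp} encodes and what the paper proves via \ref{asm:asm5} (in the $u=2/3$ case this requires the quantitative estimate $|\delta_1'|\le|\delta_1|-\tfrac{4}{27}|\delta_1|^3+O(\sqrt{\log n/n})$ and a sub-case analysis). You gesture at the deterministic attraction of the $\delta_1$-axis map but say nothing about how this controls the accumulated noise over $\Theta(\log n)$ steps when the attraction is only cubic (at $u=2/3$). (2) \emph{Cold start.} You correctly identify this as the crux (``powered entirely by the variance'') and invoke ``the drift lemma of~\cite{DGMSS11},'' but that lemma lives on the one-dimensional state space $\{0,\dots,n\}$ of the complete graph; here the process lives on $[0,1]^2$ and can exit the good region sideways. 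The paper's drift lemma, \cref{cor:nazocor} (adapted from~\cite{AMLEFG18}), augments the Doerr-style argument precisely with a coupling to an absorbing ``escape'' state and a stays-in-$\mathcal{B}$ condition so that the cold start and multiplicative growth can be stitched together over a 2D state space; you would need to import or reprove something of that shape, together with the Berry--Esseen anti-concentration you mention (which the paper packages via \cref{cor:BEbound_cor}). As it stands, the proposal correctly identifies the architecture and the role of the $1/7$ threshold and the $\Omega(n)$ variance, but the step you yourself label ``delicate'' is the proof, and it is missing.

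Two minor remarks: your variance bound via $f(x)=x$ and $f(x)=x^2$ is correct but circuitous — the paper gets $\Var[|A_i'|\mid A]=n\,g(\cdot)\pm O(\sqrt{n/p})$ in one stroke by applying \ref{pro:gomiN} to the polynomial $g=f^{\Bthree}(1-f^{\Bthree})$ directly (\cref{thm:mainthm_Var}). Your closing diagnosis of why $p$ constant is needed is plausible, but the paper's proof more directly needs $p=\Omega(1)$ so that the deterministic error $O(\sqrt{n/p})$ in $\E[|A_i'|]$ is $O(\sqrt n)$, i.e.\ does not dwarf the standard deviation — the same conclusion stated slightly differently.
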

%
\paragraph*{Intuitive explanations for \cref{prop:sink_3M,prop:fastconsensus_3M,prop:escape_3M}.}
In \cref{prop:sink_3M,prop:fastconsensus_3M,prop:escape_3M}, we consider the behavior of ${\boldsymbol \alpha}^{(t)}$ around the fixed points \cref{eqn:fixed_points_3M}.
Let $H$ be the induced dynamical system and let $J$ be the Jacobian matrix of $H$ at a fixed point $\mathbf{a}^*$ with two eigenvalues $\lambda_1,\lambda_2$.
If the eigenvectors are linearly independent, we can rewrite $J$ as $J=U^{-1}\Lambda U$, where $\Lambda\defeq\mathrm{diag}(\lambda_1,\lambda_2)$ and $U$ is some nonsingular matrix.
Let ${\boldsymbol \beta}\defeq U({\boldsymbol \alpha}-\mathbf{a}^*)$.
Roughly speaking, if ${\boldsymbol \alpha}$ is closed to $\mathbf{a}^*$, the Taylor expansion at $\mathbf{a}^*$ (i.e.~$H({\boldsymbol \alpha})\approx \mathbf{a}^*+ J({\boldsymbol \alpha}-\mathbf{a}^*)$) yields
\begin{align*}
\E[{\boldsymbol \beta}' \mid A] = U(\E[{\boldsymbol \alpha}'\mid A]-\mathbf{a}^*) \approx U(H({\boldsymbol \alpha})-\mathbf{a}^*) \approx \Lambda {\boldsymbol \beta}.
\end{align*}
In other words, $\beta'_i\approx \lambda_i\beta_i$.
If $\max\{|\lambda_1|,|\lambda_2|\}<1-c$ for some constant $c>0$, we might expect that  $\|{\boldsymbol \beta}\|=\Theta(\|{\boldsymbol \alpha}-\mathbf{a}^*\|)$ is likely to keep being small.
Here, we do not restrict this argument on the Best-of-three.
We will prove \cref{prop:sinkpoint}, which is a generalized version of \cref{prop:sink_3M}.
If $\max\{|\lambda_1|,|\lambda_2|\}>1+c$ for some constant $c>0$, the norm $\|{\boldsymbol \beta}\|$ seems to become large in a small number of steps.
We will exploit this insight and prove \cref{prop:escape}, which immediately implies \cref{prop:escape_3M}.
Indeed, for consensus areas (i.e.~$\mathbf{a}^*\in\{(0,0),(1,1)\}$), the induced dynamical systems of the Best-of-three and the Best-of-two satisfy $\lambda_1=\lambda_2=0$.
Then, the Taylor expansion yields $\|{\boldsymbol \alpha}' - \mathbf{a}^*\| \approx O(\|{\boldsymbol \alpha}-\mathbf{a}^*\|^2)$.
This observation and the property~\ref{pro:gomiA} lead to the proof of \cref{prop:fastconsensus_Jacob} as well as \cref{prop:fastconsensus_3M}.
\section[Derive Theorem 1.2 and 1.4]{Derive \cref{thm:phasetransition_3M,thm:worst_case_3M}}\label{sec:proofofmainthm}
Here, we prove \cref{thm:phasetransition_3M,thm:worst_case_3M} using \cref{prop:3Mvectorfield,prop:sink_3M,prop:fastconsensus_3M,prop:escape_3M}.

\begin{proof}[Proof of \cref{thm:phasetransition_3M}]
If $r>\frac{1}{7}$ and $A^{(0)}\subseteq V$ satisfies $\bigl| |A^{(0)}|-n \bigr| = \Omega(n)$, then we have $|d^{(0)}_2|=|\delta^{(0)}_2|>\kappa$ for some constant $\kappa>0$.
Next, for any constant $\epsilon>0$, \cref{prop:3Mvectorfield} implies $\absp{\mathbf{d}^{(l)}} \in B(\mathbf{d}^*_4,\epsilon)$ for some constant $l=l(\epsilon)$.
From \cref{eqn:delta_d_diff}, we have $\absp{{\boldsymbol \delta}^{(l)}}\in B(\mathbf{d}^*_4,\epsilon)$ for sufficiently large $n$. 
Set $\epsilon$ be the constant mentioned in \cref{prop:fastconsensus_3M}. 
Then, from \cref{prop:fastconsensus_3M}, it holds w.h.p.~that $\Tcons(A^{(0)}) \leq l+\Tcons(A^{(l)}) \leq O(\log\log n+\log n/\log (np))$.

If $r<\frac{1}{7}$, \cref{prop:sink_3M} yields $\Tcons(A^{(0)}) \geq \exp(\Omega(n))$ w.h.p.~for any $A^{(0)}\subseteq V$ with ${\boldsymbol \delta}^{(0)} \in B(\mathbf{d}^*_2,\epsilon)$, where $\epsilon>0$ is the constant from \cref{prop:sink_3M}.
This completes the proof of \ref{state:belowthroshold_3M}.
\end{proof}

\begin{proof}[Proof of \cref{thm:worst_case_3M}]
If $|\delta^{(0)}|=o(1)$, then \cref{prop:escape_3M} yields that $|\delta^{(\tau)}|>\kappa$ for some constant $\kappa>0$ and some $\tau=O(\log n)$.
Then, from \cref{thm:phasetransition_3M}, we have $\Tcons(A^{(\tau)})\leq O(\log\log n+\log n/\log (np))$.
Thus, $\Tcons(A^{(0)})\leq \tau + \Tcons(A^{(\tau)}) \leq O(\log n)$.
\end{proof}
%
%
\begin{figure}[h]
\begin{center}
\begin{tikzpicture}[
           normal node/.style={
               draw,
               text height=0.25cm,
               text centered
			},
           prop node/.style={
               draw,
               text height=0.2cm,
               text width=7em,               
               text centered
			},			
           tool node/.style={
               draw,
               text width=13em,
               text centered,
               text height=0.25cm,
               text centered
			}]
	\def\D{0.2};

  \node[normal node] (worst_case_3M) at (0cm, 0cm) {\cref{thm:worst_case_3M,thm:worst_case_2C}};

  \node[normal node, below = \D of worst_case_3M] (escape_3M) {\cref{prop:escape_3M,prop:escape_2C}};
  \node[normal node, right=\D of escape_3M] (sink_3M) {\cref{prop:sink_3M,prop:sink_2C}};
  \node[normal node, right=\D of sink_3M] (fastconsensus_3M) {\cref{prop:fastconsensus_3M,prop:fastconsensus_2C}};  
  \node[normal node, above=\D of fastconsensus_3M] (phasetransition_3M) {\cref{thm:phasetransition_3M,thm:phasetransition_2C}};  
  \node[normal node, right=0.5cm of fastconsensus_3M] (3Mvectorfield) {\cref{prop:3Mvectorfield,prop:2Cvectorfield}};
  
  \node[normal node, below = \D of escape_3M] (escape) {\cref{prop:escape}}; 
  \node[normal node, below = \D of sink_3M] (sinkpoint) {\cref{prop:sinkpoint}};   
  \node[normal node, below = \D of fastconsensus_3M.south west, anchor = north west] (fastconsensus) {\cref{prop:fastconsensus_Jacob}};

  \node[normal node, below = \D of fastconsensus] (fgood) {\cref{thm:fgood,thm:approximation_vectorfield_general}};
  \node[normal node, below = \D of fgood] (keylemma) {\cref{lem:WIdiscrepancy,lem:WEWdiscrepancy,lem:EWIdiscrepancy,lem:technique,lem:maxmindegree,lem:WIdiscrepancy2}};

  \node[draw, text width=11.7em, left = 1 of keylemma] (probtools) {The Janson inequality and the Kim-Vu concentration};
  \node[draw, below = 1.6 of 3Mvectorfield] (competitive) {Competitive dynamical systems};

  \coordinate[right=0.5cm] (D) at (phasetransition_3M.east);
  \coordinate[above = \D+0.25] (E) at (3Mvectorfield.north);
  \coordinate[below = \D+0.5] (A) at (sinkpoint);
  \coordinate[below = \D+0.5] (B) at (escape);
  \coordinate[below = 2.15] (C) at (D);  

  \draw[->] (phasetransition_3M) to (worst_case_3M);
  \draw[->] (3Mvectorfield) -- (E) -- (phasetransition_3M);
  \draw[->] (escape_3M) to (worst_case_3M);
  \draw[->] (sink_3M) to (phasetransition_3M.south west); 
  \draw[->] (fastconsensus_3M) to (phasetransition_3M);
    
  \draw[->] (sinkpoint) to (sink_3M);
  \draw[->] (escape) to (escape_3M); 
  \draw[->] (fastconsensus) to (fastconsensus_3M); 
  
  \draw[->] (A) -- (B) -- (escape.south);
  \draw[->] (fgood.west) -- (A) -- (sinkpoint.south);
  \draw[->] (fgood.north) to (fastconsensus.south);
  \draw[-] (fgood.east) -- (C) -- (D);
  \draw[->] (keylemma) to (fgood);
  
  \draw[->] (probtools) to (keylemma);
  \draw[->] (competitive.north) -- (3Mvectorfield.south);
\end{tikzpicture}
\caption{The organization of our proofs. \label{fig:organization}}
\end{center}
\end{figure}
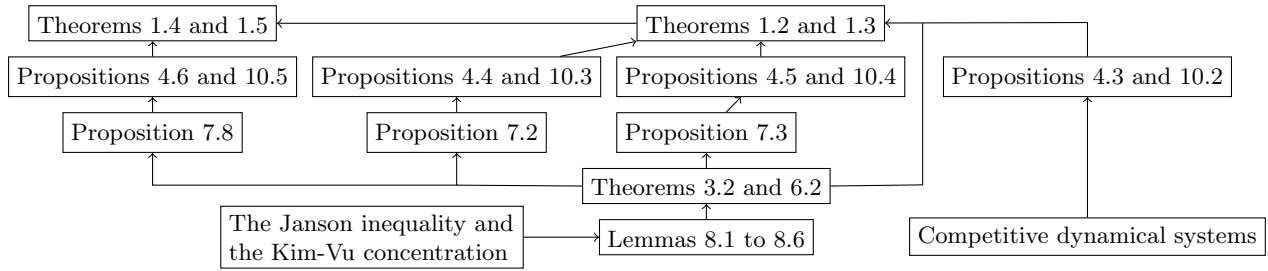
%


\section{Polynomial voting processes
\label{sec:polynomialvoting}}
Using \cref{thm:fgood}, we can prove the same results as \cref{thm:approximation_vectorfield} for various models including the Best-of-two.
Hence, in this paper, we do not restrict our interest to the Best-of-three: Instead, we prove general results that hold for \emph{polynomial voting process} on $G(2n,p,q)$.

\begin{definition}[$(f_1,f_2)$-polynomial voting process]
Let $G=(V,E)$ be a graph where each vertex holds an opinion from $\{1,2\}$.
Let $f_1,f_2: [0,1]\to [0,1]$ be polynomials.
For the set $A$ of vertices with opinion $1$,
let $A'$ denote the set of vertices with opinion $1$ after an update.
In the $(f_1,f_2)$-polynomial voting process, $A'= \{v\in V: X_v=1\}$ where $(X_v)_{v\in V}$ are independent binary random variables satisfying
\begin{align*}
\Pr[X_v=1]&=
\begin{cases}
f_1\left(\frac{\deg_A(v)}{\deg(v)}\right) &(\textrm{$v\in A$, i.e.~$v$ has opinion $1$})\\
f_2\left(\frac{\deg_A(v)}{\deg(v)}\right) &(\textrm{$v\in V\setminus A$, i.e.~$v$ has opinion $2$})
\end{cases}.
\end{align*}
\end{definition}
In other words, 
\begin{align*}
\Pr[v\in A'\mid A,\text{$v$ has opinion $i$}] = f_i\left(\frac{\deg_A(v)}{\deg(v)}\right) 
\end{align*}
for $i=1,2$.
Polynomial voting process includes several known voting models including the Best-of-two, the Best-of-three, and so on.
For example, $f_1(x)=f_2(x)=f^{\Bthree}(x)=3x^2-2x^3$ for the Best-of-three.
For the Best-of-two, $f_1(x)=2x(1-x)$ and $f_2(x)=x^2$.
%
We can define induced dynamical system for any polynomial voting process on $G(2n,p,q)$ via the following result:
\begin{theorem}[\cref{thm:approximation_vectorfield} for polynomial voting processes]\label{thm:approximation_vectorfield_general}
Let $f_1$ and $f_2$ be polynomials with constant degree.
Consider an $(f_1,f_2)$-polynomial voting process, on an $f_1$-good and $f_2$-good $G(2n,p,q)$ starting with vertex set $A^{(0)}\subseteq V$ of opinion $1$.
Let $(A^{(t)})_{t=0}^{\infty}$ be a sequence of random vertex subsets defined by $A^{(t+1)}\defeq (A^{(t)})'$ for each $t\geq 0$.
Let $({\boldsymbol \alpha}^{(t)})_{t=0}^{\infty}$, where ${\boldsymbol \alpha}^{(t)}=(\alpha^{(t)}_1,\alpha^{(t)}_2)$ and $\alpha^{(t)}_i=|A^{(t)}\cap V_i|/n$.
Define a mapping $H=(H_1,H_2)$ as
\begin{align*}
H_i(a_1,a_2)=a_i f_1\left(\frac{a_i+ra_{3-i}}{1+r}\right)
+
(1-a_i) f_2\left(\frac{a_i+ra_{3-i}}{1+r}\right) \,\,\,\text{for $i=1,2$}.
\end{align*}
Define $(\mathbf{a}^{(t)})_{t=0}^\infty$ as $\mathbf{a}^{(0)}={\boldsymbol \alpha}^{(0)}$ and $\mathbf{a}^{(t+1)}=H(\mathbf{a}^{(t)})$ for each $t\geq 0$. 
Then, 
there exists a constant $C>0$ such that
\begin{align*}
\forall 0\leq t\leq n^{o(1)},\forall A^{(0)}\subseteq V\,:\,\Pr\left[\|{\boldsymbol \alpha}^{(t)}-\mathbf{a}^{(t)}\|_\infty \leq C^t \left(\frac{1}{\sqrt{np}}+\sqrt{\frac{\log n}{n}}\right)\right] \geq 1-n^{-\Omega(1)}.
\end{align*}
\end{theorem}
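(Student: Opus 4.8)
The plan is to prove a uniform \emph{one-step approximation}---on an $f_1$-good and $f_2$-good graph, the random update $\alpha^{(t+1)}_i$ lies within $O(\epsilon_n)$ of $H_i({\boldsymbol\alpha}^{(t)})$ with probability $1-n^{-\Omega(1)}$, uniformly over the current configuration, where $\epsilon_n\defeq \frac{1}{\sqrt{np}}+\sqrt{\frac{\log n}{n}}$---and then to iterate it along the orbit of $H$, absorbing the per-step error propagation into the factor $C^t$ via the Lipschitz continuity of $H$ on $[0,1]^2$. This is exactly the scheme behind \cref{thm:approximation_vectorfield}; the only new ingredient is producing the one-step bound for two polynomials $f_1,f_2$ rather than one.

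First I would establish the one-step bound. Writing $A=A^{(t)}$ and conditioning on $G(2n,p,q)$, independence and the prescribed means of the $X_v$ give $\E[|A'_i|\mid A]=\sum_{v\in A_i}f_1\!\left(\frac{\deg_A(v)}{\deg(v)}\right)+\sum_{v\in V_i\setminus A_i}f_2\!\left(\frac{\deg_A(v)}{\deg(v)}\right)$. Applying property \ref{pro:gomiN} of $f_1$-goodness with $S=A$ and of $f_2$-goodness with $S=V\setminus A$, and using $\frac{|A_i|p+|A_{3-i}|q}{n(p+q)}=\frac{\alpha_i+r\alpha_{3-i}}{1+r}$, yields
\[
\E[|A'_i|\mid A]=|A_i|f_1\!\left(\tfrac{\alpha_i+r\alpha_{3-i}}{1+r}\right)+(n-|A_i|)f_2\!\left(\tfrac{\alpha_i+r\alpha_{3-i}}{1+r}\right)\pm O\!\left(\sqrt{\tfrac{n}{p}}\right)=nH_i({\boldsymbol\alpha})\pm O\!\left(\sqrt{\tfrac{n}{p}}\right),
\]
so $\bigl|\E[\alpha'_i\mid A]-H_i({\boldsymbol\alpha})\bigr|=O(1/\sqrt{np})$ for \emph{every} $A\subseteq V$. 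Since $|A'_i|=\sum_{v\in V_i}X_v$ is a sum of independent $\{0,1\}$ variables, the Hoeffding bound (as in \cref{eqn:A'concentration}) gives $\Pr\bigl[|\alpha'_i-\E[\alpha'_i\mid A]|>c\sqrt{\log n/n}\bigr]\le n^{-\Omega(1)}$, again uniformly over $A$. Combining, $|\alpha'_i-H_i({\boldsymbol\alpha})|\le c_0\epsilon_n$ with probability $1-n^{-\Omega(1)}$ for an absolute constant $c_0$, uniformly in the current configuration.

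Second I would run the induction on $t$. Observe that $H$ maps $[0,1]^2$ into itself ($H_i$ is a convex combination of $f_1(\cdot),f_2(\cdot)\in[0,1]$), is Lipschitz there with an absolute constant $L$ (a polynomial of bounded degree with bounded coefficients on a compact set; see \cref{sec:lipschitzcondition}), and that ${\boldsymbol\alpha}^{(t)},\mathbf a^{(t)}\in[0,1]^2$ for all $t$. Fix $A^{(0)}$ and $T=n^{o(1)}$, and let $\mathcal E$ be the event that $G(2n,p,q)$ is $f_1$- and $f_2$-good and that the one-step bound holds for the realized $A^{(t)}$ and $i=1,2$ at every step $0\le t<T$. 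By \cref{thm:fgood} goodness fails with probability $n^{-\Omega(1)}$, and by the uniform-in-configuration one-step bound each of the remaining $\le 2T$ events fails with conditional probability $n^{-\Omega(1)}$ given the history, so $\Pr[\mathcal E^c]\le n^{-\Omega(1)}$; the restriction $T=n^{o(1)}$ is what keeps this union bound negligible. On $\mathcal E$, take $C\ge \max\{c_0+L,1\}$ and induct: the case $t=0$ is immediate since $\mathbf a^{(0)}={\boldsymbol\alpha}^{(0)}$, and if $\|{\boldsymbol\alpha}^{(t)}-\mathbf a^{(t)}\|_\infty\le C^t\epsilon_n$ then
\[
\|{\boldsymbol\alpha}^{(t+1)}-\mathbf a^{(t+1)}\|_\infty\le \|{\boldsymbol\alpha}^{(t+1)}-H({\boldsymbol\alpha}^{(t)})\|_\infty+\|H({\boldsymbol\alpha}^{(t)})-H(\mathbf a^{(t)})\|_\infty\le c_0\epsilon_n+LC^t\epsilon_n\le C^{t+1}\epsilon_n,
\]
which closes the induction and gives the claimed bound.

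The argument is essentially routine once \cref{thm:fgood} is in hand; the point that needs care is the probabilistic bookkeeping in the iteration---the one-step concentration must be stated \emph{uniformly} over the current configuration so that it composes cleanly with the adaptive choice of $A^{(t)}$ and with the deterministic $f$-good estimates---and the range $t\le n^{o(1)}$ is forced both by the union bound over steps and by the need to keep the amplified error $C^t\epsilon_n$ of size $o(1)$, so that all iterates remain in $[0,1]^2$ where the Lipschitz estimate for $H$ is valid.
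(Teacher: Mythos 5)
Your proof is correct and follows essentially the same route as the paper's: a one-step bound obtained from property \ref{pro:gomiN} (applied with $S=A$ for $f_1$ and $S=V\setminus A$ for $f_2$) together with Hoeffding, then propagation via the Lipschitz constant of $H$ on $[0,1]^2$ and induction on $t$. Your treatment is slightly more explicit than the paper's about the per-step union bound and about why $H$ preserves $[0,1]^2$, but these are presentation differences rather than a genuinely different argument.
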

Remark that the mapping $H$ of \cref{thm:approximation_vectorfield_general} is the induced dynamical system.
\begin{proof}
For any polynomial voting process, the cardinality $|A_i'|$ can be written as the sum of independent random variables:
\begin{align*}
|A_i'|=\sum_{v\in V_i}X_v.
\end{align*}
Thus, if we fix $A\subseteq V$, the Hoeffding bound (\cref{lem:Hoeffding}) implies that \cref{eqn:A'concentration} holds w.h.p.
Since
\begin{align*}
\E[|A_i'|]
&=\sum_{v\in V_i}\E[X_v]
=\sum_{v\in A_i}f_1\left(\frac{\deg_A(v)}{\deg(v)}\right)+\sum_{v\in V\setminus A_i}f_2\left(\frac{\deg_A(v)}{\deg(v)}\right), 
\end{align*}
the property~\ref{pro:gomiN} and \cref{eqn:A'concentration} lead to
\begin{align*}
\| {\boldsymbol \alpha'}- H({\boldsymbol \alpha}) \|_\infty \leq C_1\left(\frac{1}{\sqrt{np}}+\sqrt{\frac{\log n}{n}}\right)
\end{align*}
for some constant $C_1>0$.

Note that the function $H$ satisfies the Lipschitz condition.
Hence, a positive constant $C_2$ exists such that
\begin{align*}
\|H(\mathbf{x})-H(\mathbf{y})\|_\infty \leq C_2 \|\mathbf{x}-\mathbf{y}\|_\infty
\end{align*}
holds for any $\mathbf{x},\mathbf{y}\in[0,1]^2$ (see \cref{sec:lipschitzcondition}).
Let ${\boldsymbol \alpha}^{(t)}=(\alpha^{(t)}_1,\alpha^{(t)}_2)$ be the vector-valued stochastic process and $\mathbf{a}^{(t)}=(\mathbf{a}^{(t)}_1,\mathbf{a}^{(t)}_2)$ be the vector sequence given in \cref{eqn:veca_def}.
Then, we have
\begin{align*}
\|{\boldsymbol \alpha}^{(t)}-\mathbf{a}^{(t)}\|_\infty&=
\|{\boldsymbol \alpha}^{(t)} - H({\boldsymbol \alpha}^{(t-1)}) + H({\boldsymbol \alpha}^{(t-1)}) - H(\mathbf{a}^{(t-1)}) \|_\infty \\
&\leq
\|{\boldsymbol \alpha}^{(t)} - H({\boldsymbol \alpha}^{(t-1)}) \|_\infty + C_2\|{\boldsymbol \alpha}^{(t-1)} - \mathbf{a}^{(t-1)} \|_\infty \\
&\leq
C_2\|{\boldsymbol \alpha}^{(t-1)} - \mathbf{a}^{(t-1)} \|_\infty + C_1\left(\frac{1}{\sqrt{np}}+\sqrt{\frac{\log n}{n}}\right) \\
&\leq C^t\left(\frac{1}{\sqrt{np}}+\sqrt{\frac{\log n}{n}}\right),
\end{align*}
where $C$ is sufficiently large constant.
\end{proof}

\section{Results of general induced dynamical systems with applications to the Best-of-three}
\label{sec:SJacobian}
Now let us focus on the orbit $({\boldsymbol \alpha}^{(t)})_{t=1}^\infty$ such that $H({\boldsymbol \alpha}^{(0)})={\boldsymbol \alpha}^{(0)}$ holds, where $H$ is the induced dynamical system.
In this case, \cref{thm:approximation_vectorfield_general} does not provide enough information about the dynamics.
In dynamical system theory, a natural approach for the local behavior around fixed points is to consider the \emph{Jacobian matrix}.
Recall that, the Jacobian matrix $J$ of a function
$H:\mathbf{x} \mapsto (H_1(\mathbf{x}),H_2(\mathbf{x}))$ at $\mathbf{a}\in \mathbb{R}^2$ is a $2 \times 2$ matrix given by
\begin{align*}
J= \left(\frac{\partial H_i}{\partial x_j}(\mathbf{a})\right)_{i,j\in [2]}.
\end{align*}
In the following subsections, we will investigate the local dynamics from the viewpoint of the maximum singular value and eigenvalue of the Jacobian matrix.

In contrast to the local dynamics, it is quite difficult to predicate the orbit of general dynamical systems since some of them exhibits so-called chaos phenomenon.
Therefore, the proof of the orbit convergence (e.g.~\cref{prop:3Mvectorfield}) is not trivial.
Fortunately, the induced dynamical system of the Best-of-three on $G(2n,p,q)$ is \emph{competitive}, a well-known nice property for predicting the future orbit~\cite{HS05}~(see \cref{sec:competitive_system} for definition).
In \cref{sec:apply_to_3M}, we use known results of competitive dynamical systems to show \cref{prop:3Mvectorfield}.
It should be noted that the same argument leads to the orbit convergence for the Best-of-two as we shall discuss in \cref{sec:2Choices}.
\subsection{Sink point}
We begin with defining the notion of sink points.
Recall that the singular value of a matrix $A$ is the positive square root of the eigenvalue of $A^{\top}A$ (see \cref{sec:linearalgebra} for formal definition and basic properties).

\begin{definition}[sink point]
Consider a dynamical system $H$.
A fixed point $\mathbf{a}^*\in\mathbb{R}^2$ is \emph{sink} if the Jacobian matrix $J$ at $\mathbf{a}^*$ satisfies $\sigma_{\max}<1$, where $\sigma_{\max}$ is the largest singular value of $J$.
\end{definition}

\begin{proposition} \label{prop:sinkpoint}
Consider an $(f_1,f_2)$-polynomial voting process on an $f_1$-good and $f_2$-good $G(2n,p,q)$ such that $r=\frac{q}{p}$ is a constant.
Let $H$ be the induced dynamical system.
Then, for any sink point $\mathbf{a}^*$ and any sufficiently small $\epsilon=\omega(\sqrt{1/np})$,
\begin{align*}
\Pr\left[{\boldsymbol \alpha}'\not\in B(\mathbf{a}^*,\epsilon)\,\middle|\,{\boldsymbol \alpha}\in B(\mathbf{a}^*,\epsilon)\right]\leq \exp(-\Omega(\epsilon^2 n))
\end{align*}
holds.

In particular, let 
\begin{align*}
\tau:=\inf\left\{t\in\mathbb{N}:{\boldsymbol \alpha}^{(t)}\not\in B(\mathbf{a}^*,\epsilon)\right\}
\end{align*}
be a stopping time.
Then, $\tau\geq\exp(\Omega(\epsilon^2 n))$ holds w.h.p.~conditioned on ${\boldsymbol \alpha}^{(0)}\in B(\mathbf{a}^*,\epsilon)$ for any $\epsilon$ satisfying $\epsilon=\omega(\max\{1/\sqrt{np},\sqrt{\log n/n}\})$.
\end{proposition}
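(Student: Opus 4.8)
The plan is to track the process in the eigen-coordinates of the Jacobian $J$ at $\mathbf{a}^*$ and show that, as long as ${\boldsymbol \alpha}$ stays in a small ball around $\mathbf{a}^*$, the conditional expectation of the displacement contracts by a constant factor while the fluctuation is $O(\sqrt{\log n/n})$-small with exponentially good probability. First I would write $J = U^{-1}\Lambda U$ (or, if $J$ is not diagonalizable, pass to the real Schur form, which still gives $\|\Lambda\| = \sigma_{\max} < 1$ up to an arbitrarily small perturbation of the norm; alternatively one may work directly with the operator norm of $J$ since $\sigma_{\max}<1$ means $\|J\mathbf{v}\|\le\sigma_{\max}\|\mathbf{v}\|$ in the Euclidean norm). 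Set ${\boldsymbol \beta} \defeq U({\boldsymbol \alpha}-\mathbf{a}^*)$. Using the Taylor expansion $H({\boldsymbol \alpha}) = \mathbf{a}^* + J({\boldsymbol \alpha}-\mathbf{a}^*) + O(\|{\boldsymbol \alpha}-\mathbf{a}^*\|^2)$ together with the approximation $\E[{\boldsymbol \alpha}'\mid A] = H({\boldsymbol \alpha}) \pm O(1/\sqrt{np})$ coming from property \ref{pro:gomiN} (cf.\ \cref{eq:goodalphai}), we get
\begin{align*}
\E[{\boldsymbol \beta}'\mid A] = \Lambda {\boldsymbol \beta} + O(\|{\boldsymbol \beta}\|^2) + O(1/\sqrt{np}).
\end{align*}
Choosing $\epsilon$ small enough that the quadratic term is dominated by the contraction margin, and using $\epsilon=\omega(1/\sqrt{np})$, this gives $\|\E[{\boldsymbol \beta}'\mid A]\| \le \rho\,\epsilon$ for some constant $\rho<1$ whenever ${\boldsymbol \alpha}\in B(\mathbf{a}^*,\epsilon)$.

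Next I would control the deviation of ${\boldsymbol \alpha}'$ from its conditional mean. Since $|A_i'| = \sum_{v\in V_i}X_v$ is a sum of independent indicators, the Hoeffding bound gives $\big||A_i'|-\E[|A_i'|\mid A]\big| = O(\sqrt{n \log(1/\eta)})$ with probability $\ge 1-\eta$; taking $\eta = \exp(-\Theta(\epsilon^2 n))$ yields $\|{\boldsymbol \alpha}'-\E[{\boldsymbol \alpha}'\mid A]\|_\infty = O(\epsilon)$ with probability $1-\exp(-\Omega(\epsilon^2 n))$, with a small enough implied constant by shrinking $\epsilon$ or sharpening the Hoeffding constant. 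Combining the two bounds, on this high-probability event $\|{\boldsymbol \beta}'\| \le \rho\epsilon + O(\epsilon) \le \epsilon$ provided the additive constant is tuned below $(1-\rho)$, which forces ${\boldsymbol \alpha}'\in B(\mathbf{a}^*,\epsilon)$ (after converting back through $U$, which only costs a fixed condition-number factor — so one actually runs the argument with radius $\epsilon/\|U\|$ and widens back). This establishes the one-step escape bound $\Pr[{\boldsymbol \alpha}'\notin B(\mathbf{a}^*,\epsilon)\mid {\boldsymbol \alpha}\in B(\mathbf{a}^*,\epsilon)] \le \exp(-\Omega(\epsilon^2 n))$.

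For the ``in particular'' statement, a union bound over the first $T = \exp(c\epsilon^2 n)$ steps (for a small constant $c$) shows that with probability $1 - T\exp(-\Omega(\epsilon^2 n)) = 1 - \exp(-\Omega(\epsilon^2 n))$ the process never leaves $B(\mathbf{a}^*,\epsilon)$ within $T$ steps, i.e.\ $\tau \ge \exp(\Omega(\epsilon^2 n))$ w.h.p.; here the extra requirement $\epsilon = \omega(\sqrt{\log n/n})$ is exactly what makes $\exp(\Omega(\epsilon^2 n))$ superpolynomial so that ``w.h.p.'' is meaningful, and also what lets the approximation \cref{eqn:delta_d_diff}/\cref{eq:goodalphai} error term $1/\sqrt{np}+\sqrt{\log n/n}$ stay below $\epsilon$. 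Note one subtlety: the one-step bound must be applied conditionally at each step given the current configuration $A^{(t)}$, which is legitimate because the $f$-good properties \ref{pro:gomiN}, \ref{pro:gomiA} hold simultaneously for \emph{all} $A\subseteq V$ on the (w.h.p.) good graph, and the Hoeffding bound is applied to the fresh independent randomness of the update — so we may chain the single-step estimates without re-exposing the graph.

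The main obstacle I anticipate is the non-diagonalizable / complex-eigenvalue case: when $J$ has a Jordan block or a complex pair, a naive diagonalization introduces a condition number $\|U\|\|U^{-1}\|$ that could in principle depend on $\epsilon$ or blow up, and the iteration $\|\Lambda^k\|$ can transiently exceed $1$ even when $\sigma_{\max}<1$. The clean fix, which I would adopt, is to avoid diagonalization entirely and argue directly with the Euclidean operator norm: $\sigma_{\max}(J)<1$ means $\|J\mathbf{v}\|_2 \le \sigma_{\max}\|\mathbf{v}\|_2$ for every $\mathbf{v}$, so the contraction of $\|{\boldsymbol \alpha}-\mathbf{a}^*\|_2$ is immediate in one step without any change of basis, and the only place the quadratic Taylor remainder and the two noise terms enter is additively. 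This sidesteps the Jordan-block issue and also keeps all constants genuinely universal in $n$. The remaining routine work is just bookkeeping: verifying that $\epsilon$ can be chosen small enough (as a function of the fixed $r$ and the Lipschitz/Hessian bounds on $H$) that the quadratic term is at most $\tfrac{1-\sigma_{\max}}{4}\epsilon$, and that the Hoeffding and property-\ref{pro:gomiN} errors are each at most $\tfrac{1-\sigma_{\max}}{4}\epsilon$ under the stated growth conditions on $\epsilon$.
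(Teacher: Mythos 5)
Your proposal is correct and, after your self-correction to argue directly with the Euclidean operator norm of $J$ rather than passing to eigen-coordinates, it matches the paper's proof in all essentials. The paper factors the argument into two lemmas — \cref{lem:sinkdistance}, which uses exactly your Taylor-expansion-plus-$\sigma_{\max}$ contraction to show that $H$ maps $B(\mathbf{a}^*,\epsilon)$ to a point at $\ell_\infty$-distance $\geq K\epsilon$ from the complement of the ball, and \cref{lem:constantfar}, which is your Hoeffding-plus-property-\ref{pro:gomiN} bound on $\|{\boldsymbol\alpha}'-H({\boldsymbol\alpha})\|_\infty$ — and then combines them and applies the same union bound over $\exp(O(\epsilon^2 n))$ steps; your observation about the role of $\epsilon=\omega(\sqrt{\log n/n})$ in making the final probability superpolynomially small is also precisely the point the paper makes.
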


\subsection{Fast consensus}
Suppose that the Jacobian matrix at the consensus point (i.e.~$\mathbf{\alpha}\in\{(0,0),(1,1)\}$ is the all-zero matrix.
Then, we claim that the polynomial voting process reaches consensus within a small number of iterations if the initial set $A^{(0)}$ has small size.

\begin{proposition} \label{prop:fastconsensus_Jacob}
Consider an $(f_1,f_2)$-polynomial voting process on an $f_1$-good and $f_2$-good $G(2n,p,q)$ such that $\frac{p}{q}$ is a constant.
Suppose that the Jacobian matrix at the point ${\boldsymbol \alpha}=(0,0)$ is the all-zero matrix.

Then, there exists a constant $C_1,C_2,\delta>0$ such that
\begin{align*}
\Pr\left[\Tcons(A)\leq C_1\left(\log\log n+\frac{\log n}{\log np}\right)\right]\geq 1-n^{-C_2}
\end{align*}
for all $A\subseteq V$ satisfying $|A|\leq \delta n$.
\end{proposition}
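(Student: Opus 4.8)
The plan is to reduce everything to a recursion for the scalar $m^{(t)}\defeq|A^{(t)}|$. The hypothesis is really a statement about $f_1,f_2$: writing $w_i(\mathbf a)=\frac{a_i+ra_{3-i}}{1+r}$ and $H_i(\mathbf a)=a_if_1(w_i(\mathbf a))+(1-a_i)f_2(w_i(\mathbf a))$, a one-line computation of the Jacobian of $H$ at $(0,0)$ shows that (together with $f_2(0)=0$, i.e.\ $\emptyset$ being absorbing) the all-zero hypothesis forces $f_1(0)=f_2(0)=f_2'(0)=0$; for the Best-of-two and Best-of-three this holds outright. Hence near the origin $f_1(x)=\Order(x)$ and $f_2(x)=\Order(x^2)$, and since $w_i(\mathbf a)\le a_1+a_2$ this gives $H_i(\mathbf a)=\Order((a_1+a_2)^2)$ on a small box around $0$. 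Applying property~\ref{pro:gomiA} with $(f,S)=(f_1,A)$ and with $(f,S)=(f_2,V\setminus A)$ and summing over $i\in\{1,2\}$ then yields, for every $A$ with $|A|\le\delta n$ (a small constant $\delta$),
\begin{equation*}
\E\bigl[\,|A'|\mid A\,\bigr]\ \le\ n\bigl(H_1({\boldsymbol\alpha})+H_2({\boldsymbol\alpha})\bigr)+\Order\!\Bigl(|A|\sqrt{\tfrac{\log n}{np}}\Bigr)\ \le\ K_1\frac{|A|^2}{n}+K_2|A|\sqrt{\frac{\log n}{np}}.
\end{equation*}
Because each $|A_i'|$ is a sum of independent indicators, a Chernoff bound turns this into $m^{(t+1)}\le K_1(m^{(t)})^2/n+K_2m^{(t)}\sqrt{\log n/(np)}+K_3\log n$, valid w.h.p.\ at a single step; as the whole argument uses only $\Order(\log\log n+\log n/\log(np))=n^{o(1)}$ steps, a union bound makes this recursion hold throughout.

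For the bulk of the decay, pick $\delta$ with $K_1\delta<\tfrac12$ and recall $\sqrt{\log n/(np)}=o(1)$ since $p=\omega(\log n/n)$. While $m^{(t)}$ is above the scale at which the lower-order terms bite --- concretely while $m^{(t)}\ge C\max\{\,n\sqrt{\log n/(np)},\ \log n\,\}$ --- the recursion reads $m^{(t+1)}\le 2K_1(m^{(t)})^2/n$, i.e.\ $\tfrac{2K_1}{n}m^{(t+1)}\le\bigl(\tfrac{2K_1}{n}m^{(t)}\bigr)^2$, so $m^{(t)}$ collapses doubly-exponentially from $\delta n$ down to that scale within $\Order(\log\log n)$ steps. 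In the dense regime $np=n^{\Omega(1)}$ this already finishes the job: then $\sqrt{\log n/(np)}=n^{-\Omega(1)}$, so below that scale $\E[|A'|\mid A]$ is polynomially small and $A=\emptyset$ w.h.p., giving $\Tcons(A)=\Order(\log\log n)$.

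The genuinely delicate part is the tail in the sparse regime $np=\omega(\log n)$ (with $np$ possibly only slightly above $\log n$): there the factor $\sqrt{\log n/(np)}$ supplied by property~\ref{pro:gomiA} is only barely $o(1)$, so the recursion alone cannot empty $A$ within the allotted $\Order(\log n/\log(np))$ steps. Here I would abandon the mean-field estimate~\cref{eq:goodalphai} and exploit the local structure of $G(2n,p,q)$ for small $A$: since $\deg(v)=\Theta(np)$ and the maximum codegree is $\Order(np^2+\log n)$, an opinion-$1$ vertex can influence only its $\Theta(np)$ neighbours, and a witness-tree / support-tracing argument --- bounding $\sum_v\Pr[v\in A^{(\tau)}]$ by summing over depth-$\tau$ branching witness structures rooted at $v$ whose leaves lie in $A^{(0)}$, each internal node contributing a factor $\Order(1/\deg(v)^2)$ for sampling a fixed pair of its neighbours --- shows that once $|A|$ is below a suitable polynomial threshold the total witness-mass contracts by a factor $\Order(1/(np))$ per step and drops below $n^{-C_2}$ after $\Order(\log n/\log(np))$ steps, which is precisely where the diameter scale enters. (For the two-opinion processes one must also allow ``spines'' of vertices that merely retain opinion~$1$, i.e.\ local branching $1$; a spine of length $\ell$ occurs with probability $\Order((np)^{-2\ell})$, since consecutive vertices must repeatedly sample one another, so spines do not spoil the bound.) Concatenating the doubly-exponential phase with this tail gives $\Tcons(A)=\Order(\log\log n+\log n/\log(np))$ w.h.p. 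I expect this tail analysis --- bridging the point where the doubly-exponential decay stalls to $A=\emptyset$, by extracting an $\Order(1/(np))$ contraction from the bounded degrees and codegrees and handling the spine case --- to be the main obstacle; the Jacobian computation, the doubly-exponential phase, and the union bound are routine once the recursion is in place.
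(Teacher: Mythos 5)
Your reduction to the scalar recursion, the derivation of $\E[|A'|\mid A]\le K_1|A|^2/n+K_2|A|\sqrt{\log n/(np)}$ from \ref{pro:gomiA}, and the doubly-exponential phase down to the scale $\Theta(\epsilon n)$ with $\epsilon=\Theta(\sqrt{\log n/(np)})$ all match the paper (which packages the recursion as \cref{prop:fastconsensus} and instantiates it as \cref{prop:fastconsensus_Jacob}). The point where you diverge — and where your plan has a real gap — is the tail. You correctly observe that a per-step Chernoff bound leaves a $K_3\log n$ floor and conclude that ``the recursion alone cannot empty $A$,'' and you then propose a witness-tree/support-tracing argument that you do not carry out and that would require substantial new estimates (codegree bounds, spine counting, etc.). This is not how the paper proceeds, and it misses a much simpler device: in the regime $|A|\le(\epsilon/C)n$ one has the pure multiplicative bound $\E[|A'|\mid A]\le 2\epsilon|A|$, and one should not apply Chernoff at all. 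Instead, after checking (via one Chernoff application) that $|A^{(t)}|$ stays below $(\epsilon/C)n$ for $n^{o(1)}$ steps, iterate the conditional expectation through the tower property to get $\E[|A^{(\tau_2)}|]\le(2\epsilon)^{\tau_2}\cdot\tfrac{\epsilon n}{C}$, and then invoke Markov's inequality in the form $\Pr[A^{(\tau_2)}\neq\emptyset]=\Pr[|A^{(\tau_2)}|\ge1]\le\E[|A^{(\tau_2)}|]$. Choosing $\tau_2=O(\log n/\log\epsilon^{-1})=O(\log n/\log(np))$ makes the right-hand side $n^{-\Omega(1)}$, which is exactly the failure probability needed. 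Markov is lossless at the threshold $|A|=0$ vs.\ $|A|\ge1$, so there is no $\log n$ floor to fight; the witness-tree machinery is unnecessary. You should replace the entire last paragraph of your proposal with this two-line Markov argument.
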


To show \cref{prop:fastconsensus_Jacob}, we prove the following result which might be an independent interest:
\begin{proposition} \label{prop:fastconsensus}
Consider a polynomial voting process on a graph $G$ of $n$ vertices.
Suppose that there exist absolute constants $C,\delta>0$ and a function $\epsilon=\epsilon(n)=o(1)$ such that
\begin{align*}
\E_{}[|A'|]\leq \frac{C|A|^{2}}{n}+\epsilon|A|
\end{align*}
holds for all $A\subseteq V$ satisfying $|A|\leq \delta n$.

Then, there exist positive constants $\delta',C',C''$ such that
\begin{align*}
\Pr\left[\Tcons(A)\leq C'\left(\log\log n+\frac{\log n}{\log\epsilon^{-1}}\right) \right]\geq 1-n^{-C''}
\end{align*}
holds for all $A\subseteq V$ satisfying $|A|\leq \delta'n$.
\end{proposition}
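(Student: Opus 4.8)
The plan is to follow the single scalar $|A^{(t)}|$ and show that, on a good event, it first shrinks doubly exponentially (a ``quadratic'' phase) and then shrinks geometrically with ratio $O(\epsilon)$ all the way to $0$ (a ``linear'' phase). The first step is to convert the hypothesis into a pointwise recursion. Because the process is a polynomial voting process, $|A^{(t+1)}|=\sum_{v\in V}X_v$ is, conditioned on $A^{(t)}$, a sum of independent $\{0,1\}$ variables, so Chernoff's bound gives $|A^{(t+1)}|\le 2\,\E[\,|A^{(t+1)}|\mid A^{(t)}\,]+K\log n$ with probability $1-n^{-\Omega(1)}$ for a suitable constant $K$. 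Feeding in the hypothesis $\E[|A'|\mid A]\le C|A|^2/n+\epsilon|A|$ and taking a union bound over the first $O(\log n)$ steps yields, on an event of probability $1-n^{-\Omega(1)}$,
\[
|A^{(t+1)}|\;\le\;\frac{4C}{n}\,|A^{(t)}|^2\;+\;4\epsilon\,|A^{(t)}|\;+\;K\log n ,
\]
valid whenever $|A^{(t)}|\le\delta n$; a short induction (choosing $\delta'$ small enough that $4C\delta'+4\epsilon<1$) shows that starting from $|A^{(0)}|\le\delta' n$ this last constraint is maintained automatically.

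For the quadratic phase, set $M:=\max\{5\epsilon n/C,\,C_0\sqrt{n\log n}\}$ with $C_0$ a large constant. While $|A^{(t)}|\ge M$, both additive error terms are dominated by $\frac{4C}{n}|A^{(t)}|^2$, so the recursion collapses to $|A^{(t+1)}|\le\frac{12C}{n}|A^{(t)}|^2$; rescaling $y_t:=\frac{12C}{n}|A^{(t)}|$ turns this into $y_{t+1}\le y_t^2$ with $y_0\le 12C\delta'\le\frac12$, hence $y_t\le 2^{-2^t}$. Thus $|A^{(t)}|$ drops below $M$ (or hits $0$) within $O(\log\log(n/M))=O(\log\log n)$ steps.

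For the linear phase and endgame we continue from $|A^{(t)}|<M$. If $M=C_0\sqrt{n\log n}$ then $\epsilon=O(\sqrt{\log n/n})$, so $\log\epsilon^{-1}=\Omega(\log n)$ and the target bound is just $O(\log\log n)$; here $\E[|A^{(t+1)}|\mid A^{(t)}]=O(\log n)$, Chernoff gives $|A^{(t+1)}|=O(\log n)$ w.h.p., and one more step makes $\E[|A^{(t+2)}|]=n^{-\Omega(1)}$, so Markov's inequality finishes. Otherwise $M=5\epsilon n/C$, so $\frac{4C}{n}|A^{(t)}|^2=O(\epsilon|A^{(t)}|)$ and the recursion reads $|A^{(t+1)}|\le O(\epsilon)|A^{(t)}|+K\log n$; since $\epsilon=o(1)$ this contracts $|A^{(t)}|$ below $K\log n/\epsilon$ within $O(\log n/\log\epsilon^{-1})$ steps. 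For the final stretch, once $|A^{(t)}|=O(\log n/\epsilon)=O(\epsilon n)$ the quadratic term is again negligible and $\E[|A^{(t+1)}|\mid A^{(t)}]\le 2\epsilon|A^{(t)}|$; a supermartingale argument, stopped at the first time $\tau$ that $|A^{(s)}|$ would exceed this validity threshold (an event of probability $n^{-\omega(1)}$ over the $O(\log n)$ relevant steps, by Chernoff and a union bound), gives $\E\!\left[|A^{(t+k)}|\,\mathbbm{1}[\tau>t+k]\right]\le(2\epsilon)^k|A^{(t)}|$, and taking $k=O(\log n/\log\epsilon^{-1})$ makes this at most $n^{-C''-1}$, whence $\Pr[|A^{(t+k)}|\ge1]\le n^{-C''}$. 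Summing the lengths of the three stretches gives $\Tcons(A)=O(\log\log n+\log n/\log\epsilon^{-1})$ w.h.p.

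The main obstacle is stitching the two regimes together. While $|A^{(t)}|$ is large, the dynamics are pinned down by Chernoff concentration of a sum of independent indicators, but this becomes vacuous once $\E[|A^{(t+1)}|]=o(\log n)$; to actually drive $|A^{(t)}|$ to $0$ rather than merely to $O(\log n)$, one must pass to iterating conditional expectations and invoke Markov's inequality, all while keeping $|A^{(t)}|$ below the thresholds ($\delta n$, and in the last stretch $\epsilon n/C$) on which the hypothesis and the expectation bounds rely — which is exactly what forces the stopping-time and union-bound bookkeeping. Calibrating the constants so that the rescaled recursion $y_{t+1}\le y_t^2$ genuinely contracts (i.e.\ choosing $\delta'$ small relative to $C$, and $C_0$ large relative to $C,K$) is routine but must be done with care.
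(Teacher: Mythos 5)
Your proof is correct and follows the paper's strategy: a Chernoff-driven quadratic phase that drops $|A^{(t)}|$ doubly-exponentially while $|A^{(t)}|\gtrsim \epsilon n/C$, followed by an endgame that bounds $\E[|A^{(t)}|]$ by iterating $\E[|A'|\mid A]\le 2\epsilon|A|$ on a good event and then invokes Markov's inequality, for a total of $O(\log\log n+\log n/\log\epsilon^{-1})$ steps. Your only deviations are cosmetic: you unify the small- and large-expectation Chernoff cases into a single pointwise recursion with an additive $K\log n$ floor (the paper instead branches on whether $\E[|A'|]\ge\log n$), and you insert a redundant intermediate pointwise-contraction stage down to $O(\log n/\epsilon)$ before switching to the iterated-expectation/stopping-time argument, whereas the paper applies that argument directly from the threshold $\epsilon n/C$; neither change affects the asymptotics.
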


It should be noted that in \cref{prop:fastconsensus}, we do not restrict the underlying graph $G$ to be random graphs.

\subsection{Escape from a fixed point}
Consider an $(f_1,f_2)$-polynomial voting process on an $f_1$-good and $f_2$-good $G(2n,p,q)$ such that $p$ and $q$ are constants.
Let $\mathbf{a}^* \in \mathbb{R}^2$ be a fixed point of the induced dynamical system $H$.
Let $J$ be the Jacobian matrix of $H$ at $\mathbf{a}^*$ and $\lambda_1,\lambda_2$ be its eigenvalues.
Let $\mathbf{u}_i$ be the eigenvector of $J$ corresponding to $\lambda_i$.
Suppose that $\mathbf{u}_1,\mathbf{u}_2$ are linearly independent.
Then, we can rewrite $J$ as
\begin{align*}
J=U^{-1}\Lambda U,
\end{align*}
where $\Lambda=\mathrm{diag}(\lambda_1,\lambda_2)$ and $U=(\mathbf{u}_1\,\mathbf{u}_2)^{-1}$.
For a fixed point $\mathbf{a}^* \in \mathbb{R}^2$, let ${\boldsymbol \beta}=(\beta_1,\beta_2)$ be a vector-valued random variable defined as
\begin{align}
{\boldsymbol \beta}=U({\boldsymbol \alpha}-\mathbf{a}^*). \label{eqn:betadef}
\end{align}
Roughly speaking, from the Taylor expansion of $H$ at $\mathbf{a}^*$, we have
\begin{align*}
\E[{\boldsymbol \beta}']\approx \Lambda {\boldsymbol \beta}
\end{align*}
if $\|{\boldsymbol \beta}\|_\infty$ is sufficiently small.
Thus, $|\beta_i'|\approx |\lambda_i||\beta_i|$.

Recall that $B(\mathbf{x},R)$ is the open ball of radius $R$ centered at $\mathbf{x}$.
If $|\lambda_i|>1$ for some $i\in[2]$, one may expect that ${\boldsymbol \alpha}^{(\tau)}\not\in B(\mathbf{a}^*,\epsilon_0)$ holds for any $A^{(0)}\subseteq V$ and for some constant $\epsilon_0>0$.
We aim to prove this under some assumptions.

\begin{assumption}[Basic assumptions] \label{asm:escape_assumption0}
We consider an $(f_1,f_2)$-polynomial voting process on an $f_1$-good and $f_2$-good $G(2n,p,q)$ for constants $p\geq q\geq 0$.
Let $\mathbf{a}^*$ be a fixed point and $J$ be the corresponding Jacobian matrix satisfying
\begin{enumerate}[label={\rm (A\arabic*)}]
\item The eigenvectors $\mathbf{u}_1$ and $\mathbf{u}_2$ are linearly independent. \label{asm:asm1}
\item A positive constant $\epsilon_0$ exists such that $\Var[\alpha'_i\mid A]\geq \Omega(n^{-1})$ for all $i\in\{1,2\}$ and all $A\subseteq V$ of ${\boldsymbol \alpha}\in B\left(\mathbf{a}^*,\epsilon_0\right)$. \label{asm:asm2}
\item The matrix $J$ contains an eigenvalue $\lambda$ satisfying $|\lambda|>1$. \label{asm:asm3}
\end{enumerate}
\end{assumption}

Under \cref{asm:escape_assumption0}, we can define the random variable ${\boldsymbol \beta}$ of \cref{eqn:betadef}.
Further, we put the following.
\begin{assumption} \label{asm:escape_assumption1}
In addition to \cref{asm:escape_assumption0}, we assume that there exists a positive constant $\epsilon^*$ satisfying the followings:
\begin{enumerate}[label={\rm (A\arabic*)}]
\setcounter{enumi}{3}
\item There exist two positive constants $\epsilon_1,C$ such that
\begin{align*}
|\E[\beta'_i\mid A]|\geq (1+\epsilon_1)|\beta_i|-\frac{C}{\sqrt{n}}
\end{align*}
holds for any $A\subseteq V$ of $\|{\boldsymbol \beta}\|\leq \epsilon^*$ and any $i\in[2]$ of $|\lambda_i|>1$. \label{asm:asm4}
\item For any $i\in [2]$ of $|\lambda_i|\leq 1$,
\begin{align*}
\Pr[|\beta'_i|\leq \epsilon^*\mid |\beta_i|\leq \epsilon^*] \geq 1-n^{-\Omega(1)}.
\end{align*}
\label{asm:asm5}
\end{enumerate}
\end{assumption}

Sometimes, it might be not easy to check the conditions of \cref{asm:escape_assumption1}.
In this paper, we provide the following alternative condition which is easy to check:
\begin{assumption}\label{asm:escape_assumption2}
In addition to \cref{asm:escape_assumption0}, we assume the following:
\begin{enumerate}[label={\rm (A\arabic*)}]
\setcounter{enumi}{5}
\item The eigenvalues $\lambda_1,\lambda_2$ of $J$ satisfies $|\lambda_i|\neq 1$ for all $i\in[2]$. \label{asm:asm6}
\end{enumerate}
\end{assumption}

Based on the assumptions, we prove the following result:
\begin{proposition}[Escape from source and sink areas] \label{prop:escape}
Let $\mathbf{a}^*$ be a fixed point satisfying either \cref{asm:escape_assumption1}~or~\ref{asm:escape_assumption2}.
Then, there exist $\tau=O(\log n)$ and a constant $\epsilon'>0$ such that the followings hold w.h.p.:
\begin{enumerate}[label=(\roman*)]
\item $\|\beta^{(\tau)}\|_\infty > \epsilon'$, and
\item $|\beta^{(\tau)}_j| \leq \epsilon'$ for any $j\in [2]$ of $|\lambda_j|\leq 1$.
\end{enumerate}
\end{proposition}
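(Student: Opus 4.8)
The plan is to track the random variable $\boldsymbol{\beta}^{(t)} = U(\boldsymbol{\alpha}^{(t)} - \mathbf{a}^*)$ and show that the coordinate $\beta_i$ corresponding to an eigenvalue $|\lambda_i|>1$ grows geometrically, in expectation, as long as $\|\boldsymbol{\beta}^{(t)}\|$ stays within the small constant $\epsilon^*$, while every coordinate with $|\lambda_j|\le 1$ stays bounded. First I would observe that \cref{asm:escape_assumption2} implies \cref{asm:escape_assumption1}: if $|\lambda_i|\neq 1$ for both $i$, then the Taylor expansion $H(\boldsymbol{\alpha}) = \mathbf{a}^* + J(\boldsymbol{\alpha}-\mathbf{a}^*) + O(\|\boldsymbol{\alpha}-\mathbf{a}^*\|^2)$ together with the concentration bound \cref{eqn:A'concentration} (i.e.\ $|A_i'|$ concentrates around its mean with error $O(\sqrt{n\log n})$, so $\alpha_i'$ deviates from $H_i(\boldsymbol{\alpha})$ by $O(\sqrt{\log n/n})$) gives $\E[\beta_i'\mid A] = \lambda_i\beta_i + O(\|\boldsymbol{\beta}\|^2) + O(\sqrt{\log n/n})$; choosing $\epsilon^*$ small enough that the quadratic term is at most $\tfrac{1}{2}(|\lambda_i|-1)|\beta_i|$ when $|\beta_i|$ is the dominant coordinate yields \ref{asm:asm4} with $\epsilon_1 = \tfrac12(\min_{|\lambda_i|>1}|\lambda_i|-1)$, and for $|\lambda_j|\le 1$ a one-step bound $|\beta_j'| \le |\lambda_j||\beta_j| + O(\epsilon^{*2}) + O(\sqrt{\log n/n}) \le \epsilon^*$ (shrinking $\epsilon^*$ again) gives \ref{asm:asm5}. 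So it suffices to work under \cref{asm:escape_assumption1}.

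Next I would set up a supermartingale-type argument for the growing coordinate. Fix $i$ with $|\lambda_i|>1$. Consider the stopping time $\tau = \inf\{t : \|\boldsymbol{\beta}^{(t)}\|_\infty > \epsilon'\}$ for a constant $\epsilon' \le \epsilon^*$ to be chosen. I want to show $\tau = O(\log n)$ w.h.p. The obstacle is the additive $C/\sqrt{n}$ error in \ref{asm:asm4}: when $|\beta_i|$ itself is of order $1/\sqrt n$ or smaller, the multiplicative drift is swamped by noise, so $\beta_i$ performs a random walk with variance $\Omega(1/n)$ (this is where assumption \ref{asm:asm2}, $\Var[\alpha_i'\mid A]=\Omega(1/n)$, hence $\Var[\beta_i'\mid A]=\Omega(1/n)$ since $U$ is a fixed nonsingular matrix, enters). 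The standard fix — exactly the technique of \cite{DGMSS11} — is a two-phase analysis. In Phase 1 ($|\beta_i| \lesssim n^{-1/2+o(1)}$), use an anti-concentration / unbiased-random-walk argument: because the per-step variance is $\Omega(1/n)$, after $O(\log n)$ steps $|\beta_i|$ exceeds $n^{-1/2}\log n$, say, with probability $\ge 1-n^{-\Omega(1)}$ (and if it ever hits $0$ we restart the argument, using that the drift keeps it from staying pinned; one can also argue via a potential $\log(|\beta_i| + n^{-1/2})$). In Phase 2 ($|\beta_i| \ge n^{-1/2}\log n$), the additive error $C/\sqrt n$ is a $1/\log n$ fraction of $(1+\epsilon_1)|\beta_i| - C/\sqrt n \ge (1+\epsilon_1/2)|\beta_i|$, so $\log|\beta_i^{(t)}|$ is a submartingale with drift $\ge \log(1+\epsilon_1/2)$ and bounded increments, and by a Hoeffding/Azuma bound it reaches $\log\epsilon'$ within $O(\log n)$ further steps w.h.p.

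Finally I would control the bounded coordinates. For each $j$ with $|\lambda_j|\le 1$, \ref{asm:asm5} gives $\Pr[|\beta_j^{(t+1)}|\le\epsilon^* \mid |\beta_j^{(t)}|\le\epsilon^*]\ge 1-n^{-\Omega(1)}$, so a union bound over the $O(\log n)$ steps keeps $|\beta_j^{(t)}|\le\epsilon^*$ throughout with probability $1-n^{-\Omega(1)}$; taking $\epsilon' \le \epsilon^*$ then delivers conclusion (ii) and also guarantees that when $\|\boldsymbol{\beta}^{(t)}\|_\infty$ first exceeds $\epsilon'$ it must be the growing coordinate $\beta_i$ that caused it, giving (i). Assembling: on the intersection of the Phase-1, Phase-2, and bounded-coordinate good events (each failing with probability $n^{-\Omega(1)}$), there is a $\tau = O(\log n)$ with $\|\boldsymbol{\beta}^{(\tau)}\|_\infty > \epsilon'$ and $|\beta_j^{(\tau)}|\le\epsilon'$ for all $j$ with $|\lambda_j|\le 1$. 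One technical caveat to flag: \cref{thm:approximation_vectorfield} only controls the process for $t \le n^{o(1)}$, but $\tau = O(\log n)$ is well within that window, and moreover the argument here only needs the one-step estimates \ref{asm:asm2}, \ref{asm:asm4}, \ref{asm:asm5} — which hold conditionally on any current state — rather than the long-horizon coupling, so reconditioning at each step is legitimate. The main obstacle, as noted, is the delicate Phase-1 random-walk escape from the $O(1/\sqrt n)$-neighborhood of $\mathbf{a}^*$; everything after that is a routine geometric-growth submartingale estimate.
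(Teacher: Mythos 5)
Your high‑level architecture matches the paper's: track $\boldsymbol\beta = U(\boldsymbol\alpha - \mathbf{a}^*)$, keep the contracting coordinate below $\epsilon^*$ by a union bound over $O(\log n)$ steps, and push the expanding coordinate out in two phases — an anti‑concentration (Berry--Esseen via~\ref{asm:asm2}) ``escape to $\Theta(1/\sqrt n)$'' phase and a geometric‑growth phase driven by the drift — which is precisely what the paper delegates to \cref{cor:nazocor} (the [DGMSS11]/[AMLEFG18] technique). You also correctly flag the Phase~1 random‑walk escape as the delicate point.

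However, your opening reduction — that \cref{asm:escape_assumption2} implies \cref{asm:escape_assumption1} — does not hold as stated, and the paper does not attempt it. Condition~\ref{asm:asm4} must hold for \emph{all} $A$ with $\|\boldsymbol\beta\|_\infty\le\epsilon^*$, not only when the expanding coordinate $\beta_i$ is dominant. From the Taylor expansion \cref{eqn:exp_beta'} you only get $|\E[\beta'_i]| = |\lambda_i||\beta_i| \pm C\|\boldsymbol\beta\|^2_\infty \pm C/\sqrt{np}$; if a contracting coordinate satisfies $|\beta_j|=\Theta(\epsilon^*)$ while $|\beta_i|$ is small but still $\gg 1/\sqrt n$ (say $|\beta_i|=\epsilon^{*3}$), the quadratic term $\Theta(\epsilon^{*2})$ swamps $|\lambda_i||\beta_i|$ and the sign‑indefinite remainder can make $|\E[\beta'_i]|$ arbitrarily small, so $(1+\epsilon_1)|\beta_i|-C/\sqrt n$ is \emph{not} a lower bound. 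The same issue afflicts your verification of \ref{asm:asm5}: it conditions only on $|\beta_j|\le\epsilon^*$, not on $\|\boldsymbol\beta\|_\infty\le\epsilon^*$, so your ``$+O(\epsilon^{*2})$'' step is unjustified when the other coordinate is large. The paper therefore treats the two assumptions separately; under \cref{asm:escape_assumption2} it introduces an extra phase $\mathcal{B}_1$ (the regime where the largest coordinate is a \emph{contracting} one) and shows, using the one‑step bound \cref{eqn:beta'bounds} with the carefully chosen radius $\epsilon'=\epsilon/C$, that $\|\boldsymbol\beta\|_\infty$ decays geometrically until either everything is $O(\sqrt{\log n/n})$ (entering $\mathcal{B}_2$, where the localized analogue \cref{lem:beta'bounds} of \ref{asm:asm4}/\ref{asm:asm5} can be \emph{proved}) or the expanding coordinate takes over. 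Without an analogue of this preliminary shrinking stage, your proof has a gap in the \cref{asm:escape_assumption2} branch. The rest — in particular the use of \ref{asm:asm2} to feed \cref{cor:BEbound_cor}, and the union bound via \ref{asm:asm5} to control the small eigenvalue coordinate — is sound.
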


\subsection{Application to the Best-of-three} \label{sec:apply_to_3M}
In this section, we will prove the results of \cref{sec:auxiliary_results}. 
Consider the Best-of-three.
The Jacobian matrix of the dynamical system of \cref{eqn:3Mvecfielddef} is
\begin{align} \label{eqn:general_Jacobian_3M}
J&=\frac{3}{2}\left( \begin{array}{cc}
u(1-(ud_1)^2-d_2^2) & -2ud_1d_2 \\
-2u^2d_1d_2 & 1-(ud_1)^2-d_2^2
\end{array} \right).
\end{align}
Let $J_i$ be the Jacobian matrix at $\mathbf{d}^*_i$, where $\mathbf{d}^*_i$ is the fixed points \cref{eqn:fixed_points_3M}.
A straightforward calculation yields
\begin{align}  \label{eqn:Jacobian_3M}
J_i&=\begin{cases}
\frac{3}{2}\left( \begin{array}{cc}
u & 0 \\
0 & 1
\end{array} \right) & \text{if $i=1$}, \\
3\left( \begin{array}{cc}
1-u & 0 \\
0 & \frac{1}{u}-1
\end{array} \right) & \text{if $i=2$ and $u\geq \frac{2}{3}$}, \\
\frac{1}{2}\left( \begin{array}{cc}
1 & -\frac{\sqrt{4u-3}}{u} \\
-\sqrt{4u-3} & \frac{1}{u}
\end{array} \right) & \text{if $i=3$ and $u\geq \frac{3}{4}$}, \\
\left( \begin{array}{cc}
0 & 0 \\
0 & 0
\end{array} \right) & \text{if $i=4$}.
\end{cases}
\end{align}

Depending on the eigenvalues $\lambda_1\geq \lambda_2$ of $J_i$, the property of $\mathbf{d}^*_i$ changes as shown in \cref{tbl:eigentable}.
\begin{table}[htbp]
\caption{
Each cell $(c_1,c_2)$ represents the property of the eigenvalues $\lambda_1\geq\lambda_2$ of the corresponding Jacobian matrix.
Precisely, the sign $c_i$ represents whether $\lambda_i$ is larger than $1$ or not.
For example, $(+,1)$ indicates that $\lambda_1>\lambda_2=1$.
If $(+,-)$ or $(+,+)$, we may apply \cref{prop:escape}.
Indeed, cells with $(-,-)$ correspond sink points in this model.
Note that $\mathbf{d}^*_2$ is saddle if $\frac{2}{3}<u<\frac{3}{4}$ but is sink if $\frac{3}{4}<u\leq 1$.
\label{tbl:eigentable}}
\vspace{1em}
\centering
\begin{tabular}{|c|c|c|c|c|c|}
\hline
points           & $0<u<\frac{2}{3}$ & $u=\frac{2}{3}$ & $\frac{2}{3}<u<\frac{3}{4}$ & $u=\frac{3}{4}$ & $\frac{3}{4}<u\leq 1$ \\ \hline
$\mathbf{d}^*_1$ & $(+,-)$           & $(+,1)$         & $(+,+)$                     & $(+,+)$         & $(+,+)$               \\ \hline
$\mathbf{d}^*_2$ &  undefined  & $(+,1)$         & $(+,-)$                     & $(1,-)$         & $(-,-)$               \\ \hline
$\mathbf{d}^*_3$ &  undefined       &  undefined      &    undefined               & $(1,-)$         & $(+,-)$               \\ \hline
$\mathbf{d}^*_4$ & $(-,-)$           & $(-,-)$         & $(-,-)$                     & $(-,-)$         & $(-,-)$               \\ \hline
\end{tabular}
\end{table}

\begin{proof}[Proof of \cref{lem:3Mvector_field_lem1}]
If $(d_1,d_2)\in S$, we have $3d_2^2+(ud_1)^2\leq\max\{3,u^2\}\leq 3$ and $d_2^2+3(ud_1)^2\leq\max\{1,3u^2\}\leq 3$.
Hence, we have $d'_1\geq 0$ and $d'_2\geq 0$.
Let $x=\frac{1+d_1+d_2}{2}$ and $y=\frac{1+d_2-d_1}{2}$.
Then, $(d_1,d_2)\in S$ implies $\frac{1}{2}\leq x \leq 1$ and $0\leq y\leq 1$.
In addition, a simple calculation yields
$
d_1'+d_2'=3\left(\frac{x+ry}{1+r}\right)^2-2\left(\frac{x+ry}{1+r}\right)^3 \leq 1,
$
where $r=\frac{1-u}{1+u}$.
Note that $0\leq\frac{x+ry}{1+r}\leq 1$ and the function $f:z\mapsto 3z^2-2z^3$ satisfies $f(z)\leq f(1)=1$ for all $0\leq z\leq 1$.
\end{proof}

\begin{proof}[Proof of \cref{prop:sink_3M}]
It is straightforward to check that the points $\mathbf{d}^*_2$ and $-\mathbf{d}^*_2$ are sink.
Therefore, \cref{prop:sink_3M} immediately follows from \cref{prop:sinkpoint}.
\end{proof}

\begin{proof}[Proof of \cref{prop:fastconsensus_3M}]
Note that $J_4$ is the all-zero matrix and the same holds at $-\mathbf{d}^*_4$.
Let $\epsilon>0$ be sufficiently small constant.
If $A$ satisfies $|A|\leq \epsilon n$, apply \cref{prop:fastconsensus_Jacob}.
If $A$ satisfies $|A|\geq (2-\epsilon)n$, apply \cref{prop:fastconsensus_Jacob} for $V\setminus A$.
\end{proof}

\begin{proof}[Proof of \cref{prop:escape_3M}]
Suppose that $u<\frac{3}{4}$ (or equivalently, $r>\frac{1}{7}$) and that $p\geq q\geq 0$ are constants.
We prove that $\Tcons(A)=O(\log n)$ for any $A=A^{(0)}\subseteq V$.
Consider the stochastic process $({\boldsymbol \delta}^{(t)})_{t=0}^\infty$.
From \cref{prop:3Mvectorfield,prop:fastconsensus_3M}, it suffices to show that $|\delta_2^{(\tau)}|>\epsilon_1$ holds w.h.p.~for some constant $\epsilon_1>0$ and some $\tau=O(\log n)$.
To this end, we use \cref{prop:escape}.
From \cref{prop:3Mvectorfield}, we may assume
\begin{align}
{\boldsymbol \delta}^{(0)}\in \bigcup_{i\in\{1,2\}}B(\mathbf{d}^*_i,\epsilon_2) \label{eqn:initial_assumption}
\end{align}
for any constant $\epsilon_2>0$.
We check the condition~\ref{asm:asm2} of Assumption~\ref{asm:escape_assumption0}.
To this end, we show the following result:
\begin{theorem}[Concentration of the variance for the Best-of-three]  \label{thm:mainthm_Var}
Consider the Best-of-three on $f^{\Bthree}$-good $G(2n,p,q)$.
Then, two constants $C_1,C_2>0$ exist such that 

\begin{align*}
\forall A\subseteq V, \, \forall i\in\{1,2\}\,:\,
\left|\Var\left[|A'_i|\,\middle|\, A \right]
-
n\cdot g\left(\frac{|A_i|p+|A_{3-i}|q}{n(p+q)}\right)
\right| \leq C_2\sqrt{\frac{n}{p}},
\end{align*}
where $g(x)\defeq f^{\Bthree}(x)(1-f^{\Bthree}(x))$.
\end{theorem}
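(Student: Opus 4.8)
The plan is to mimic the structure already used for the first-moment statement in \cref{thm:fgood} (properties \ref{pro:gomiN}--\ref{pro:gomiA}), since the variance of $|A_i'|$ is, after conditioning on the graph, a deterministic function of the degrees $\deg(v)$ and $\deg_A(v)$. Because the $X_v$ are independent Bernoulli random variables with $\Pr[X_v=1]=f^{\Bthree}(\deg_A(v)/\deg(v))$, we have the exact identity
\begin{align*}
\Var[|A_i'| \mid A] = \sum_{v\in V_i} f^{\Bthree}\!\left(\frac{\deg_A(v)}{\deg(v)}\right)\left(1 - f^{\Bthree}\!\left(\frac{\deg_A(v)}{\deg(v)}\right)\right) = \sum_{v\in V_i} g\!\left(\frac{\deg_A(v)}{\deg(v)}\right),
\end{align*}
where $g = f^{\Bthree}(1-f^{\Bthree})$ is a fixed polynomial of constant degree (degree $6$). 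So the quantity to control is a sum of the form $\sum_{v\in S\cap V_i} h(\deg_A(v)/\deg(v))$ with $S=V$ and $h=g$, which is exactly the object bounded by property \ref{pro:gomiN} of \cref{thm:fgood}.

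**Main steps.** First I would invoke \cref{thm:fgood} with the polynomial $g$ in place of $f$: since $g$ has constant degree and the hypotheses $p=\omega(\log n/n)$, $q \geq \log n/n^2$ hold (in fact here $p,q$ are constants), $G(2n,p,q)$ is $g$-good w.h.p., so property \ref{pro:gomiN} gives, for $S=V$,
\begin{align*}
\left| \sum_{v\in V_i} g\!\left(\frac{\deg_A(v)}{\deg(v)}\right) - n\, g\!\left(\frac{|A_i|p + |A_{3-i}|q}{n(p+q)}\right) \right| \leq C_1\sqrt{\frac{n}{p}}
\end{align*}
simultaneously for all $A\subseteq V$ and $i\in\{1,2\}$. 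Combining this with the exact variance identity above immediately yields the claimed bound with $C_2 = C_1$. That is essentially the whole argument, provided one is content to cite \cref{thm:fgood} as a black box.

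**If one wants a self-contained proof** (i.e.\ not just quoting $g$-goodness), one would instead expand $g$ into its monomials $x^k$, $0\le k\le 6$, reduce as in the "Idea of the proof of \cref{thm:fgood}" to controlling $\sum_{v\in V_i}\deg_A(v)^k$ after dividing out the common factor $(1\pm O(\sqrt{1/np}))/(n(p+q))^k$ coming from the w.h.p.\ bound $\deg(v) = n(p+q)\pm O(\sqrt{np\log n})$, then apply the Janson inequality (\cref{lem:Janson}) for a lower bound on the "crossing-star" counts $W(S;A,\dots,A)$, and the Kim--Vu concentration (\cref{lem:Kim-Vu}) for the matching upper bound on $W(V;V,\dots,V)$, taking a union bound over the (at most $2^{2n}\cdot 2^{2n}$) choices of $S$ and $A$; the error terms $O(n^{k/2 + 1/2}p^{k/2-1/2})$ accumulated this way translate, after the normalization, into the $O(\sqrt{n/p})$ in the statement. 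Finally one estimates the gap between $\E[W(V_i; A,\dots,A)]$ and $|V_i|\bigl((|A_i|p+|A_{3-i}|q)/ (n(p+q))\bigr)^k \cdot (n(p+q))^k$, which is lower-order, to land on the stated centering function $n\,g(\cdot)$.

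**Main obstacle.** There is essentially no new obstacle: the only work is bookkeeping the error terms across the monomial decomposition of $g$ and checking that the union bound over $S,A$ still goes through for a degree-$6$ polynomial exactly as it did for degree $3$ in \cref{thm:fgood}. The cleanest route for the paper is simply to note $g$ is a constant-degree polynomial, apply \cref{thm:fgood} to $g$, and use the exact Bernoulli variance identity; I would expect the authors to take precisely this shortcut.
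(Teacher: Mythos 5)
Your proposal is correct and follows exactly the paper's own route: rewrite the conditional variance as $\sum_{v\in V_i} g(\deg_A(v)/\deg(v))$ using the independence of the $X_v$, observe that $g=f^{\Bthree}(1-f^{\Bthree})$ is a constant-degree polynomial, and then apply \ref{pro:gomiN}. If anything you are slightly more careful than the paper, which cites \ref{pro:gomiN} directly while only assuming $f^{\Bthree}$-goodness in the theorem hypothesis; your remark that one really invokes \cref{thm:fgood} with $g$ in place of $f$ (so that $G(2n,p,q)$ is also $g$-good w.h.p.) makes the logical dependency explicit.
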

\begin{proof}
That variance $\Var[|A'_i| \mid A]$ can be represented as
\begin{align*}
\Var[|A'_i| \mid A]
 = \sum_{v\in V_i} \Pr[v\in A'](1-\Pr[v\in A'])
 = \sum_{v\in V_i} f^{\Bthree}\left(\frac{\deg_A(v)}{\deg(v)}\right) \left(1-f^{\Bthree}\left(\frac{\deg_A(v)}{\deg(v)}\right)\right).
\end{align*}
Therefore, \cref{thm:mainthm_Var} immediately follows from property~\ref{pro:gomiN}.
\end{proof}
From \cref{thm:mainthm_Var}, a straightforward calculation leads to
\begin{align*}
\Var[\alpha'_i \mid A]
&= \frac{1}{n^2}\Var[|A'_i| \mid A] \\
&= \frac{(3z_i^2-2z_i^3)\bigl(1-(3z_i^2-2z_i^3)\bigr)}{n} \pm O\left(\frac{1}{\sqrt{n^3p}}\right) \\
&=\frac{z_i^2(3-2z_i)(1-z_i)^2(2z_i+1)}{n} \pm O\left(\frac{1}{\sqrt{n^3p}}\right),
\end{align*}
where $z_i=\frac{a_i+ra_{3-i}}{1+r}$.
If $|\delta_2|<1-\epsilon_3$ (or equivalently, $\epsilon_3<|a_1+a_2|<2-\epsilon_3$) for some constant $\epsilon_3>0$, we have $\Var[\alpha'_i]=\Omega(n^{-1})$.
Hence, the statement~\ref{asm:asm2} holds for every ${\boldsymbol \delta}$ satisfying ${\boldsymbol \delta} \in \bigcup_{i\in\{1,2\}}B(\mathbf{d}^*_i,\epsilon_2)$ with sufficiently small constant $\epsilon_2<1$ mentioned in \cref{eqn:initial_assumption}.

\paragraph*{The case of $u\neq\frac{2}{3}$.}
If $u\neq\frac{2}{3}$, then both $\mathbf{d}^*_1$ and $\mathbf{d}^*_2$ satisfy Assumption~\ref{asm:escape_assumption2}:
From \cref{eqn:Jacobian_3M}, both of these points satisfies the condition~\ref{asm:asm1} and~\ref{asm:asm3} of Assumption~\ref{asm:escape_assumption0}, and the condition~\ref{asm:asm6} of Assumption~\ref{asm:escape_assumption2} (here, we use $u\neq\frac{2}{3}$).

Suppose that $u<\frac{2}{3}$.
Then the fixed point $\mathbf{d}^*_2$ does not exist and thus we assume ${\boldsymbol \delta}^{(0)} \in B(\mathbf{d}^*_1,\epsilon_2)$.
From \cref{prop:escape}, we have $|\delta_2^{(\tau)}|>\epsilon_2$ for some $\tau=O(\log n)$ (note that ${\boldsymbol \beta}=(\beta_1,\beta_2)$ of \cref{eqn:betadef} is given by ${\boldsymbol \beta}={\boldsymbol \delta}$ and the eigenvalues satisfy $0\leq \lambda_1 < 1 < \lambda_2$).

Suppose that $u>\frac{2}{3}$.
Both eigenvalues of $J_1$ are strictly larger than $1$.
Hence, for $\mathbf{d}^*_1$, \cref{prop:escape} implies that either $|\delta_1^{(\tau)}|>\epsilon_2$ or $|\delta_2^{(\tau)}|>\epsilon_2$ holds for some $\tau=O(\log n)$ if ${\boldsymbol \delta}^{(0)} \in B(\mathbf{d}^*_1,\epsilon_2)$.
If the former holds with $|\delta_2^{(\tau)}|=o(1)$, then ${\boldsymbol \delta}^{(\tau+T)} \in B(\mathbf{d}^*_2,\epsilon_2)$ holds for some constant $T=T(\epsilon_2)$ since $d_1'>d_1$ holds whenever $0<d_1<\sqrt{\frac{3u-2}{u^3}}$ and $d_2=0$.
Note that, at the point $\mathbf{d}^*_2$, the Jacobian matrix $J_2$ has eigenvalues $\lambda_1,\lambda_2$ satisfying $0<\lambda_1<1<\lambda_2$.
Moreover, in look at \cref{eqn:betadef}, we have ${\boldsymbol \beta}={\boldsymbol \delta}-\mathbf{d}^*_2$.
Thus, \cref{prop:escape} yields that $|\delta_2^{(\tau')}|>\epsilon_2$ holds for some $\tau'=O(\log n)$ and for any ${\boldsymbol \delta}^{(0)} \in B(\mathbf{d}^*_2,\epsilon_2)$.

\paragraph*{The case of $u=\frac{2}{3}$.}
In this case, we have $\mathbf{d}^*_1=\mathbf{d}^*_2=(0,0)$.
We claim that this point satisfies \ref{asm:asm4} and \ref{asm:asm5} and then apply \cref{prop:escape}.

Let $\epsilon_2>0$ be sufficiently small constant mentioned in \cref{eqn:initial_assumption}.
The Jacobian matrix $J_1=J_2$ has eigenvalues $1$ and $\frac{3}{2}$.
Suppose that $\|{\boldsymbol \delta}^{(0)}\|_\infty \leq \epsilon_2$ for sufficiently small constant $\epsilon_2>0$.
Then, we have
\begin{align*}
|\E[\delta_2' \mid A]|&=\left|\frac{\delta_2}{2}(3-3(u\delta_1)^2-\delta_2^2)\right| \pm O(n^{-0.5})  \\
& \geq |\delta_2|\left(\frac{3}{2} - O_{\epsilon_2\to 0}(\epsilon_2)\right) - O(n^{-0.5})  \\
&\geq 1.49 |\delta_2| - O(n^{-0.5}).
\end{align*}
This verifies the assumption~\ref{asm:asm4}.
On the other hand, for any ${\boldsymbol \delta}$ of $|\delta_1|\leq\epsilon_2$, we have
\begin{align}
\left|\E[\delta_1' \mid A]\right|&=\left|\frac{u\delta_1}{2}\right|\left|3-(u\delta_1)^2-3\delta_2^2\right| \pm O(n^{-0.5}) \nonumber\\
&\leq |\delta_1|\left|1 - \frac{4}{27} \delta_1^2\right| + O(n^{-0.5}) \nonumber\\
&\leq |\delta_1|\left(1 - \frac{4}{27} \delta_1^2\right) + O(n^{-0.5}). \label{eqn:gamma1_exp_bound}
\end{align}
By applying the Hoeffding bound (\cref{lem:Hoeffding}) to the random variables $\delta_1=\alpha_1-\alpha_2$ and $\delta_2=\alpha_1+\alpha_2-\frac{1}{2}$, we obtain
\begin{align*}
\Pr\left[|\delta'_i-\E[\delta'_i]|\geq t\right] \leq \exp\left(-\Omega\left(nt^2\right)\right).
\end{align*}
In particular, letting $t=\sqrt{\frac{\log n}{n}}$, we have
\begin{align}
\delta'_i=\E[\delta'_i] \pm O\left(\sqrt{\frac{\log n}{n}}\right) \label{eqn:gamma_bound}
\end{align}
holds w.h.p.
We claim that 
\begin{align*}
\Pr\left[|\delta'_1|\leq\epsilon_2 \,\middle|\,|\delta_1|\leq\epsilon_2\right] \geq 1-n^{-\Omega(1)} 
\end{align*}
holds, which is equivalent to \ref{asm:asm5}.
From \cref{eqn:gamma1_exp_bound,eqn:gamma_bound}, if $|\delta_1|\leq\epsilon_2$, we have
\begin{align*}
|\delta'_1| \leq |\delta_1| - \frac{4}{27}|\delta_1|^3 + C\sqrt{\frac{\log n}{n}}
\end{align*}
for sufficiently large constant $C>0$ and large $n$.
If $|\delta_1|^3\geq \frac{27C}{4}\sqrt{\frac{\log n}{n}}$, we have $|\delta'_1|\leq |\delta_1|\leq \epsilon_2$ holds w.h.p.
If $|\delta_1|^3 < \frac{27C}{4}\sqrt{\frac{\log n}{n}}$, we have $|\delta'_1|=O\left(\sqrt{\frac{\log n}{n}}\right)\leq \epsilon_2$ holds w.h.p.

Thus, from \cref{prop:escape}, we have $|\delta^{(\tau)}_2|>\epsilon_2$ w.h.p.~for some $\tau=O(\log n)$.
This completes the proof of \cref{thm:worst_case_3M}. 
\end{proof}

\begin{proof}[Proof of \cref{prop:3Mvectorfield}]
If $u=1$, we have
\begin{align*}
d'_1+d'_2 &= f(d_1+d_2), \\
d'_1-d'_2 &= f(d_1-d_2),
\end{align*}
where $f:z\mapsto \frac{1}{2}z(3-z^2)$.
Since $f(z)>z$ for $z\in(0,1)$, we have $\lim_{t\to\infty}\mathbf{d}^{(t)}\in\{(0,0),(0,1),(1,0)\}$.
In addition, if $d_2^{(0)}>0$ then $d_2^{(t)}\to 1$ as $t\to\infty$.

Suppose that $0\leq u<1$.
We use basic results of \emph{competitive dynamics} (see \cref{sec:competitive_system}).
We first claim that the map $T:S\to S$ is competitive and it satisfies the conditions \ref{cond:map_condition1} to \ref{cond:map_condition4} described in \cref{sec:competitive_system}.
Then, we apply \cref{thm:orbit_convergence} and complete the proof of the first statement.
To this end, we consider the Jacobian matrix $J$ given in \cref{eqn:general_Jacobian_3M}.

The condition \ref{cond:map_condition1} follows from \cref{lem:criterion_C1}: it is straightforward to check that the Jacobian matrix \cref{eqn:general_Jacobian_3M} satisfies the condition of \cref{lem:criterion_C1}.
To verify the condition \ref{cond:map_condition2}, we use the Inverse Function Theorem (\cref{thm:IFT}).
We claim that $\det J>0$ for any $\mathbf{d}\in S\setminus\{(0,1)\}$.
Indeed, in look at \cref{eqn:general_Jacobian_3M}, we have
\begin{align}
\det J = \frac{9u}{4}(1-(ud_1-d_2)^2)(1-(ud_1+d_2)^2) 
\geq \frac{9u}{4}(1-ud_1-d_2)^4 
>0 \label{eqn:positive_det_3M}
\end{align}
for $(d_1,d_2)\in S\setminus \{(0,1)\}$.
Hence, for any $\mathbf{d}\in S\setminus \{(0,1)\}$, an inverse mapping $T^{-1}(\mathbf{d})$ is defined.
Moreover, it is straightforward to check that $T(\mathbf{x})=(0,1)$ if and only if $\mathbf{x}=(0,1)$.
Consequently, $T$ is injective.
The condition \ref{cond:map_condition3} immediately follows from the definition \cref{eqn:3Mvecfielddef}.
To condition \ref{cond:map_condition4} follows from \cref{lem:criterion_C4} and \cref{eqn:positive_det_3M}.
Now we apply \cref{thm:orbit_convergence} and complete the proof of the first claim of \cref{prop:3Mvectorfield}.

To obtain the second claim of \cref{prop:3Mvectorfield}, we show that $d'_2>0$ whenever $(d_1,d_2)\in S$ satisfies $d_2>0$ (here, $d_2>0$ means that $d_2>c$ for some constant $c>0$; we do not deal with the case of $d_2\geq o(1)$).
This follows from a simple calculation
\begin{align*}
d'_2 \geq \frac{d_2}{2}(3-3d_1^2-d_2^2) \geq \frac{d_2}{2}(3-3d_1-d_2) 
\geq d_2^2
>0.
\end{align*}
\end{proof}




\section[Proof of f-goodness]{Proof of the $f$-goodness of the stochastic block model (\cref{thm:fgood})}\label{sec:randomgraphs}
In this section we show \cref{thm:fgood}. %
In \cref{sec:reduction}, we show that the property \ref{pro:gomiN} is obtained from \cref{lem:WIdiscrepancy}.
\begin{lemma}
\label{lem:WIdiscrepancy}
For a vertex set $V$ with $|V|=\Vsize$, let $(\Ind_e)_{e\in \binom{V}{2}}$ be $\binom{N}{2}$ independent binary random variables. 
Let $\pmax \defeq \max_{e\in \binom{V}{2}}\E[\Ind_e]$.
Suppose that $\Vsize \pmax \geq 1$.
For $\ell+1$ vertex subsets $S_0, S_1, \ldots, S_\ell$, let
\begin{align*}
\W(S_0;S_1,\ldots,S_\ell)&\defeq\sum_{s\in S_0}\prod_{i\in[\ell]}\deg_{S_i}(s), 
\\
\Wh(S_0;S_1,\ldots,S_\ell)&\defeq \sum_{s\in S_0}\prod_{i\in[\ell]}\E[\deg_{S_i}(s)]
\end{align*}
where $\deg_S(v)=\sum_{s\in S\setminus \{v\}}\Ind_{\{v,s\}}$ for $S\subseteq V$ and $v\in V$.

Then two positive constants $C_1, C_2$ exist such that the following holds with probability $1-\Vsize^{-C_1}$: 
\begin{align*}
\forall S_0, S_1, \ldots, S_\ell: \bigl|\W(S_0;S_1,\ldots,S_\ell)-\Wh(S_0;S_1,\ldots,S_\ell)\bigr| 
\leq C_2\Vsize (\Vsize \pmax)^{\ell-1/2}.
\end{align*}
\end{lemma}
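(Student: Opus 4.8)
The plan is to obtain the uniform (over all $(\ell+1)$-tuples of subsets) estimate from two ingredients: a one-sided \emph{lower}-tail bound for each $\W(T_0;T_1,\dots,T_\ell)$ coming from the Janson inequality (\cref{lem:Janson}), cheap enough to survive a union bound over all $2^{(\ell+1)\Vsize}$ choices of $(T_0,\dots,T_\ell)$; and a single two-sided estimate for the ``full'' sum $\W(V;V,\dots,V)$ coming from the Kim--Vu concentration (\cref{lem:Kim-Vu}), for which no union bound is needed. These are linked by a telescoping identity: using $\deg_V(s)=\deg_{S_i}(s)+\deg_{V\setminus S_i}(s)$ for each $i$ together with $\sum_{s\in V}=\sum_{s\in S_0}+\sum_{s\in V\setminus S_0}$, and expanding the product, one gets
\begin{align*}
\W(V;V,\dots,V)=\sum_{X\in\{S_0,\,V\setminus S_0\}}\ \sum_{T\subseteq[\ell]}\W^X(T),\qquad \W^X(T):=\sum_{s\in X}\prod_{i\in T}\deg_{S_i}(s)\prod_{i\notin T}\deg_{V\setminus S_i}(s),
\end{align*}
and the same identity with $\W$ replaced by $\Wh$ (and each $\deg$ by its expectation). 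The distinguished term $\W^{S_0}([\ell])$ is exactly $\W(S_0;S_1,\dots,S_\ell)$, every other term is again of the form $\W(T_0;T_1,\dots,T_\ell)$ with $T_i\in\{S_i,V\setminus S_i\}$, and there are only $2^{\ell+1}=O(1)$ of them (recall $\ell$ is constant). We will also freely use $\Vsize\pmax\ge1$ to absorb lower-order errors of size $O(\Vsize(\Vsize\pmax)^{\ell-1})$ into the target bound $O(\Vsize(\Vsize\pmax)^{\ell-1/2})$.

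For the Janson step, fix $T_0,\dots,T_\ell$ and write $\W(T_0;T_1,\dots,T_\ell)=\sum_{s\in T_0}\sum_{t_1,\dots,t_\ell}\prod_{i\in[\ell]}\Ind_{\{s,t_i\}}$ (with $t_i\in T_i\setminus\{s\}$), a sum of indicators of increasing events, each a product of at most $\ell$ distinct $\Ind_e$'s. I would first check that $\E[\W(T_0;\dots)]=\Wh(T_0;\dots)+O(\Vsize(\Vsize\pmax)^{\ell-1})$ with $\E[\W(T_0;\dots)]\ge\Wh(T_0;\dots)$ (the gap coming only from degenerate tuples with repeated leaves), and then bound the Janson dependency parameter $\bar\Delta:=\sum_{\alpha\ne\beta:\,E_\alpha\cap E_\beta\neq\emptyset}\E\bigl[\prod_{e\in E_\alpha}\Ind_e\prod_{e\in E_\beta}\Ind_e\bigr]$ — over stars $\alpha=(s;t_1,\dots,t_\ell)$ with edge set $E_\alpha$ — by counting pairs of crossing stars that share an edge, obtaining $\bar\Delta=O(\Vsize(\Vsize\pmax)^{2\ell-1})$. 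Since also $\E[\W(T_0;\dots)]=O(\Vsize(\Vsize\pmax)^\ell)$, \cref{lem:Janson} applied with $t=c\,\Vsize(\Vsize\pmax)^{\ell-1/2}$ yields
\begin{align*}
\Pr\bigl[\W(T_0;\dots)\le\Wh(T_0;\dots)-t\bigr]\le\exp\!\left(-\Omega\!\left(\frac{t^2}{\E[\W(T_0;\dots)]+\bar\Delta}\right)\right)=\exp\bigl(-\Omega(c^2\Vsize)\bigr).
\end{align*}
Taking $c$ large enough (depending only on $\ell$) and a union bound over the $2^{(\ell+1)\Vsize}$ tuples gives: with probability $1-\Vsize^{-C_1}$, \emph{every} $\W(T_0;T_1,\dots,T_\ell)\ge\Wh(T_0;T_1,\dots,T_\ell)-O(\Vsize(\Vsize\pmax)^{\ell-1/2})$, which in particular covers $\W(S_0;\dots)$ itself and all the pieces $\W^X(T)$.

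For the Kim--Vu step, note $\W(V;V,\dots,V)=\sum_{s\in V}\deg(s)^\ell$, which after using $\Ind_e^2=\Ind_e$ is a multilinear polynomial of constant degree $\ell$ with nonnegative $O(1)$ coefficients whose monomials are partial stars; hence its partial-derivative expectations satisfy $\mathcal E_0=\E[\W(V;V,\dots,V)]=O(\Vsize(\Vsize\pmax)^\ell)$ and $\mathcal E_{\ge1}=O((\Vsize\pmax)^{\ell-1})$. Since no union bound is taken here, \cref{lem:Kim-Vu} with deviation parameter $\lambda\asymp\Vsize^{1/(2\ell)}$ gives $\bigl|\W(V;V,\dots,V)-\E[\W(V;V,\dots,V)]\bigr|=O\bigl(\sqrt{\mathcal E_0\,\mathcal E_{\ge1}}\cdot\sqrt{\Vsize}\,\bigr)=O(\Vsize(\Vsize\pmax)^{\ell-1/2})$ with probability $1-\Vsize^{-C_1}$, hence $\W(V;V,\dots,V)=\Wh(V;V,\dots,V)\pm O(\Vsize(\Vsize\pmax)^{\ell-1/2})$. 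On the intersection of this event with the one from the Janson step, the wanted lower bound for $\W(S_0;\dots)$ is just the Janson bound with $(T_0,\dots,T_\ell)=(S_0,\dots,S_\ell)$, and the wanted upper bound follows by subtracting the $2^{\ell+1}-1$ Janson lower bounds for the remaining summands $\W^X(T)$ from the Kim--Vu upper bound for $\W(V;V,\dots,V)$ and invoking the $\Wh$-version of the telescoping identity, so that the leading terms cancel and only $O(1)$ errors of size $O(\Vsize(\Vsize\pmax)^{\ell-1/2})$ survive. This gives $|\W(S_0;\dots)-\Wh(S_0;\dots)|\le C_2\,\Vsize(\Vsize\pmax)^{\ell-1/2}$ simultaneously for all $S_0,\dots,S_\ell$.

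The step I expect to be the main obstacle is the bound $\bar\Delta=O(\Vsize(\Vsize\pmax)^{2\ell-1})$: it needs a careful case analysis of how two crossing stars can overlap (a common centre, versus the centre of one being a leaf of the other), together with bookkeeping for degenerate stars that repeat leaves, and it is exactly this order of $\bar\Delta$ that makes the Janson tail $\exp(-\Omega(\Vsize))$ beat the $2^{(\ell+1)\Vsize}$-fold union bound. Conceptually, the scheme is arranged so that one never needs an \emph{upper}-tail bound for $\W$ (the notoriously hard ``upper tail'', unavailable from Janson): the upper bound on each $\W(S_0;\dots)$ is produced only indirectly, as ``$\W(V;V,\dots,V)$ minus lower bounds for the complementary pieces''.
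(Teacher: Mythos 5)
Your proposal is correct and takes essentially the same approach as the paper: a Janson lower-tail bound strong enough to survive a union bound over all $2^{(\ell+1)\Vsize}$ tuples (with $\bar\Delta=O(\Vsize(\Vsize\pmax)^{2\ell-1})$ playing the role of the paper's $\nabla$-bound in \cref{lem:technique}), a Kim--Vu two-sided bound on the single quantity $\W(V;V,\dots,V)$, and an indirect upper bound obtained by subtracting Janson lower bounds for the complementary pieces. The only superficial difference is the decomposition used for the upper-bound step: you use the full $2^{\ell+1}$-term binomial expansion in $\deg_V=\deg_{S_i}+\deg_{V\setminus S_i}$, whereas the paper uses the $(\ell+1)$-term telescoping identity
$\W(S_0;\dots,S_\ell)=\W(V;V,\dots,V)-\sum_{i=0}^{\ell}\W(V;\dots,V,V\setminus S_i,S_{i+1},\dots,S_\ell)$; both yield constantly many subtracted $W$-terms and serve the same purpose.
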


Our proof of \cref{lem:WIdiscrepancy} consists of three parts.
First, we give a concentration of $W$ (\cref{lem:WEWdiscrepancy}). 
Next, we give an upper bound on the discrepancy between $\E[W]$ and $\Wh$ (\cref{lem:EWIdiscrepancy}). 
At the end, we show \cref{lem:technique} which plays a key role in showing \cref{lem:WEWdiscrepancy,lem:EWIdiscrepancy}. 

\subsection[Reduction to W]{Reduction to $W$}\label{sec:reduction}
\begin{proof}[Proof of \ref{pro:gomiN} of \cref{thm:fgood} via \cref{lem:WIdiscrepancy}]
Let $f(x)=\sum_{j=0}^\ell c_jx^j$.
For notational convenience, 
let $x_v=\frac{\deg_A(v)}{\deg(v)}$, $\bar{x}_v=\frac{\E[\deg_A(v)]}{\E[\deg(v)]}$ and $\hat{x}=\frac{|A_{i}|p+|A_{3-i}|q}{n(p+q)}$.
Then from the triangle inequality, it holds that
\begin{align}
\left|\sum_{v\in S\cap V_i}\bigl(f(x_v)-f(\hat{x})\bigr)\right|
&\leq \left|\sum_{v\in S\cap V_i}\bigl(f(x_v)-f(\bar{x}_v)\bigr)\right|+\left|\sum_{v\in S\cap V_i}\bigl(f(\bar{x}_v)-f(\hat{x})\bigr)\right|.
\label{eq:trianglegomi}
\end{align}
For the second term of the right hand of \cref{eq:trianglegomi}, two positive constant $C_1, C_2$ exist such that
\begin{align*}
\left|\sum_{v\in S\cap V_i}\bigl(f(\bar{x}_v)-f(\hat{x})\bigr)\right|
\leq \sum_{v\in S\cap V_i}|f(\bar{x}_v)-f(\hat{x})|
\leq C_1\sum_{v\in S\cap V_i}|\bar{x}_v-\hat{x}|
\leq C_2\frac{|S\cap V_i|}{n} 
\leq C_2.
\end{align*}
The second inequality follows from the Lipschitz condition of $f$ (c.f.~\cref{sec:lipschitzcondition}).
The third inequality holds since $\E[\deg(v)]=(n-1)p+nq$ and $(|A_{i}|-1)p+|A_{3-i}|q \leq \E[\deg_A(v)]\leq |A_{i}|p+|A_{3-i}|q$ for any $v\in V_i$.

For the first term of the right hand of \cref{eq:trianglegomi}, since
\begin{align*}
\left(\frac{\deg_A(v)}{\deg(v)}\right)^j-\left(\frac{\E[\deg_A(v)]}{\E[\deg(v)]}\right)^j
&=\frac{\left(\E[\deg(v)]^j-\deg(v)^j\right)\left(\frac{\deg_A(v)}{\deg(v)}\right)^j+\left(\deg_A(v)^j-\E[\deg_A(v)]^j\right)}{\E[\deg(v)]^j}
\end{align*}
for any $j$ and $v\in V$, we have
\begin{align*}
\lefteqn{\left|\sum_{v\in S\cap V_i}\bigl(f(x_v)-f(\bar{x}_v)\bigr)\right|
=\left|\sum_{j=1}^{\ell}c_j \sum_{v\in S\cap V_i}\left((x_v)^j-(\bar{x}_v)^j\right)\right|}\\
&\leq \sum_{j=1}^\ell \frac{|c_j|}{((n-1)p)^j} \left(\left|\sum_{v\in S\cap V_i}\left(\E[\deg(v)]^{j}-\deg(v)^j\right)\left(x_v\right)^j \right|+\left| \sum_{v\in S\cap V_i}\left(\deg_A(v)^j-\E[\deg_A(v)]^j\right)\right|\right).
\end{align*}
Note that $\E[\deg(v)]=(n-1)p+nq\geq (n-1)p$ for any $v\in V$.
Since
\begin{align*}
\left|\sum_{v\in S\cap V_i}\left(\E[\deg(v)]^{j}-\deg(v)^j\right)\left(x_v\right)^j \right|
&\leq \max_{U\subseteq V}\left|\sum_{u\in U}\left(\E[\deg(u)]^{j}-\deg(u)^j\right)\right|\\
&=\max_{U\subseteq V}\left|W(U;\overbrace{V,\ldots,V}^{j})-\Wh(U;\overbrace{V,\ldots,V}^{j})\right|
\end{align*}
and $\sum_{v\in S\cap V_i}\left(\deg_A(v)^j-\E[\deg_A(v)]^j\right)=W(S\cap V_i;\overbrace{A,\ldots,A}^{j})-\Wh(S\cap V_i;\overbrace{A,\ldots,A}^{j})$, 
we obtain the claim from \cref{lem:WIdiscrepancy}.
Note that, for any $S\subseteq V$, $a\in \mathbb{R}^V$ and $x\in [0,1]^V$, $|\sum_{s\in S}a_sx_s|\leq \max_{U\subseteq V}|\sum_{u\in U}a_u|$ since 
$\sum_{s\in S:a_s\leq 0}a_s \leq \sum_{s\in S}a_sx_s\leq \sum_{s\in S:a_s\geq 0}a_s$.
\end{proof}
Now we introduce the following lemma, which we will use in \cref{sec:W-EW,sec:EW-Wh}.
\begin{lemma}
\label{lem:technique}
Let $V$ be a set of $|V|=\Vsize$ vertices
and fix $l+1$ subsets $S_0, S_1, \ldots, S_l\subseteq V$. 
For any $\mathbf{s}=(s_0,s_1,\ldots, s_l)\in S_0\times S_1\times \cdots \times S_l$, 
define 
\begin{align*}
U(\mathbf{s})&\defeq \{s_i: i\in \{0\}\cup [l]\}.
\end{align*}
Consider $\sum_{\substack{\mathbf{s}\in \mathcal{S}}}p^{|\mathcal{F}(\mathbf{s})|}$, where 
$p\in [1/\Vsize,1]$, $\mathcal{S}\subseteq S_0\times S_1\times \cdots \times S_l$ and  
\begin{align*}
\mathcal{F}: S_0\times S_1\times \cdots \times S_l \to 2^{\binom{V}{2}}.
\end{align*}
Suppose that the following three conditions hold for any $\mathbf{s}\in \mathcal{S}$:  
(1) $|\mathcal{F}(\mathbf{s})|\leq k$, 
(2) $\mathcal{\Es}(\mathbf{s})\subseteq \binom{U(\mathbf{s})}{2}$ 
and 
(3) the graph $G(\mathbf{s})=\bigl(\Vs(\mathbf{s}), \mathcal{F}(\mathbf{s})\bigr)$ is connected.
Let $L\subseteq \{0\}\cup [l]$ be the set of indices such that $S_i\cap S_j=\emptyset$ for any $i,j\in L$ $(i\neq j)$.
Then
\begin{align*}
\sum_{\substack{\mathbf{s}\in \mathcal{S}}}p^{|\mathcal{F}(\mathbf{s})|}\leq \mathcal{B}_{l+1}\Vsize (\Vsize p)^k\frac{\prod_{i\in L}|S_i|}{\Vsize ^{|L|}}
\end{align*}
where $\mathcal{B}_l$ denotes the $l$-th Bell number.
\end{lemma}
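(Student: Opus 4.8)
The plan is to split the sum according to the \emph{coincidence pattern} of the tuples and to bound the contribution of each pattern separately. For $\mathbf{s}=(s_0,\ldots,s_l)\in\mathcal{S}$ let $\pi(\mathbf{s})$ be the set partition of $\{0\}\cup[l]$ in which $i,j$ lie in the same block if and only if $s_i=s_j$. Since a set of size $l+1$ has exactly $\mathcal{B}_{l+1}$ partitions, it suffices to prove that for each fixed partition $\pi$ with $b\defeq|\pi|$ blocks,
\begin{align*}
\sum_{\mathbf{s}\in\mathcal{S},\,\pi(\mathbf{s})=\pi} p^{|\mathcal{F}(\mathbf{s})|}
\;\leq\; \Vsize\,(\Vsize p)^{k}\,\frac{\prod_{i\in L}|S_i|}{\Vsize^{|L|}},
\end{align*}
and then sum over the at most $\mathcal{B}_{l+1}$ choices of $\pi$.

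First I would fix such a $\pi$ and assume its class is nonempty. For every $\mathbf{s}$ in this class we have $|U(\mathbf{s})|=b$ (the number of distinct coordinate values), and because $G(\mathbf{s})=(U(\mathbf{s}),\mathcal{F}(\mathbf{s}))$ is a connected graph on $b$ vertices it has at least $b-1$ edges; together with $|\mathcal{F}(\mathbf{s})|\le k$ this gives $b-1\le k$ and, since $p\le 1$, the pointwise bound $p^{|\mathcal{F}(\mathbf{s})|}\le p^{b-1}$. A tuple with pattern $\pi$ is specified by choosing, for each block $B\in\pi$, a single vertex lying in $\bigcap_{i\in B}S_i$; dropping the constraint that distinct blocks receive distinct vertices, the number of tuples in the class is at most $\prod_{B\in\pi}\bigl|\bigcap_{i\in B}S_i\bigr|$. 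Next I would invoke the hypothesis on $L$: if $i,j\in L$ are distinct then $S_i\cap S_j=\emptyset$, so no valid tuple has $s_i=s_j$; hence every block of $\pi$ contains at most one index from $L$, so exactly $|L|$ of the $b$ blocks contain an $L$-index. Bounding $\bigl|\bigcap_{i\in B}S_i\bigr|\le|S_j|$ for the block containing $j\in L$ and $\bigl|\bigcap_{i\in B}S_i\bigr|\le\Vsize$ for each of the remaining $b-|L|$ blocks, the class contributes at most
\begin{align*}
p^{b-1}\,\Vsize^{b-|L|}\prod_{j\in L}|S_j|
&= \Vsize\,(\Vsize p)^{b-1}\,\Vsize^{-|L|}\prod_{j\in L}|S_j| \\
&\leq \Vsize\,(\Vsize p)^{k}\,\Vsize^{-|L|}\prod_{j\in L}|S_j|,
\end{align*}
where the last step uses $\Vsize p\ge 1$ and $b-1\le k$. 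Summing over the at most $\mathcal{B}_{l+1}$ partitions gives the claimed inequality.

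The argument is pure counting and uses no probability: $\mathcal{F}$ is treated as an arbitrary deterministic map subject only to the three structural constraints. The one place that requires care is the interplay between those constraints — connectivity of $G(\mathbf{s})$ forces $|\mathcal{F}(\mathbf{s})|\ge|U(\mathbf{s})|-1$, so a small edge budget $k$ caps the number of distinct vertices and hence of "free" coordinates, while the pairwise-disjointness condition defining $L$ prevents the coordinates indexed by $L$ from ever coalescing, which is exactly what lets us charge each such coordinate the possibly small factor $|S_i|$ instead of $\Vsize$. Organizing everything around the partition $\pi(\mathbf{s})$ is what makes both of these effects bookkeepable at once; beyond that, the computation is routine.
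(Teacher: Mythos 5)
Your proof is correct and mirrors the paper's argument: both decompose the sum over the coincidence partition of the tuple, use connectivity to force $|\mathcal{F}(\mathbf{s})|\geq|U(\mathbf{s})|-1$ (hence $p^{|\mathcal{F}(\mathbf{s})|}\leq p^{b-1}$ and $b\leq k+1$), bound the per-partition tuple count by $\prod_{B\in\pi}|\bigcap_{i\in B}S_i|$, and exploit the pairwise disjointness of the $L$-indexed sets to charge $|S_j|$ rather than $\Vsize$ to those $|L|$ blocks. The bookkeeping and the final Bell-number factor are identical.
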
 
The $l$-th {\em Bell number} $\mathcal{B}_l$ is the number of possible partitions of a set with $l$ labeled elements. 
It is known that $\mathcal{B}_{l}<\left(\frac{0.792 l}{\ln(l+1)}\right)^l$ for all positive integer $l$ \cite{DT10}.
\subsection[Concentration of W]{Concentration of $W$} \label{sec:W-EW}
\begin{lemma}
\label{lem:WEWdiscrepancy}
Suppose the same setting of \cref{lem:WIdiscrepancy}.
Then two positive constants $C_1, C_2$ exist such that the following holds with probability $1-\Vsize^{-C_1}$: 
\begin{align*}
\forall S_0, S_1, \ldots, S_\ell: 
\bigl|\W(S_0;S_1,\ldots,S_\ell)-\E[\W(S_0;S_1,\ldots,S_\ell)]\bigr|\leq  C_2 \Vsize(\Vsize \pmax)^{\ell-1/2}.
\end{align*}
\end{lemma}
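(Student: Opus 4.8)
The plan is to establish the two one-sided bounds $\W(S_0;S_1,\dots,S_\ell)\ge\E[\W]-O(\Vsize(\Vsize\pmax)^{\ell-1/2})$ and $\W(S_0;S_1,\dots,S_\ell)\le\E[\W]+O(\Vsize(\Vsize\pmax)^{\ell-1/2})$ separately, for \emph{all} $S_0,\dots,S_\ell$ simultaneously. The lower bound will come from the Janson inequality (\cref{lem:Janson}), whose failure probability is $\exp(-\Omega(\Vsize))$ and hence survives a union bound over the at most $2^{(\ell+1)\Vsize}$ choices of $(S_0,\dots,S_\ell)$. The upper bound cannot be obtained the same way, because the Kim--Vu concentration (\cref{lem:Kim-Vu}) has only polynomially small failure probability; instead I will reduce it to a \emph{single} application of Kim--Vu plus the (already established) Janson lower bound, via an inclusion--exclusion decomposition.

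For the lower bound, write $\W(S_0;S_1,\dots,S_\ell)=\sum_{\mathbf{s}}\mathbbm{1}[E(\mathbf{s})]$, where $\mathbf{s}=(s_0,s_1,\dots,s_\ell)$ ranges over ``crossing stars'' with center $s_0\in S_0$ and leaves $s_i\in S_i\setminus\{s_0\}$, and $E(\mathbf{s})$ is the increasing event that all edges $\{s_0,s_i\}$ are present. The relevant Janson quantity is $\bar\Delta:=\E[\W]+\sum_{\mathbf{s}\neq\mathbf{s}',\,\mathbf{s}\sim\mathbf{s}'}\Pr[E(\mathbf{s})\cap E(\mathbf{s}')]$, where $\mathbf{s}\sim\mathbf{s}'$ means the two stars share an edge. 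Since each star has a connected edge set $\mathcal{F}(\mathbf{s})$ with $|\mathcal{F}(\mathbf{s})|\le\ell$, \cref{lem:technique} (with $k=\ell$ and $\ell+1$ slots) gives $\E[\W]\le\mathcal{B}_{\ell+1}\Vsize(\Vsize\pmax)^{\ell}$; and a pair $\mathbf{s}\sim\mathbf{s}'$ spans a connected graph on $\le 2(\ell+1)$ vertices with $\le 2\ell-1$ distinct edges, so \cref{lem:technique} (with $k=2\ell-1$ and $2\ell+2$ slots) bounds the cross term by $\mathcal{B}_{2\ell+2}\Vsize(\Vsize\pmax)^{2\ell-1}$. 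Using $\Vsize\pmax\ge1$, this yields $\bar\Delta=O(\Vsize(\Vsize\pmax)^{2\ell-1})$ uniformly. Hence, for $t=C_2\Vsize(\Vsize\pmax)^{\ell-1/2}$, Janson gives $\Pr[\W\le\E[\W]-t]\le\exp(-\Omega(t^2/\bar\Delta))=\exp(-\Omega(C_2^2\Vsize))$, which for $C_2$ large enough (depending on $\ell$) beats the union bound over all $(S_0,\dots,S_\ell)$.

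For the upper bound, using $\deg_S(s)=\deg_V(s)-\deg_{V\setminus S}(s)$ in each factor and $\sum_{s\in S_0}=\sum_{s\in V}-\sum_{s\in V\setminus S_0}$, peel the slots off one at a time to get an identity $\W(S_0;S_1,\dots,S_\ell)=\W(V;V,\dots,V)-\sum_{j=0}^{\ell}R_j$, where each $R_j\ge0$ is again a $\W$-quantity of the same shape (one slot a complement $V\setminus S_j$, the others equal to $V$ or to some $S_k$); taking expectations gives $\E[\W(S_0;\dots)]=\E[\W(V;V,\dots,V)]-\sum_j\E[R_j]$. Now apply Kim--Vu \emph{once} to the fixed random variable $\W(V;V,\dots,V)$, a degree-$\ell$ polynomial in the $\binom{\Vsize}{2}$ edge indicators: one checks $\E_0=\Theta(\Vsize(\Vsize\pmax)^{\ell})$ and $\E_r=O((\Vsize\pmax)^{\ell-r})$ for $r\ge1$ (again by star counting), so $\sqrt{\E'\E''}=O(\Vsize^{1/2}(\Vsize\pmax)^{\ell-1/2})$ and Kim--Vu with $\lambda=\Theta(\log\Vsize)$ gives $\W(V;V,\dots,V)\le\E[\W(V;V,\dots,V)]+O(\mathrm{polylog}(\Vsize)\cdot\Vsize^{1/2}(\Vsize\pmax)^{\ell-1/2})$, which is $o(\Vsize(\Vsize\pmax)^{\ell-1/2})$, with probability $1-\Vsize^{-\Omega(1)}$. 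Combining this single upper bound with the Janson lower bounds $R_j\ge\E[R_j]-O(\Vsize(\Vsize\pmax)^{\ell-1/2})$ (valid for all the $R_j$ at once, since every $R_j$ is among the $\W$-quantities already handled in the previous paragraph) yields $\W(S_0;\dots)\le\E[\W(S_0;\dots)]+O(\Vsize(\Vsize\pmax)^{\ell-1/2})$ for all $S_0,\dots,S_\ell$; together with the lower bound this proves the lemma. (Empty or small $S_i$ are harmless: then $\E[\W]$ and $\bar\Delta$ are only smaller, so the worst-case estimates suffice.)

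The main obstacle is precisely the asymmetry that forces the decomposition: Kim--Vu's polynomially small failure probability rules out a naive uniform upper bound, so all dependence on the $2^{\Theta(\Vsize)}$ subset choices must be channeled through Janson's exponentially small lower tail, with Kim--Vu used only for the single term $\W(V;V,\dots,V)$. Concretely, the technical heart is the uniform estimate $\bar\Delta=O(\Vsize(\Vsize\pmax)^{2\ell-1})$ — i.e.\ bounding the number of overlapping pairs of crossing stars — and the companion Kim--Vu partial-derivative bounds; both reduce to the same ``count connected subconfigurations, weight by $\pmax^{\#\text{edges}}$'' problem solved by \cref{lem:technique} with its Bell-number bookkeeping over the slots.
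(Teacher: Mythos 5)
Your proposal is correct and follows essentially the same route as the paper's own proof: Janson plus a union bound over all subset tuples for the lower tail (using \cref{lem:technique} to get $\nabla = O(\Vsize(\Vsize\pmax)^{2\ell-1})$ uniformly), and then the telescoping identity $\W(S_0;\dots)=\W(V;V,\dots,V)-\sum_j R_j$ combined with a single Kim--Vu application to $\W(V;V,\dots,V)$ and the already-proved Janson lower bounds on the $R_j$ for the upper tail. The paper's argument is this one, with the same bookkeeping via Bell numbers and the same parameter choices.
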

\begin{proof}
For $\ell+1$ vertex subsets $S_0, S_1, \ldots, S_\ell$, let
\begin{align*}
\Slc&\defeq\left\{(s_0,s_1,\ldots,s_\ell): s_0\in S_0, s_i \in S_i\setminus \{s_0\}\ \textrm{for every $i\in [\ell]$}\right\}
\end{align*}
and for any $\mathbf{s}=(s_0, s_1,\ldots, s_\ell)\in \prod_{i=0}^\ell S_i=S_0 \times S_1\times \cdots \times S_\ell$, let 
\begin{align*}
\Es(\mathbf{s})&\defeq \bigl\{\{s_0,s_i\}: i\in [\ell] \bigr\}\setminus \{s_0\}. 
\end{align*}
For example, 
$\Es\bigl((a,b,a,c,d,b,f,a)\bigr)=\bigl\{\{a,b\},\{a,c\},\{a,d\},\{a,f\}\bigr\}$.
Then, 
\begin{align}
W(S_0;S_1,\ldots, S_\ell)
&=\sum_{s_0\in S_0}\prod_{i\in [\ell]}\left(\sum_{s_i\in S_i\setminus \{s_0\}}\Ind_{\{s_0,s_i\}}\right)
=\sum_{\mathbf{s}\in \Slc}\prod_{i\in [\ell]} \Ind_{\{s_0,s_i\}}
=\sum_{\mathbf{s}\in \Slc}\prod_{e\in \Es(\mathbf{s})} \Ind_{e}.
\label{eq:WtoJ}
\end{align}
%
%

\

\noindent \textbf{Lower bound on $W$. } 
First, we claim the following:
Two positive constants $C_3, C_4$ exist such that
\begin{align}
\Pr\left[\forall S_0, S_1\ldots, S_\ell: W(S_0;S_1,\ldots, S_\ell)\geq \E[W(S_0;S_1,\ldots, S_\ell)]-C_4 \Vsize (\Vsize \pmax)^{\ell-1/2}  \right]\geq 1-\Vsize^{-C_3}.
\label{claim:WEWlower}
\end{align}
To obtain \cref{claim:WEWlower}, we apply Janson's inequality (\cref{lem:Janson}) to \cref{eq:WtoJ}.
Then we have
\begin{align}
\lefteqn{\Pr\left[\exists S_0,S_1,\ldots, S_\ell: W(S_0;S_1,\ldots,S_\ell)\leq \E[W(S_0;S_1,\ldots,S_\ell)]-t\right]} \nonumber \\
&\leq \left(2^\Vsize\right)^{\ell+1}\exp\left(-\frac{t^2}{2\nabla(S_0;S_1,\ldots,S_\ell)}\right)
\leq \exp\left((\ell+1)\Vsize-\frac{t^2}{2\nabla(S_0;S_1,\ldots,S_\ell)}\right)
\label{eq:WJansonwithNabla}
\end{align}
where
\begin{align*}
\nabla(S_0;S_1,\ldots,S_\ell)
&= \sum_{\substack{\mathbf{s}\in \Slc,\mathbf{s}'\in \Slc:\\\Es(\mathbf{s})\cap \Es(\mathbf{s}')\neq \emptyset}}
\E\left[\prod_{e\in \Es(\mathbf{s})} \Ind_{e}\prod_{e'\in \Es(\mathbf{s}')} \Ind_{e'}\right].
\end{align*}
Thus it suffices to show that
$
\nabla(S_0;S_1,\ldots,S_\ell)
= O(\Vsize (\Vsize \pmax)^{2\ell-1})
$.
Since $\max_{e\in \binom{V}{2}}\E[J_e]=\pmax$, it holds that
\begin{align}
\nabla(S_0;S_1,\ldots,S_\ell)
&=\sum_{\substack{\mathbf{s}\in \Slc,\mathbf{s}'\in \Slc:\\\Es(\mathbf{s})\cap \Es(\mathbf{s}')\neq \emptyset}}
\E\left[\prod_{e\in \Es(\mathbf{s})} \Ind_{e}\prod_{e'\in \Es(\mathbf{s}')} \Ind_{e'}\right]
\leq \sum_{\substack{\mathbf{s}\in \Slc,\mathbf{s}'\in \Slc:\\\Es(\mathbf{s})\cap \Es(\mathbf{s}')\neq \emptyset}}
p^{|\Es(\mathbf{s})\cup \Es(\mathbf{s}')|}. 
\label{eq:nablaupper1}
\end{align}
To bound \cref{eq:nablaupper1}, we apply \cref{lem:technique} which we will prove in \cref{sec:technique}.
Consider $2\ell+2$ vertex subsets $S'_0, S'_1, \ldots, S'_{2\ell+1}$ where $S'_i\defeq S_{i\bmod (\ell+1)}$.
For any $i\in \{0\}\cup [2\ell+1]$, let 
\begin{align*}
\mathcal{S}&\defeq \left\{ (s_0, s_1, \ldots, s_{2\ell+1})\in \Slc\times \Slc : \Es\bigl((s_0, \ldots, s_{\ell})\bigr)\cap \Es\bigl((s_{\ell+1},\ldots, s_{2\ell+1})\bigr)\neq \emptyset \right\}\subseteq \prod_{i=0}^{2\ell+1}S'_i, \\
\mathcal{F}(\mathbf{s})&\defeq \Es\bigl((s_0, \ldots, s_{\ell})\bigr)\cup \Es\bigl((s_{\ell+1},\ldots, s_{2\ell+1})\bigr)\ \textrm{for any}\ \mathbf{s}=(s_0, s_1, \ldots, s_{2\ell+1})\in \prod_{i=0}^{2\ell+1}S'_i.
\end{align*}
Then for any $\mathbf{s}\in \mathcal{S}$, 
$G({\mathbf{s}})=\bigl(\Vs({\mathbf{s}}), \mathcal{F}(\mathbf{s})\bigr)$ is a connected graph 
and $|\mathcal{F}(\mathbf{s})|\leq 2\ell-1$.
Thus, for any $i_* \in \{0\} \cup [\ell]$, \cref{lem:technique} with letting $l=2\ell+1,k=2\ell-1$ and $L=\{i_*\}$ yields
\begin{align}
\sum_{\substack{\mathbf{s}\in \Slc,\mathbf{s}'\in \Slc:\\\Es(\mathbf{s})\cap \Es(\mathbf{s}')\neq \emptyset}}
p^{|\Es(\mathbf{s})\cup \Es(\mathbf{s}')|}
&=\sum_{\mathbf{s}\in \mathcal{S}}p^{|\mathcal{F}(\mathbf{s})|}
\leq \mathcal{B}_{2(\ell+1)}\Vsize(\Vsize\pmax)^{2\ell-1}\frac{|S_{i_*}'|}{\Vsize}
=\mathcal{B}_{2(\ell+1)}|S_{i_*}|(\Vsize \pmax)^{2\ell-1}.
\label{eq:nablaupper2}
\end{align}
Equations~\cref{eq:nablaupper1,eq:nablaupper2} imply the following statement:
For any $\ell+1$ vertex subsets $S_0, S_1, \ldots S_\ell$ and for any $i_*\in \{0\}\cup [\ell]$, 
\begin{align}
\nabla(S_0;S_1,\ldots,S_\ell)
&\leq \mathcal{B}_{2(\ell+1)}|S_{i_*}|(\Vsize \pmax)^{2\ell-1}
\leq \mathcal{B}_{2(\ell+1)}\Vsize (\Vsize \pmax)^{2\ell-1}.
\label{claim:nablaupper}
\end{align}
Thus by substituting $t=C_4 \Vsize(\Vsize \pmax)^{\ell-1/2}$ with $C_4=\sqrt{2(\ell+1+C_3)\mathcal{B}_{2(\ell+1)}}$ to \cref{eq:WJansonwithNabla}, we obtain the claim \cref{claim:WEWlower}.

\

\noindent \textbf{Upper bound on $W$. }
To complete the proof of \cref{lem:WEWdiscrepancy}, we combine the claim \cref{claim:WEWlower} and  the following claim:
Two positive constants $C_5, C_6$ exist such that
\begin{align}
\Pr\left[\forall S_0, S_1\ldots, S_\ell: W(S_0;S_1,\ldots, S_\ell)\leq \E[W(S_0;S_1,\ldots, S_\ell)]+C_6 \Vsize (\Vsize \pmax)^{\ell-1/2}  \right]\geq 1-\Vsize^{-C_5}.
\label{claim:WEWupper}
\end{align}
To show the claim, we consider the following expression of $W$.
For any $S_0, S_1, \ldots, S_\ell$, let 
$W_0\defeq W(S_0;S_1,\ldots,S_\ell)$ and
let 
$W_i\defeq W(\overbrace{V; V\ldots, V}^{i}, S_{i}, S_{i+1}, \ldots, S_{\ell})$ for each $i\in[\ell+1]$.
Since $W_{i+1}-W_{i}=W(\overbrace{V;V,\ldots,V}^{i},V\setminus S_{i}, S_{i+1}, \ldots, S_{\ell})$ for any $i\in \{0\}\cup [\ell+1]$
and $\sum_{i=0}^{\ell}(W_{i+1}-W_{i})=W_{\ell+1}-W_0$, we have
\begin{align}
\label{eq:WdlV}
W(S_0;S_1,\ldots,S_{\ell})&=W(\overbrace{V;V,\ldots,V}^{\ell+1})-\sum_{i=0}^{\ell}W(\overbrace{V;V,\ldots,V}^{i},V\setminus S_{i}, S_{i+1}, \ldots, S_{\ell}).
\end{align}
We can apply \cref{claim:WEWlower} for the second term of the right hand of \cref{eq:WdlV}. 
Now we try to get an upper bound on $W(\overbrace{V;V,\ldots,V}^{\ell+1})$. 
For the notational convenience, let $Y=W(\overbrace{V;V,\ldots,V}^{\ell+1})$.
Let $S_i=V$ for every $i\in \{0\}\cup [\ell]$ and let 
\begin{align*}
\mathcal{E}&\defeq \left\{\Es(\mathbf{s}): \mathbf{s}\in \Slc\right\}.
\end{align*}
From \eqref{eq:WtoJ}, we have
\begin{align*}
Y
=\sum_{\mathbf{s}\in \Slc}\prod_{e\in \Es(\mathbf{s})}\Ind_e
=\sum_{\Es\in \mathcal{E}} |\{\mathbf{s}\in \Slc : \Es(\mathbf{s})=F\}|\prod_{e\in \Es}\Ind_e.
\end{align*}
Thus applying Kim-Vu inequality (\cref{lem:Kim-Vu}) to $Y$ yields
\begin{align}
\Pr\left[|Y-\E[Y]|\geq \sqrt{\ell! \max_{A\subseteq \binom{V}{2}}\E[Y_{A}]\max_{A\subseteq \binom{V}{2}:A\neq \emptyset}\E[Y_{A}]}(8\lambda)^\ell\right]\leq 2\exp(2+2(\ell-1)\log \Vsize-\lambda)
\label{eq:KimVutoY}
\end{align}
where
\begin{align*}
Y_{A}
&=\sum_{\substack{\Es\in \mathcal{E}:\\F\supseteq A}} |\{\mathbf{s}\in \Slc:  \Es(\mathbf{s})=F\}|\prod_{e\in \Es\setminus A}\Ind_e
=\sum_{\substack{\mathbf{s}\in \Slc:\\ \Es(\mathbf{s}) \supseteq A}}\prod_{e\in \Es(\mathbf{s})\setminus A}\Ind_e.
\end{align*}
Now, we give an upper bound on $\E[Y_{A}]$. 
Since $\max_{e\in \binom{V}{2}}\E[J_e]=p$,
it holds that
\begin{align*}
\E[Y_{A}]=\sum_{\substack{\mathbf{s}\in \Slc:\\ \Es(\mathbf{s}) \supseteq A}}\E\left[\prod_{e\in \Es(\mathbf{s})\setminus A}\Ind_e\right]
\leq \sum_{\substack{\mathbf{s}\in \Slc:\\ \Es(\mathbf{s}) \supseteq A}}p^{|\Es(\mathbf{s})\setminus A|}
=\sum_{\substack{\mathbf{s}\in \Slc:\\ \Es(\mathbf{s}) \supseteq A}}p^{|\Es(\mathbf{s})|-|A|}.
\end{align*}

If $A=\emptyset$, 
a direct application of \cref{lem:technique} with letting $l=k=\ell$ and $L=\emptyset$ yields
\begin{align}
\E[Y_{A}]=\E[Y]
&\leq \sum_{\substack{\mathbf{s}\in \Slc}}p^{|\Es(\mathbf{s})|}
\leq \mathcal{B}_{\ell+1}\Vsize (\Vsize \pmax)^{\ell}.
\label{eq:YAzero}
\end{align}
Note that $|\Es(\mathbf{s})|\leq \ell$ and $G({\mathbf{s}})=(\Vs(\mathbf{s}),\Es(\mathbf{s}))$ is a connected graph for any $\mathbf{s}\in \Slc\subseteq \prod_{i=0}^\ell S_i$.

Now we consider the case $|A|=\kappa\geq 1$.
Let $V(A)$ be the set of vertices induced by the edge set $A\subseteq \binom{V}{2}$.
If $\Es(\mathbf{s})\supseteq A$ for some $\mathbf{s}\in \prod_{i=0}^\ell V$, the graph 
$G'=(V(A),A)$ is a star graph and hence $|V(A)|=|A|+1=\kappa+1$.
Let $V(A)=\{a_0, a_1, \ldots, a_{\kappa}\}$.
Now consider $(\ell+1)+(\kappa+1)$ vertex subsets $S'_0, S'_1, \ldots S'_{\ell+\kappa+1}$ where $S'_i=S_i$ for any $0\leq i\leq \ell$ and $S'_i=\{a_{i-(\ell+1)}\}$ for any $\ell+1\leq i\leq \ell+\kappa+1$.
Let
\begin{align*}
\mathcal{S}&\defeq \left\{ (s_0, s_1, \ldots, s_{\ell+\kappa+1})\in \Slc\times \prod_{i=0}^{\kappa}\{a_i\}: F\bigl( (s_0,\ldots,s_\ell)\bigr)\supseteq A \right\}\subseteq \prod_{i=0}^{\ell+\kappa+1}S'_i,\\
\mathcal{F}(\mathbf{s})&\defeq F\bigl((s_0,\ldots,s_\ell)\bigr) \ \textrm{for any}\ \mathbf{s}=(s_0, s_1, \ldots, s_{\ell+\kappa+1})\in \prod_{i=0}^{\ell+\kappa+1}S'_i.
\end{align*}
Note that, for any $\mathbf{s}\in \mathcal{S}$, 
the graph $G({\mathbf{s}})=\bigl(\Vs({\mathbf{s}}), \mathcal{F}(\mathbf{s})\bigr)$ is connected and $|\mathcal{F}(\mathbf{s})|\leq \ell$.
Thus \cref{lem:technique} 
with letting $l=\ell+\kappa+1$, $k=\ell$ and $L=\{\ell+1, \ell+2, \ldots, \ell+\kappa+1\}$ (note that $a_i\neq a_j$ for any $i\neq j$ and $\prod_{i=\ell+1}^{\ell+\kappa+1}|S'_i|=\prod_{i=\ell+1}^{\ell+\kappa+1}|\{a_{i-(\ell+1)}\}|=1$)
yields
\begin{align}
\E[Y_{A}]
&\leq 
\sum_{\substack{\mathbf{s}\in \Slc:\\ \Es(\mathbf{s}) \supseteq A}}p^{|\Es(\mathbf{s})|-|A|}
=\frac{1}{p^{\kappa}}\sum_{\substack{\mathbf{s}\in \mathcal{S}}}p^{|\mathcal{F}(\mathbf{s})|}
\leq \frac{1}{p^{\kappa}} \mathcal{B}_{\ell+\kappa+2} \Vsize(\Vsize\pmax)^{\ell}\frac{\prod_{i=\ell+1}^{\ell+\kappa+1}|S'_i|}{\Vsize^{\kappa+1}}
\leq \mathcal{B}_{2(\ell+1)}(\Vsize \pmax)^{\ell-\kappa}.
\label{eq:YApositive}
\end{align}
%
Combining \cref{eq:YAzero,eq:YApositive}, we have
\begin{align*}
\max_{\substack{A\subseteq \binom{V}{ 2}:|A|\geq 1}}\E[Y_{A}]
&\leq \max_{\substack{A\subseteq \binom{V}{2}:|A|\geq 1}}\mathcal{B}_{2(\ell+1)}(\Vsize\pmax)^{\ell-|A|}=\mathcal{B}_{2(\ell+1)}(\Vsize \pmax)^{\ell-1}, \\
\max_{\substack{A\subseteq \binom{V}{2}}}\E[Y_{A}]
&=\max\left\{ \max_{\substack{A\subseteq \binom{V}{2}:|A|\geq 1}}\E[Y_{A}], \E[d_\ell(V)] \right\}
\leq \mathcal{B}_{2(\ell+1)}\Vsize(\Vsize\pmax)^{\ell}.
\end{align*}
Thus from \cref{eq:KimVutoY}  
with $\lambda=(2(\ell-1)+C_7/2)\log \Vsize$ and $C_8=\sqrt{\ell!} \mathcal{B}_{2(\ell+1)} (16(\ell-1+C_7/2))^\ell$, 
we obtain
\begin{align}
\Pr\left[|Y-\E[Y]|\geq C_8\sqrt{\Vsize }(\log \Vsize)^\ell (\Vsize \pmax)^{\ell-1/2}\right]\leq 2\mathrm{e}^2/\Vsize^{C_7}.
\label{claim:dlV}
\end{align}
Combining \cref{eq:WdlV}, \cref{claim:WEWlower} and \cref{claim:dlV}, the following holds with probability at least $1-2e^2/\Vsize^{C_7}-1/\Vsize^{C_3}$:
\begin{align*}
&\forall S_0, S_1, \ldots, S_\ell: \\
&W(S_0;S_1,\ldots,S_{\ell})
\leq
\E[W(S_0;S_1,\ldots,S_{\ell})]+C_9\sqrt{\Vsize }(\log \Vsize)^\ell (\Vsize \pmax)^{\ell-1/2}+(\ell+1)C_4 \Vsize (\Vsize\pmax)^{\ell-1/2}.
\end{align*}
Thus we obtain the claim \cref{claim:WEWupper} and combining the claims \cref{claim:WEWlower,claim:WEWupper} complete the proof of \cref{lem:WEWdiscrepancy}.
\end{proof}
\subsection{Discrepancy between the expected value and the ideal value}
\label{sec:EW-Wh}

\begin{lemma} \label{lem:EWIdiscrepancy}
Suppose the same setting of \cref{lem:WIdiscrepancy}.
Then for any vertex subsets $S_0, S_1, \ldots, S_\ell$ 
and for any $i_*\in \{0\}\cup [\ell]$, 
a positive constant $C$ exists such that
\begin{align*}
\left|\E[\W(S_0;S_1,\ldots,S_\ell)]-\Wh(S_0;S_1,\ldots,S_\ell)\right|
\leq 
C|S_{i_*}|(\Vsize \pmax)^{\ell-1}.
\end{align*}
\end{lemma}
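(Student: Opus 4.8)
Here is a plan for proving \cref{lem:EWIdiscrepancy}.

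The plan is to expand both $\E[\W(S_0;S_1,\dots,S_\ell)]$ and $\Wh(S_0;S_1,\dots,S_\ell)$ as sums over tuples $\mathbf{s}=(s_0,s_1,\dots,s_\ell)$ with $s_0\in S_0$ and $s_i\in S_i\setminus\{s_0\}$ for $i\in[\ell]$, and then to observe that the two sums agree term by term except on the ``colliding'' tuples, i.e.\ those for which $s_i=s_j$ for some $i\neq j$ in $[\ell]$. Writing $\deg_{S_i}(s_0)=\sum_{s_i\in S_i\setminus\{s_0\}}\Ind_{\{s_0,s_i\}}$ and multiplying out, one obtains
\begin{align*}
\E[\W(S_0;S_1,\dots,S_\ell)] &= \sum_{s_0\in S_0}\ \sum_{\substack{s_i\in S_i\setminus\{s_0\}\\ i\in[\ell]}}\E\!\left[\prod_{i\in[\ell]}\Ind_{\{s_0,s_i\}}\right],\\
\Wh(S_0;S_1,\dots,S_\ell) &= \sum_{s_0\in S_0}\ \sum_{\substack{s_i\in S_i\setminus\{s_0\}\\ i\in[\ell]}}\prod_{i\in[\ell]}\E\!\left[\Ind_{\{s_0,s_i\}}\right],
\end{align*}
where the two inner index sets are literally the same because $\deg_{S_i}(s_0)$ ranges over $S_i\setminus\{s_0\}$ in both cases. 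If $s_1,\dots,s_\ell$ are pairwise distinct, then the edges $\{s_0,s_i\}$ are distinct, the variables $\Ind_{\{s_0,s_i\}}$ are independent, and the two inner summands coincide; hence the difference $\E[\W]-\Wh$ is supported on the set $\mathcal{S}$ of colliding tuples.

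For a colliding tuple $\mathbf{s}\in\mathcal{S}$ I set $\mathcal{F}(\mathbf{s})\defeq\{\{s_0,s_i\}:i\in[\ell]\}$ and $U(\mathbf{s})\defeq\{s_i:i\in\{0\}\cup[\ell]\}$. Since the $\Ind_e$ are $\{0,1\}$-valued, $\prod_{i\in[\ell]}\Ind_{\{s_0,s_i\}}=\prod_{e\in\mathcal{F}(\mathbf{s})}\Ind_e$, and distinct edges are independent, so $\E[\prod_i\Ind_{\{s_0,s_i\}}]=\prod_{e\in\mathcal{F}(\mathbf{s})}\E[\Ind_e]\le\pmax^{|\mathcal{F}(\mathbf{s})|}$; likewise $\prod_i\E[\Ind_{\{s_0,s_i\}}]\le\pmax^{\ell}\le\pmax^{|\mathcal{F}(\mathbf{s})|}$. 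Both quantities being nonnegative, every term of the difference has absolute value at most $2\pmax^{|\mathcal{F}(\mathbf{s})|}$, and a single coincidence among $s_1,\dots,s_\ell$ already forces $|\mathcal{F}(\mathbf{s})|\le\ell-1$. Therefore
\begin{align*}
\bigl|\E[\W(S_0;S_1,\dots,S_\ell)]-\Wh(S_0;S_1,\dots,S_\ell)\bigr|\ \le\ 2\sum_{\mathbf{s}\in\mathcal{S}}\pmax^{|\mathcal{F}(\mathbf{s})|}.
\end{align*}

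To bound the last sum I will invoke \cref{lem:technique} with $l=\ell$, $k=\ell-1$, the subsets $S_0,\dots,S_\ell$, the map $\mathcal{F}$ just defined, and $L=\{i_*\}$ (a singleton, so the pairwise-disjointness requirement on $L$ is vacuous). Its three hypotheses hold immediately: $|\mathcal{F}(\mathbf{s})|\le\ell-1=k$; each edge of $\mathcal{F}(\mathbf{s})$ lies in $\binom{U(\mathbf{s})}{2}$; and $G(\mathbf{s})=(U(\mathbf{s}),\mathcal{F}(\mathbf{s}))$ is the star centred at $s_0$ spanning $U(\mathbf{s})$, hence connected. \cref{lem:technique} then yields
\begin{align*}
\sum_{\mathbf{s}\in\mathcal{S}}\pmax^{|\mathcal{F}(\mathbf{s})|}\ \le\ \mathcal{B}_{\ell+1}\,\Vsize\,(\Vsize\pmax)^{\ell-1}\cdot\frac{|S_{i_*}|}{\Vsize}\ =\ \mathcal{B}_{\ell+1}\,|S_{i_*}|\,(\Vsize\pmax)^{\ell-1},
\end{align*}
so the claim follows with $C=2\mathcal{B}_{\ell+1}$. (If $\ell\le1$ there are no colliding tuples and the bound is trivial.) The only delicate points here are combinatorial, not probabilistic: checking that the non-colliding contributions to $\E[\W]$ and to $\Wh$ cancel exactly (which relies precisely on the inner sums ranging over the identical index set $S_i\setminus\{s_0\}$), and noting that any collision drops the number of distinct edges to at most $\ell-1$. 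No concentration inequality is needed, so I do not expect a genuine obstacle; the substance of the argument is simply arranging the reduction so that \cref{lem:technique} applies cleanly.
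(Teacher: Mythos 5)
Your proof is correct, and it follows essentially the same route as the paper: both decompose $\E[\W]$ and $\Wh$ as sums over tuples in $\Slc$, identify the ``colliding'' tuples (those with $|\Es(\mathbf{s})|\le\ell-1$) as the only source of discrepancy, and control them by \cref{lem:technique} with $l=\ell$, $k=\ell-1$, $L=\{i_*\}$. The one genuine difference is in how you handle the two sides of the bound. The paper gets the lower bound $\Wh\le\E[\W]$ from the FKG inequality (\cref{lem:FKG}), and for the upper bound it bounds the non-colliding part of $\E[\W]$ by $\Wh$ via a crude inclusion. You instead observe that for a non-colliding tuple the edges $\{s_0,s_i\}$ are pairwise distinct, so $\E[\prod_i\Ind_{\{s_0,s_i\}}]=\prod_i\E[\Ind_{\{s_0,s_i\}}]$ exactly; thus the non-colliding contributions cancel, and for a colliding tuple you bound each side of the difference by $\pmax^{|\mathcal{F}(\mathbf{s})|}$ (the $\prod_i\E[\Ind]$ side via $p^{\ell}\le p^{|\mathcal{F}(\mathbf{s})|}$ since $\pmax\le 1$). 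This is a slightly cleaner and more elementary treatment --- it avoids FKG entirely and handles both directions symmetrically --- at the harmless cost of a factor $2$ in the final constant. All the bookkeeping you do to set up \cref{lem:technique} (checking $|\mathcal{F}(\mathbf{s})|\le\ell-1$, that $\mathcal{F}(\mathbf{s})\subseteq\binom{U(\mathbf{s})}{2}$, that $G(\mathbf{s})$ is a connected star, and that $L=\{i_*\}$ vacuously satisfies the disjointness requirement) is correct, and the resulting bound $2\mathcal{B}_{\ell+1}|S_{i_*}|(\Vsize\pmax)^{\ell-1}$ matches the statement with $C=2\mathcal{B}_{\ell+1}$.
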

\begin{proof}[Proof of \cref{lem:EWIdiscrepancy}]
We show
\begin{align*}
\sum_{s\in S_0}\prod_{i\in [\ell]}\E[\deg_{S_i}(s)]
\leq \E[W(S_0;S_1,\ldots,S_\ell)]
\leq \sum_{s\in S_0}\prod_{i\in [\ell]}\E[\deg_{S_i}(s)]+ \mathcal{B}_{\ell+1}|S_{i_*}|(\Vsize\pmax)^{\ell-1}
\end{align*}
for any $i_*\in \{0\}\cup [\ell]$.
The first inequality follows directly from the FKG inequality (\cref{lem:FKG}) since $\deg_{S_i}(s)$ is a monotone increase function on $(\Ind_e)_{e\in \binom{V}{2}}$ for every $i$.
Now we show the second inequality.
We write each element $\mathbf{s}\in \Slc$ as $\mathbf{s}=(s_0,s_1,\ldots,s_\ell)$.
Since $\E[W(S_0;S_1,\ldots,S_\ell)]=\sum_{\substack{\mathbf{s}\in \Slc}}\E\left[\prod_{i\in [\ell]}\Ind_{\{s_0,s_i\}}\right]$, we have
\begin{align*}
\E[W(S_0;S_1,\ldots,S_\ell)]
&=\sum_{\substack{\mathbf{s}\in \Slc:\\
|\Es(\mathbf{s})|=\ell
}}\E\left[\prod_{i\in [\ell]}\Ind_{\{s_0,s_i\}}\right]+
\sum_{\substack{\mathbf{s}\in \Slc:\\
|\Es(\mathbf{s})|\leq\ell-1
}}\E\left[\prod_{i\in [\ell]}\Ind_{\{s_0,s_i\}}\right].
\end{align*}
For the first term, since $s_i\neq s_j$ for any $i,j\in[\ell]$ ($i\neq j$) if $|\Es(\mathbf{s})|=\ell$, we obtain
\begin{align*}
\sum_{\substack{\mathbf{s}\in \Slc:\\
|\Es(\mathbf{s})|=\ell
}}\E\left[\prod_{i\in [\ell]}\Ind_{\{s_0,s_i\}}\right]
&=\sum_{\substack{\mathbf{s}\in \Slc:\\
|\Es(\mathbf{s})|=\ell
}}\prod_{i\in [\ell]}\E\left[\Ind_{\{s_0,s_i\}}\right]
\leq \sum_{\substack{\mathbf{s}\in \Slc}}\prod_{i\in [\ell]}\E\left[\Ind_{\{s_0,s_i\}}\right]
= \sum_{s\in S_0}\prod_{i\in [\ell]}\E[\deg_{S_i}(s)].
\end{align*}
For the second term, from \cref{lem:technique}, 
\begin{align*}
\sum_{\substack{\mathbf{s}\in \Slc:\\
|\Es(\mathbf{s})|\leq\ell-1
}}\E\left[\prod_{e\in \Es(\mathbf{s})}\Ind_{e}\right]
\leq \sum_{\substack{\mathbf{s}\in \Slc:\\
|\Es(\mathbf{s})|\leq\ell-1
}}\pmax^{|\Es(\mathbf{s})|}
\leq \mathcal{B}_{\ell+1}|S_{i_*}|(\Vsize\pmax)^{\ell-1}.
\end{align*}
Note that $G(\mathbf{s})=(\Vs(\mathbf{s}),\Es(\mathbf{s}))$ is a connected graph for any $\mathbf{s}\in \mathbf{S}$. 
\if0
We complete the proof of \cref{lem:EWIdiscrepancy} by showing
\begin{align}
 I(S_0;S_1,\ldots, S_\ell)- |\mathcal{R}_{\ell+1}||S_{i_*}|(np)^{\ell-1}
\leq \sum_{s\in S_0}\prod_{i\in [\ell]}\E[\deg(s)]
\leq I(S_0;S_1,\ldots, S_\ell).
\end{align}
The second inequality is trivial since
\begin{align*}
\Wh(S_0;S_1,\ldots,S_\ell)
&=\sum_{\mathbf{s}\in \Slc}\prod_{i\in [\ell]}\E\left[J_{\{s_0,s_i\}}\right]
+\sum_{\mathbf{s}\in (\prod_{i=0}^\ell S_i) \setminus \Slc}\prod_{i\in [\ell]}P(s_0,s_i)
\geq \sum_{\mathbf{s}\in \Slc}\prod_{i\in [\ell]}\E\left[J_{\{s_0,s_i\}}\right].
\end{align*}
For the first inequality, we have
\begin{align*}
\sum_{\mathbf{s}\in (\prod_{i=0}^\ell S_i) \setminus \Slc}\prod_{i\in [\ell]}P(s_0,s_i)
&= \sum_{\substack{\mathbf{s}\in (\prod_{i=0}^\ell S_i) \setminus \Slc:\\ |\Vs(\mathbf{s})|\leq \ell}}\prod_{i\in [\ell]}P(s_0,s_i)
\leq \sum_{\substack{\mathbf{s}\in \prod_{i=0}^\ell S_i:\\ |\Es(\mathbf{s})|\leq \ell-1}} p^{|\Es(\mathbf{s})|+1}
\leq \mathcal{B}_{\ell+1}|S_{i_*}|(np)^{\ell-1}p.
\end{align*}

Thus we obtain the claim.\fi
\end{proof}
\subsection[Proof of key lemma]{Proof of key lemma (\cref{lem:technique})}\label{sec:technique}
To complete the proof of \cref{lem:WIdiscrepancy}, we show \cref{lem:technique} in this section.
\begin{proof}[Proof of \cref{lem:technique}]
It is easy to see that 
\begin{align*}
\forall \mathbf{s}\in \mathcal{S}: |\Vs(\mathbf{s})|-1\leq |\mathcal{F}(\mathbf{s})|\leq k
\end{align*}
since $G(\mathbf{s})=\bigl(\Vs(\mathbf{s}),\mathcal{\Es}(\mathbf{s})\bigr)$ is a connected graph from the assumption.
Hence we have
\begin{align}
\sum_{\mathbf{s}\in \mathcal{S}}p^{|\mathcal{\Es}(\mathbf{s})|}
&\leq \sum_{\mathbf{s}\in \mathcal{S}}p^{|\Vs(\mathbf{s})|-1}
= \sum_{\mathbf{s}\in \mathcal{S}:|\Vs(\mathbf{s})|\leq k+1}p^{|\Vs(\mathbf{s})|-1}
\leq \sum_{\mathbf{s}\in \prod_{i=0}^{l}S_i:|\Vs(\mathbf{s})|\leq k+1}p^{|\Vs(\mathbf{s})|-1}. \label{eqn:C2_1}
\end{align}
To estimate above, we introduce the following notations.
For any $(l+1)$-dimensional vector $\mathbf{s}=(s_0, s_1, \ldots, s_{l})\in S_0\times S_1\times \cdots \times S_l$, 
let 
\begin{align*}
R(\mathbf{s})&\defeq \bigl\{ \{j\in \{0\}\cup [l]: s_j=s_i\} : i\in \{0\}\cup [l]\bigr\}.
\end{align*}
For example, $R\bigl((a,b,a,c,d,b,f,a)\bigr)=\bigl\{\{0,2,7\}, \{1,5\}, \{3\}, \{4\}, \{6\}\bigr\}$.
Note that $R(\mathbf{s})$ is a partition of $\{0\}\cup [l]$.
From the definition, we have $|R(\mathbf{s})|=|\Vs(\mathbf{s})|$. 
For example, $|U\bigl((a,b,a,c,d,b,f,a)\bigr)|=|\{a,b,c,d,f\}|=5=|R\bigl((a,b,a,c,d,b,f,a)\bigr)|$.
Let $\mathcal{R}_l$ be the family of all partitions of $\{0\}\cup [l]$. 
For example, 
\begin{align*}
\mathcal{R}_2
&=
\Bigl\{
\bigl\{ \{0\},\{1\},\{2\} \bigr\},
\bigl\{ \{0\},\{1,2\} \bigr\},
\bigl\{ \{1\},\{0,2\} \bigr\},
\bigl\{ \{2\},\{0,1\} \bigr\},
\bigl\{ \{0,1,2\} \bigr\}
\Bigr\}.
\end{align*}
Note that $|\mathcal{R}_l|=\mathcal{B}_{l+1}$.
Then we have
\begin{align}
\sum_{\substack{\mathbf{s}\in \prod_{i=0}^l S_i:\\ |\Vs(\mathbf{s})| \leq k+1}}p^{|\Vs(\mathbf{s})|}
&=\sum_{\substack{R\in \mathcal{R}_{l}:\\|R|\leq k+1}}\sum_{\substack{\mathbf{s}\in \prod_{i=0}^l S_i:  \\ R(\mathbf{s})=R}}p^{|\Vs(\mathbf{s})|}
=\sum_{\substack{R\in \mathcal{R}_{l}:\\|R|\leq k+1}}p^{|R|}\left|\left\{\mathbf{s}\in \prod_{i=0}^l S_i: R(\mathbf{s})=R\right\}\right|. \label{eqn:C2_2}
\end{align}
From the definition of $R(\mathbf{s})$, for any $r \in R(\mathbf{s})$, $s_i=s_j$ for any $i,j \in r$.
Thus
\begin{align}
\left|\left\{\mathbf{s}\in \prod_{i=0}^l S_i: R(\mathbf{s})=R\right\}\right|
&= \sum_{\substack{\mathbf{s}\in \prod_{i=0}^l S_i:\\ R(\mathbf{s})=R}}1
\leq \sum_{s_0\in S_0}\sum_{s_1\in S_1}\cdots \sum_{s_l\in S_l}\prod_{r\in R} \prod_{i,j\in r}\mathbbm{1}_{s_i=s_j}
\leq \prod_{r\in R}\left|\bigcap_{i\in r}S_i\right|.
\label{eq:equivalenceexample1}
\end{align}
For example, 
consider four vertex subsets $S_0, S_1, S_2, S_3$, let $R=\bigl\{ \{0,1\},\{2\},\{3\} \bigr\}\in \mathcal{R}_3$ and let $l=\{i_*\}\subseteq \{0\}\cup [3]$ where $i_*\in \{0\}\cup [3]$.
Then \cref{eq:equivalenceexample1} means that
\begin{align*}
\lefteqn{
\left|\left\{\mathbf{s}\in \prod_{i=0}^3 S_i: R(\mathbf{s})=R\right\}\right|}\\
&=\left|\left\{(s_0,s_1,s_2,s_3)\in S_0\times S_1\times S_2\times S_3: s_0=s_1, s_0\neq s_2, s_0\neq s_3, s_2\neq s_3\right\}\right|\\
&\leq \sum_{s_0\in S_0}\sum_{s_1\in S_1}\sum_{s_2\in S_2}\sum_{s_3\in S_3}\mathbbm{1}_{s_0=s_1}\leq |S_0\cap S_1||S_2||S_3|
=\prod_{r\in \{ \{0,1\},\{2\},\{3\} \}}\left|\bigcap_{i\in r}S_i\right|.
\end{align*}
For an index $i\in \{0\}\cup [l]$, let $r_i$ be the element of $R$ such that $r_i\ni i$.
Now let us consider the set $L$ described in the statement (of \cref{lem:technique}).
First we assume that there exist $i,j\in L$ with $i\neq j$ such that both $i$ and $j$ in the same $r_*=r_i=r_j\in R$.
In this case, since $S_i\cap S_j=\emptyset$ from the definition of $L$, we have
\begin{align}
\prod_{r\in R}\left|\bigcap_{i\in r}S_i\right|=\left|\bigcap_{i\in r_*}S_i\right|\prod_{r\in R\setminus r_*}\left|\bigcap_{i\in r}S_i\right|=0. \label{eqn:C2_4}
\end{align}
Now we assume that $r_i\neq r_j$ for any $i, j\in L$.
Then since $|\{r_i: i\in L\}|=|L|$ and $R=\{r_i: i\in L\} \cup R\setminus \{r_i: i\in L\}$, we have
\begin{align}
\prod_{r\in R}\left|\bigcap_{i\in r}S_i\right|
=\prod_{i\in L}\left|\bigcap_{j\in r_i}S_j\right|\prod_{r\in R\setminus \{r_i: i\in L\}}\left|\bigcap_{j\in r}S_j\right|
\leq \left(\prod_{i\in L}|S_i|\right) \Vsize^{|R|-|L|}. \label{eqn:C2_5}
\end{align}
Finally, by combining \cref{eqn:C2_1,eqn:C2_2,eqn:C2_4,eq:equivalenceexample1,eqn:C2_5}, we obtain
\begin{align*}
\sum_{\mathbf{s}\in \mathcal{S}}p^{|\mathcal{\Es}(\mathbf{s})|}
&\leq \frac{1}{p}\sum_{\substack{R\in \mathcal{R}_{l}:\\|R|\leq k+1}}p^{|R|}\left|\left\{\mathbf{s}\in \prod_{i=0}^l S_i: R(\mathbf{s})=R\right\}\right| \\
&\leq  \frac{1}{p}\sum_{\substack{R\in \mathcal{R}_{l}:\\|R|\leq k+1}} p^{|R|} \Vsize^{|R|}\frac{\prod_{i\in L}|S_{i}|}{\Vsize^{|L|}}
\leq \frac{1}{p}\left(\frac{\prod_{i\in L}|S_{i}|}{\Vsize^{|L|}}\right) (\Vsize \pmax)^{k+1} \sum_{\substack{R\in \mathcal{R}_{l}:\\|R|\leq k+1}} 1\\
&\leq |\mathcal{R}_{l}| \left(\frac{\prod_{i\in l}|S_{i}|}{\Vsize^{|L|}}\right) \Vsize (\Vsize \pmax)^k
= \mathcal{B}_{l+1}\left(\frac{\prod_{i\in L}|S_{i}|}{\Vsize^{|L|}}\right) \Vsize (\Vsize \pmax)^k.
\end{align*}
Note that the third inequality follows since $\Vsize p\geq 1$ from the assumption.
\end{proof}

\begin{proof}[Proof of \cref{lem:WIdiscrepancy}]
Combining \cref{lem:WEWdiscrepancy,lem:EWIdiscrepancy}, we obtain the proof.
\end{proof}

\subsection[Proof of (P3) of Theorem 3.2]{Proof of \ref{pro:gomiA} of \cref{thm:fgood}}\label{sec:gomiA}
The property \ref{pro:gomiA} is obtained by the following two lemmas.
\begin{lemma}\label{lem:maxmindegree}
Suppose that $0\leq q\leq p=\omega(\log n/n)$.
Then two positive constants $C_1, C_2$ exist such that $G(2n,p,q)$ satisfies the following with probability $1-O(n^{-C_1})$:
\begin{align*}
\forall v\in V: |\deg(v)-n(p+q)|\leq C_2\sqrt{np\log n}.
\end{align*}
\end{lemma}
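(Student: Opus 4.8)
The plan is to treat each vertex's degree as a sum of independent Bernoulli variables, apply a variance-aware concentration inequality vertex by vertex, and finish with a union bound over the $2n$ vertices.

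First I would fix a vertex $v\in V_i$. By \cref{def:SBM}, $\deg(v)=\sum_{u\in V_i\setminus\{v\}}\mathbbm{1}_{\{v,u\}\in E}+\sum_{w\in V_{3-i}}\mathbbm{1}_{\{v,w\}\in E}$ is a sum of $2n-1$ independent indicators, $n-1$ of expectation $p$ and $n$ of expectation $q$, so that $\mu_v\defeq\E[\deg(v)]=(n-1)p+nq$, with $\tfrac{np}{2}\leq\mu_v\leq n(p+q)\leq 2np$ for $n\geq 2$. Since $n(p+q)-\mu_v=p$, and since $p\leq 1\leq\sqrt{np\log n}$ for all $n\geq 2$ (indeed $p=\omega(\log n/n)$ gives $np=\omega(\log n)$, so $\sqrt{np\log n}\to\infty$), the additive-constant gap between $\mu_v$ and the target centering $n(p+q)$ is negligible compared with $\sqrt{np\log n}$; it therefore suffices to prove $|\deg(v)-\mu_v|\leq C_2'\sqrt{np\log n}$ with probability $1-O(n^{-C_1-1})$ for a suitable constant $C_2'$ and then take $C_2\defeq C_2'+1$.

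Next I would apply the multiplicative Chernoff bound: for $0<t\leq\mu_v$, $\Pr[|\deg(v)-\mu_v|\geq t]\leq 2\exp(-t^2/(3\mu_v))$. Taking $t\defeq C_2'\sqrt{np\log n}$, the condition $t\leq\mu_v$ holds for all large $n$ because $C_2'^2\log n=o(np)$, and then $\mu_v\leq 2np$ gives $\Pr[|\deg(v)-\mu_v|\geq t]\leq 2\exp(-C_2'^2\log n/6)=2n^{-C_2'^2/6}$. Choosing $C_2'$ so that $C_2'^2/6\geq C_1+2$ makes this at most $n^{-C_1-1}$ for large $n$, and a union bound over the $2n$ vertices yields that, with probability $1-O(n^{-C_1})$, every $v\in V$ satisfies $|\deg(v)-\mu_v|\leq C_2'\sqrt{np\log n}$, whence $|\deg(v)-n(p+q)|\leq C_2'\sqrt{np\log n}+p\leq C_2\sqrt{np\log n}$.

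Strictly speaking this lemma is routine, so there is no serious obstacle; the one point worth attention is the choice of inequality. The plain Hoeffding bound (\cref{lem:Hoeffding}) applied to a sum of $2n$ variables in $[0,1]$ yields only a deviation of order $\sqrt{n\log n}$, which is too weak once $p=o(1)$ — we genuinely need the tighter window $\sqrt{np\log n}$, which comes from a Bernstein/Chernoff-type bound whose effective scale is $\mu_v=\Theta(np)$ rather than $n$. One should also double-check that $t\leq\mu_v$ really does hold throughout the regime $p=\omega(\log n/n)$ (it does, since then $np/\log n\to\infty$), so that the multiplicative form of the bound is legitimately applicable.
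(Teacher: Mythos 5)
Your proof is correct and follows essentially the same route as the paper's: a per-vertex Chernoff-type bound at scale $\mu_v=\Theta(np)$ followed by a union bound over the $2n$ vertices. The only difference is cosmetic — you explicitly account for the $O(p)$ gap between $\E[\deg(v)]=(n-1)p+nq$ and the centering $n(p+q)$, which the paper silently absorbs into the error term; this makes your write-up marginally more careful but does not change the argument.
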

\begin{proof}
Applying the Chernoff bound (Lemma~\ref{lem:chernoff-additive}), 
\begin{align*}
\lefteqn{
\Pr\left[\exists v\in V: |\deg(v)-n(p+q)|> t\right]}\\
&\leq \sum_{v\in V}\left(\exp\left(-\frac{t^2}{3\E[\deg(v)]}\right)+\exp\left(-\frac{t}{3}\right)+\exp\left(-\frac{t^2}{2\E[\deg(v)]}\right)\right)\\
&\leq n\left(2\exp\left(-\frac{t^2}{6np}\right)+ \exp\left(-\frac{t}{3}\right)\right)\\
&\leq 2\exp\left(\log n-\frac{t^2}{6np}\right)+ \exp\left(\log n-\frac{t}{3}\right).
\end{align*}
Note that $\E[\deg(v)]=(n-1)p+nq$ for any $v\in V$ and
$\E[\deg(v)]\leq n(p+q)\leq 2np$.
Thus we obtain the claim letting $t=C_2\sqrt{np\log n}$ since
$
t
=C_2\sqrt{np\log n}
\geq C\log n
$ for some constant $C$.

\end{proof}
\begin{lemma}
\label{lem:WIdiscrepancy2}
Suppose that $0\leq q\leq p=\omega(\log n/n)$.
Let $\mathcal{S}(A)=\bigl\{ S\cap U: S\in \{A,V\setminus A, V\}, U\in \{V_1, V_2, V\} \bigr\}$ for $A\subseteq V$.
For any constant $\ell$, two positive constants $C_1, C_2$ exist such that $G(2n,p,q)$ satisfies the following with probability $1-O(n^{-C_1})$:
\begin{align*}
\lefteqn{\forall A\subseteq V, \forall S_0, \ldots, S_{\ell-1}\in \mathcal{S}(A):}\\
&\bigl|\W(S_0;S_1,\ldots,S_{\ell-1}, A)-\Wh(S_0;S_1,\ldots,S_{\ell-1}, A)\bigr|\leq  C_2 |A|\sqrt{\log n}(np)^{\ell-1/2}.
\end{align*}
\end{lemma}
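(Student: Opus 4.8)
The plan is to re-run the proof of \cref{lem:WIdiscrepancy} — concentration of $\W$ around $\E[\W]$, and the gap between $\E[\W]$ and $\Wh$ — with two modifications that exploit the shape of the arguments. First, once $A$ is fixed there are only $O(1)$ choices of $(S_0,\dots,S_{\ell-1})\in\mathcal S(A)^{\ell}$, so the union bound over configurations can be stratified by $m\defeq|A|$ and then costs only $\binom{2n}{m}=n^{O(m)}$ instead of $2^{O(n)}$. Second, every time \cref{lem:technique} is invoked I take $L$ to be the singleton consisting of the coordinate that carries the fixed set $A$ (a singleton trivially satisfies the disjointness requirement), which replaces every factor $n$ coming from $\Vsize(\Vsize\pmax)^{k}$ by the much smaller $|A|$. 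The discrepancy between $\E[\W]$ and $\Wh$ is already under control: \cref{lem:EWIdiscrepancy} with $i_*$ the last coordinate gives $\bigl|\E[\W(S_0;S_1,\dots,S_{\ell-1},A)]-\Wh(S_0;S_1,\dots,S_{\ell-1},A)\bigr|\le C|A|(np)^{\ell-1}=o\bigl(|A|\sqrt{\log n}\,(np)^{\ell-1/2}\bigr)$ because $np=\omega(\log n)$. Hence it suffices to prove $|\W-\E[\W]|\le C_2|A|\sqrt{\log n}\,(np)^{\ell-1/2}$ w.h.p., uniformly over all $A$ and all $S_0,\dots,S_{\ell-1}$ in $\mathcal S'(A)\defeq\mathcal S(A)\cup\{V\setminus S:S\in\mathcal S(A)\}$, an $O(1)$-element family (the complements are needed for the telescoping below).

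For the lower tail I follow the Janson step of \cref{lem:WEWdiscrepancy}. Writing $\W(S_0;S_1,\dots,S_{\ell-1},A)=\sum_{\mathbf s}\prod_{e\in\Es(\mathbf s)}\Ind_e$ as in \cref{eq:WtoJ}, the associated quantity $\nabla$ is, by \cref{lem:technique} with $l=2\ell+1$, $k=2\ell-1$, and $L$ the coordinate of the first copy of $A$, at most $O\bigl(|A|(np)^{2\ell-1}\bigr)$. Janson's inequality (\cref{lem:Janson}) then gives $\Pr[\W\le\E[\W]-t]\le\exp\bigl(-\Omega(t^{2}/(|A|(np)^{2\ell-1}))\bigr)$; with $t=C|A|\sqrt{\log n}\,(np)^{\ell-1/2}$ the exponent is $-\Theta(C^{2}m\log n)$, which for $C$ large beats the $\binom{2n}{m}\,|\mathcal S'(A)|^{\ell}=n^{O(m)}$ configurations of size $m$, and summing over $m\ge 1$ leaves failure probability $O(n^{-1})$. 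This is precisely where the extra $\sqrt{\log n}$ appears: we need $t^{2}/\nabla=\Omega(m\log n)$, not just $\Omega(1)$, since the stratified union bound still carries weight $n^{\Theta(m)}$.

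For the upper tail the Kim--Vu step of \cref{lem:WEWdiscrepancy} is not usable: telescoping every coordinate reaches the $A$-free term $\W(V;V,\dots,V)$ and so loses the factor $|A|$, whereas the smallest $A$-dependent ``top'' term $\W(V;V,\dots,V,A)$ needs a union bound over $A$, which forces $\lambda=\Theta(m\log n)$ and makes the $(8\lambda)^{\ell}$ factor in Kim--Vu overshoot the target. Instead I telescope only the first $\ell$ arguments, keeping the last equal to $A$, exactly as in \cref{eq:WdlV}: $\W(S_0;S_1,\dots,S_{\ell-1},A)=\W(V;V,\dots,V,A)-\sum_{j=0}^{\ell-1}D_j$, where each $D_j\ge 0$ is the usual telescoping summand, a $\W$ whose final argument is still $A$ and whose remaining arguments lie in $\mathcal S'(A)$, so the lower-tail claim applies to each $D_j$. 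The single top term I bound by the uniform degree estimate \cref{lem:maxmindegree}: since $\W(V;V,\dots,V,A)=\sum_{s\in V}\deg(s)^{\ell-1}\deg_A(s)\le\bigl(\max_{v}\deg(v)\bigr)^{\ell-1}\sum_{a\in A}\deg(a)\le|A|\bigl(\max_{v}\deg(v)\bigr)^{\ell}$ and $\max_{v}\deg(v)\le n(p+q)+O(\sqrt{np\log n})$ with $p+q=\Theta(p)$, expanding gives $\W(V;V,\dots,V,A)\le|A|\bigl(n(p+q)\bigr)^{\ell}+O\bigl(|A|(np)^{\ell-1/2}\sqrt{\log n}\bigr)$; and since $|A|(n(p+q))^{\ell}$ and $\Wh(V;V,\dots,V,A)=|A|((n-1)p+nq)^{\ell}$ differ by $O(|A|p(np)^{\ell-1})$, while $\Wh(V;V,\dots,V,A)$ and $\E[\W(V;V,\dots,V,A)]$ differ by $O(|A|(np)^{\ell-1})$ (\cref{lem:EWIdiscrepancy}), we obtain $\W(V;V,\dots,V,A)\le\E[\W(V;V,\dots,V,A)]+O\bigl(|A|(np)^{\ell-1/2}\sqrt{\log n}\bigr)$. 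Substituting this and the lower bounds for the $D_j$ into the telescoping identity (whose expectations telescope the same way) gives $\W(S_0;\dots,A)\le\E[\W(S_0;\dots,A)]+O\bigl(|A|(np)^{\ell-1/2}\sqrt{\log n}\bigr)$; together with the lower tail and the first paragraph this proves the lemma with failure probability $O(n^{-C_{1}})$.

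The only genuinely new point relative to \cref{lem:WIdiscrepancy} is the upper tail: one must avoid telescoping down to an $A$-free quantity, whose Kim--Vu slack $\sqrt{n}\,(\log n)^{\ell}(np)^{\ell-1/2}$ dwarfs the target when $|A|$ is small, and instead stop at $\W(V;V,\dots,V,A)$ and control it through the maximum degree, which only requires a union bound over the $2n$ vertices rather than over all subsets $A$.
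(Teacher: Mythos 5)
Your proposal is correct and follows essentially the same route as the paper: a Janson lower tail with $\nabla$ bounded via \cref{lem:technique} taking $L$ to be the $A$-coordinate (so the union bound over $A$ of size $m$ is paid for by an exponent of order $m\log n$), and an upper tail obtained by telescoping only the first $\ell$ arguments as in \cref{eq:gimiAsum} and bounding the $A$-fixed top term $\W(V;\dots,V,A)\le|A|\,d_{\max}^{\ell}$ with \cref{lem:maxmindegree}. The one small refinement you add over the paper's write-up is the explicit passage to $\mathcal{S}'(A)=\mathcal{S}(A)\cup\{V\setminus S:S\in\mathcal{S}(A)\}$, which tidily accounts for the complementary sets $V\setminus S_i$ produced by the telescoping (the paper's lower-bound claim is stated only for $\mathcal{S}(A)$, but, as you observe, the argument is indifferent to which $O(1)$-size family is used once the $A$-coordinate supplies the $|A|$ factor in $\nabla$).
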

\begin{proof}

\ 

\noindent \textbf{Lower bound.}
First we claim the following:
Two positive constants $C_3, C_4$ exist such that the following holds with probability $1-n^{-C_3}$: 
\begin{align}
&\forall A\subseteq V, \forall S_0, \ldots, S_{\ell-1}\in \mathcal{S}(A):\nonumber \\
&\W(S_0;S_1,\ldots,S_{\ell-1}, A)\geq \Wh(S_0;S_1,\ldots,S_{\ell-1}, A)-C_4|A|\sqrt{\log n}(np)^{\ell-1/2}.
\label{claim:gomiAlower}
\end{align}
From Janson's inequality (\cref{lem:Janson}) and \cref{claim:nablaupper} with a constant $C_5$ and $C_6=\sqrt{2(C_5+1)\mathcal{B}_{2(\ell+1)}}$, we have
\begin{align}
&\Pr\left[ \substack{\exists A\subseteq V, \\ \exists S_0, \ldots, S_{\ell-1}\in \mathcal{S}(A)}: \W(S_0;S_1,\ldots,S_{\ell-1}, A)\leq \E[\W(S_0;S_1,\ldots,S_{\ell-1}, A)] -C_6|A|\sqrt{\log \Vsize}(\Vsize\pmax)^{\ell-1/2} \right] \nonumber \\
&\leq \binom{\Vsize}{|A|} |\mathcal{S}(A)|^\ell \exp\left(-\frac{2(C_5+1)\mathcal{B}_{2(\ell+1)}|A| (\log \Vsize) (\Vsize\pmax)^{2\ell-1}}{2\mathcal{B}_{2(\ell+1)}|A|(\Vsize\pmax)^{2\ell-1}}\right) \nonumber  \\
& \leq 9^\ell \exp\left(|A|\log \Vsize-(C_5+1)|A|\log \Vsize\right)
\leq 9^\ell/\Vsize^{C_5}.
\label{eq:gomijan1}
\end{align}
Thus combining \cref{eq:gomijan1} and \cref{lem:EWIdiscrepancy} yields the claim \cref{claim:gomiAlower}. 

\noindent \textbf{Upper bound.}
Now we show the following claim:
Two positive constants $C_7, C_8$ exist such that the following holds with probability $1-n^{-C_7}$: 
\begin{align}
&\forall A\subseteq V, \forall S_0, \ldots, S_{\ell-1}\in \mathcal{S}(A):\nonumber \\
&\W(S_0;S_1,\ldots,S_{\ell-1}, A)\leq \Wh(S_0;S_1,\ldots,S_{\ell-1}, A)+C_8|A|\sqrt{\log n}(np)^{\ell-1/2}.
\label{claim:gomiAupper}
\end{align}
From the same discussion of \eqref{eq:WdlV}, 
\begin{align}
\W(S_0;S_1,\ldots,S_{\ell-1}, A)
&=\W(\overbrace{V;V,\ldots, V}^{\ell},A)-\sum_{i=0}^{\ell-1}W(\overbrace{V;V,\ldots,V}^{i},V\setminus S_{i}, S_{i+1}, \ldots, S_{\ell-1},A)
\label{eq:gimiAsum}
\end{align}
since $W_{\ell}-W_0=\sum_{i=0}^{\ell-1}(W_{i+1}-W_{i})$.
Thus we consider an upper bound on $\W(S_0;S_1,\ldots,S_{\ell-1}, A)$. 
Let $d_{\max}\defeq \max_{v\in V}\deg(v)$.
Since $\sum_{v\in V}\deg_A(v)=\sum_{a\in A}\deg(v)$, we have
\begin{align*}
\W(\overbrace{V;V,\ldots, V}^{\ell},A)
&=\sum_{v\in V}\deg(v)^{\ell-1}\deg_A(v)
\leq d_{\max}^{\ell-1}\sum_{v\in V}\deg_A(v)
\leq d_{\max}^{\ell-1}\sum_{a\in A}\deg(a)
\leq d_{\max}^{\ell}|A|.
\end{align*}
From \cref{lem:maxmindegree}, it holds with high probability that
\begin{align*}
d_{\max}^\ell
&=\left(n(p+q)+O(\sqrt{np\log n})\right)^{\ell}
=\bigl(n(p+q)\bigr)^\ell \left(1+O\left(\sqrt{\frac{\log n}{np}}\right)\right).
\end{align*}
The second equality holds since $(\log n)/(np)=o(1)$ and $\ell$ is a constant.
Hence we have
\begin{align}
\W(\overbrace{V;V,\ldots, V}^{\ell},A)
&\leq 
d_{\max}^{\ell}|A|
=|A|\bigl(n(p+q)\bigr)^\ell \left(1+O\left(\sqrt{\frac{\log n}{np}}\right)\right) \nonumber \\
&\leq \Wh(\overbrace{V;V,\ldots, V}^{\ell},A)+O(|A|\sqrt{\log n} (np)^{\ell-1/2}).
\label{eq:gomiAlast}
\end{align}
Note that $\Wh(\overbrace{V;V,\ldots,V}^{\ell},A)=|A|\bigl((n-1)p+nq\bigr)^{\ell}$. 
Thus we obtain the claim \cref{claim:gomiAupper} by applying \cref{eq:gomiAlast,claim:gomiAlower} to \cref{eq:gimiAsum}.
Combining the claims \cref{claim:gomiAupper,claim:gomiAlower} complete the proof of \cref{lem:WIdiscrepancy2}.
\end{proof}
\begin{proof}[Proof of  \ref{pro:gomiA} of \cref{thm:fgood}]
Let $d_{\min}\defeq \min_{v\in V}\deg(v)$.
Then for any $j\in [\ell]$, 
\begin{align*}
\sum_{s\in S\cap V_i}\left(\frac{\deg_A(s)}{\deg(s)}\right)^j
&\leq d_{\min}^{-j}\sum_{s\in S\cap V_i}\deg_A(s)^j
=d_{\min}^{-j}W(S\cap V_i;\overbrace{A,\ldots,A}^{j}).
\end{align*}
From \cref{lem:maxmindegree}, it holds with high probability that
\begin{align*}
d_{\min}^{-j}
&=\left(n(p+q)-O(\sqrt{np\log n})\right)^{-j}
=\left(1+O\left(\sqrt{\frac{\log n}{np}}\right)\right)\bigl(n(p+q)\bigr)^{-j}.
\end{align*}
The second equality holds since $(\log n)/(np)=o(1)$ and $j\in [\ell]$ is a constant.
Thus from \cref{lem:WIdiscrepancy2}, we have
\begin{align*}
\sum_{s\in S\cap V_i}\left(\frac{\deg_A(s)}{\deg(s)}\right)^j
&=\left(1+O\left(\sqrt{\frac{\log n}{np}}\right)\right)\left(\frac{\Wh(S\cap V_i;\overbrace{A,\ldots,A}^{j})}{(n(p+q))^j}+O\left(|A|\sqrt{\frac{\log n}{np}}\right)\right)\\
&\leq |S\cap V_i|\left(\frac{|A_i|p+|A_{3-i}|q}{n(p+q)}\right)^j+O\left(|A|\sqrt{\frac{\log n}{np}}\right).
\end{align*}
Note that $\frac{|S\cap V_i|(|A_i|p+|A_{3-i}|q)^{j}}{(n(p+q))^j}= \frac{|S\cap V_i|(|A_i|p+|A_{3-i}|q)}{n(p+q)}\left(\frac{|A_i|p+|A_{3-i}|q}{n(p+q)}\right)^{j-1}\leq\frac{|A|p}{(p+q)}\leq  |A|$.
Thus we obtain the claim.
\end{proof}

\section{Proofs of general results of dynamical systems}\label{sec:generaljacobi}
In this section, we consider a polynomial voter process according to $f_1$ and $f_2$ on $G(2n,p,q)$ that is both $f_1$-good and $f_2$-good.
Moreover, we assume that $q/p$ is a constant.
Let $H=(H_1,H_2):[0,1]^2\to[0,1]^2$ be the induced dynamical system.
Throughout this section, probability and expectation are taken over the voter process unless otherwise noted.

\subsection[dynamics at sink points]{Proof of stationary dynamics around a sink point (\cref{prop:sinkpoint})} \label{sec:sinkpoint_proof}
We begin with establishing two auxiliary results.
\begin{lemma} \label{lem:constantfar}
For any $\epsilon = \omega(\sqrt{1/np})$, there exists an absolute constant $C>0$ satisfying
\begin{align*}
\Pr\left[\|{\boldsymbol \alpha'}-H({\boldsymbol \alpha})\|_\infty \geq\epsilon\right]\leq O(\exp(-C \epsilon^2 n)).
\end{align*}
\end{lemma}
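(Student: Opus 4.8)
The plan is to split $\|{\boldsymbol \alpha}'-H({\boldsymbol \alpha})\|_\infty$ into a deterministic \emph{bias} part, controlled by the $f$-goodness of the graph (properties \ref{pro:gomiN}), and a random \emph{fluctuation} part, controlled by a Hoeffding bound. First I would fix an arbitrary $A\subseteq V$ and condition on it; since ${\boldsymbol \alpha}=(|A_1|/n,|A_2|/n)$ is then determined, it suffices to prove the stated bound conditionally on every such $A$, because averaging over $A$ recovers the unconditional statement. Write $z_i\defeq\frac{|A_i|p+|A_{3-i}|q}{n(p+q)}=\frac{\alpha_i+r\alpha_{3-i}}{1+r}$ for $i=1,2$.

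Next I would pin down $nH_i({\boldsymbol \alpha})$ exactly. Since $|A_i'|=\sum_{v\in V_i}X_v$ with $\E[X_v]=f_1(\deg_A(v)/\deg(v))$ for $v\in A\cap V_i$ and $\E[X_v]=f_2(\deg_A(v)/\deg(v))$ for $v\in(V\setminus A)\cap V_i$, we have $\E[|A_i'|\mid A]=\sum_{v\in A\cap V_i}f_1(\deg_A(v)/\deg(v))+\sum_{v\in(V\setminus A)\cap V_i}f_2(\deg_A(v)/\deg(v))$. Applying property \ref{pro:gomiN} once with $(f,S)=(f_1,A)$ and once with $(f,S)=(f_2,V\setminus A)$ (the graph is both $f_1$-good and $f_2$-good), together with the identity $|A\cap V_i|f_1(z_i)+|(V\setminus A)\cap V_i|f_2(z_i)=n\bigl(\alpha_i f_1(z_i)+(1-\alpha_i)f_2(z_i)\bigr)=nH_i({\boldsymbol \alpha})$, I obtain
\begin{align*}
\bigl|\E[|A_i'|\mid A]-nH_i({\boldsymbol \alpha})\bigr|\le C_1\sqrt{n/p},
\end{align*}
where $C_1$ is the larger of the two $f$-good constants, so that $|\E[\alpha_i'\mid A]-H_i({\boldsymbol \alpha})|\le C_1\sqrt{1/(np)}$. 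Since $\epsilon=\omega(\sqrt{1/np})$, this bias is at most $\epsilon/2$ once $n$ is large enough, which is all we need for the asymptotic conclusion.

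For the fluctuation, conditioned on $A$ the quantity $|A_i'|=\sum_{v\in V_i}X_v$ is a sum of $n$ independent $\{0,1\}$ variables, so the Hoeffding bound (\cref{lem:Hoeffding}) gives $\Pr\bigl[|\alpha_i'-\E[\alpha_i'\mid A]|\ge\epsilon/2\bigm|A\bigr]\le 2\exp(-\epsilon^2 n/2)$; combining with the bias bound through the triangle inequality yields $\Pr[|\alpha_i'-H_i({\boldsymbol \alpha})|\ge\epsilon\mid A]\le 2\exp(-\epsilon^2 n/2)$ for $i=1,2$, and a union bound over $i$ followed by averaging over $A$ gives $\Pr[\|{\boldsymbol \alpha}'-H({\boldsymbol \alpha})\|_\infty\ge\epsilon]\le 4\exp(-\epsilon^2 n/2)=O(\exp(-C\epsilon^2 n))$ with the absolute constant $C=1/2$. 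I do not expect a genuine obstacle here: the whole proof is the standard "bias $+$ concentration" scheme, and the only points needing care are (i) applying property \ref{pro:gomiN} \emph{separately} to the opinion-$1$ and opinion-$2$ vertices with the correct sets $S=A$ and $S=V\setminus A$, and (ii) the bookkeeping check that $|A\cap V_i|f_1(z_i)+|(V\setminus A)\cap V_i|f_2(z_i)$ equals $nH_i({\boldsymbol \alpha})$ exactly, so that no error beyond the $O(\sqrt{n/p})$ term of \ref{pro:gomiN} enters. The hypothesis $\epsilon=\omega(\sqrt{1/np})$ is used only to absorb the bias into $\epsilon/2$.
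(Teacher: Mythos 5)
Your proof is correct and follows essentially the same route as the paper's: a bias--concentration split in which property \ref{pro:gomiN} (applied to $f_1$ on $S=A$ and to $f_2$ on $S=V\setminus A$) controls $|\E[|A_i'|\mid A]-nH_i({\boldsymbol\alpha})|$ by $O(\sqrt{n/p})$, the hypothesis $\epsilon=\omega(\sqrt{1/np})$ absorbs this into $\epsilon n/2$, and Hoeffding handles the remaining fluctuation of $|A_i'|$ around its conditional mean, followed by a union bound over $i\in\{1,2\}$. The paper simply invokes \cref{thm:mainthm_E} rather than expanding the two $f$-good applications, but the content is identical, and your bookkeeping of $|A_i|f_1(z_i)+(n-|A_i|)f_2(z_i)=nH_i({\boldsymbol\alpha})$ is exactly right.
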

\begin{proof}
From \cref{thm:mainthm_E} and $\epsilon = \omega(\sqrt{1/np})$, we have
\begin{align*}
\Pr\left[\|{\boldsymbol \alpha'}-H({\boldsymbol \alpha})\|_\infty\geq\epsilon\right]
&\leq \sum_{i\in[2]}\Pr\left[||A'_i|-\E[|A_i'|]|\geq \epsilon n-O(\sqrt{n/p})\right]\\
&\leq \sum_{i\in[2]}\Pr\left[||A'_i|-\E[|A_i'|]|\geq \frac{\epsilon n}{2}\right]
\end{align*}
for sufficiently large $n$.
Hence, from the Hoeffding inequality (\cref{lem:Hoeffding}), we have
\begin{align*}
\Pr\left[||A'_i|-\E[|A_i'|]|\geq \frac{\epsilon n}{2}\right]
&\leq O\left(\exp\left(-C\epsilon^2 n\right)\right).
\end{align*}
\end{proof}

\begin{lemma} \label{lem:sinkdistance}
Let $\mathbf{x}^*$ be a sink point of $H$.
Then, for sufficiently small $\epsilon>0$, there exists a constant $K=K(H)>0$ such that
\begin{align*}
\inf\{\|H(\mathbf{x})-\mathbf{y}\|_\infty:\,\mathbf{x}\in B(\mathbf{x}^*,\epsilon),\mathbf{y}\not\in B(\mathbf{x}^*,\epsilon)\}\geq K\epsilon.
\end{align*}
\end{lemma}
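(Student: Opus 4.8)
The plan is to strengthen the statement to the claim that $H$ carries $B(\mathbf{x}^*,\epsilon)$ into a strictly smaller concentric ball, after which the asserted inequality is immediate. Precisely, I will exhibit a constant $K=K(H)>0$ and an $\epsilon_0>0$ so that $H\!\left(B(\mathbf{x}^*,\epsilon)\right)\subseteq B(\mathbf{x}^*,(1-K)\epsilon)$ for every $\epsilon\le\epsilon_0$; then for $\mathbf{x}\in B(\mathbf{x}^*,\epsilon)$ and $\mathbf{y}\notin B(\mathbf{x}^*,\epsilon)$ the triangle inequality gives
\[
\|H(\mathbf{x})-\mathbf{y}\|_\infty \;\ge\; \|\mathbf{y}-\mathbf{x}^*\|_\infty-\|H(\mathbf{x})-\mathbf{x}^*\|_\infty \;\ge\; \epsilon-(1-K)\epsilon \;=\; K\epsilon .
\]

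To obtain the inclusion, the first step is to Taylor-expand $H$ about the fixed point. Since $H$ is a polynomial map (hence $C^2$) with $H(\mathbf{x}^*)=\mathbf{x}^*$, there are $\epsilon_0>0$ and a constant $M=M(H)>0$ bounding the second-order behaviour of $H$ on $B(\mathbf{x}^*,\epsilon_0)$ with
\[
\bigl\|\,H(\mathbf{x})-\mathbf{x}^*-J(\mathbf{x}-\mathbf{x}^*)\,\bigr\|_\infty \;\le\; M\,\|\mathbf{x}-\mathbf{x}^*\|_\infty^{2}\qquad\text{for all }\mathbf{x}\in B(\mathbf{x}^*,\epsilon_0),
\]
$J$ denoting the Jacobian of $H$ at $\mathbf{x}^*$. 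As $\mathbf{x}^*$ is a sink we have $\sigma_{\max}(J)<1$; write $\sigma_{\max}(J)=1-2c$ with $c>0$. Since $\sigma_{\max}(J)$ is the operator norm of $J$, $\|J(\mathbf{x}-\mathbf{x}^*)\|_\infty\le(1-2c)\|\mathbf{x}-\mathbf{x}^*\|_\infty$ — strictly speaking this is the $\ell_2$ operator bound, but at the sink points occurring here $J$ is diagonal (e.g.\ $J_2$ in \cref{eqn:Jacobian_3M}), so it equals the $\ell_\infty$ operator norm; the general case is handled by running the same argument with Euclidean balls. Combining the two displays, for $\mathbf{x}\in B(\mathbf{x}^*,\epsilon)$ with $\epsilon\le\min\{\epsilon_0,\,c/M\}$,
\[
\|H(\mathbf{x})-\mathbf{x}^*\|_\infty \;\le\; (1-2c)\epsilon+M\epsilon^{2} \;\le\; (1-c)\epsilon ,
\]
so the inclusion holds with $K:=c=\tfrac12\bigl(1-\sigma_{\max}(J)\bigr)$, which depends only on $H$.

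There is no substantial obstacle here: the only point needing care is the balance between the linear contraction and the quadratic remainder — one must shrink $\epsilon$ below $c/M$ so that the $O(\epsilon^2)$ term cannot eat the margin $2c\epsilon$ coming from $\sigma_{\max}(J)<1$, which is possible because $c$ is an absolute constant (fixed by $H$) and $M$ is finite on the fixed neighbourhood $B(\mathbf{x}^*,\epsilon_0)$. The secondary subtlety, already flagged above, is the passage between the $\ell_2$ operator norm $\sigma_{\max}$ and the $\ell_\infty$ balls in the statement, which causes no loss at the diagonal Jacobians that arise in this paper.
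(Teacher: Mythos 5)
Your proof is correct and is essentially the same argument the paper gives: Taylor-expand $H$ at the fixed point, use $\sigma_{\max}(J)<1$ to contract the linear term, absorb the quadratic remainder by shrinking $\epsilon$, and finish with the triangle inequality. You are in fact more careful than the paper on the point you flag yourself: the paper's chain $\|H(\mathbf{x})-\mathbf{x}^*\|_2\le\sigma_{\max}\|\mathbf{x}-\mathbf{x}^*\|_2+O(\cdot)\le(1-K)\|\mathbf{x}-\mathbf{x}^*\|_\infty$ silently mixes the $\ell_2$ and $\ell_\infty$ norms, and read literally it would need $\sqrt 2\,\sigma_{\max}<1$, which can fail, e.g.\ for $J_2$ of \cref{eqn:Jacobian_3M} with $u$ close to $3/4$. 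Your observation that the Jacobians at the sink points arising in this paper are diagonal, so that $\sigma_{\max}$ coincides with the $\ell_\infty$ operator norm, is exactly the right way to close this. One small caveat: your fallback remark ``the general case is handled by running the same argument with Euclidean balls'' is not quite enough on its own, since the lemma's balls are $\ell_\infty$-balls and converting an $\ell_2$-contraction back to $\ell_\infty$-balls reintroduces the $\sqrt 2$ factor; for a general non-diagonal $J$ with $\sigma_{\max}$ near $1$ one would instead iterate $H$ a bounded number of times (so that $J^k$ has small operator norm) or work in a norm adapted to $J$. This does not affect the use in this paper.
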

\begin{proof}
Let $\mathbf{x}^*$ be a sink point and $J$ be the Jacobian matrix of $H$ at $\mathbf{x}^*$.
From the Taylor expansion (see, e.g.~Theorem 12.15 of \cite{Krantz16}), we have
\begin{align}
H(\mathbf{x})&=H(\mathbf{x}^*)+J(\mathbf{x}-\mathbf{x}^*)+O_{\mathbf{x}\to\mathbf{x}^*}(\|\mathbf{x}-\mathbf{x}^*\|_2^2) \nonumber\\
&=\mathbf{x}^*+J(\mathbf{x}-\mathbf{x}^*)+O_{\mathbf{x}\to\mathbf{x}^*}(\|\mathbf{x}-\mathbf{x}^*\|_2^2). \label{eqn:Taylorexpansion}
\end{align}
By the property of singular value (\cref{prop:singularvalue}), there exist constants $\epsilon,K>0$ such that, for any $\mathbf{x}\in B(\mathbf{x}^*,\epsilon)$, it holds that
\begin{align*}
\|H(\mathbf{x})-\mathbf{x}^*\|_2&=\|H(\mathbf{x})-H(\mathbf{x}^*)\|_2
\leq \sigma_{\max}\|\mathbf{x}-\mathbf{x}^*\|_2+O_{\mathbf{x}\to\mathbf{x}^*}(\|\mathbf{x}-\mathbf{x}^*\|_2^2)\\
&\leq(1-K)\|\mathbf{x}-\mathbf{x}^*\|_\infty
<(1-K)\epsilon.
\end{align*}
Consequently, for any $\mathbf{y}\not\in B(\mathbf{x}^*,\epsilon)$, we have
\begin{align*}
\epsilon&\leq \|\mathbf{y}-\mathbf{x}^*\|_\infty
\leq \|H(\mathbf{x})-\mathbf{y}\|_\infty+\|H(\mathbf{x})-\mathbf{x}^*\|_\infty
< \|H(\mathbf{x})-\mathbf{y}\|_\infty+(1-K)\epsilon.
\end{align*}
Equivalently, we obtain $\|H(\mathbf{x})-\mathbf{y}\|_\infty>K\epsilon$.
\end{proof}

\begin{proof}[Proof of \cref{prop:sinkpoint}]
Let $\mathbf{a}^*$ be a sink point of $H$.
From \cref{lem:sinkdistance}, we can take a constant $K>0$ satisfying
$K\epsilon \leq \inf\{\|H(\mathbf{x})-\mathbf{y}\|_\infty:\,\mathbf{x}\in B(\mathbf{a}^*,\epsilon),\mathbf{y}\not\in B(\mathbf{a}^*,\epsilon)\}$.
Then, \cref{lem:constantfar} implies
\begin{align*}
\Pr\left[{\boldsymbol \alpha}'\not\in B(\mathbf{a}^*,\epsilon) \mid {\boldsymbol \alpha}\in B(\mathbf{a}^*,\epsilon)\right]
&\leq \Pr\left[\|{\boldsymbol \alpha}'-H({\boldsymbol \alpha})\|_\infty>K \epsilon\right]\\
&\leq \exp(-\Omega(\epsilon^2 n)).
\end{align*}
For any $T$, the union bound over the time $t=1,\ldots,T$ leads to
\begin{align*}
\Pr\left[\exists t\in\{1,\ldots,T\}:{\boldsymbol \alpha}^{(t)}\not\in B(\mathbf{a}^*,\epsilon) \,\middle|\, {\boldsymbol \alpha}^{(0)}\in B(\mathbf{a}^*,\epsilon)\right] \leq T\exp(-\Omega(\epsilon^2 n))
\end{align*}
and thus, the stopping time $\tau$ satisfies $\tau\geq\exp(\Omega(\epsilon^2 n))$ w.h.p.~if $\epsilon = \omega(\max\{\sqrt{\log n/n}, \sqrt{1/np}\})$.
\end{proof}

\subsection[Proof of fast consensus]{Proof of fast consensus results (\cref{prop:fastconsensus_Jacob,prop:fastconsensus})} \label{sec:fastconsensus_proof}
\paragraph*{Derive \cref{prop:fastconsensus_Jacob} from \cref{prop:fastconsensus}}
It suffices to check the condition of \cref{prop:fastconsensus} for $\epsilon=\epsilon(n)=\Theta\left(\sqrt{\frac{\log n}{np}}\right)$.
Using \ref{pro:gomiA} and the Taylor expansion \cref{eqn:Taylorexpansion}, there exists $C=C(H)$ such that
\begin{align*}
\E[|A_i'|]
&=n H_i(\alpha_1, \alpha_2)\pm O\left(|A|\sqrt{\frac{\log n}{np}}\right)
\leq n C\left((\alpha_1+\alpha_2)^2+ |A|\sqrt{\frac{\log n}{np}}\right) \\
&=C\frac{|A|^2}{n}+ C|A|\sqrt{\frac{\log n}{np}}
\end{align*}
holds if $\|{\boldsymbol \alpha}\|\leq \delta$ for sufficiently small constant $\delta$.
\QED

\paragraph*{Proof of \cref{prop:fastconsensus}}

We prove \cref{prop:fastconsensus}.
Let $n=|V(G)|$.
Suppose that there exist absolute constants $C,\delta>0$ and a function $\epsilon=\epsilon(n)$ such that $\epsilon(n)=o(1)$ and $\E[|A'|]\leq \frac{C|A|^2}{n}+\epsilon|A|$ holds for all $A\subseteq V$ of $|A|\leq \delta n$.
Note that we may assume $\epsilon(n)=\Omega(\sqrt{\log n/n})$: If $\epsilon=o(\sqrt{\log n/n})$, we have $\log n/\log \epsilon^{-1}=O(1)$ and we will obtain the claim by applying \cref{prop:fastconsensus} with letting $\epsilon=\sqrt{\log n/n}$.

Take a positive constant $\delta'$ such that
\begin{align}
&10\left(\frac{CM^2}{n}+\epsilon M\right) \leq M, \label{eqn:delta'condition1}\\
&\delta'\leq \min\left\{\delta,\frac{1}{16C}\right\} \label{eqn:delta'condition2}
\end{align}
hold for any $0\leq M\leq \delta'n$.
We can take such constant $\delta'>0$ since $\epsilon=o(1)$ and thus the inequality \cref{eqn:delta'condition1} holds if the ratio $\frac{M}{n}$ is sufficiently small.

Consider $A^{(0)},A^{(1)},\ldots$ given by the voting model such that $|A^{(0)}|\leq \delta n$.
To exploit the assumption of the expectation, we first claim that $|A^{(t)}|\leq \delta' n \leq \delta n$ holds w.h.p.~for all $t=0,\ldots,n^{o(1)}$.
Let $\mathcal{B}^{(t)}$ be the event that $|A^{(i)}|\leq \delta n$ for all $i=0,\ldots,t$.
Note that $\mathcal{B}^{(0)}$ holds.
Consider $\Pr[\mathcal{B}^{(t+1)}|\mathcal{B}^{(t)}]$.
If $\E[|A'|]\geq\log n$, the Chernoff bound (\ref{eqn:cher1} of \cref{lem:chernoff}) implies
\begin{align*}
\Pr\left[|A'|\geq 10\left(\frac{C|A|^2}{n}+\epsilon|A|\right) \,\middle|\, |A|\leq \delta'n\right]
\leq \Pr\left[|A'|\geq 10\E[|A'|]\right]
\leq \exp\left(-\frac{10}{3}\log n\right)
\leq n^{-3}.
\end{align*}
Then, for $|A|\leq \delta'n$, it holds with probability $1-O(n^{-3})$ that
\begin{align}
|A'|\leq 10\left(\frac{C|A|^2}{n}+\epsilon|A|\right)\leq |A|\leq\delta' n. \label{eqn:A'bound1}
\end{align}
Here, we used \cref{eqn:delta'condition1} with letting $M=|A|$.
If $\E[|A'|]\leq\log n$, from the Chernoff bound (\ref{eqn:cher3} of \cref{lem:chernoff}), we have 
\begin{align}
|A'|\leq 6\log n =o(\delta' n) \label{eqn:A'bound2}
\end{align}
with probability at least $1-O(n^{-3})$.
From \cref{eqn:A'bound1,eqn:A'bound2}, we obtain $\Pr\left[\mathcal{B}^{(t+1)}\,\middle|\, \mathcal{B}^{(t)}\right]\geq 1-O(n^{-3})$ for each $t$ and thus $\mathcal{B}^{(t)}$ holds for $t=n^{0.01}$ with probability $1-O(n^{-2.99})$.

Now we look at $|A^{(t)}|$.
Note that
\begin{align*}
\E[|A'|\mid |A|\leq \delta' n] \leq 
\begin{cases}
\frac{2C|A|^2}{n} & \text{if $ \frac{\epsilon}{C}n \leq |A| \leq \delta'n$},\\
2\epsilon |A| & \text{if $0\leq |A| \leq \frac{\epsilon}{C}$}.
\end{cases}
\end{align*}
Thus let us consider the following two cases.
\paragraph*{Case I: $\frac{\epsilon}{C}n \leq |A|^{(t)} \leq \delta'n$.}
From the Chernoff bound (\ref{eqn:cher3} of \cref{lem:chernoff}), we have
\begin{align*}
\Pr\left[|A^{(t+1)}|\geq \frac{12C|A^{(t)}|^2}{n} \,\middle|\, \mathcal{B}^{(t)} \right]
&\leq 2^{-\Omega(\log n)}.
\end{align*}
In the last inequality, we used $|A^{(t)}|\geq \frac{\epsilon}{C} n = \Omega(\sqrt{n\log n})$.
Hence, conditioned on $\mathcal{B}^{(t)}$ and $|A|^{(i)}\geq \frac{\epsilon}{C}n$ ($i=0,\ldots,t$), it holds w.h.p.~that
\begin{align*}
|A^{(t)}| \leq \frac{12C (|A|^{(t-1)})^2}{n} \leq \frac{n}{12C}\left(\frac{12C |A^{(0)}|}{n}\right)^{2^t} \leq \frac{0.75^{2^t}n}{12C}.
\end{align*}
Here, we used \cref{eqn:delta'condition2}.
Therefore, for some $\tau_1=O(\log\log n)$, $|A^{(\tau_1)}| \leq \frac{\epsilon}{C}$ holds w.h.p.

\paragraph*{Case I\hspace{-.1em}I: $0\leq |A^{(t)}| \leq \frac{\epsilon}{C}n$.}
Conditioned on $|A^{(0)}| \leq \frac{\epsilon}{C}n$, we claim that $\E[|A^{(\tau_2)}|]\leq n^{-\Omega(1)}$ for some $\tau_2=O(\log n/\log \epsilon^{-1})$.
Note that this completes the proof of \cref{prop:fastconsensus} since $\Pr[A^{(\tau_2)}\neq \emptyset]\leq \E[|A^{(\tau_2)}|]=n^{-\Omega(1)}$ from the Markov inequality.

To show the claim, we exploit the property that $\E[|A'|\mid A]\leq 2\epsilon|A|$ if $|A|\leq \frac{\epsilon}{C}n$.
Before using this, we show that $|A^{(t)}|\leq \frac{\epsilon}{C}n$ holds for all $t=1,\ldots,n^{o(1)}$.
Conditioned on $|A|\leq\frac{\epsilon}{C}n$, we have $\E[|A'|]\leq 2\epsilon |A| \leq O(\epsilon^2 n)$ and thus the Chernoff bound (\ref{eqn:cher3} of \cref{lem:chernoff}) yields
\begin{align*}
\Pr\left[|A'|\geq \frac{\epsilon}{C}n \middle| |A|\leq \frac{\epsilon}{C}n\right] \leq 2^{-\Omega(\epsilon n)} = n^{-\Omega(1)}.
\end{align*}
Therefore, $|A^{(t)}|\leq \frac{\epsilon}{C}n$ holds for all $t=0,\ldots,n^{o(1)}$.
Let $\mathcal{C}^{(t)}$ be the event that $|A^{(i)}|\leq\frac{\epsilon}{C}n$ holds for all $i=0,\ldots,t$.
Then, from the tower property of the conditional expectation, we have
\begin{align*}
\E[|A^{(\tau_2)}|\mid \mathcal{C}^{(\tau_2)}] &\leq \E[\E[|A^{(\tau_2)}|\mid A^{(\tau_2-1)},\mathcal{C}^{(\tau_2)}]\mid \mathcal{C}^{(\tau_2)}] \\
&\leq \E[2\epsilon |A^{(\tau_2-1)}| \mid \mathcal{C}^{(\tau_2)}] \\
&\leq (2\epsilon)^{\tau_2} \cdot \frac{\epsilon n}{C} \\
&\leq n^{-\Omega(1)}
\end{align*}
for some $\tau_2=O(\log n/\log \epsilon^{-1})$.
This shows the aforementioned claim as well as completes the proof of \cref{prop:fastconsensus}.
\QED

\subsection[Proof of the escape result]{Proof of the escape result (\cref{prop:escape})} \label{sec:escape_proof}
In this section, we prove \cref{prop:escape}.
Let $\mathbf{a}^*$ be a fixed point satisfying \cref{asm:escape_assumption0}.
The proof of \cref{prop:escape} consists of two parts: We derive \cref{prop:escape} from \cref{asm:escape_assumption1,asm:escape_assumption2}.

Recall the random variable ${\boldsymbol \beta}$ defined in \cref{eqn:betadef}.
From the definition \cref{eqn:betadef}, each element $\beta_i$ of ${\boldsymbol \beta}$ can be rewritten as
\begin{align}
\beta_i&=\sum_{j=1}^2 u_{ij} \alpha_j-(U\mathbf{a}^*)_i, \label{eqn:beta_sum_of_irv}
\end{align}
where we let $U=(u_{ij})$.
Each element $u_{ij}$ of the matrix $U$ does not depend on $n$.
Hence, the Hoeffding bound (\cref{lem:Hoeffding}) implies
\begin{align}
\Pr[|\beta'_i-\E[\beta'_i]|\geq t]\leq \exp\left(-\Omega(t^2n)\right). \label{eqn:beta_concentration}
\end{align}
From \cref{thm:approximation_vectorfield} and the Taylor expansion \cref{eqn:Taylorexpansion}, we have
\begin{align*}
\E[{\boldsymbol \beta}' \mid A] &= \E[U({\boldsymbol \alpha}'-\mathbf{a}^*) \mid A]\\
&=U(H({\boldsymbol \alpha})-\mathbf{a}^*)+O\left(\frac{1}{\sqrt{np}}\right)\cdot \mathbf{1}\\
&=UJ({\boldsymbol \alpha}-\mathbf{a}^*)+\left(O_{{\boldsymbol \alpha}\to \mathbf{a}^*}(\|{\boldsymbol \alpha}-\mathbf{a}^*\|_\infty^2)+O\left(\frac{1}{\sqrt{np}}\right)\right)\cdot \mathbf{1}\\
&=\Lambda {\boldsymbol \beta}+\left(O_{\|{\boldsymbol \beta}\|\to 0}(\|{\boldsymbol \beta}\|_\infty^2)+O\left(\frac{1}{\sqrt{np}}\right)\right)\cdot \mathbf{1}.
\end{align*}
Hence, the $i$-th element $\beta_i$ of ${\boldsymbol \beta}=(\beta_1\,\beta_2)^{\top}$ satisfies
\begin{align}
|\E[\beta'_i]| = |\lambda_i\|\beta_i|+O_{\|{\boldsymbol \beta}\|\to 0}(\|{\boldsymbol \beta}\|_\infty^2)+O\left(\frac{1}{\sqrt{np}}\right). \label{eqn:exp_beta'}
\end{align}
It is convenient to consider the behavior of ${\boldsymbol \beta}$ instead of ${\boldsymbol \alpha}$.
Note that ${\boldsymbol \alpha}\to \mathbf{a}^*$ implies ${\boldsymbol \beta}\to \mathbf{0}$ and vice versa since the matrix $U$ is nonsingular.
By substituting $t=\Theta\left(\sqrt{\frac{\log n}{n}}\right)$ to \cref{eqn:beta_concentration}, for sufficiently large constant $C>0$, it holds w.h.p.~that
\begin{align}
\left| |\beta'_i| - |\lambda_i||\beta_i| \right| \leq C\|{\boldmath \beta}\|_{\infty}^2+ C\sqrt{\frac{\log n}{n}}. \label{eqn:beta'bounds}
\end{align}

\paragraph*{Derive \cref{prop:escape} from \cref{asm:escape_assumption1}} \label{sec:derivethm2.11fromasm2.9}
Suppose that the fixed point $\mathbf{a}^*$ satisfies \cref{asm:escape_assumption1}.
Let $I_{>1} \defeq \{i\in[2]:|\lambda_i|>1\}$ and $I_{\leq 1} \defeq [2]\setminus I_{>1}$.
Fix a sufficiently large constant $K>0$ and let $\epsilon^*$ be the constant mentioned in \cref{asm:escape_assumption1}.
Define
\begin{align*}
\mathcal{A}_1&=\left\{A\subseteq V\,:\, \|{\boldsymbol \beta}\|_\infty \leq \epsilon^* \text{ and }|\beta_j|< K\sqrt{\frac{\log n}{n}} \text{ for all }j\in I_{>1} \right\},\\
\mathcal{A}_2&=\left\{A\subseteq V\,:\, \|{\boldsymbol \beta}\|_\infty \leq \epsilon^* \text{ and }|\beta_j|\geq K\sqrt{\frac{\log n}{n}} \text{ for some }j\in I_{>1} \right\},\\
\mathcal{A}_3&=\left\{A\subseteq V\,:\, \|{\boldsymbol \beta}\|_\infty >\epsilon^* \text{ and }|\beta_j|\leq \epsilon^* \text{ for all }j\in I_{\leq 1} \right\}.
\end{align*}

We claim that, for each $i=1,2$ and any $A\in \mathcal{A}_i$, there exists $\tau=O(\log n)$ satisfying $\Pr\left[A^{(\tau)}\in \mathcal{A}_{i+1}\,\middle| A^{(0)}=A \right] \geq 1-n^{-\Omega(1)}$.
This completes the proof of \cref{prop:escape} under \cref{asm:escape_assumption1}.

\paragraph*{Case I: $A^{(0)}\in \mathcal{A}_1$.}
Let $f(A)\defeq \lfloor n\max\{|\beta_i|:i\in I_{>1}\}\rfloor$ and $m=K\sqrt{n\log n}$.
We use \cref{cor:nazocor} to show $A^{(\tau)}\in \mathcal{A}_2$ for some $\tau=O(\log n)$.
Here, we use $\mathcal{A}_1$ as $\mathcal{B}$ of \cref{cor:nazocor}.
Note that $A\in\mathcal{A}_1$ implies $f(A)<m$.

From \cref{eqn:beta_sum_of_irv} and \ref{asm:asm2}, we have $\Var[|\beta_i|  \mid  A]=\sum_{j\in[2]} u_{ij}^2 \Var[\alpha_j \mid A]=\Omega(n^{-1})$.
Here, note that, for every $i\in[2]$, there exists $j\in[2]$ such that $u_{ij}\neq 0$, since otherwise, it contradicts to the fact that the matrix $U$ is nonsingular.
Thus, from \cref{cor:BEbound_cor}, it holds that, for any constant $h>0$, there exists a positive constant $C_1<1$ such that $\Pr[f(A')< h\sqrt{n} \mid f(A)\leq m] < C_1.$
This verifies the condition~\ref{cond:nazo_sqrt} of \cref{cor:nazocor}.

Now we check the condition~\ref{cond:chernoff}.
Let $z\in[2]$ be the least index satisfying $|\beta_z|=\max\{|\beta_i|:i\in[2]\}$.
Suppose that $A\in \mathcal{A}_1$ satisfies $f(A)=\lfloor n|\beta_z|\rfloor \geq h\sqrt{n}$ for sufficiently large constant $h>\frac{100 C}{\epsilon_1}$ (recall that the constant $\epsilon_1$ is mentioned in \ref{asm:asm4}).
Then, from \ref{asm:asm4}, we have
\begin{align*}
|\E[\beta'_z \mid A]| &\geq (1+0.99\epsilon_1)|\beta_z| + 0.01\epsilon_1|\beta_z|-\frac{C}{\sqrt{n}}
\geq (1+0.99\epsilon_1)|\beta_z|.
\end{align*}
Thus, from the Hoeffding inequality (\cref{lem:Hoeffding}), we obtain
\begin{align*}
\Pr[f(A')<(1+0.98\epsilon_1)f(A) \mid A]
\leq
\Pr\left[f(A')<\frac{1+0.98\epsilon_1}{1+0.99\epsilon_1}\E[f(A')]\,\middle|\,A\right]
\leq
\exp\left(-\Omega\left(\frac{f(A)^2}{n}\right)\right)
\end{align*}
holds for every $A\in \mathcal{A}_1$ satisfying $f(A)\geq h\sqrt{n}$.
This verifies the condition~\ref{cond:chernoff}.

Finally, we check the condition~\ref{cond:whp} of \cref{cor:nazocor}.
From \ref{asm:asm5}, it holds that
\begin{align*}
\Pr[A'\not\in \mathcal{A}_1\text{ and }f(A')<m \mid A\in \mathcal{A}_1]
\leq
\Pr[\exists j\in I_{\leq 1},|\beta'_j|>\epsilon^* \mid A\in\mathcal{A}_1]
\leq
n^{-\Omega(1)}.
\end{align*}

Therefore, from \cref{cor:nazocor}, we have $f(A^{(\tau)})\geq m=K\sqrt{n\log n}$ (i.e.~$A^{(\tau)}\in \mathcal{A}_2$) holds w.h.p.~for some $\tau=O(\log n)$.

\paragraph*{Case I\hspace{-.1em}I: $A^{(0)}\in \mathcal{A}_2$.}
Suppose that $A^{(0)}\in\mathcal{A}_2$ and let $j\in I_{>1}$ be the index satisfying $|\beta_j|>K\sqrt{\frac{\log n}{n}}$.
We remark that $K$ is sufficiently large.
From \ref{asm:asm4} and \cref{eqn:beta'bounds}, we have $|\beta'_j|\geq (1+0.99\epsilon_1)|\beta_j|$.
Thus, for some $\tau=O(\log n)$, we have $|\beta^{(\tau)}_j|>\epsilon^*$.
Moreover, from \ref{asm:asm5}, we have $|\beta^{(\tau)}_i|\leq\epsilon^*$ for all $i\in I_{\leq 1}$.
Therefore, $A^{(\tau)}\in \mathcal{A}_3$ holds w.h.p.

\paragraph*{Derive \cref{prop:escape} from \cref{asm:escape_assumption2}}

Suppose that the fixed point $\mathbf{a}^*$ satisfies \cref{asm:escape_assumption2}.
Let
\begin{align*}
I_{<1}&\defeq\{i\in[2]:|\lambda_i|<1\}, \\
I_{>1}&\defeq\{i\in[2]:|\lambda_i|>1\}.
\end{align*}
From \ref{asm:asm6}, we have $I_{<1}\cup I_{>1}=[2]$.
Moreover, there exists some constant $\epsilon>0$ such that
\begin{align}
||\lambda_i|-1| > 3\epsilon \label{eqn:epsdef}
\end{align}
holds for every $i\in [2]$.
For $A\subseteq V$, let $z=z(A)\in[2]$ be the least index satisfying $|\beta_z|=\|{\boldsymbol \beta}\|_\infty$.
We use four constants: In \cref{eqn:beta'bounds,eqn:epsdef}, we defined $C$ and $\epsilon$.
Let $K\defeq\frac{C}{\epsilon}$ and $\epsilon'\defeq \frac{\epsilon}{C}$.
Consider four events
\begin{align*}
\mathcal{B}_1&=\left\{A\subseteq V\,:\, K\sqrt{\frac{\log n}{n}}<\|{\boldsymbol \beta}\|_\infty \leq \epsilon' \text{ and }z(A)\in I_{<1} \right\},\\
\mathcal{B}_2&=\left\{A\subseteq V\,:\, \|{\boldsymbol \beta}\|_\infty \leq K\sqrt{\frac{\log n}{n}} \right\}, \\
\mathcal{B}_3&=\left\{A\subseteq V\,:\, K\sqrt{\frac{\log n}{n}}<\|{\boldsymbol \beta}\|_\infty \leq \epsilon' \text{ and }z(A)\in I_{>1} \right\}, \\
\mathcal{B}_4&=\left\{A\subseteq V\,:\, \|{\boldsymbol \beta}\|_\infty > \epsilon' \text{ and } |\beta_j|\leq \epsilon'\text{ for all }j\in I_{<1} \right\}.
\end{align*}

We claim that, if $A^{(0)}\in \mathcal{B}_i$, then $A^{(\tau)}\not \in  \cup_{1\leq j\leq i}\mathcal{B}_{j}$ holds w.h.p.~for any $i\in\{1,2,3\}$ and some $\tau=O(\log n)$.
This completes the proof of \cref{prop:escape} under \cref{asm:escape_assumption2}.

\paragraph*{Case I: $A^{(0)}\in\mathcal{B}_1$.}
Suppose $A^{(0)}\in \mathcal{B}_1$.
We argue that, if $A^{(t)}\in \mathcal{B}_1$, then either $A^{(t+1)}\in \mathcal{B}_3$ or $\|{\boldsymbol \beta}^{(t+1)}\|_\infty \leq (1-\epsilon)\|{\boldsymbol \beta}^{(t)}\|_\infty$ holds w.h.p.
For any $j\in I_{<1}$, the bound \cref{eqn:beta'bounds} yields that
\begin{align*}
|\beta'_j| &\leq (1-3\epsilon)\|{\boldsymbol \beta}\|_\infty +C\|{\boldsymbol \beta}\|^2_\infty + C\sqrt{\frac{\log n}{n}} \\
&\leq (1-\epsilon)\|{\boldsymbol \beta}\|_\infty - 2\epsilon \|\beta\|_\infty + C\epsilon'\|\beta\|_\infty + C\sqrt{\frac{\log n}{n}} \\
&= (1-\epsilon)\|\beta\|_\infty - \epsilon\|\beta\|_\infty + C\sqrt{\frac{\log n}{n}} \\
&\leq (1-\epsilon)\|\beta\|_\infty-(\epsilon K-C)\sqrt{\frac{\log n}{n}} \\
&\leq (1-\epsilon)\|\beta\|_\infty
\end{align*}
holds w.h.p.
If $A^{(t+1)}\not\in\mathcal{B}_3$, then $\|{\boldsymbol \beta}'\|_\infty = |\beta'_j|$ for some $j\in I_{<1}$; thus, we have $\|{\boldsymbol \beta}'\|_\infty \leq (1-\epsilon)\|{\boldsymbol \beta}\|_\infty$ w.h.p.
Therefore, for some $\tau=O(\log n)$, either $A^{(\tau)}\in \mathcal{B}_2$ or $A^{(\tau)}\in \mathcal{B}_{3}$ holds w.h.p.

\paragraph*{Case I\hspace{-.1em}I: $A^{(0)}\in\mathcal{B}_2$.}
Suppose $A^{(0)}\in \mathcal{B}_2$.
Our strategy is to apply \cref{cor:nazocor}.
We will prove the following result in the last part of this subsection.
\begin{lemma} \label{lem:beta'bounds}
Conditioned on $A\in \mathcal{B}_2$, the followings hold w.h.p.:

\begin{enumerate}[label=(\roman*)]

\item\label{state:small} For every $i\in I_{<1}$, it holds that $|\beta'_i|\leq K\sqrt{\frac{\log n}{n}}$.

\item\label{state:large} there exists a constant $h>0$ such that, for every $i\in I_{>1}$, 
\begin{align*}
\left|\E\left[\beta'_i\,\middle|\,A,|\beta_i|\geq \frac{h}{\sqrt{n}}\right]\right| \geq (1+\epsilon) |\beta_i|.
\end{align*}
\end{enumerate}
\end{lemma}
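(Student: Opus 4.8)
}
The plan is to read off both parts from the two one-step estimates already established for ${\boldsymbol \beta}$, namely the deterministic expectation bound \cref{eqn:exp_beta'} and the concentration bound \cref{eqn:beta'bounds}, together with the spectral gap \cref{eqn:epsdef} and the bookkeeping choice $K=C/\epsilon$. First note that $\beta_i$ is a function of the current configuration $A$, so conditioning on $A\in\mathcal{B}_2$ (and, in part~\ref{state:large}, additionally on $|\beta_i|\ge h/\sqrt n$) merely restricts which $A$ we sit at; all probability and expectation below are over one step of the voting process, and since $p,q$ are constants we have $1/\sqrt{np}=\Theta(1/\sqrt n)$. On $\mathcal{B}_2$ we have $\|{\boldsymbol \beta}\|_\infty\le K\sqrt{\log n/n}$, hence $\|{\boldsymbol \beta}\|_\infty^2\le K^2\log n/n$.

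For part~\ref{state:small} I would fix $i\in I_{<1}$ (if $I_{<1}=\emptyset$ the claim is vacuous), so that $|\lambda_i|\le 1-3\epsilon$ by \cref{eqn:epsdef}. Feeding $|\beta_i|\le\|{\boldsymbol \beta}\|_\infty\le K\sqrt{\log n/n}$ into \cref{eqn:beta'bounds} gives, w.h.p.,
\begin{align*}
|\beta'_i| &\le (1-3\epsilon)K\sqrt{\tfrac{\log n}{n}}+CK^2\tfrac{\log n}{n}+C\sqrt{\tfrac{\log n}{n}} \\
&= \sqrt{\tfrac{\log n}{n}}\,\bigl((1-3\epsilon)K+C+CK^2\sqrt{\tfrac{\log n}{n}}\bigr).
\end{align*}
Since $K=C/\epsilon$ we have $(1-3\epsilon)K+C=K-2C$, and $CK^2\sqrt{\log n/n}<2C$ once $n$ is large, so the parenthesis is strictly less than $K$; this is exactly $|\beta'_i|\le K\sqrt{\log n/n}$.

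For part~\ref{state:large} I would fix $i\in I_{>1}$, so $|\lambda_i|\ge 1+3\epsilon$ by \cref{eqn:epsdef}. By \cref{eqn:exp_beta'} there is an absolute constant $C_0$ (the implied constant of the $O$-terms there) such that, whenever $\|{\boldsymbol \beta}\|_\infty$ is small enough,
\begin{align*}
|\E[\beta'_i\mid A]| &\ge |\lambda_i|\,|\beta_i|-C_0\|{\boldsymbol \beta}\|_\infty^2-\frac{C_0}{\sqrt n} \\
&\ge (1+3\epsilon)|\beta_i|-C_0K^2\frac{\log n}{n}-\frac{C_0}{\sqrt n},
\end{align*}
the last step using $A\in\mathcal{B}_2$ (and $\|{\boldsymbol \beta}\|_\infty\to0$ on $\mathcal{B}_2$ makes $\|{\boldsymbol \beta}\|_\infty$ small enough for large $n$). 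Writing $(1+3\epsilon)|\beta_i|=(1+\epsilon)|\beta_i|+2\epsilon|\beta_i|$ and using $|\beta_i|\ge h/\sqrt n$ together with $C_0K^2\log n/n=o(1/\sqrt n)$, the leftover $2\epsilon|\beta_i|-C_0K^2\log n/n-C_0/\sqrt n$ is at least $(2\epsilon h-2C_0)/\sqrt n$ for large $n$, which is nonnegative provided we choose the constant $h>C_0/\epsilon$; hence $|\E[\beta'_i\mid A]|\ge(1+\epsilon)|\beta_i|$.

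I expect the only delicate point to be the bookkeeping of the error hierarchy --- $\|{\boldsymbol \beta}\|_\infty^2\le K^2\log n/n$ must be seen to be negligible against $\sqrt{\log n/n}$ in part~\ref{state:small} and against $1/\sqrt n$ in part~\ref{state:large} --- and verifying that the specific constants $K=C/\epsilon$ and $h>C_0/\epsilon$ really close each inequality; the $3\epsilon$ slack built into \cref{eqn:epsdef} is precisely what absorbs the quadratic remainder and the $1/\sqrt n$ term. No ingredients beyond \cref{eqn:exp_beta',eqn:beta'bounds,eqn:epsdef} are needed.
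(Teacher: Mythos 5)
Your proposal is correct and follows essentially the same route as the paper: part~\ref{state:small} plugs $\|{\boldsymbol \beta}\|_\infty \le K\sqrt{\log n/n}$ into \cref{eqn:beta'bounds} and uses $K=C/\epsilon$ to absorb the quadratic and additive error terms, and part~\ref{state:large} splits $(1+3\epsilon)|\beta_i|$ and discharges the $O(\|{\boldsymbol \beta}\|_\infty^2)$ and $O(1/\sqrt n)$ remainders against $2\epsilon|\beta_i|$ using $|\beta_i|\ge h/\sqrt n$ with $h$ a suitable multiple of $C/\epsilon$. The paper pins $h=C/\epsilon$ exactly rather than leaving $h>C_0/\epsilon$, but that is an immaterial bookkeeping difference.
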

Let $m=K\sqrt{n\log n}$ and define $f(A)\defeq\left\lfloor n\cdot \max_{i\in I_{>1}}|\beta_i| \right\rfloor$.
Suppose that $f(A^{(\tau)}) \geq K\sqrt{n\log n} $ holds w.h.p.~for some $\tau=O(\log n)$.
Then, we have $A^{(\tau)}\not\in\mathcal{B}_1\cup\mathcal{B}_2$ w.h.p.~since $|\beta_i^{(\tau)}|\leq K\sqrt{\frac{\log n}{n}}$ holds w.h.p.~for any $i\in I_{< 1}$.
Here, we used \ref{state:small} of \cref{lem:beta'bounds}.
To show $f(A^{(\tau)}) \geq K\sqrt{n\log n}$, we check the condition \ref{cond:nazo_sqrt} to \ref{cond:whp} of \cref{cor:nazocor} and then apply it.

First, we check the condition~\ref{cond:nazo_sqrt} of \cref{cor:nazocor}.
We use the same argument described in the Case I in \cref{sec:derivethm2.11fromasm2.9}.
From \cref{eqn:beta_sum_of_irv}, we have $\Var[\beta'_i \mid A]\geq \sum_{j=1}^2 u_{ij}^2 \Var[\alpha'_j \mid A]$.
Moreover, for every $i\in[2]$ there exists $j\in[2]$ such that $u_{ij}\neq 0$, since otherwise, it contradicts to the fact that $U$ is nonsingular.
From \ref{asm:asm2}, we have $\Var[\beta'_i \mid A]=\Omega(n^{-1})$; thus, from \cref{cor:BEbound_cor}, it holds that, for any constant $h>0$, there exists a positive constant $C_1<1$ such that $\Pr[f(A')\geq h\sqrt{n} \mid f(A)\leq m] < C_1$.

We check the condition~\ref{cond:chernoff} of \cref{cor:nazocor}.
For every $i\in I_{>1}$, \cref{lem:beta'bounds} yields
\begin{align}
|\E[\beta^{(t+1)}_i \mid A^{(t)}\in\mathcal{B}_2]| \geq (1+\epsilon) |\beta^{(t)}_i|. \label{eqn:beta_expect}
\end{align}
In look at \cref{eqn:beta_sum_of_irv}, from the Hoeffding inequality (\cref{lem:Hoeffding}), it holds for any set $A^{(t)}\in\mathcal{B}_2$, any index $i\in I_{>1}$ and any constant $\epsilon'>0$ that
\begin{align}
\Pr[|\beta'_i| \leq (1-\epsilon')|\E[\beta_i']|]&\leq \exp\left(-\Omega\left(\epsilon'^2\E[\beta'_i]^2n\right)\right)
\leq \exp\left(-\Omega\left(\epsilon'^2\frac{f(A)^2}{n}\right)\right). \label{eqn:beta_hoeffding}
\end{align}
From \cref{eqn:beta_expect,eqn:beta_hoeffding}, by letting $\epsilon'=\frac{\epsilon}{2(1+\epsilon)}$, we obtain
\begin{align*}
\Pr\left[|\beta^{(t+1)}_i| \leq \left(1+\frac{\epsilon}{2}\right)\cdot |\beta^{(t)}_i| \,\middle|\,A^{(t)}\in \mathcal{B}_2\right] 
&\leq \Pr\left[|\beta'_i| \leq (1-\epsilon') \cdot |\E[\beta_i'  \mid A^{(t)}\in\mathcal{B}_2] |\right]\\
&\leq \exp\left(-\Omega\left(\frac{f(A)^2}{n}\right)\right). 
\end{align*}
In other words, for any $A\in\mathcal{B}_2$ satisfying $f(A)\geq h\sqrt{n}$ for some constant $h>0$, we have
\begin{align*}
\Pr\left[f(A^{(t+1)})<\left(1+\frac{\epsilon}{2}\right)f(A^{(t)})\,\middle|\,A^{(t)}=A \right]\leq \exp\left(-\Omega\left(\frac{f(A)^2}{n}\right)\right).
\end{align*}

Finally, we check the condition~\ref{cond:whp} of \cref{cor:nazocor}.
From \cref{lem:beta'bounds}, we have
\begin{align*}
\Pr[A^{(t+1)}\not\in \mathcal{B}_2 \land f(A^{(t+1)})\leq m \mid A^{(t)}\in\mathcal{B}_2]
&\leq \Pr\left[\exists j\in I_{<1}:\,|\beta'_j|> K\sqrt{\frac{\log n}{n}}\,\middle|\,A\in\mathcal{B}_2\right] \\
&\leq n^{-\Omega(1)}.
\end{align*}

Now we are ready to apply \cref{cor:nazocor}.
Thus, there exists $\tau=O(\log n)$ such that $f(A^{(\tau)})\geq K\sqrt{\frac{\log n}{n}}$ and $|\beta^{(\tau)}_j| \leq K\sqrt{\frac{\log n}{n}}$ hold w.h.p.~for every $j\in I_{<1}$.
Consequently, $A^{(\tau)}\in \mathcal{B}_3 \cup \mathcal{B}_4$ holds w.h.p.

\paragraph*{Case I\hspace{-.1em}I\hspace{-.1em}I: $A^{(0)}\in\mathcal{B}_3$.}
Suppose that $A^{(0)}\in \mathcal{B}_3$.
From \cref{eqn:beta'bounds}, it holds w.h.p.~that
\begin{align*}
|\beta'_z|&\geq |\lambda_z||\beta_z|-C\|{\boldsymbol \beta}\|_\infty - C\sqrt{\frac{\log n}{n}} \\
&\geq (1+\epsilon)|\beta_z| + (\epsilon|\beta_z|-C|\beta_z|^2)+\left(\epsilon|\beta_z|-C\sqrt{\frac{\log n}{n}}\right) \\
&\geq (1+\epsilon)|\beta_z|.
\end{align*}
Moreover, for any $j\in I_{<1}$, it holds w.h.p.~that
\begin{align*}
|\beta'_j| \leq (1-3\epsilon)\|{\boldsymbol \beta}\|_\infty + C\|{\boldsymbol \beta}\|^2_{\infty} + C\sqrt{\frac{\log n}{n}}
\leq (1-\epsilon)|\beta_z|
\end{align*}
These imply that $A^{(t+1)}\not\in\mathcal{B}_1 \cup \mathcal{B}_2$ holds w.h.p.~whenever $A^{(t)}\in \mathcal{B}_3$.
Let $\tau$ be the stopping time given by $\tau \defeq \min\{t\,:\,A^{(t)}\not\in \mathcal{B}_3\}$.
Then, $\|{\boldsymbol \beta}^{(t+1)}\|_\infty \geq (1+\epsilon)\|{\boldsymbol \beta}^{(t)}\|_\infty $ holds w.h.p.~for all $t<\tau$.
Therefore, we have $A^{(\tau)}\in\mathcal{B}_4$ with $\tau=O(\log n)$, and $|\beta^{(\tau)}_j|\leq \epsilon'$ for all $j\in I_{<1}$.

\paragraph*{Proof of \cref{lem:beta'bounds}.}
Suppose $A\in\mathcal{B}_2$ and recall the definition $K=\frac{C}{\epsilon}$.
For any $i\in I_{<1}$, the bound \cref{eqn:beta'bounds} yields that
\begin{align*}
|\beta'_i|&\leq (1-3\varepsilon)K\sqrt{\frac{\log n}{n}}+K^2\frac{\log n}{n}+ C\sqrt{\frac{\log n}{n}} \\
&\leq \frac{C}{\varepsilon}\sqrt{\frac{\log n}{n}} - 2C\sqrt{\frac{\log n}{n}} + K^2\frac{\log n}{n} \\
&\leq K\sqrt{\frac{\log n}{n}}
\end{align*}
holds w.h.p.
This completes the proof of the statement \ref{state:small}.

Now we consider the statement \ref{state:large}.
Suppose that $A\in\mathcal{B}_2$ and $|\beta_i|\geq \frac{C}{\epsilon}\cdot\frac{1}{\sqrt{n}}$ (we expect $h=\frac{C}{\epsilon}$).
For $i\in I_{>1}$, the bound \cref{eqn:exp_beta'} implies $|\E[\beta'_i]|\geq |\lambda_i||\beta_i| - C\|{\boldsymbol \beta}\|^2_{\infty} - \frac{C}{\sqrt{n}}$.
Since $\|{\boldsymbol \beta}\|_\infty \leq K\sqrt{\frac{\log n}{n}}$ and $|\beta_i|\geq \frac{h}{\sqrt{n}}= \frac{C}{\epsilon}\cdot \frac{1}{\sqrt{n}}$, we have
\begin{align*}
&C\|{\boldsymbol \beta}\|^2_{\infty} \leq \frac{CK\log n}{n}\leq \epsilon|\beta_i| \,\,\,\,\text{(for sufficiently large $n$)},\\
&\frac{C}{\sqrt{n}}\leq \epsilon|\beta_i|.
\end{align*}
This leads to $|\E[\beta'_i]|\geq (1+\epsilon)|\beta_i|$, which completes the proof of the statement \ref{state:large}. \QED

\section{Best-of-two voting process} \label{sec:2Choices}

This section is devoted to show \cref{thm:phasetransition_2C,thm:worst_case_2C}.
To this end, we investigate the induced dynamical system of the Best-of-two.
Let $\alpha\defeq \frac{|A|}{n}$, $\alpha_i \defeq \frac{|A_i|}{n}$, $\alpha'_i\defeq \frac{|A'_i|}{n}$ and $r \defeq \frac{q}{p}$.
Suppose that $r$ is a constant.
Let $f^\Btwo_1(x)\defeq 2x(1-x)$ and $f^\Btwo_2(x)\defeq x^2$.
Observe that the Best-of-two is the polynomial voter process according to $f^\Btwo_1$ and $f^\Btwo_2$.
Then, on an $f^{\Btwo}_1$-good and $f^{\Btwo}_2$-good $G(2n,p,q)$, it holds w.h.p.~that
\begin{align}
\left|\alpha_i'-\alpha_if^{\Btwo}_1\left(\frac{\alpha_i+r\alpha_{3-i}}{1+r}\right)-(1-\alpha_i)f^{\Btwo}_2\left(\frac{\alpha_i+r\alpha_{3-i}}{1+r}\right) \right|= O\left(\sqrt{\frac{1}{np}}+\sqrt{\frac{\log n}{n}}\right)
\label{thm:mainthm_E2}
\end{align}
for all $A\subseteq V$ and $i=1,2$.

For convenience of calculation, we change the coordinate of $H$ by
\begin{align*}
{\boldsymbol \delta}=(\delta_1, \delta_2)\defeq (\alpha_1-\alpha_2, \alpha_1+\alpha_2-1).
\end{align*}
Note that $\delta_1,\delta_2$ are random variables.
Let $u\defeq\frac{1-r}{1+r}$. Then we have
\begin{align*}
\E[\delta'_i \mid A]=T_i(\delta_1,\delta_2)+O\left(\frac{1}{\sqrt{np}}\right),
\end{align*}
where
\begin{align}
T_1(d_1,d_2)\defeq \frac{d_1}{2}\left((2u+1)-(ud_1)^2-(2u+1) d_2^2\right), \, 
T_2(d_1,d_2)\defeq \frac{ d_2}{2}\left(3-u(2+u) d_1^2- d_2^2\right). \label{eqn:2Cvecfielddef}
\end{align}
This suggests another dynamical system $T(\mathbf{d})=(T_1(\mathbf{d}),T_2(\mathbf{d}))$.
We use $\mathbf{d}=(d_1,d_2)$ as a specific point and ${\boldsymbol \delta}=(\delta_1,\delta_2)$ as a vector-valued random variable.
Consider ${\boldsymbol \delta^{(t)}}=(\delta^{(t)}_1,\delta^{(t)}_2)$ and $(\mathbf{d}^{(t)})_{t=0}^\infty$, where $\mathbf{d}^{(0)}={\boldsymbol \delta}^{(0)}$ and $\mathbf{d}^{(t+1)}=T(\mathbf{d}^{(t)})$ for each $t\geq 0$.
For notational convenience, we use ${\boldsymbol \delta}'\defeq {\boldsymbol \delta}^{(t+1)}$ for ${\boldsymbol \delta}={\boldsymbol \delta}^{(t)}$.
Similarly, we refer $\mathbf{d}'$ to $T(\mathbf{d})$.
Note that ${\boldsymbol \delta}$ satisfies $|\delta_1|+|\delta_2|\leq 1$ since ${\boldsymbol \alpha}\in[0,1]^2$.
Moreover, the dynamical system $T$ has symmetry on $d_1=0$ and $d_2=0$.
Hence, we focus on the set
\begin{align*}
S\defeq \{(d_1,d_2)\in[0,1]^2:d_1+d_2\leq 1\}.
\end{align*}
The sequence $(\mathbf{d}^{(t)})_{t=0}^\infty $ is closed in $S$ as follows:
\begin{lemma} \label{lem:2Cvector_field_lem1}
For any $\mathbf{d}\in S$, it holds that $\mathbf{d}'\in S$.
\end{lemma}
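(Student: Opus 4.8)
The plan is to follow the proof of \cref{lem:3Mvector_field_lem1} step for step. Given $\mathbf{d}=(d_1,d_2)\in S$, I must check that $\mathbf{d}'=(d_1',d_2')=T(\mathbf{d})$, with $T$ as in \cref{eqn:2Cvecfielddef}, again lies in $S$, i.e.\ that $d_1'\geq 0$, $d_2'\geq 0$ and $d_1'+d_2'\leq 1$. Throughout I will use that $u=\frac{1-r}{1+r}\in[0,1]$ and that $\mathbf{d}\in S$ gives $0\leq d_1,d_2$ and $d_1+d_2\leq 1$, hence in particular $d_i^2\leq d_i\leq 1$ for each $i$ and $d_1^2\leq d_1\leq 1-d_2$ (and symmetrically).

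For the two sign conditions I would bound the quadratic factors in \cref{eqn:2Cvecfielddef}. Since $d_1\geq 0$, the inequality $d_1'\geq 0$ reduces to $u^2d_1^2+(2u+1)d_2^2\leq 2u+1$; bounding $d_1^2\leq 1-d_2$ and $d_2^2\leq d_2$ gives a left-hand side at most $u^2(1-d_2)+(2u+1)d_2=u^2+(2u+1-u^2)d_2$, and since $2u+1-u^2=2-(1-u)^2\geq 1$ for $u\in[0,1]$ while $d_2\leq 1$, this is at most $u^2+(2u+1-u^2)=2u+1$. Since $d_2\geq 0$, the inequality $d_2'\geq 0$ reduces to $u(2+u)d_1^2+d_2^2\leq 3$; here $u(2+u)=2u+u^2\leq 3$ for $u\in[0,1]$, so the left-hand side is at most $3d_1^2+d_2^2\leq 3d_1+d_2\leq 3(d_1+d_2)\leq 3$.

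For $d_1'+d_2'\leq 1$ I would pass back to the coordinates $a_1=\frac{1+d_1+d_2}{2}$ and $a_2=\frac{1-d_1+d_2}{2}$, in which $T$ is the conjugate of the induced dynamical system $H$ of \cref{thm:approximation_vectorfield_general} under the substitution ${\boldsymbol \delta}=(\alpha_1-\alpha_2,\alpha_1+\alpha_2-1)$. Exactly as in \cref{lem:3Mvector_field_lem1}, the constraints defining $S$ become $a_1\in[\frac{1}{2},1]$ and $a_2\in[0,1]$, and a direct check gives $d_1'+d_2'=2H_1(a_1,a_2)-1$. Writing $z=\frac{a_1+ra_2}{1+r}\in[0,1]$, the value $H_1(a_1,a_2)=a_1 f^\Btwo_1(z)+(1-a_1)f^\Btwo_2(z)$ is a convex combination of $f^\Btwo_1(z),f^\Btwo_2(z)\in[0,1]$, hence $H_1(a_1,a_2)\leq 1$ and therefore $d_1'+d_2'\leq 1$. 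This is the Best-of-two analogue of the estimate $d_1'+d_2'=f^{\Bthree}(z)\leq f^{\Bthree}(1)=1$ used in \cref{lem:3Mvector_field_lem1}.

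I do not expect a genuine obstacle here — every step is an elementary inequality. The only point that needs slight care is the sign of $d_1'$, where the $u$-dependent coefficients have to be recombined correctly (the key fact being $2u+1-u^2\geq 0$ on $[0,1]$), together with keeping the bookkeeping of the change of variables $\mathbf{d}\leftrightarrow(a_1,a_2)$ straight when reducing the third inequality to $H_1\leq 1$.
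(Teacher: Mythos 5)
Your proof is correct and follows the same route the paper intends: the paper omits the proof of \cref{lem:2Cvector_field_lem1}, stating it is identical to that of \cref{lem:3Mvector_field_lem1}, and your argument is precisely that adaptation — elementary bounds on the quadratic factors for the two sign conditions, followed by a pass to the $(a_1,a_2)$ coordinates and the observation $d_1'+d_2'=2H_1(a_1,a_2)-1\leq 1$, with the only Best-of-two-specific wrinkle being that $H_1$ is now a convex combination of $f^\Btwo_1(z),f^\Btwo_2(z)\in[0,1]$ rather than a single evaluation of $f^\Bthree$. One could streamline the sign-condition step by noting, as the \cref{lem:3Mvector_field_lem1} proof implicitly does, that each quadratic form is convex so its maximum over $S$ is attained at a vertex of $S$, but your term-by-term bounds reach the same conclusion.
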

The proof is the same as that of \cref{lem:3Mvector_field_lem1} and we omit in this paper.

We will consider the behavior of ${\boldsymbol \delta}$ around fixed points.
Note that $\mathbf{d}'=\mathbf{d}\in S$ holds if and only if $\mathbf{d}\in\{\mathbf{d}^*_i\}_{i=1}^4$,
where
\begin{align} \label{eqn:fixed_points_2C}
\mathbf{d}^*_i \defeq \begin{cases}
(0,0) & \text{if $i=1$},\\
\left(\sqrt{\frac{2u-1}{u^2}},0\right)& \text{if $i=2$ and $u\geq\frac{1}{2}$},\\
\left(\sqrt{\frac{u^2+u-1}{(u+1)^2}},\sqrt{\frac{1}{u(u+1)^2}}\right). & \text{if $i=3$ and $u\geq\frac{\sqrt{5}-1}{2}$},\\
(0,1) & \text{if $i=4$}.
\end{cases}
\end{align}
The Jacobian matrix $J$ at $(d_1,d_2)$ is given by
\begin{align} \label{eqn:general_Jacobian_2C}
J&=\frac{1}{2}\left( \begin{array}{cc}
2u+1-3(u d_1)^2-(2u+1) d_2^2 & -2(2u+1) d_1 d_2 \\
-2u(u+2) d_1 d_2 & 3-u(2+u) d_1^2-3d_2^2
\end{array} \right).
\end{align}
Let $J_i$ be the Jacobian matrix at $\mathbf{d}^*_i$.
From \cref{eqn:general_Jacobian_2C}, a straightforward calculation yields
\begin{align}  \label{eqn:Jacobian_2C}
J_i&=\begin{cases}
\frac{1}{2}\left( \begin{array}{cc}
2u+1 & 0 \\
0 & 3
\end{array} \right) & \text{if $i=1$}, \\
\left( \begin{array}{cc}
2-2u & 0 \\
0 & \frac{1-u^2}{u}
\end{array} \right) & \text{if $i=2$ and $u\geq \frac{1}{2}$}, \\
-\frac{1}{(1+u)^2}\left( \begin{array}{cc}
\frac{1+u-4u^2-8u^3+u^4+3u^5}{2u} & \frac{(2u+1)\sqrt{u^2+u-1}}{\sqrt{u}} \\
u(u+2)\sqrt{u^3+u^2-u} & \frac{3-3u-8u^2-2u^3+3u^4+u^5}{2u}
\end{array} \right) & \text{if $i=3$ and $u\geq \frac{\sqrt{5}-1}{2}$}, \\
\left( \begin{array}{cc}
0 & 0 \\
0 & 0
\end{array} \right) & \text{if $i=4$}.
\end{cases}
\end{align}

Depending on the eigenvalues $\lambda_1\geq\lambda_2$ of $J_i$, the property of $\mathbf{d}^*_i$ changes as shown in \cref{tbl:eigentable_2C}.
\begin{table}[htbp]
\caption{
Each cell $(c_1,c_2)$ represents the property of the eigenvalues $\lambda_1\geq\lambda_2$ of the corresponding Jacobian matrix.
Precisely, the sign $c_i$ represents whether $\lambda_i$ is larger than $1$ or not.
For example, $(+,1)$ indicates that $\lambda_1>\lambda_2=1$.
If $(+,-)$ or $(+,+)$, we may apply \cref{prop:escape}.
Indeed, cells with $(-,-)$ correspond sink points in this model.
Note that $\mathbf{d}^*_2$ is saddle if $\frac{2}{3}<u<\frac{3}{4}$ but is sink if $\frac{3}{4}<u\leq 1$.
\label{tbl:eigentable_2C}}
\vspace{1em}
\centering
\begin{tabular}{|c|c|c|c|c|c|}
\hline
points           & $0<u<\frac{1}{2}$ & $u=\frac{1}{2}$ & $\frac{1}{2}<u<\frac{\sqrt{5}-1}{2}$ & $u=\frac{\sqrt{5}-1}{2}$ & $\frac{\sqrt{5}-1}{2}<u\leq 1$ \\ \hline
$\mathbf{d}^*_1$ & $(+,-)$           & $(+,1)$         & $(+,+)$                     & $(+,+)$         & $(+,+)$               \\ \hline
$\mathbf{d}^*_2$ &  undefined  & $(+,1)$         & $(+,-)$                     & $(1,-)$         & $(-,-)$               \\ \hline
$\mathbf{d}^*_3$ &  undefined       &  undefined      &    undefined               & $(1,-)$         & $(+,-)$               \\ \hline
$\mathbf{d}^*_4$ & $(-,-)$           & $(-,-)$         & $(-,-)$                     & $(-,-)$         & $(-,-)$               \\ \hline
\end{tabular}
\end{table}
In this section, we prove the following analogous results as \cref{prop:3Mvectorfield,prop:sink_3M,prop:fastconsensus_3M,prop:escape_3M}.
Recall that, for $\mathbf{d}=(d_1,d_2)\in\mathbb{R}^2$, $\absp{\mathbf{d}}\defeq (|d_1|,|d_2|)\in\mathbb{R}^2$.
\begin{proposition}[Convergence of the orbit]\label{prop:2Cvectorfield}
For any sequence $(\mathbf{d}^{(t)})_{t=0}^\infty$, $\lim_{t\to\infty} \absp{\mathbf{d}^{(t)}}=\mathbf{d}^*_i$ for some $i\in\{1,2,3,4\}$.
Furthermore, if $u<\frac{\sqrt{5}-1}{2}$ and a positive constant $\kappa>0$ exists such that the initial point $\mathbf{d}^{(0)}=(d^{(0)}_1,d^{(0)}_2)\in S$ satisfies $|d^{(0)}_2|>\kappa$, then $\lim_{t\to\infty}\absp{\mathbf{d}^{(t)}}=\mathbf{d}^*_4$.
\end{proposition}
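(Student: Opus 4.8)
The argument mirrors the proof of \cref{prop:3Mvectorfield}, now carried out for the dynamical system $T$ of \cref{eqn:2Cvecfielddef} with Jacobian \cref{eqn:general_Jacobian_2C}, fixed points \cref{eqn:fixed_points_2C}, and $S$ closed under $T$ by \cref{lem:2Cvector_field_lem1}. First I would dispose of the degenerate case $u=1$ (i.e.\ $r=0$): here $T_1(d_1,d_2)=\frac{d_1}{2}(3-d_1^2-3d_2^2)$ and $T_2(d_1,d_2)=\frac{d_2}{2}(3-3d_1^2-d_2^2)$, which coincides with the Best-of-three system at $u=1$. A one-line computation gives $d_1'+d_2'=f(d_1+d_2)$ and $d_1'-d_2'=f(d_1-d_2)$ with $f(z)=\frac{1}{2}z(3-z^2)$; since $f(z)>z$ on $(0,1)$ and $f$ fixes $0$ and $1$, every orbit satisfies $\lim_{t\to\infty}\mathbf{d}^{(t)}\in\{(0,0),(0,1),(1,0)\}$, and $d_2^{(0)}>0$ forces the limit $(0,1)$.

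For $0\le u<1$ the plan is to verify that $T\colon S\to S$ is competitive and meets conditions \ref{cond:map_condition1}--\ref{cond:map_condition4} of \cref{sec:competitive_system}, and then to apply \cref{thm:orbit_convergence} (after reducing to $S$ via the coordinate symmetries of $T$, so that $\absp{\mathbf{d}^{(t)}}$ follows the orbit in $S$). Condition \ref{cond:map_condition1} comes from \cref{lem:criterion_C1}, since the off-diagonal entries $-(2u+1)d_1d_2$ and $-u(u+2)d_1d_2$ of $J$ are $\le 0$ on $S$. For condition \ref{cond:map_condition2} I would use the Inverse Function Theorem (\cref{thm:IFT}): it suffices to establish $\det J>0$ on $S\setminus\{(0,1)\}$ together with the fact that $T(\mathbf{x})=(0,1)$ forces $\mathbf{x}=(0,1)$ --- the latter because $T_2(d_1,d_2)\le\frac{d_2}{2}(3-d_2^2)\le1$ on $S$ with equality only at $(0,1)$ --- and these two facts give injectivity of $T$ off $(0,1)$. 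Condition \ref{cond:map_condition3} is immediate from the form of \cref{eqn:2Cvecfielddef}, and condition \ref{cond:map_condition4} follows from \cref{lem:criterion_C4} and $\det J>0$. Invoking \cref{thm:orbit_convergence} then yields the first assertion.

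For the second assertion, I claim it is enough to show that $d_2'\ge c'>0$ whenever $\mathbf{d}\in S$ has $d_2\ge c$ for a positive constant $c$ (I do not track the regime $d_2=o(1)$); this rules out the orbit converging to $\mathbf{d}^*_1$ or $\mathbf{d}^*_2$, whose second coordinates vanish, and since $\mathbf{d}^*_3$ does not exist when $u<\frac{\sqrt{5}-1}{2}$, the only remaining limit is $\mathbf{d}^*_4=(0,1)$. The bound itself is the same elementary estimate as in \cref{prop:3Mvectorfield}: using $u(u+2)\le3$ for $u\le1$, then $d_1^2\le d_1$ and $d_1+d_2\le1$ on $S$, one gets $d_2'\ge\frac{d_2}{2}(3-3d_1^2-d_2^2)\ge\frac{d_2}{2}(3-3d_1-d_2)\ge d_2^2$.

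The one genuinely new point, and the main obstacle, is the positivity $\det J>0$ on $S\setminus\{(0,1)\}$. For the Best-of-three this was transparent because the two diagonal entries of $J$ are proportional, yielding the clean factorization \cref{eqn:positive_det_3M}; the Best-of-two Jacobian admits no such factorization, so I would instead expand $\det J$ as a quadratic form in the nonnegative variables $d_1^2$ and $1-d_2^2$ and argue that its single negative term is dominated by the positive quadratic terms, using the constraint $d_1+d_2\le1$ to control $d_1^2$ relative to $1-d_2^2$. Everything else --- closure of $S$, the $u=1$ case, the drift estimate on $d_2'$, and the reduction from $\absp{\mathbf{d}^{(t)}}$ to orbits in $S$ --- transfers essentially verbatim from the Best-of-three analysis.
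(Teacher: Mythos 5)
Your proof mirrors the paper's strategy step for step: the $u=1$ reduction to the Best-of-three system, the verification of (C1)--(C4) via \cref{lem:criterion_C1}, \cref{thm:IFT}, and \cref{lem:criterion_C4}, the appeal to \cref{thm:orbit_convergence}, and the drift estimate $d_2'\geq d_2^2$ for the second assertion. You also correctly isolate the one genuinely new computation: positivity of $\det J$ on $S\setminus\{(0,1)\}$.

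Your plan for that step, however, would not go through as described. Writing $x=d_1^2$, $y=1-d_2^2$ gives
\[
4\det J = 3(2u+1)y^2 + 6u(u^2+u+1)xy + 3u^3(u+2)x^2 - 4u(u+2)(2u+1)x,
\]
so the ``single negative term'' is \emph{linear} in $x$, not quadratic, and naive domination by the positive quadratic part fails. The cheapest use of the constraint $d_1+d_2\leq1$ gives $y\geq d_1(2-d_1)\geq\sqrt{x}$, hence $y^2\geq x$, so comparing only $3(2u+1)y^2$ against $4u(u+2)(2u+1)x$ yields positivity precisely when $3\geq 4u(u+2)$, i.e.\ $u\leq(\sqrt7-2)/2\approx0.32$. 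What rescues the argument is the observation that $\partial_y(4\det J)=6(2u+1)y+6u(u^2+u+1)x\geq0$, so $\det J$ is nonincreasing in $d_2$ and is minimized over the admissible range at $d_2=1-d_1$; there it factors as $\frac{1}{4}d_1^2(u-1)\,h(d_1)$ with $h$ increasing on $[0,1]$ and $h(1)=(u-1)(3u+1)(u+3)<0$ for $u<1$, giving $\det J>0$ for $d_1>0$. This monotonicity-in-$d_2$ reduction and boundary factorization is exactly what the paper does; your quadratic-form framing can be reconciled with it, but you would need to carry out these two observations explicitly rather than asserting a domination that the coefficients alone do not provide.
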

\begin{proof}
If $u=1$, the dynamical system given in \cref{eqn:2Cvecfielddef} is the same as that of the Best-of-three of \cref{eqn:3Mvecfielddef} and thus we are done.

Suppose that $0\leq u<1$.
The proof of this case depends on several results presented in \cref{sec:competitive_system}.
We first claim that the map $T:S\to S$ is competitive (see \cref{sec:competitive_system} for definition).
Then, we apply \cref{thm:orbit_convergence} and complete the proof of the first statement.
To this end, we verify that the map $T$ satisfies the conditions \ref{cond:map_condition1} to \ref{cond:map_condition4} described in \cref{sec:competitive_system}.
The condition~\ref{cond:map_condition1} follows from \cref{lem:criterion_C1}: it is straightforward to check that the Jacobian matrix \cref{eqn:general_Jacobian_2C} satisfies the condition of \cref{lem:criterion_C1}.
We use the Inverse Function Theorem (\cref{thm:IFT}) to verify the condition \ref{cond:map_condition2}.
To this end, we claim that $\det J>0$ for any $\mathbf{d}\in S\setminus\{(0,1)\}$.
In view of \cref{eqn:general_Jacobian_2C}, let
\begin{align*}
f(d_1,d_2) &\defeq \det J \\
&= \frac{1}{4}((2u+1-3u^2d_1^2-(2u+1)d_2^2)(3-u(2+u)d_1^2-3d_2^2)-4u(u+2)(2u+1)d_1^2d_2^2) \\
&\geq f(d_1,1-d_1) = \frac{1}{4}d_1^2(-1+u)\cdot h(d_1),
\end{align*}
where $h(\theta)\defeq 3(-1+u)(1+u)^2\theta^2 + 12(1+u)^2\theta - 4(3+7u+2u^2)$.
A simple calculation yields $h'(\theta) = 6(2-(1-u)\theta)(1+u)^2>0$ for $\theta\in[0,1]$ and thus 
\begin{align*}
h(\theta)\leq h(1)=-3(1-u^3)-7u(1-u) < 0.
\end{align*}
Here, recall that $u<1$.
Therefore, if $d_1>0$, we have
\begin{align*}
\det J &= f(d_1,d_2)
\geq f(d_1,1-d_1)
= \frac{1}{4}d_1^2(-1+u)h(d_1) >0.
\end{align*}
It is straightforward to see that $f(0,d_2)=0$ if and only if $d_2=1$.
In other words, we have
\begin{align}
\det J =f(d_1,d_2)>0 \label{eqn:positive_det_2C}.
\end{align}
for any $(d_1,d_2)\in S\setminus \{(0,1)\}$.
Consequently, for any $\mathbf{d}\in S\setminus \{(0,1)\}$, an inverse mapping $T^{-1}(\mathbf{d})$ is defined.
Moreover, it is straightforward to check that $T(\mathbf{x})=(0,1)$ if and only if $\mathbf{x}=(0,1)$.
Therefore, $T$ is injective and we verified the condition~\ref{cond:map_condition2}.
The condition~\ref{cond:map_condition3} follows immediately since we already have the Jacobian matrix \cref{eqn:general_Jacobian_2C}.
To condition~\ref{cond:map_condition4} follows from \cref{lem:criterion_C4} and \cref{eqn:positive_det_2C}.
Now we apply \cref{thm:orbit_convergence} and complete the proof of the first claim of \cref{prop:2Cvectorfield}.

To obtain the second claim, we show that $d'_2>0$ whenever $(d_1,d_2)\in S$ satisfies $d_2>0$.
This follows from a simple calculation 
\begin{align*}
d'_2 &\geq \frac{d_2}{2}(3-3d_1^2-d_2^2) 
\geq \frac{d_2}{2}(3-3d_1-d_2) 
\geq d_2^2
>0.
\end{align*}
\end{proof}
\begin{proposition}[Dynamics around sink points] \label{prop:sink_2C}
Consider the Best-of-two on $G(2n,p,q)$ that is both $f^\Btwo_1$-good and $f^\Btwo_2$-good.
Suppose that $r=q/p<\sqrt{5}-2$ is a constant.
Then, there exists a positive constant $\epsilon=\epsilon(r)$ satisfying
\begin{align*}
\Pr\left[{\boldsymbol \delta'}\not\in B(\mathbf{d}_2^*,\epsilon) \,\middle|\, {\boldsymbol \delta}\in B(\mathbf{d}_2^*,\epsilon)\right] \leq \exp(-\Omega(n)).
\end{align*}
In particular, 
$\Tcons(A) = \exp(\Omega(n))$ holds w.h.p.~for any $A\subseteq V$ satisfying $\absp{{\boldsymbol \delta}} \in B(\mathbf{d}_2^*,\epsilon)$.
%
\end{proposition}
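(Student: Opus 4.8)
The plan is to follow the same route as the proof of \cref{prop:sink_3M}: establish that $\mathbf{d}_2^*$ is a \emph{sink point} of the induced dynamical system, and then quote \cref{prop:sinkpoint}. First I would record the translation of hypotheses: since $u=\frac{1-r}{1+r}$, the condition $r<\sqrt5-2$ is equivalent to $u>\frac{\sqrt5-1}{2}$, which is exactly the range in which the fixed point $\mathbf{d}_2^*$ of \cref{eqn:fixed_points_2C} exists and is classified $(-,-)$ in \cref{tbl:eigentable_2C}. I would also note that the change of coordinates $\boldsymbol\delta=(\alpha_1-\alpha_2,\alpha_1+\alpha_2-1)$ is affine with linear part equal to $\sqrt2$ times an orthogonal matrix; hence it preserves the singular values of the Jacobian at corresponding fixed points and distorts $\|\cdot\|_\infty$ by only a constant factor. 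Consequently a point is a sink for $T$ if and only if the corresponding point is a sink for $H$, and it is harmless to carry out the whole argument in the $\boldsymbol\delta$-coordinates and invoke \cref{prop:sinkpoint} there.

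The sink property itself is a one-line computation on \cref{eqn:Jacobian_2C}: the Jacobian of $T$ at $\mathbf{d}_2^*$ is the diagonal matrix $\mathrm{diag}\!\bigl(2-2u,\ \frac{1-u^2}{u}\bigr)$, whose singular values are $|2-2u|$ and $\frac{1-u^2}{u}$ (both nonnegative since $0<u<1$). Now $|2-2u|<1$ because $u>\frac12$, and $\frac{1-u^2}{u}<1\iff u^2+u-1>0\iff u>\frac{\sqrt5-1}{2}$; so under the hypothesis $\sigma_{\max}<1$, i.e.\ $\mathbf{d}_2^*$ is a sink. Using the symmetry $T_1(\pm d_1,\mp d_2)=\pm T_1(d_1,d_2)$ and $T_2(\pm d_1,\mp d_2)=\mp T_2(d_1,d_2)$ of \cref{eqn:2Cvecfielddef}, the reflected point $-\mathbf{d}_2^*$ is likewise a fixed point whose Jacobian has the same diagonal entries, hence is also a sink.

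With these facts, \cref{prop:sinkpoint} applied at $\mathbf{d}_2^*$ with a sufficiently small \emph{constant} $\epsilon$ (a constant is $\omega(\sqrt{1/np})$ because $np=\omega(\log n)$ when $r$ is constant and $p=\omega(\log n/n)$) immediately gives the first claim, $\Pr[\boldsymbol\delta'\notin B(\mathbf{d}_2^*,\epsilon)\mid\boldsymbol\delta\in B(\mathbf{d}_2^*,\epsilon)]\le\exp(-\Omega(\epsilon^2 n))=\exp(-\Omega(n))$. For the ``in particular'' clause I would shrink $\epsilon$ so that $\epsilon<\frac12$: then $\absp{\boldsymbol\delta}\in B(\mathbf{d}_2^*,\epsilon)$ forces $\boldsymbol\delta\in B(\mathbf{d}_2^*,\epsilon)\cup B(-\mathbf{d}_2^*,\epsilon)$, and $\epsilon<\frac12$ also guarantees that neither of these balls meets a consensus configuration, since the two consensus states correspond to $\boldsymbol\delta\in\{(0,1),(0,-1)\}$, each at $\|\cdot\|_\infty$-distance at least $1$ from $\pm\mathbf{d}_2^*$ (whose second coordinate is $0$). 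Applying the one-step estimate at whichever of $\pm\mathbf{d}_2^*$ the process is currently near, together with a union bound over $t=1,\dots,\exp(cn)$ for a small constant $c>0$ exactly as in the proof of \cref{prop:sinkpoint}, shows that the process remains in $B(\mathbf{d}_2^*,\epsilon)\cup B(-\mathbf{d}_2^*,\epsilon)$ for $\exp(\Omega(n))$ steps w.h.p., and therefore cannot reach consensus in that time, i.e.\ $\Tcons(A)=\exp(\Omega(n))$ w.h.p.

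There is no substantial obstacle here, since everything reduces to \cref{prop:sinkpoint}. The one step requiring genuine care is the singular-value computation: it is the \emph{largest singular value} of $J_2$, not merely its eigenvalues, that must drop below $1$, and the crossover must occur precisely at $r=\sqrt5-2$; the diagonality of $J_2$ makes this immediate. A secondary bookkeeping point is verifying that the $\boldsymbol\alpha\leftrightarrow\boldsymbol\delta$ change of coordinates — orthogonal up to the scalar $\sqrt2$ — carries the sink property and open balls across, so that \cref{prop:sinkpoint} can be invoked verbatim.
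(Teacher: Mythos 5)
Your proof is correct and takes essentially the same route as the paper's one-line argument, which simply observes from \cref{eqn:Jacobian_2C} that $\mathbf{d}^*_2$ and $-\mathbf{d}^*_2$ are sink points and then invokes \cref{prop:sinkpoint}. You usefully supply the details the paper leaves implicit — the translation $r<\sqrt5-2\iff u>\frac{\sqrt5-1}{2}$, the observation that the $\boldsymbol\alpha\leftrightarrow\boldsymbol\delta$ change of variables is a scalar multiple of an orthogonal map (hence preserves singular values and the sink property), the explicit singular-value check $\max\{2-2u,\tfrac{1-u^2}{u}\}<1$, and the requirement that $\epsilon$ be small enough that $B(\pm\mathbf{d}^*_2,\epsilon)$ avoids the consensus points $(0,\pm1)$.
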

\begin{proof}
From \cref{eqn:Jacobian_2C}, it is easy to check that both $\mathbf{d}^*_2$ and $-\mathbf{d}^*_2$ are sink.
Then, \cref{prop:sink_2C} follows from \cref{prop:sinkpoint}.
\end{proof}
\begin{proposition}[Towards consensus] \label{prop:fastconsensus_2C}
Consider the Best-of-two on $G(2n,p,q)$ that is both $f^\Btwo_1$-good and $f^\Btwo_2$-good.
Suppose that $r=q/p$ is a constant.
Then, there exists a universal constant $\epsilon=\epsilon(r)>0$ satisfying the following:
$\Tcons(A)\leq O(\log\log n+\log n/\log (np))$ holds w.h.p.~for all $A\subseteq V$ with $\min\{|A|,2n-|A|\}\leq \epsilon n$.
\end{proposition}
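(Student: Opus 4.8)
The plan is to follow the proof of \cref{prop:fastconsensus_3M} almost verbatim, reducing everything to \cref{prop:fastconsensus_Jacob}. Recall that $T$ of \cref{eqn:2Cvecfielddef} is the conjugate of the induced dynamical system $H$ under the linear change of variables ${\boldsymbol\delta}=(\alpha_1-\alpha_2,\alpha_1+\alpha_2-1)$, and that the two consensus configurations $\boldsymbol\alpha=(0,0)$ and $\boldsymbol\alpha=(1,1)$ are the fixed points $-\mathbf{d}^*_4$ and $\mathbf{d}^*_4$ of $T$. By \cref{eqn:Jacobian_2C} the Jacobian $J_4$ of $T$ at $\mathbf{d}^*_4=(0,1)$ is the all-zero matrix; since $T_1$ is even and $T_2$ is odd in $d_2$ (immediate from \cref{eqn:2Cvecfielddef}), the same holds at $-\mathbf{d}^*_4$, exactly as in the Best-of-three. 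As the change of variables is linear and invertible, the Jacobian of $H$ at $\boldsymbol\alpha=(0,0)$ and at $\boldsymbol\alpha=(1,1)$ — being conjugate to these — is the all-zero matrix as well.

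Fix a constant $\epsilon>0$ below the threshold $\delta$ given by \cref{prop:fastconsensus_Jacob}. For $A$ with $|A|\le\epsilon n$: the Best-of-two is the $(f^\Btwo_1,f^\Btwo_2)$-polynomial voting process, the graph is $f^\Btwo_1$-good and $f^\Btwo_2$-good by hypothesis, and $r=q/p$ constant makes $p/q$ constant, so the previous paragraph lets us invoke \cref{prop:fastconsensus_Jacob} to get $\Tcons(A)\le O(\log\log n+\log n/\log(np))$ w.h.p. For $A$ with $|A|\ge(2-\epsilon)n$ — equivalently $|V\setminus A|\le\epsilon n$ — I would track the opinion-$2$ count $|V\setminus A^{(t)}|$ instead. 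The Best-of-two update (majority of two sampled neighbors and the vertex itself) is symmetric in the two opinion labels and leaves the absorbing event $\{A^{(t)}\in\{\emptyset,V\}\}$, hence $\Tcons$, unchanged; so $|V\setminus A^{(t)}|$ again evolves by the Best-of-two on the same graph, started from a set of size $\le\epsilon n$, and its relevant consensus point is the original $\boldsymbol\alpha=(1,1)$, whose Jacobian vanishes. A second application of \cref{prop:fastconsensus_Jacob} then yields the same bound on $\Tcons(A)$, completing the proof.

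The only non-routine point is justifying the $|A|\ge(2-\epsilon)n$ case — one must be sure the complemented process genuinely satisfies the hypotheses of \cref{prop:fastconsensus_Jacob} on the same ($f^\Btwo_1$- and $f^\Btwo_2$-good) graph. If invoking opinion-symmetry feels too slick, the robust alternative is to verify the hypothesis of the graph-agnostic \cref{prop:fastconsensus} for $|V\setminus A'|$ directly, exactly as \cref{prop:fastconsensus_Jacob} is derived from \cref{prop:fastconsensus} in \cref{sec:fastconsensus_proof}: write $\E[|V\setminus A'_i|]=n-\E[|A'_i|]$, substitute $\E[|A'_i|]=nH_i(\alpha_1,\alpha_2)\pm O(|A|\sqrt{\log n/np})$ from \ref{pro:gomiA}, Taylor-expand $H$ about $(1,1)$ using the vanishing Jacobian and the fact that $(1,1)$ is a fixed point of $H$ to get $1-H_i(\alpha_1,\alpha_2)=O((|V\setminus A|/n)^2)$, and conclude $\E[|V\setminus A'|]\le C|V\setminus A|^2/n+C\sqrt{\log n/np}\cdot|V\setminus A|$ with $\sqrt{\log n/np}=o(1)$ since $p=\omega(\log n/n)$. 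Everything else is a direct appeal to results already established.
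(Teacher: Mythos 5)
Your main argument matches the paper's exactly: the paper's proof of \cref{prop:fastconsensus_2C} is a one-liner ("Since $J_4$ is the all-zero matrix, \cref{prop:fastconsensus_2C} immediately follows from \cref{prop:fastconsensus_Jacob}"), and the details you supply — checking both consensus points and handling $|A|\ge(2-\epsilon)n$ by applying \cref{prop:fastconsensus_Jacob} to $V\setminus A$ — are precisely what the paper does explicitly in the analogous proof of \cref{prop:fastconsensus_3M}. The opinion-symmetry you invoke for that second case is genuine: with $f_1(x)=x(2-x)$ and $f_2(x)=x^2$ the relabeled process has $1-f_2(1-y)=f_1(y)$ and $1-f_1(1-y)=f_2(y)$, so $|V\setminus A^{(t)}|$ is again a Best-of-two trajectory started from a small set, and $f^\Btwo_1$- and $f^\Btwo_2$-goodness of the given graph already covers it.

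One caveat on your ``robust alternative'': it would not work as written. Property \ref{pro:gomiA} gives an \emph{upper} bound on $\E[|A_i'|]$ with error $O(|A|\sqrt{\log n/np})$, whereas to bound $\E[|V\setminus A_i'|]=n-\E[|A_i'|]$ from above you need a \emph{lower} bound on $\E[|A_i'|]$, and even turning \ref{pro:gomiA} around, the error scales with $|A|\approx 2n$ rather than with $|V\setminus A|$, which destroys the $\epsilon=\Theta(\sqrt{\log n/np})$ rate needed in \cref{prop:fastconsensus}. Fortunately this alternative is unnecessary: the complementation argument you give first is both correct and exactly what the paper intends.
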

\begin{proof}
Since $J_4$ is the all-zero matrix, \cref{prop:fastconsensus_2C} immediately follows from \cref{prop:fastconsensus_Jacob}.
\end{proof}
\begin{proposition}[Escape from fixed points] \label{prop:escape_2C}
Let $p,q$ are constants and consider the Best-of-two on $G(2n,p,q)$ that is both $f^{\Btwo}_1$-good and $f^{\Btwo}_2$-good.
If $q/p>\sqrt{5}-2$ and $|\delta_2^{(0)}|=o(1)$, then it holds w.h.p.~that $|\delta_2^{(\tau)}|>\epsilon$ for some $\tau=O(\log n)$ and some constant $\epsilon>0$.
\end{proposition}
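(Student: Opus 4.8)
The plan is to follow the proof of \cref{prop:escape_3M} almost verbatim, replacing the Best-of-three system \cref{eqn:3Mvecfielddef} and fixed points \cref{eqn:fixed_points_3M} by the Best-of-two system \cref{eqn:2Cvecfielddef} and \cref{eqn:fixed_points_2C}. First note that the hypothesis $r>\sqrt5-2$ is equivalent to $u=\frac{1-r}{1+r}<\frac{\sqrt5-1}{2}$, which by \cref{tbl:eigentable_2C} is exactly the range in which $\mathbf{d}^*_4$ is the only sink among the fixed points in $S$. Exactly as in \cref{prop:escape_3M} — using \cref{prop:2Cvectorfield} together with the fact (via \cref{thm:approximation_vectorfield_general}, transported through the linear change of coordinates to ${\boldsymbol\delta}$) that the stochastic orbit tracks the deterministic one for $O(1)$ steps — the hypothesis $|\delta_2^{(0)}|=o(1)$ lets me assume ${\boldsymbol\delta}^{(0)}\in B(\mathbf{d}^*_1,\epsilon_2)\cup B(\mathbf{d}^*_2,\epsilon_2)$ for an arbitrarily small constant $\epsilon_2>0$. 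It then suffices to prove that $|\delta_2^{(\tau)}|>\epsilon_2$ for some $\tau=O(\log n)$, which I would extract from \cref{prop:escape}; the work is in checking its hypotheses.

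The main hypothesis to verify is the variance bound \ref{asm:asm2}. For this I would establish the Best-of-two analogue of \cref{thm:mainthm_Var}: writing $x_v=\frac{\deg_A(v)}{\deg(v)}$ and $g_j(x)\defeq f^\Btwo_j(x)(1-f^\Btwo_j(x))$, one has
\begin{align*}
\Var[|A'_i|\mid A]=\sum_{v\in A_i}g_1(x_v)+\sum_{v\in V_i\setminus A_i}g_2(x_v),
\end{align*}
and, since $g_1,g_2$ are degree-$4$ polynomials mapping $[0,1]$ into $[0,1]$ (so \cref{thm:fgood} applies), applying property \ref{pro:gomiN} to each sum (with $S=A$, resp.\ $S=V\setminus A$) gives
\begin{align*}
\Var[\alpha'_i\mid A]=\frac1n\Bigl(\alpha_i\,g_1(z_i)+(1-\alpha_i)\,g_2(z_i)\Bigr)\pm O\left(\frac{1}{\sqrt{n^3p}}\right),
\end{align*}
where $z_i=\frac{\alpha_i+r\alpha_{3-i}}{1+r}$. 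On a small neighbourhood of $\mathbf{d}^*_1$ or $\mathbf{d}^*_2$ both $\alpha_i$ and $z_i$ are bounded away from $\{0,1\}$ (both fixed points lie on $\delta_2=0$ with $|\delta_1|$ bounded below $1$), so $(1-\alpha_i)g_2(z_i)$ alone is a positive constant and \ref{asm:asm2} follows. Conditions \ref{asm:asm1} and \ref{asm:asm3} are immediate from the explicit Jacobians in \cref{eqn:Jacobian_2C}.

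I would then split on $u$. If $u\neq\frac12$, then at $\mathbf{d}^*_1$ the eigenvalues of $J_1$ are $\frac32$ (along $\delta_2$) and $\frac{2u+1}{2}$ (along $\delta_1$), and at $\mathbf{d}^*_2$ (for $u\geq\frac12$) they are $\frac{1-u^2}{u}$ (along $\delta_2$) and $2-2u$ (along $\delta_1$); none equals $1$, so \cref{asm:escape_assumption2} holds at both points. For $u<\frac12$ only $\mathbf{d}^*_1$ is present and $\frac{2u+1}{2}<1<\frac32$, so \cref{prop:escape} directly gives $|\delta_2^{(\tau)}|>\epsilon_2$. For $\frac12<u<\frac{\sqrt5-1}{2}$, both eigenvalues of $J_1$ exceed $1$; \cref{prop:escape} forces the orbit out of $B(\mathbf{d}^*_1,\epsilon_2)$, and if this is not already witnessed by $|\delta_2^{(\tau)}|$, then the orbit leaves along $\delta_1$ and, since $T_1(d_1,0)>d_1$ for $0<d_1<\sqrt{\frac{2u-1}{u^2}}$, drifts into $B(\mathbf{d}^*_2,\epsilon_2)$ within $O(1)$ steps; a second application of \cref{prop:escape} at $\mathbf{d}^*_2$ (whose $\delta_2$-direction eigenvalue $\frac{1-u^2}{u}$ exceeds $1$) then yields $|\delta_2^{(\tau')}|>\epsilon_2$ — exactly as in the $u>\frac23$ case of \cref{prop:escape_3M}.

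Finally, the degenerate case $u=\frac12$, where $\mathbf{d}^*_1=\mathbf{d}^*_2=(0,0)$ and $J_1=\mathrm{diag}(1,\tfrac32)$, needs \cref{asm:escape_assumption1}. Here $T_2(d_1,d_2)=\frac{d_2}{2}(3-\tfrac54 d_1^2-d_2^2)$, so $|\E[\delta_2'\mid A]|\geq 1.49\,|\delta_2|-O(n^{-1/2})$ once $\|{\boldsymbol\delta}\|_\infty$ is small, which gives \ref{asm:asm4}; and $T_1(d_1,d_2)=d_1(1-\tfrac18 d_1^2-d_2^2)$ gives $|\E[\delta_1'\mid A]|\leq|\delta_1|(1-\tfrac18\delta_1^2)+O(n^{-1/2})$, whence $|\delta_1'|\leq|\delta_1|-\tfrac18|\delta_1|^3+O(\sqrt{\log n/n})$ w.h.p.\ and \ref{asm:asm5} follows by splitting according to whether $|\delta_1|^3$ exceeds $\sqrt{\log n/n}$, just as in \cref{prop:escape_3M}. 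Applying \cref{prop:escape} then gives $|\delta_2^{(\tau)}|>\epsilon_2$ for some $\tau=O(\log n)$. I expect the main obstacle to be the variance estimate of the second paragraph — setting up and uniformly lower-bounding the Best-of-two counterpart of \cref{thm:mainthm_Var} on the relevant neighbourhoods — together with the careful handling of the degenerate case $u=\frac12$; the remainder is a direct transcription of the Best-of-three argument.
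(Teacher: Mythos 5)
Your proof is correct, and it follows essentially the same strategy the paper sketches — verify \ref{asm:asm2} via the Best-of-two analogue of \cref{thm:mainthm_Var} (i.e.\ \cref{claim:mainthm_Var_2C}), then split on whether the Jacobian at the relevant fixed point has an eigenvalue equal to~$1$, exactly as in the $u=\frac23$ case of \cref{prop:escape_3M}. You have, however, silently corrected a slip in the paper's own sketch of \cref{prop:escape_2C}: the paper labels the degenerate case as $u=\frac{\sqrt5-1}{2}$ and asserts $\mathbf{d}^*_1=\mathbf{d}^*_2=(0,0)$ there, but at $u=\frac{\sqrt5-1}{2}$ one has $2u-1=\sqrt5-2\neq 0$, so $\mathbf{d}^*_2\neq(0,0)$; moreover that value of $u$ lies on the boundary of (and hence outside) the hypothesis regime $u<\frac{\sqrt5-1}{2}$, while the genuine degenerate value $u=\frac12$ — where $\mathbf{d}^*_2$ merges with $\mathbf{d}^*_1$ at the origin and both $J_1$ and $J_2$ acquire an eigenvalue equal to $1$, as recorded in \cref{tbl:eigentable_2C} — is squarely inside it and must be treated. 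Your identification of $u=\frac12$ as the degenerate case, and your explicit verification of \ref{asm:asm4} and \ref{asm:asm5} there via $T_1(d_1,d_2)=d_1\bigl(1-\tfrac18 d_1^2-d_2^2\bigr)$ and $T_2(d_1,d_2)=\frac{d_2}{2}\bigl(3-\tfrac54 d_1^2-d_2^2\bigr)$, is the correct Best-of-two transcription of the $u=\frac23$ argument and closes the gap the published sketch leaves; the rest (the variance bound, the Jacobians being diagonal so \ref{asm:asm1} is trivial, the two-stage escape through $\mathbf{d}^*_1$ and then $\mathbf{d}^*_2$ when $\frac12<u<\frac{\sqrt5-1}{2}$) is exactly the paper's intended route.
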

\begin{proof}[Proof sketch]
The proof is the same as that of \cref{prop:escape_3M} and we just present the sketch.
We first show the following claim:
\begin{claim}[Concentration of the variance for the Best-of-two]  \label{claim:mainthm_Var_2C}
Consider the Best-of-two on $G(2n,p,q)$ that is both $f^{\Btwo}_1$-good and $f^{\Btwo}_2$-good.
Then, two constants $C_1,C_2>0$ exist such that
\begin{align*}
&\forall A\subseteq V, \, \forall i\in\{1,2\}\,: \\
&\hspace{3em}
\left|\Var\left[|A'_i|\,\middle|\, A \right]
-
|A_i| g_1\left(\frac{|A_i|p+|A_{3-i}|q}{n(p+q)}\right) - (n-|A_i|) g_2\left(\frac{|A_i|p+|A_{3-i}|q}{n(p+q)}\right)
\right| \leq C_2\sqrt{\frac{n}{p}},
\end{align*}
where $g_i(x)\defeq f^{\Btwo}_i(x)(1-f^{\Btwo}_i(x))$ for $i=1,2$.
\end{claim}
\begin{proof}
That variance $\Var[|A'_i| \mid A]$ can be represented as
\begin{align*}
\Var[|A'_i| \mid A]
= \sum_{v\in V_i} \Pr[v\in A'](1-\Pr[v\in A'])
= \sum_{v\in A_i} g_1\left(\frac{\deg_A(v)}{\deg(v)}\right) 
+ \sum_{v\in V_i\setminus A_i} g_2\left(\frac{\deg_A(v)}{\deg(v)}\right).
\end{align*}
Therefore, \cref{claim:mainthm_Var_2C} immediately follows from the property~\ref{pro:gomiN} and \cref{thm:fgood}.
\end{proof}

From \cref{claim:mainthm_Var_2C}, we have $\Var[\alpha'_i\mid A]=\Omega(n^{-1})$ if $\min\{\alpha_1,1-\alpha_1,\alpha_2,1-\alpha_2\}=\Omega(1)$.
This verifies the condition~\ref{asm:asm2} of Assumption~\ref{asm:escape_assumption0}.
We consider two cases: $u\neq \frac{\sqrt{5}-1}{2}$ and $u=\frac{\sqrt{5}-1}{2}$.
Suppose that $u\neq\frac{\sqrt{5}-1}{2}$.
Then both $\mathbf{d}^*_1$ and $\mathbf{d}^*_2$ satisfy Assumption~\ref{asm:escape_assumption2} and we can apply \cref{prop:escape}.
Suppose that  $u=\frac{\sqrt{5}-1}{2}$.
In this case, we have $\mathbf{d}^*_1=\mathbf{d}^*_2=(0,0)$.
This point satisfies conditions~\ref{asm:asm4} and \ref{asm:asm5} of Assumption~\ref{asm:escape_assumption1} and apply \cref{prop:escape}.
\end{proof}

\paragraph*{Proof sketches of \cref{thm:phasetransition_2C,thm:worst_case_2C}.}
Combining \cref{prop:2Cvectorfield,prop:sink_2C,prop:fastconsensus_2C,prop:escape_2C}, the proofs of \cref{thm:phasetransition_2C,thm:worst_case_2C} follow from the same argument as that of \cref{thm:phasetransition_3M,thm:worst_case_3M} (see \cref{sec:apply_to_3M}).

\section{Concluding remark} \label{sec:concluding}
In this paper we studied the Best-of-two and the Best-of-three voting processes on the stochastic block model $G(2n,p,q)$.
Here, we first generate $G(2n,p,q)$, then set an initial opinion configuration and observe the voting process.
We presented phase transition results on $r=q/p$ for both processes. 
In addition, if $p\geq q>0$ are constants, we proved that the consensus time is $O(\log n)$ for arbitrary initial opinion configurations.
In the proof, we combined the theory of dynamical systems and our technical result~\cref{thm:approximation_vectorfield_general} which approximates the stochastic processes by the corresponding appropriate deterministic processes.
To estimate the probability which the process reaches sink areas from the source area is future work to consider an application of these processes to distributed community detection algorithms.
For an application to distributed community detection algorithms, it is significant to estimate the probability that the voting process reaches the sink areas (in particular, starting from the source area).
This is a possible future direction of this paper.

Note that \cref{thm:approximation_vectorfield_general} is allowable for any polynomial function with constant degree.
For example, consider the Best-of-$(2k+1)$ voting process for a positive constant $k$.
This process is defined by $f_1(x)=f_2(x)\defeq \Pr[{\rm Bin}(2k+1,x)\geq k+1]=\sum_{i=k+1}^{2k+1}\binom{2k+1}{i}x^i(1-x)^{2k+1-i}$.
\Cref{thm:approximation_vectorfield_general} guarantees $\|{\boldsymbol \alpha}^{(t)}-\mathbf{a}^{(t)}\|_\infty\leq C^t(\sqrt{1/np}+\sqrt{\log n/n})$ for this process.
Moreover, using \cref{lem:WIdiscrepancy}, it is not difficult to extend \cref{thm:approximation_vectorfield_general} to voting processes on general stochastic block models that has $c_1$ communities each of size $\Omega(n)$ and initially involving $c_2$ opinions, where $c_1, c_2$ denote arbitrary positive constants.
This setting yields induced dynamical systems of dimension more than two.
The Jacobian matrix would be helpful to investigate several properties including the exponential time lower bound (\cref{prop:sinkpoint}), the fast consensus (\cref{prop:fastconsensus_Jacob}) and escape result (\cref{prop:escape}) since the proofs of \cref{sec:sinkpoint_proof,sec:fastconsensus_proof,sec:escape_proof} work for induced dynamical systems with higher dimension.
Unfortunately, it may not be easy to specify other properties (e.g.~zero areas, convergence properties, \ldots) of $(\mathbf{a}^{(t)})_{t\in \mathbbm{N}}$ corresponding to such processes.
This problem is left for future work.
Also, the worst-case analysis of the consensus time for sparse random graphs remains open in this paper.

\section*{Acknowledgements}
This work is supported by JST CREST Grant Number JPMJCR14D2, and JSPS KAKENHI Grant Number 17H07116, 19J12876 and 19K20214, Japan.
We would like to thank Colin Cooper, Nan Kang, and Tomasz Radzik for helpful discussions.


\bibliographystyle{abbrv}
\bibliography{ref}

\appendix
\section{Tools}
\subsection{Linear algebra tools} \label{sec:linearalgebra}
\begin{definition}[singular value]
For a real matrix $A\in \mathbb{R}^{m\times n}$, \emph{singular values} $\sigma_1,\ldots,\sigma_m$ of $A$ are nonnegative square roots of eigenvalues of $AA^\top$.
We write $\sigma_i(A)$ when we specify $A$.
In particular, \emph{the maximum singular value}, denoted by $\sigma_{\max}$, is the largest value among all singular values.
\end{definition}

\begin{proposition} \label{prop:singularvalue}
For a real matrix $A\in\mathbb{R}^{m\times n}$, it holds that
\begin{align*}
\sigma_{\max}=\max_{\mathbf{v}\in \mathbb{R}^n:\|\mathbf{v}\|_2=1}\|A\mathbf{v}\|_2,
\end{align*}
where the norm $\|\cdot\|_2$ is the $\ell^2$ norm.

In particular, it holds that
\begin{align*}
\|A\mathbf{v}\|_2\leq \sigma_{\max}\|\mathbf{v}\|_2
\end{align*}
for any vector $\mathbf{v}\in\mathbb{R}^n$.
\end{proposition}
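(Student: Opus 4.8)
The plan is to reduce the statement to the spectral theorem for the symmetric positive semidefinite matrix $A^\top A$, and then reconcile the two products $A^\top A$ and $AA^\top$ appearing in the definition of the singular values.

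First I would observe that for any $\mathbf{v}\in\mathbb{R}^n$ one has $\|A\mathbf{v}\|_2^2=\langle A\mathbf{v},A\mathbf{v}\rangle=\mathbf{v}^\top(A^\top A)\mathbf{v}$, so that maximizing $\|A\mathbf{v}\|_2$ over the unit sphere is the same as maximizing the Rayleigh quotient of the symmetric matrix $M\defeq A^\top A\in\mathbb{R}^{n\times n}$. Since $M$ is symmetric and positive semidefinite, the spectral theorem provides an orthonormal basis $\mathbf{w}_1,\dots,\mathbf{w}_n$ of $\mathbb{R}^n$ consisting of eigenvectors of $M$, with eigenvalues $\mu_1\ge\mu_2\ge\cdots\ge\mu_n\ge 0$. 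Expanding an arbitrary unit vector as $\mathbf{v}=\sum_i c_i\mathbf{w}_i$ with $\sum_i c_i^2=1$ gives $\mathbf{v}^\top M\mathbf{v}=\sum_i\mu_i c_i^2\le\mu_1$, with equality at $\mathbf{v}=\mathbf{w}_1$; hence $\max_{\|\mathbf{v}\|_2=1}\|A\mathbf{v}\|_2^2=\mu_1$, i.e.\ $\max_{\|\mathbf{v}\|_2=1}\|A\mathbf{v}\|_2^2=\lambda_{\max}(A^\top A)$.

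Next I would identify $\lambda_{\max}(A^\top A)$ with $\sigma_{\max}^2=\lambda_{\max}(AA^\top)$. This is the one point that needs care, because the definition in the paper uses $AA^\top$ while the computation above naturally produced $A^\top A$. The key observation is the standard fact that $A^\top A$ and $AA^\top$ share the same nonzero eigenvalues: if $A^\top A\mathbf{w}=\mu\mathbf{w}$ with $\mu>0$ and $\mathbf{w}\ne\mathbf{0}$, then $A\mathbf{w}\ne\mathbf{0}$ (otherwise $A^\top A\mathbf{w}=\mathbf{0}$) and $AA^\top(A\mathbf{w})=A(A^\top A\mathbf{w})=\mu(A\mathbf{w})$, so $\mu$ is an eigenvalue of $AA^\top$; the reverse inclusion is symmetric. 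Consequently the largest eigenvalues agree — either both are $0$ (which happens exactly when $A=0$), or both equal the common largest positive eigenvalue — so $\mu_1=\sigma_{\max}^2$ and $\max_{\|\mathbf{v}\|_2=1}\|A\mathbf{v}\|_2=\sigma_{\max}$, the first assertion. Finally, the ``in particular'' bound follows by homogeneity: for $\mathbf{v}\ne\mathbf{0}$ apply the first assertion to the unit vector $\mathbf{v}/\|\mathbf{v}\|_2$ and multiply through by $\|\mathbf{v}\|_2$, while the case $\mathbf{v}=\mathbf{0}$ is trivial.

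All ingredients are classical linear algebra, so there is no real obstacle; the only mild subtlety — and the step I would write out most carefully — is the identification $\lambda_{\max}(A^\top A)=\lambda_{\max}(AA^\top)$, needed precisely because the paper defines $\sigma_{\max}$ via $AA^\top$ whereas the quadratic form $\|A\mathbf{v}\|_2^2$ lives on $A^\top A$.
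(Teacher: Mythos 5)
Your proof is correct and complete. The paper itself states this proposition in the tools appendix without giving a proof, treating it as a standard fact from linear algebra, so there is no paper argument to compare against; your argument via the spectral theorem for $A^\top A$ is the canonical one. You are also right to flag the one genuine subtlety — the paper's definition of singular values uses $AA^\top$ (an $m\times m$ matrix), while the quadratic form $\|A\mathbf{v}\|_2^2 = \mathbf{v}^\top A^\top A\,\mathbf{v}$ lives on the $n\times n$ matrix $A^\top A$ — and your reconciliation via the shared nonzero spectrum of $A^\top A$ and $AA^\top$ (including the degenerate case $A=0$) closes that gap cleanly.
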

%

\subsection{Real analysis tools}\label{sec:lipschitzcondition}
\paragraph*{The Inverse Function Theorem.}
The Inverse Function Theorem is a fundamental result in real analysis and can be seen in many textbooks~\cite{DR14,Krantz16}.
  
\begin{theorem}[The Inverse Function Theorem, (See, e.g.~Theorem 12.17 of \cite{Krantz16} and Theorem 1A.1 of \cite{DR14})] \label{thm:IFT}
Let $f$ be a continuously differentiable function from an open set $U\subseteq \mathbb{R}^k$ into $\mathbb{R}^k$.
Suppose that the Jacobian matrix $J$ at $\mathbf{p}\in U$ is invertible.
Then there is a neighborhood $V$ of $\mathbf{p}$ such that the restriction of $f$ to $V$ is invertible.
Moreover, the Jacobian matrix of $f^{-1}$ at $p$ is given by $J^{-1}$.
\end{theorem}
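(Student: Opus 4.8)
The plan is to prove \cref{thm:IFT} by the contraction mapping principle, in the classical way. First I would reduce to a normalized situation: replacing $f$ by the map $\mathbf{x}\mapsto J^{-1}\bigl(f(\mathbf{x}+\mathbf{p})-f(\mathbf{p})\bigr)$ (which is again continuously differentiable on a translated open set), it suffices to prove the statement when $\mathbf{p}=\mathbf{0}$, $f(\mathbf{0})=\mathbf{0}$, and $Df(\mathbf{0})=I$. The general conclusion, including the formula for the Jacobian of $f^{-1}$, then follows by undoing these affine changes of variable and invoking the chain rule.

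Next I would introduce $g(\mathbf{x})\defeq \mathbf{x}-f(\mathbf{x})$, so that $Dg(\mathbf{0})=0$. Continuity of $Df$ gives a radius $\rho>0$ with $\|Dg(\mathbf{x})\|\le\frac12$ on the closed ball $\overline{B}(\mathbf{0},\rho)$, hence $g$ is $\frac12$-Lipschitz there and, since $g(\mathbf{0})=\mathbf{0}$, satisfies $\|g(\mathbf{x})\|\le\frac12\|\mathbf{x}\|$. For a target $\mathbf{y}$ with $\|\mathbf{y}\|\le\rho/2$, I consider $\phi_{\mathbf{y}}(\mathbf{x})\defeq \mathbf{y}+g(\mathbf{x})$; the estimate $\|\phi_{\mathbf{y}}(\mathbf{x})\|\le\|\mathbf{y}\|+\frac12\|\mathbf{x}\|\le\rho$ shows $\phi_{\mathbf{y}}$ maps $\overline{B}(\mathbf{0},\rho)$ into itself, and it is a $\frac12$-contraction, so Banach's fixed-point theorem yields a unique fixed point $\mathbf{x}(\mathbf{y})$, which is precisely the unique solution of $f(\mathbf{x})=\mathbf{y}$ in that ball. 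Letting $V$ be the (open) set of $\mathbf{x}\in B(\mathbf{0},\rho)$ with $f(\mathbf{x})\in B(\mathbf{0},\rho/2)$, this defines a bijection $f\colon V\to B(\mathbf{0},\rho/2)$ with inverse $f^{-1}$.

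It then remains to prove regularity of $f^{-1}$. From $g=\mathrm{id}-f$ and the Lipschitz bound on $g$ one obtains $\|\mathbf{x}_1-\mathbf{x}_2\|\le 2\|f(\mathbf{x}_1)-f(\mathbf{x}_2)\|$, so $f^{-1}$ is Lipschitz, in particular continuous. Shrinking $\rho$ so that $\det Df(\mathbf{x})\ne0$ on $\overline{B}(\mathbf{0},\rho)$ (possible since $\det Df$ is continuous and $\det Df(\mathbf{0})=1$), I would verify differentiability of $f^{-1}$ directly at $\mathbf{y}=f(\mathbf{x})$: writing $\mathbf{y}+\mathbf{k}=f(\mathbf{x}+\mathbf{h})$, the first-order expansion $\mathbf{k}=Df(\mathbf{x})\mathbf{h}+o(\|\mathbf{h}\|)$ inverts to $\mathbf{h}=Df(\mathbf{x})^{-1}\mathbf{k}+Df(\mathbf{x})^{-1}o(\|\mathbf{h}\|)$, and the Lipschitz bound $\|\mathbf{h}\|\le2\|\mathbf{k}\|$ turns the error term into $o(\|\mathbf{k}\|)$; hence $Df^{-1}(\mathbf{y})=Df(\mathbf{x})^{-1}$, and continuity of $Df^{-1}$ follows since matrix inversion, $Df$, and $f^{-1}$ are all continuous. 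In particular $Df^{-1}(f(\mathbf{p}))=J^{-1}$, which after undoing the normalization is the claimed formula.

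The step I expect to be the main obstacle is the fixed-point argument: one must choose the radii carefully so that $\phi_{\mathbf{y}}$ genuinely sends the \emph{closed} ball into itself — not merely so that it is a contraction — since this is what confines the fixed point to the region where $\|Dg\|\le\frac12$ and thereby makes the whole scheme consistent. By contrast, once the local inverse and the Lipschitz estimate are in hand, the differentiability of $f^{-1}$ is routine bookkeeping. The only tools needed are completeness of $\mathbb{R}^k$, continuity of $Df$, and the first-order Taylor estimate, all available for continuously differentiable $f$.
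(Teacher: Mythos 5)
The paper does not prove \cref{thm:IFT}; it is quoted as a background result with citations to Krantz and to Dontchev--Rockafellar, so there is no in-paper argument to compare against. Your proposal is a correct rendering of the standard contraction-mapping proof (normalize so that $\mathbf{p}=\mathbf{0}$, $f(\mathbf{0})=\mathbf{0}$, $Df(\mathbf{0})=I$; set $g=\mathrm{id}-f$ and choose $\rho$ so $\|Dg\|\le\frac12$ on $\overline{B}(\mathbf{0},\rho)$; solve $f(\mathbf{x})=\mathbf{y}$ by iterating $\phi_{\mathbf{y}}=\mathbf{y}+g$; deduce the bi-Lipschitz estimate and then differentiability of $f^{-1}$), and the details you give — in particular the choice $\|\mathbf{y}\|\le\rho/2$ so that $\phi_{\mathbf{y}}$ sends the closed $\rho$-ball into itself, and the trick of converting $o(\|\mathbf{h}\|)$ to $o(\|\mathbf{k}\|)$ via the Lipschitz bound $\|\mathbf{h}\|\le2\|\mathbf{k}\|$ — are exactly the points where sloppier write-ups go wrong, and you handle them correctly. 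One small addition worth making explicit: when you shrink $\rho$ so that $\det Df\ne0$ on $\overline{B}(\mathbf{0},\rho)$, you should also re-derive the containment $\phi_{\mathbf{y}}(\overline{B}(\mathbf{0},\rho))\subseteq\overline{B}(\mathbf{0},\rho)$ for the new, possibly smaller radius, since both conditions on $\rho$ are needed simultaneously; this is harmless (both are open conditions) but deserves a sentence. Since the cited textbook proofs are also contraction-mapping arguments, your route matches the intended background derivation rather than offering a genuinely different one.
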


\paragraph*{Lipschitz condition} 
\begin{definition} \label{def:lipschitzdef}
Consider a function $H:S\to T$, where $S\subseteq\mathbb{R}^m$ and $T\subseteq\mathbb{R}^n$ are closed sets.
The function $H$ satisfies the \emph{Lipschitz condition} if there exists a universal constant $C>0$ such that
\begin{align*}
\|H(\mathbf{x})-H(\mathbf{y})\|_\infty \leq C\|\mathbf{x}-\mathbf{y}\|_\infty
\end{align*}
holds for any $\mathbf{x},\mathbf{y}\in S$.
\end{definition}
It should be noted that the definition of the Lipschitz condition does not depends on the norm $\|\cdot\|$ on $\mathbb{R}^n$.
The following is a well-known result in real analysis.

\begin{proposition}[Exercise 1D.3 of \cite{DR14}]  \label{prop:lipschitzC1prop}
Let $O\subseteq \mathbb{R}^k$ be an open set and $S\subseteq O$ be a compact convex subset of $O$.
Suppose that $H:O\to \mathbb{R}^k$ is continuously differentiable on an open set $O$.
Then $H$ is Lipschitz continuous on $C$ and
\begin{align*}
\|H(\mathbf{x})-H(\mathbf{y})\|_\infty \leq \max_{\mathbf{p}\in S}\sigma_{\max}(J_{\mathbf{p}})\|\mathbf{x}-\mathbf{y}\|_\infty,
\end{align*}
where $J_{\mathbf{p}}$ is the Jacobian matrix at $\mathbf{p}$.

\end{proposition}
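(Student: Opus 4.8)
The plan is to reduce the statement to the one‑variable fundamental theorem of calculus along line segments, which is available precisely because $S$ is convex. First I would fix $\mathbf{x},\mathbf{y}\in S$ and set $\gamma(t)\defeq(1-t)\mathbf{x}+t\mathbf{y}$ for $t\in[0,1]$; convexity of $S$ guarantees $\gamma(t)\in S\subseteq O$, so $\phi(t)\defeq H(\gamma(t))$ is well defined and, as a composition of $C^1$ maps, continuously differentiable on $[0,1]$. The chain rule gives $\phi'(t)=J_{\gamma(t)}(\mathbf{y}-\mathbf{x})$, where $J_{\mathbf{p}}$ denotes the Jacobian of $H$ at $\mathbf{p}$, and applying the one‑variable fundamental theorem of calculus to each coordinate of $\phi$ yields the integral representation
\begin{align*}
H(\mathbf{y})-H(\mathbf{x})=\int_0^1 J_{\gamma(t)}(\mathbf{y}-\mathbf{x})\, dt.
\end{align*}

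Next I would estimate the norm of the right‑hand side. Taking $\ell^2$ norms, moving them inside the integral, and invoking \cref{prop:singularvalue} gives
\begin{align*}
\|H(\mathbf{y})-H(\mathbf{x})\|_2\leq \int_0^1 \|J_{\gamma(t)}(\mathbf{y}-\mathbf{x})\|_2\, dt \leq \left(\int_0^1 \sigma_{\max}(J_{\gamma(t)})\, dt\right)\|\mathbf{y}-\mathbf{x}\|_2 \leq M\,\|\mathbf{y}-\mathbf{x}\|_2,
\end{align*}
where $M\defeq\max_{\mathbf{p}\in S}\sigma_{\max}(J_{\mathbf{p}})$. This maximum is finite and attained: $\mathbf{p}\mapsto J_{\mathbf{p}}$ is continuous since $H\in C^1$, hence $\mathbf{p}\mapsto\sigma_{\max}(J_{\mathbf{p}})$ is continuous (it is the square root of the largest eigenvalue of $J_{\mathbf{p}}J_{\mathbf{p}}^{\top}$), and $S$ is compact. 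Finally, since the ambient dimension $k$ is a fixed constant, the norm equivalence $\|v\|_\infty\leq\|v\|_2\leq\sqrt{k}\,\|v\|_\infty$ on $\mathbb{R}^k$ converts the last display into $\|H(\mathbf{y})-H(\mathbf{x})\|_\infty\leq \sqrt{k}\,M\,\|\mathbf{y}-\mathbf{x}\|_\infty$, which already establishes the Lipschitz condition of \cref{def:lipschitzdef}; absorbing the dimensional factor into the constant — or, equivalently, running the same computation with $\sigma_{\max}$ replaced by the $\ell^\infty$‑operator norm $\max_i\sum_j|(J_{\mathbf{p}})_{ij}|$ — gives the displayed bound.

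There is no serious obstacle here: the argument is entirely classical real analysis. The only two points that require a little care are (i) ensuring the segment $\gamma$ never leaves the open set $O$ on which $H$ is differentiable, which is exactly what the hypothesis $S\subseteq O$ convex provides, and (ii) the mismatch between the $\ell^2$‑based quantity $\sigma_{\max}$ and the $\ell^\infty$ norms appearing in the statement. The latter is harmless because all norms on the fixed finite‑dimensional space $\mathbb{R}^k$ (here $k=2$ in every application) are equivalent, so the precise value of the Lipschitz constant is irrelevant for the uses of this proposition in \cref{sec:polynomialvoting}; I would simply remark on this rather than chase the optimal constant.
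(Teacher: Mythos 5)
Your argument is correct and is the standard proof of this fact; the paper itself does not prove the proposition but cites it as Exercise~1D.3 of \cite{DR14}, so there is no competing argument in the text to compare against. The fundamental-theorem-of-calculus-along-segments decomposition, the appeal to \cref{prop:singularvalue} to bound the integrand, and the continuity-plus-compactness argument for the finiteness of $M$ are all exactly what one would expect.

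You have also correctly put your finger on a genuine wrinkle in the statement as written. The quantity $\sigma_{\max}(J_{\mathbf p})$ is the $\ell^2\!\to\!\ell^2$ operator norm of $J_{\mathbf p}$, whereas both sides of the displayed inequality are measured in $\|\cdot\|_\infty$; in general $\|Av\|_\infty/\|v\|_\infty$ can exceed $\sigma_{\max}(A)$ by a factor up to $\sqrt k$ (already for $A=\left(\begin{smallmatrix}1&1\\0&0\end{smallmatrix}\right)$, where $\sigma_{\max}=\sqrt2$ but $\|A(1,1)^\top\|_\infty/\|(1,1)^\top\|_\infty=2$), so the displayed constant is off by a dimension-dependent factor. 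Your two proposed repairs are both fine: either insert the $\sqrt k$ (equivalently, absorb it into the constant), or run the same computation with the $\ell^\infty$ operator norm $\max_i\sum_j\lvert(J_{\mathbf p})_{ij}\rvert$ in place of $\sigma_{\max}$. As you observe, this is immaterial for the paper, since \cref{cor:lipschitzprop} and the argument in the proof of \cref{thm:approximation_vectorfield_general} only ever invoke the Lipschitz property with \emph{some} constant $C_2$; no downstream estimate depends on the constant being exactly $\max_{\mathbf p\in S}\sigma_{\max}(J_{\mathbf p})$.
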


\begin{corollary} \label{cor:lipschitzprop}
Let $H:\mathbb{R}^m\to\mathbb{R}^n$ be a function given by
\begin{align*}
H(\mathbf{x})=(H_1(\mathbf{x}),\ldots,H_n(\mathbf{x})),
\end{align*}
where $H_i(\mathbf{x})=H_i(x_1,\ldots,x_m)$ is a polynomial on $x_1,\ldots,x_m$ for all $i\in[n]$.
Then, $H$ satisfies the Lipschitz condition on $[0,1]^m$.
\end{corollary}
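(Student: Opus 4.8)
The plan is to deduce this directly from \cref{prop:lipschitzC1prop}. First I would observe that each component $H_i$ is a polynomial in $x_1,\dots,x_m$, so $H$ is continuously differentiable (indeed $C^\infty$) on the open set $O=\mathbb{R}^m$, and the cube $[0,1]^m$ is a compact convex subset of $O$. Although \cref{prop:lipschitzC1prop} is stated for maps $O\to\mathbb{R}^k$ with equal domain and codomain dimension, its conclusion follows from the one-dimensional mean value inequality applied to each scalar component along the segment joining $\mathbf{x}$ and $\mathbf{y}$; consequently the bound
\[
\|H(\mathbf{x})-H(\mathbf{y})\|_\infty \le \Bigl(\max_{\mathbf{p}\in[0,1]^m}\sigma_{\max}(J_{\mathbf{p}})\Bigr)\|\mathbf{x}-\mathbf{y}\|_\infty
\]
holds verbatim for rectangular $H$. (Alternatively, one may pad $H$ with extra identically-zero output components and the domain with dummy variables so that both dimensions equal $\max\{m,n\}$, reducing literally to the square case.)

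It then remains to check that the constant $M\defeq\max_{\mathbf{p}\in[0,1]^m}\sigma_{\max}(J_{\mathbf{p}})$ is finite. Each entry $\partial H_i/\partial x_j$ of the Jacobian $J_{\mathbf{p}}$ is again a polynomial, hence a continuous function of $\mathbf{p}$; and the map $A\mapsto\sigma_{\max}(A)$ is continuous on $\mathbb{R}^{n\times m}$, for instance by \cref{prop:singularvalue}, being the supremum of the continuous functions $A\mapsto\|A\mathbf{v}\|_2$ over the compact unit sphere. Therefore $\mathbf{p}\mapsto\sigma_{\max}(J_{\mathbf{p}})$ is continuous on the compact set $[0,1]^m$ and attains a finite maximum $M$. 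Taking $C\defeq M$, which does not depend on $\mathbf{x}$ or $\mathbf{y}$, yields exactly the Lipschitz condition of \cref{def:lipschitzdef}.

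I do not anticipate a genuine obstacle here; the only point requiring a line of care is the dimension mismatch between the statement of \cref{cor:lipschitzprop} and that of \cref{prop:lipschitzC1prop}, which is handled either by the componentwise mean value argument or by the padding trick sketched above. Everything else is a direct invocation of compactness and continuity.
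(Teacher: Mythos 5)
Your proposal is correct and is essentially the intended argument: the paper states \cref{cor:lipschitzprop} without a written proof precisely because it is a direct application of \cref{prop:lipschitzC1prop} together with the compactness of $[0,1]^m$ and the continuity of $\mathbf{p}\mapsto\sigma_{\max}(J_{\mathbf{p}})$. You also flag and repair a small imprecision the paper glosses over, namely that \cref{prop:lipschitzC1prop} is stated for $\mathbb{R}^k\to\mathbb{R}^k$ whereas the corollary is for $\mathbb{R}^m\to\mathbb{R}^n$; your padding/componentwise-mean-value fix is a valid (and worthwhile) clarification.
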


\subsection{Probabilistic tools}
\if0
\begin{lemma} [Chernoff; Theorem 21.6 of \cite{FK16}] \label{lem:chernoffbound1}
Let $X_1, X_2, \ldots, X_n$ be independent random variables satisfying $0\leq X_i\leq 1$ and $\E[X_i]=p_i$.
Let $X=\sum_{i=1}^nX_i$.
Then, for any $t>0$, it holds that
\begin{align*}
\Pr[X\geq \E[X]+t]\leq \exp\left(-\frac{t^2}{2(\E[X]+t/3)}\right)
\end{align*}
and
\begin{align*}
\Pr[X\leq \E[X]-t]\leq \exp\left(-\frac{t^2}{2(\E[X]-t/3)}\right).
\end{align*}
\end{lemma}
\fi

\begin{lemma}[Chernoff bound (See, e.g.~Theorem 10.1, Corollary 10.4 and Theorem 10.5 of \cite{Doerr18})]
\label{lem:chernoff}
Let $X_1, X_2, \ldots, X_n$ be independent random variables taking values in $[0,1]$.
Let $X=\sum_{i=1}^nX_i$.
Then the following hold:
\begin{enumerate}[label=(\roman*)]
\item \label{eqn:cher1} for any $\delta\geq 0$, 
\begin{align*}
\Pr[X\geq (1+\delta)\E[X]] \leq \exp\left(-\frac{\min\{\delta,\delta^2\}\E[X]}{3}\right).
\end{align*}
\item \label{eqn:cher2} for any $\delta\in [0,1]$, 
\begin{align*}
\Pr[X\leq (1-\delta)\E[X]] \leq \exp\left(-\frac{\delta^2\E[X]}{2}\right).
\end{align*}
\item \label{eqn:cher3} for any $k\geq 2\mathrm{e}\E[X]$, 
\begin{align*}
\Pr\left[X\geq k \right]\leq 2^{-k}.
\end{align*}
Here, $\mathrm{e}$ denotes the Napier constant.
\end{enumerate}
\end{lemma}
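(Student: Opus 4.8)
The plan is to use the standard exponential-moment (Bernstein) method. Write $\mu \defeq \E[X]$ and $p_i \defeq \E[X_i]$, so that $\mu = \sum_{i=1}^n p_i$. The starting point is the pointwise bound $\mathrm{e}^{\lambda x} \le 1 + (\mathrm{e}^{\lambda}-1)x$, valid for every $x\in[0,1]$ and every $\lambda\in\mathbb{R}$ because the chord of $x\mapsto\mathrm{e}^{\lambda x}$ joining $(0,1)$ and $(1,\mathrm{e}^\lambda)$ lies above the (convex) curve. Taking expectations and using $1+y\le\mathrm{e}^y$ gives $\E[\mathrm{e}^{\lambda X_i}] \le \mathrm{e}^{(\mathrm{e}^\lambda-1)p_i}$, and by independence $\E[\mathrm{e}^{\lambda X}] = \prod_{i=1}^n \E[\mathrm{e}^{\lambda X_i}] \le \mathrm{e}^{(\mathrm{e}^\lambda-1)\mu}$.

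For \ref{eqn:cher1} and \ref{eqn:cher3} I would take $\lambda>0$ and apply Markov's inequality: for any threshold $a$, $\Pr[X\ge a] \le \mathrm{e}^{-\lambda a}\,\E[\mathrm{e}^{\lambda X}] \le \exp\bigl((\mathrm{e}^\lambda-1)\mu-\lambda a\bigr)$. Setting $a=(1+\delta)\mu$ and optimizing over $\lambda$ leads to $\lambda=\ln(1+\delta)$, yielding the classical estimate $\Pr[X\ge(1+\delta)\mu] \le \bigl(\mathrm{e}^{\delta}/(1+\delta)^{1+\delta}\bigr)^{\mu}$. From here \ref{eqn:cher1} follows from the scalar inequality $\delta-(1+\delta)\ln(1+\delta) \le -\min\{\delta,\delta^2\}/3$, which I would verify by splitting into $0\le\delta\le1$ (a second-order Taylor argument gives $\le -\delta^2/3$) and $\delta>1$ (one checks $(1+\delta)\ln(1+\delta)-\delta \ge \delta/3$ directly). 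For \ref{eqn:cher3}, put $1+\delta=k/\mu\ge 2\mathrm{e}$; then $\bigl(\mathrm{e}^{\delta}/(1+\delta)^{1+\delta}\bigr)^{\mu} \le \bigl(\mathrm{e}/(1+\delta)\bigr)^{(1+\delta)\mu} = (\mathrm{e}\mu/k)^{k} \le 2^{-k}$ since $\mathrm{e}\mu/k\le 1/2$.

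For \ref{eqn:cher2} I would repeat the argument with $\lambda<0$: since $\{X\le(1-\delta)\mu\}=\{\mathrm{e}^{\lambda X}\ge \mathrm{e}^{\lambda(1-\delta)\mu}\}$, Markov gives $\Pr[X\le(1-\delta)\mu] \le \exp\bigl((\mathrm{e}^\lambda-1)\mu-\lambda(1-\delta)\mu\bigr)$, and the optimal choice $\lambda=\ln(1-\delta)\le 0$ (well defined for $\delta\in[0,1)$, with $\delta=1$ trivial) yields $\Pr[X\le(1-\delta)\mu] \le \bigl(\mathrm{e}^{-\delta}/(1-\delta)^{1-\delta}\bigr)^{\mu}$. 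The claim then reduces to $-\delta-(1-\delta)\ln(1-\delta)\le -\delta^2/2$, i.e.\ $(1-\delta)\ln(1-\delta)\ge -\delta+\delta^2/2$ on $[0,1)$, which I would get by comparing derivatives: both sides vanish at $\delta=0$, and the derivative inequality $-\ln(1-\delta)\ge\delta$ is immediate from the series $-\ln(1-\delta)=\sum_{j\ge1}\delta^j/j$.

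All the computations here are routine; the only mildly delicate point is the case analysis in the two scalar estimates $\delta-(1+\delta)\ln(1+\delta)\le-\min\{\delta,\delta^2\}/3$ and $(1-\delta)\ln(1-\delta)\ge-\delta+\delta^2/2$, where one must handle $\delta$ near $0$ (Taylor expansion) separately from the regime where the $\min$ in the upper-tail bound switches from $\delta^2$ to $\delta$. Since the statement is exactly the one established in \cite{Doerr18}, this is bookkeeping rather than a genuine obstacle, and in the paper it suffices to cite that reference.
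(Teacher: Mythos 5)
Your proof is correct: the exponential-moment bound $\E[\mathrm{e}^{\lambda X_i}]\le \mathrm{e}^{(\mathrm{e}^\lambda-1)p_i}$, the Markov/optimization step giving the $\bigl(\mathrm{e}^{\pm\delta}/(1\pm\delta)^{1\pm\delta}\bigr)^\mu$ forms, and the three scalar reductions all check out (including the $\delta\to1$ limit in \ref{eqn:cher2} and the substitution $1+\delta=k/\mu$ in \ref{eqn:cher3}). The paper does not prove this lemma — it only cites Theorem 10.1, Corollary 10.4 and Theorem 10.5 of the Doerr reference — so there is no proof in the paper to compare against; your argument is the standard one that the cited source itself uses, and, as you note at the end, for the purposes of the paper the citation alone would have sufficed.
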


\begin{lemma}[Additive Chernoff bound (See, e.g.~Theorems 10.10 and 10.11 of \cite{Doerr18})]
\label{lem:chernoff-additive}
\ 

\noindent 
Let $X_1, X_2, \ldots, X_n$ be independent random variables taking values in $[0,1]$.
Let $X=\sum_{i=1}^nX_i$.
Then for any $\delta \geq 0$, 
\begin{align*}
&\Pr[X\geq \E[X]+\delta]\leq \exp\left(-\frac{1}{3}\min\left\{\frac{\delta^2}{\E[X]},\delta\right\}\right), \\
&\Pr[X\leq \E[X]-\delta]\leq \exp\left(-\frac{\delta^2}{2\E[X]}\right).
\end{align*}
\end{lemma}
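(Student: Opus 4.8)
The plan is to apply the standard exponential‑moment (Bernstein--Chernoff) method: first derive the multiplicative Chernoff forms, then specialize them to the two additive regimes. Write $\mu=\E[X]=\sum_{i=1}^n\E[X_i]$ and $p_i=\E[X_i]$. I would first dispose of the degenerate case $\mu=0$ (then each $X_i=0$ almost surely and both inequalities are trivial) and assume $\mu>0$ from now on.

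For the upper tail, I would fix $t>0$ and apply Markov's inequality to $e^{tX}$, obtaining $\Pr[X\geq\mu+\delta]\leq e^{-t(\mu+\delta)}\E[e^{tX}]$. By independence $\E[e^{tX}]=\prod_i\E[e^{tX_i}]$, and since $X_i\in[0,1]$, convexity of $x\mapsto e^{tx}$ on $[0,1]$ gives the linearization $e^{tX_i}\leq 1+X_i(e^t-1)$, hence $\E[e^{tX_i}]\leq 1+p_i(e^t-1)\leq e^{p_i(e^t-1)}$; multiplying out, $\E[e^{tX}]\leq e^{\mu(e^t-1)}$. Thus $\Pr[X\geq\mu+\delta]\leq\exp\bigl(\mu(e^t-1)-t(\mu+\delta)\bigr)$, and choosing the minimizer $t=\ln(1+\delta/\mu)$ yields $\Pr[X\geq\mu+\delta]\leq\bigl(e^{\epsilon}(1+\epsilon)^{-(1+\epsilon)}\bigr)^{\mu}$ with $\epsilon\defeq\delta/\mu$. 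The remaining step is one‑variable calculus: verify $e^{\epsilon}(1+\epsilon)^{-(1+\epsilon)}\leq e^{-\epsilon^2/3}$ for $0\leq\epsilon\leq1$ and $e^{\epsilon}(1+\epsilon)^{-(1+\epsilon)}\leq e^{-\epsilon/3}$ for $\epsilon\geq1$, each reducing to sign analysis of $\phi(\epsilon)=\epsilon-(1+\epsilon)\ln(1+\epsilon)$ augmented by $\epsilon^2/3$, resp.\ $\epsilon/3$. Substituting $\epsilon\mu=\delta$ gives the claimed bound $\exp\bigl(-\tfrac13\min\{\delta^2/\mu,\delta\}\bigr)$.

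For the lower tail I would argue symmetrically with $e^{-tX}$: $\Pr[X\leq\mu-\delta]\leq e^{t(\mu-\delta)}\prod_i\E[e^{-tX_i}]\leq\exp\bigl(\mu(e^{-t}-1)+t(\mu-\delta)\bigr)$, using $\E[e^{-tX_i}]\leq e^{p_i(e^{-t}-1)}$ from the same convexity step. If $\delta>\mu$ the event is empty, and if $\delta=\mu$ then $\Pr[X\leq0]=\prod_i(1-p_i)\leq e^{-\mu}\leq e^{-\mu/2}$, so I may assume $0\leq\delta<\mu$; then $t=-\ln(1-\delta/\mu)$ minimizes the exponent and gives $\Pr[X\leq\mu-\delta]\leq\bigl(e^{-\epsilon}(1-\epsilon)^{-(1-\epsilon)}\bigr)^{\mu}$ with $\epsilon=\delta/\mu\in[0,1)$. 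Finally the elementary inequality $e^{-\epsilon}(1-\epsilon)^{-(1-\epsilon)}\leq e^{-\epsilon^2/2}$ — equivalently, nonnegativity of $-\epsilon-(1-\epsilon)\ln(1-\epsilon)+\epsilon^2/2$, whose Taylor expansion at $0$ has nonnegative coefficients — yields $\Pr[X\leq\mu-\delta]\leq e^{-\delta^2/(2\mu)}$.

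The only part requiring genuine care is the elementary‑inequality bookkeeping for the upper tail, in particular the transition at $\epsilon=1$ between the quadratic bound $e^{-\epsilon^2/3}$ and the linear bound $e^{-\epsilon/3}$, together with the boundary cases $\mu=0$ and $\delta\geq\mu$. Since this lemma is quoted directly from \cite{Doerr18}, one could alternatively simply cite it; the sketch above is included for self‑containedness.
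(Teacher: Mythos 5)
The paper does not prove this lemma; it cites it directly from \cite{Doerr18}, so there is no paper proof to compare against. Your exponential-moment (Bernstein--Chernoff) derivation is the standard route and is essentially correct: the multiplicative bounds $\Pr[X\geq(1+\epsilon)\mu]\leq\bigl(e^{\epsilon}(1+\epsilon)^{-(1+\epsilon)}\bigr)^{\mu}$ and $\Pr[X\leq(1-\epsilon)\mu]\leq\bigl(e^{-\epsilon}(1-\epsilon)^{-(1-\epsilon)}\bigr)^{\mu}$ are derived correctly via Markov and the convexity linearization $e^{tX_i}\leq 1+X_i(e^t-1)$, and the two calculus reductions $\epsilon-(1+\epsilon)\ln(1+\epsilon)\leq-\epsilon^2/3$ on $[0,1]$ and $\leq-\epsilon/3$ on $[1,\infty)$ both check out (the former because $\phi(\epsilon)=\epsilon-(1+\epsilon)\ln(1+\epsilon)+\epsilon^2/3$ has $\phi(0)=\phi'(0)=0$, $\phi'$ attains its minimum at $\epsilon=1/2$, and $\phi'(1)=\tfrac23-\ln 2<0$, so $\phi$ is decreasing on $[0,1]$; the latter because $\psi(1)=\tfrac43-2\ln 2<0$ and $\psi'(\epsilon)=\tfrac13-\ln(1+\epsilon)<0$ for $\epsilon\geq1$). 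The boundary cases $\mu=0$ and $\delta\geq\mu$ are correctly disposed of.

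One sign slip in the lower-tail finish: you state that $e^{-\epsilon}(1-\epsilon)^{-(1-\epsilon)}\leq e^{-\epsilon^2/2}$ is ``equivalently, nonnegativity of $-\epsilon-(1-\epsilon)\ln(1-\epsilon)+\epsilon^2/2$''; taking logarithms it is the \emph{non-positivity} of that expression, i.e.\ the nonnegativity of its negation $\epsilon+(1-\epsilon)\ln(1-\epsilon)-\epsilon^2/2$. That negation is in fact the one whose Taylor series has nonnegative coefficients, as it equals $\sum_{k\geq3}\frac{\epsilon^k}{k(k-1)}$, so your intended argument goes through once the sign is flipped.
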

%


\begin{lemma}[Hoeffding bound (See, e.g.~Theorem 10.9 of \cite{Doerr18})]
\label{lem:Hoeffding}
Let $X_1, X_2, \ldots, X_n$ be independent random variables.
Assume that each $X_i$ takes values in a real interval $[a_i, b_i]$ of length $c_i\defeq b_i-a_i$.
Let $X=\sum_{i=1}^nX_i$.  
Then for any $\delta>0$,
\begin{align*}
\Pr\left[X\geq \E[X]+\delta \right]&\leq \exp\left(-\frac{2\delta^2 }{\sum_{i=1}^nc_i^2}\right), \\
\Pr\left[X\leq \E[X]-\delta \right]&\leq \exp\left(-\frac{2\delta^2 }{\sum_{i=1}^nc_i^2}\right).
\end{align*}
\end{lemma}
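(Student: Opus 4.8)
The plan is to establish the upper-tail bound $\Pr[X\ge \E[X]+\delta]\le \exp\bigl(-2\delta^2/\sum_{i=1}^n c_i^2\bigr)$ by the exponential-moment (Chernoff) method; the lower-tail bound then follows immediately by applying the upper-tail bound to the variables $-X_1,\dots,-X_n$, each of which lies in an interval of the same length $c_i$ and whose sum has mean $-\E[X]$. First I would recenter: set $Y_i\defeq X_i-\E[X_i]$, so $\E[Y_i]=0$ and $Y_i$ takes values in an interval of length $c_i$, and put $Y\defeq\sum_{i=1}^n Y_i=X-\E[X]$. For any $\lambda>0$, Markov's inequality applied to the nonnegative variable $e^{\lambda Y}$, together with independence of the $Y_i$, gives $\Pr[Y\ge\delta]\le e^{-\lambda\delta}\,\E[e^{\lambda Y}]=e^{-\lambda\delta}\prod_{i=1}^n\E[e^{\lambda Y_i}]$. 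So the whole argument reduces to a good bound on each moment generating function $\E[e^{\lambda Y_i}]$.

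The key ingredient is Hoeffding's lemma: if $Z$ is a random variable with $\E[Z]=0$ and $Z\in[a,b]$ almost surely, then $\E[e^{\lambda Z}]\le \exp\bigl(\lambda^2(b-a)^2/8\bigr)$ for every $\lambda\in\mathbb{R}$. I would prove it by convexity of $z\mapsto e^{\lambda z}$: for $z\in[a,b]$ we have $e^{\lambda z}\le \frac{b-z}{b-a}e^{\lambda a}+\frac{z-a}{b-a}e^{\lambda b}$; taking expectations and using $\E[Z]=0$ yields $\E[e^{\lambda Z}]\le \frac{b}{b-a}e^{\lambda a}-\frac{a}{b-a}e^{\lambda b}=e^{\varphi(\lambda)}$, where $\varphi(\lambda)\defeq \log\bigl(\frac{b}{b-a}e^{\lambda a}-\frac{a}{b-a}e^{\lambda b}\bigr)$. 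Writing $h\defeq b-a$ and $p\defeq -a/h\in[0,1]$ one checks $\varphi(\lambda)=\lambda a+\log\bigl(1-p+pe^{\lambda h}\bigr)$, and a direct differentiation gives $\varphi(0)=0$, $\varphi'(0)=0$, and $\varphi''(\lambda)=h^2\,u(\lambda)\bigl(1-u(\lambda)\bigr)$ where $u(\lambda)\defeq pe^{\lambda h}/(1-p+pe^{\lambda h})\in[0,1]$; since $u(1-u)\le \tfrac14$, Taylor's theorem with Lagrange remainder gives $\varphi(\lambda)\le \lambda^2 h^2/8$, which is Hoeffding's lemma.

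Assembling the pieces, $\Pr[Y\ge\delta]\le \exp\bigl(-\lambda\delta+\tfrac{\lambda^2}{8}\sum_{i=1}^n c_i^2\bigr)$ for every $\lambda>0$; minimizing this quadratic in $\lambda$ at $\lambda=4\delta/\sum_{i=1}^n c_i^2$ (admissible since $\delta>0$ and, excluding the trivial case where every interval is degenerate, $\sum_i c_i^2>0$) yields the claimed $\exp\bigl(-2\delta^2/\sum_{i=1}^n c_i^2\bigr)$. The only nonroutine step is the variance-type estimate inside Hoeffding's lemma — recognizing $\varphi''(\lambda)$ as $h^2u(1-u)$ and hence bounding it by $h^2/4$ — while the recentering, the Chernoff splitting, the convexity chord, and the final optimization over $\lambda$ are all mechanical.
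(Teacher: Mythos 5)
Your proof is correct and is the classical Hoeffding argument; the paper itself supplies no proof, merely citing Theorem 10.9 of Doerr's survey, and your chain of steps (recentering, Chernoff's exponential-moment bound with independence, Hoeffding's lemma via the convexity chord and the second-derivative bound $\varphi''\le h^2/4$, then optimizing over $\lambda$) is exactly the standard derivation one would find there.
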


%
%
%
\begin{lemma}[Berry-Esseen (See, e.g.~\cite{Shevtsova10})]
\label{lem:Berry-Esseen}
Let $X_1, X_2, \ldots, X_n$ be independent random variables such that
$\E[X_i]=0$, $\E[X_i^2]>0$, $\E[|X_i|^3]<\infty$ for all $i\in [n]$ and $\sum_{i=1}^n\E[X_i^2]=1$.
Let $X=\sum_{i=1}^nX_i$ and 
let $\Phi(x)=\frac{1}{\sqrt{2\pi}}\int_{-\infty}^x\mathrm{e}^{-y^2/2}\mathrm{d}y$ (the cumulative distribution function of the standard normal distribution).
Then
\begin{align*}
\sup_{x\in \mathbbm{R}}\bigl|\Pr\left[X\leq x\right]-\Phi(x)\bigr|
&\leq 5.6 \sum_{i=1}^n\E[|X_i|^3].
\end{align*}
\end{lemma}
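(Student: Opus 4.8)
The plan is to run the classical Fourier-analytic argument: move to characteristic functions and invoke the Esseen smoothing inequality, reducing the Kolmogorov distance to an integral estimate of $f-g$ near the origin, where $f(t)=\prod_{j}\phi_j(t)$ is the characteristic function of $X$, $\phi_j$ that of $X_j$, and $g(t)=\mathrm{e}^{-t^2/2}$ that of $\Phi$. Set $\sigma_j^2=\E[X_j^2]$ (so $\sum_j\sigma_j^2=1$) and $\beta=\sum_{j=1}^n\E[|X_j|^3]$. One first disposes of the easy regime: if $\beta\ge 1/5.6$ the inequality is trivial since the left-hand side never exceeds $1$, so from now on $\beta$ lies below an absolute constant, which is exactly what will make the Taylor estimates below small.

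First I would record the Esseen smoothing inequality: for every $T>0$,
\begin{align*}
\sup_{x\in\mathbbm{R}}\bigl|\Pr[X\le x]-\Phi(x)\bigr|\ \le\ \frac{1}{\pi}\int_{-T}^{T}\left|\frac{f(t)-g(t)}{t}\right|\,\mathrm{d}t\ +\ \frac{24\,m}{\pi\,T},\qquad m=\sup_{x}\Phi'(x)=\tfrac{1}{\sqrt{2\pi}}.
\end{align*}
Choosing $T=c/\beta$ with a small absolute constant $c$ makes the boundary term $O(\beta)$, so the whole task is to show $\int_{-T}^{T}|f(t)-g(t)|/|t|\,\mathrm{d}t=O(\beta)$ with a controlled constant.

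Second, I would exploit the product structure on $[-T,T]$. The elementary bound $\bigl|\prod_j a_j-\prod_j b_j\bigr|\le\sum_j|a_j-b_j|$ for complex numbers with $|a_j|,|b_j|\le 1$, applied with $a_j=\phi_j(t)$ and $b_j=\mathrm{e}^{-\sigma_j^2 t^2/2}$, together with the exact identity $\prod_j\mathrm{e}^{-\sigma_j^2 t^2/2}=\mathrm{e}^{-t^2/2}=g(t)$ (here $\sum_j\sigma_j^2=1$ is used), gives $|f(t)-g(t)|\le\sum_j\bigl|\phi_j(t)-\mathrm{e}^{-\sigma_j^2 t^2/2}\bigr|$. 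Each summand is estimated by the second-order Taylor bound $\bigl|\phi_j(t)-(1-\tfrac12\sigma_j^2 t^2)\bigr|\le\tfrac16|t|^3\E[|X_j|^3]$ (from $|\mathrm{e}^{ix}-1-ix+\tfrac12 x^2|\le\tfrac16|x|^3$ and $\E[X_j]=0$), plus the comparison of $1-\tfrac12\sigma_j^2 t^2$ with $\mathrm{e}^{-\sigma_j^2 t^2/2}$; using Lyapunov's inequality $\sigma_j^3\le\E[|X_j|^3]\le\beta$ and a Gaussian-type damping $|f(t)|\le\mathrm{e}^{-t^2/4}$ valid on $|t|\le\beta^{-1/3}$ (obtained from $|\phi_j(t)|\le\exp(-\tfrac12\sigma_j^2 t^2+C|t|^3\E[|X_j|^3])$ for $|t|\sigma_j$ bounded, after splitting off the at most $O(1)$ indices with $\sigma_j\gg\beta^{1/3}$, for which one just uses $|\phi_j|\le 1$), one gets $|f(t)-g(t)|\le C|t|^3\beta\,\mathrm{e}^{-t^2/4}$ on $|t|\le\beta^{-1/3}$. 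Since $\int_{\mathbbm{R}}|t|^2\mathrm{e}^{-t^2/4}\,\mathrm{d}t<\infty$ this bounds the central part by $O(\beta)$; on $\beta^{-1/3}<|t|\le T$ both $g$ (super-polynomially small) and $f$ (via the decay of $|\phi_j|$ while $|t|\sigma_j\lesssim 1$, using that the heavy indices are few) contribute a negligible amount.

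The main obstacle is not the architecture but the \emph{quantitative bookkeeping of absolute constants}: every inequality above must be made explicit and the free parameters — $T$, the split point $\beta^{-1/3}$ between the two ranges, the constants in the Taylor remainders and in the exponential bound for $|\phi_j|$, and the treatment of the heavy indices — tuned simultaneously so that the final sum is at most $5.6$ rather than some larger number. Fortunately $5.6$ is a generous (non-optimal) constant, so moderate care suffices; I would follow the optimized estimates of Shevtsova~\cite{Shevtsova10} for the sharp accounting rather than re-derive it. A cleaner but slightly less sharp self-contained alternative is Stein's method (a Lindeberg-type one-term replacement controlled by third moments), which also yields an explicit absolute constant and would serve as a fallback.
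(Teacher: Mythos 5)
This lemma is not proved in the paper; it is cited directly as a known result from Shevtsova~\cite{Shevtsova10}, so there is no internal proof to compare against. Your sketch follows the standard Esseen-smoothing / Fourier-analytic route, which is indeed how Shevtsova and her predecessors obtain the explicit constant, so the architecture is the right one and the difficulty you flag (tight bookkeeping of all absolute constants so the final sum lands at or below $5.6$) is exactly where the real work in the cited reference lies. One simplification you overlooked: by Lyapunov, $\sigma_j^3 = (\E[X_j^2])^{3/2} \le \E[|X_j|^3] \le \beta$ for every $j$, so $\sigma_j \le \beta^{1/3}$ always holds and the set of ``heavy indices with $\sigma_j \gg \beta^{1/3}$'' is in fact empty; the case split you propose there is unnecessary, and the Gaussian damping bound for $|f(t)|$ on $|t| \le \beta^{-1/3}$ applies uniformly to all factors. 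With that cleaned up the sketch is a faithful outline of the standard argument, but as you correctly note, it is a sketch rather than a proof of the stated constant, which is precisely why the paper outsources it to the literature rather than reproving it.
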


\begin{corollary}\label{cor:BEbound_cor}
Let $X_1, X_2,\ldots, X_n$ be $n$ independent random variables such that $\Var[X]\neq 0$ and $|X_i-\E[X_i]|\leq C<\infty$ for all $i\in [n]$ where $X=\sum_{i=1}^nX_i$.
Then for any $x\in \mathbb{R}$,
\begin{align*}
\left|\Pr\left[\frac{X-\E[X]}{\sqrt{\Var[X]}}\leq x\right]-\Phi(x)\right|
&\leq \frac{5.6C}{\sqrt{\Var[X]}}.
\end{align*}
\end{corollary}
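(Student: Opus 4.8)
The plan is to reduce everything to the Berry--Esseen bound (\cref{lem:Berry-Esseen}) by centering and rescaling. First I would dispose of degenerate terms: any index $i$ for which $X_i$ is almost surely constant contributes nothing to $X-\E[X]$ and nothing to $\Var[X]=\sum_i\Var[X_i]$ (using independence), so we may delete all such indices and assume $\Var[X_i]=\E[(X_i-\E[X_i])^2]>0$ for every surviving $i$. Note $\Var[X]\neq 0$ guarantees at least one term remains. Then define
\[
Y_i\defeq\frac{X_i-\E[X_i]}{\sqrt{\Var[X]}},\qquad Y\defeq\sum_{i=1}^n Y_i=\frac{X-\E[X]}{\sqrt{\Var[X]}}.
\]
The $Y_i$ are independent, $\E[Y_i]=0$, $\E[Y_i^2]=\Var[X_i]/\Var[X]>0$, and $\sum_i\E[Y_i^2]=\Var[X]/\Var[X]=1$; moreover $|X_i-\E[X_i]|\le C$ forces $\E[|Y_i|^3]<\infty$. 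Thus all hypotheses of \cref{lem:Berry-Esseen} hold for $(Y_i)_{i}$ and $Y$.

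The next step is to bound the third-moment sum appearing in \cref{lem:Berry-Esseen}. Since $|X_i-\E[X_i]|\le C$ almost surely, we have the pointwise inequality $|X_i-\E[X_i]|^3\le C\,(X_i-\E[X_i])^2$, hence $\E[|X_i-\E[X_i]|^3]\le C\,\Var[X_i]$. Dividing by $\Var[X]^{3/2}$ and summing over $i$ gives
\[
\sum_{i=1}^n\E[|Y_i|^3]=\frac{1}{\Var[X]^{3/2}}\sum_{i=1}^n\E\bigl[|X_i-\E[X_i]|^3\bigr]\le\frac{C}{\Var[X]^{3/2}}\sum_{i=1}^n\Var[X_i]=\frac{C}{\sqrt{\Var[X]}}.
\]
Applying \cref{lem:Berry-Esseen} to $Y=\sum_i Y_i$ then yields $\sup_{x\in\mathbb{R}}\bigl|\Pr[Y\le x]-\Phi(x)\bigr|\le 5.6\sum_i\E[|Y_i|^3]\le 5.6C/\sqrt{\Var[X]}$, and since $Y=(X-\E[X])/\sqrt{\Var[X]}$ this is exactly the claimed inequality for every $x\in\mathbb{R}$.

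I do not anticipate a genuine obstacle here, as the argument is a routine normalization; the only two points that want a word of care are (i) the reduction to the non-degenerate case, which is what makes the rescaling well defined and legitimizes the hypothesis ``$\E[X_i^2]>0$'' of \cref{lem:Berry-Esseen} read after centering, and (ii) the elementary bound $\E[|Z|^3]\le C\,\E[Z^2]$ for a centered variable with $|Z|\le C$, which is precisely what converts the sum of third absolute moments into $\sum_i\Var[X_i]=\Var[X]$ and produces the stated constant.
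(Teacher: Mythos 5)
Your proof is correct and follows essentially the same route as the paper: center and rescale to apply Berry--Esseen (\cref{lem:Berry-Esseen}), handle degenerate indices to satisfy the hypothesis $\E[X_i^2]>0$, and bound the third absolute moments via $\E[|X_i-\E[X_i]|^3]\le C\Var[X_i]$, which after normalization gives $\sum_i\E[|Y_i|^3]\le C/\sqrt{\Var[X]}$. The paper phrases the same bound as $\E[|Z_i|^3]\le\tfrac{C}{\sqrt{\Var[X]}}\E[Z_i^2]$ and sums, but this is identical content.
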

\begin{proof}
For all $i\in [n]$, let
\begin{align*}
Z_i&\defeq \frac{X_i-\E[X_i]}{\sqrt{\Var[X]}}, \\
Z&\defeq \sum_{\substack{i\in [n]: \E[Z_i^2]>0}}Z_i
=\sum_{i\in [n]}Z_i.
\end{align*}
Note that 
$\E[Z_i^2]=0
\iff \sum_{z}z^2\Pr[Z_i=z]=0
\iff \Pr[Z_i=0]=1$.
Then, for all $i\in \{j\in [n]: \E[Z_j^2]> 0\}$, 
it is easy to check that
$\E[Z_i]=0$, 
$\E[Z_i^2]>0$, and 
$\E[|Z_i|^3]\leq \frac{C^3}{\Var[X]^{3/2}}<\infty$.
Furthermore,  
\begin{align*}
\sum_{\substack{i\in [n]:\E[Z_i^2]>0}}\E[Z_i^2]
&=\sum_{i\in [n]}\E[Z_i^2]
=\frac{\sum_{i\in [n]}\E[(X_i-\E[X_i])^2]}{\Var[X]}=1.
\end{align*}
Thus we can apply \cref{lem:Berry-Esseen} to $Z$ and it holds that
\begin{align*}
\left|\Pr\left[\frac{X-\E[X]}{\sqrt{\Var[X]}} \leq x\right]-\Phi(x)\right|
&=\left|\Pr\left[\sum_{i=1}^n Z_i \leq x\right]-\Phi(x)\right|\\
&=\left|\Pr\left[Z \leq x\right]-\Phi(x)\right|\\
&\leq 5.6 \sum_{i\in[n]:\E[Z_i^2]>0}\E[|Z_i|^3]\\
&\leq \frac{5.6C}{\sqrt{\Var[X]}} \sum_{i=1}^n\E[Z_i^2]
=\frac{5.6C}{\sqrt{\Var[X]}}.
\end{align*}
\end{proof}

\begin{lemma}[Lemma 4.5 of \cite{AMLEFG18}] \label{lem:nazolemma}
Consider a Markov chain $(X_t)_{t=1}^\infty$ with finite state space $\Omega$ and a function $f:\Omega \to \{0,\ldots,n\}$.
Let $C_3$ be arbitrary constant and $m=C_3 \sqrt{n\log n}$.
Suppose that $\Omega,f$ and $m$ satisfies the following conditions:

\begin{enumerate}[label=$(\arabic*)$]
\item\label{state:nazo_sqrt} For any positive constant $h$, there exists a positive constant $C_1<1$ such that
\begin{align*}
\Pr\left[f(X_{t+1})<h\sqrt{n}\,\middle|\,f(X_t)\leq m\right] < C_1.
\end{align*}

\item\label{state:nazo_cher} Three positive constants $\epsilon, C_2$ and $h$ exist such that, for any $x\in \Omega$ satisfying $h\sqrt{n}\leq f(x)<m$,
\begin{align*}
\Pr\left[f(X_{t+1})< (1+\epsilon)f(X_t) \,\middle|\, X_t=x\right] < \exp\left(-C_2\frac{f(x)^2}{n}\right).
\end{align*}
\end{enumerate}

Then $f(X_\tau) \geq m$ holds for some $\tau=O(\log n)$.
\end{lemma}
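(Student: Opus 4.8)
The plan is to prove the high-probability statement that some time $\tau=O(\log n)$ satisfies $f(X_\tau)\ge m$, where $m=C_3\sqrt{n\log n}$; taking $\tau$ minimal then gives the lemma, and we may assume $f(X_0)<m$ (otherwise $\tau=0$). The mechanism is that the chain alternates between a \emph{low regime} $\{f(X_t)<h\sqrt n\}$, where $h$ is the constant supplied by hypothesis~\ref{state:nazo_cher}, and a \emph{growth regime} $\{f(X_t)\ge h\sqrt n\}$. For the low regime I would first record the bootstrap estimate: applying hypothesis~\ref{state:nazo_sqrt} with this $h$, whenever $f(X_t)\le m$ (valid since $h\sqrt n\le m$ for large $n$) one has $\Pr[f(X_{t+1})\ge h\sqrt n\mid X_t]\ge 1-C_1$, so by the Markov property each sojourn in the low regime lasts a number of steps stochastically dominated by a $\mathrm{Geometric}(1-C_1)$ variable --- $O(1)$ in expectation with an exponential tail, uniformly over the current state.

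For the growth regime I would argue via \emph{attempts}: call the time from a moment when $f$ first hits $h\sqrt n$ until $f$ next falls below $h\sqrt n$ an attempt, which \emph{succeeds} if $f$ reaches $m$ in the meantime. Fix a constant $H=H(C_2,\epsilon)$ so large that $\sum_{j\ge 0}\exp(-C_2H^2(1+\epsilon)^{2j})\le \tfrac12$. By hypothesis~\ref{state:nazo_cher} each growth step multiplies $f$ by at least $1+\epsilon$ except with probability $\exp(-C_2 f^2/n)$. Hence: (i) a clean run of $\ell_0=\lceil\log_{1+\epsilon}(H/h)\rceil=O(1)$ growth steps carries $f$ from $h\sqrt n$ above $H\sqrt n$, with probability at least the fixed constant $(1-\mathrm e^{-C_2h^2})^{\ell_0}>0$; and (ii) from $f\ge H\sqrt n$ a clean run of only $\Theta(\log\log n)$ further steps reaches $m$, and by a union bound over those steps --- whose failure probabilities $\exp(-C_2H^2(1+\epsilon)^{2j})$ decay super-geometrically --- no bad step occurs with probability at least $\tfrac12$. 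So every attempt succeeds with probability at least a fixed constant $p^\ast>0$, uniformly in the state at which it starts. Conversely a failed attempt is cheap: a bad step, if any, almost surely occurs while $f=O(\sqrt n)$, so a failed attempt together with the ensuing bootstrap back into the growth regime lasts $O(1)$ steps in expectation, again with an exponential tail.

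It remains to combine. By the strong Markov property and the uniform lower bound $p^\ast$, the first $N:=c'\log n$ attempts all fail with probability at most $(1-p^\ast)^N=n^{-\Omega(1)}$ for $c'$ a sufficiently large constant, so w.h.p.\ some attempt among the first $N$ succeeds. On that event the steps consumed form a sum of at most $N$ failed-attempt lengths and $O(1)$ bootstrap lengths --- each stochastically dominated, conditionally on the past, by a fixed $O(1)$-mean exponentially tailed distribution --- plus the $O(\log\log n)$ length of the successful attempt; a Bernstein bound for sums of sub-exponential variables bounds this by $O(\log n)$ w.h.p. Intersecting these $O(1)$ good events produces a time $\tau=O(\log n)$ with $f(X_\tau)\ge m$.

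The hard part is the asymmetry of hypothesis~\ref{state:nazo_cher}: it governs only upward multiplication, so a single bad step may in principle collapse $f$ to $0$, and there is no monotone potential to fall back on. The argument must therefore use nothing but (a) the $O(1)$-time recovery from hypothesis~\ref{state:nazo_sqrt}, which restores the growth regime no matter how far $f$ has fallen, and (b) the super-geometric decay of the per-step failure probabilities, which both makes failures cheap and --- the delicate point --- keeps the per-attempt failure probability $1-p^\ast$ a constant independent of $n$. It is precisely this $n$-independence that lets $\Omega(\log n)$ attempts fit inside the $O(\log n)$ budget and thereby upgrades the conclusion from a mere $1-o(1)$ to a genuine high-probability bound; getting the bookkeeping of attempt counts, attempt lengths, and recovery times right is where I expect the real work to lie.
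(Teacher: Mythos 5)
The paper states this result as Lemma~4.5 of \cite{AMLEFG18} and does not reprove it, so there is no in-paper argument to compare against; I will assess your sketch on its own terms. Your strategy is sound, and the observations that make it work are present: the per-attempt success probability $p^*$ is bounded below by an $n$-free constant thanks to the super-geometric decay $\sum_{j\ge 0}\exp(-C_2H^2(1+\epsilon)^{2j})\le\tfrac12$ once $H$ is chosen large, and this $n$-independence is exactly what lets $\Theta(\log n)$ attempts fit inside an $O(\log n)$ time budget while compounding to a polynomially small failure probability --- a naive phase argument with blocks of length $\Theta(\log\log n)$ would only give a $1-o(1)$ conclusion.

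The one step I would flesh out is the claim that a failed attempt is $O(1)$-mean with an exponential tail. With your definition an attempt is the entire excursion of $f$ above $h\sqrt n$, and a bad step that leaves $f\in[h\sqrt n,m)$ does not end it, so a failed attempt is in general a concatenation of several maximal runs of multiplicative growth separated by bad steps. It is each \emph{run} that is immediately $O(1)$-mean with a super-exponential tail, because a bad step overwhelmingly occurs at the lowest levels; the number of runs inside one attempt is dominated by a Geometric$(p^*)$ variable, since each run independently reaches $m$ with probability at least $p^*$. Composing these two layers does recover the statement you assert, but as a two-stage argument rather than a single one, and working directly with the runs (rather than the full excursions) makes the closing Bernstein step airtight. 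This is precisely the bookkeeping you flagged as the real work; your instinct about where it lies is correct, and the argument does close.
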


\begin{corollary} \label{cor:nazocor}
Consider a Markov chain $(X_t)_{t=1}^\infty$ with finite state space $\Omega$ and a function $f:\Omega \to \{0,\ldots,n\}$.
Let $C_3$ be arbitrary constant and $m=C_3 \sqrt{n\log n}$.
Consider a set $\mathcal{B}\subseteq \Omega$ such that
\begin{align*}
\mathcal{B}\subseteq \{x\in \Omega\,:\,f(x)< m\}.
\end{align*}
Suppose that $\Omega,f,m$ and $\mathcal{B}$ satisfy the following conditions:

\begin{enumerate}[label=$(\arabic*')$]
\item\label{cond:nazo_sqrt} For any positive constant $h$, there exists a positive constant $C_1<1$ such that
\begin{align*}
\Pr\left[f(X_{t+1})<h\sqrt{n}\,\middle|\,f(X_t)\leq m,X_t\in\mathcal{B}\right] < C_1.
\end{align*}

\item\label{cond:chernoff} Three positive constants $\epsilon, C_2,h$ exist such that, for any $x\in \mathcal{B}$ satisfying $h\sqrt{n}\leq f(x)<m$,
\begin{align*}
\Pr\left[f(X_{t+1})< (1+\epsilon)f(X_t) \,\middle|\, X_t=x\right] < \exp\left(-C_2\frac{f(x)^2}{n}\right).
\end{align*}

\item\label{cond:whp} For some constant $C_4>0$,
\begin{align*}
\Pr\left[X_{t+1}\not\in\mathcal{B}\text{ and } f(X_{t+1})< m\,\middle|\,X_t\in\mathcal{B}\right] \leq O(n^{-C_2}).
\end{align*}
\end{enumerate}

Then, 
\begin{align*}
\Pr\left[f(X_\tau) \geq m \mid X_0\in \mathcal{B}\right]\geq 1-n^{-\Omega(1)}
\end{align*}
holds for some $\tau=O(\log n)$.
\end{corollary}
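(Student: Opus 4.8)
The plan is to derive the corollary from \cref{lem:nazolemma} by a coupling/modification argument that confines the chain to $\mathcal B$ until the target value $m$ is reached, and then transfers the unconditional guarantee of the lemma through a union bound over the escape events allowed by condition~\ref{cond:whp}. First I would define an auxiliary Markov chain $(Y_t)$ that agrees with $(X_t)$ as long as $X_t\in\mathcal B$ (more precisely, as long as $f(X_t)<m$ and $X_t\in\mathcal B$), and that, once $X_t$ first leaves $\mathcal B$ while still having $f(X_t)<m$, is ``frozen'' or reflected into an absorbing dummy state $\star$ with $f(\star)$ redefined to be $0$. The point of this construction is that on the event $\mathcal G$ that no ``bad escape'' (leaving $\mathcal B$ with $f<m$) ever happens before time $\tau$, the chains $(X_t)$ and $(Y_t)$ coincide. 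Condition~\ref{cond:whp} bounds the per-step probability of a bad escape by $O(n^{-C_2})$, and since \cref{lem:nazolemma} gives a hitting time $\tau=O(\log n)$, a union bound over these $O(\log n)$ steps shows $\Pr[\lnot\mathcal G]\le O(n^{-C_2}\log n)=n^{-\Omega(1)}$.

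Next I would verify that the auxiliary chain $(Y_t)$, together with the function $f$ (restricted/extended appropriately) and the same $m=C_3\sqrt{n\log n}$, satisfies the two hypotheses \ref{state:nazo_sqrt} and \ref{state:nazo_cher} of \cref{lem:nazolemma}. For \ref{state:nazo_sqrt}: whenever $f(Y_t)\le m$, either $Y_t\in\mathcal B$ — in which case condition~\ref{cond:nazo_sqrt} of the corollary applies directly and gives the constant $C_1<1$ — or $Y_t=\star$, in which case the chain stays at $\star$ and $f(Y_{t+1})=0<h\sqrt n$ with probability $1$; taking the max of these two constants (still strictly below $1$) gives the required uniform bound. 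For \ref{state:nazo_cher}: when $h\sqrt n\le f(Y_t)<m$ we must have $Y_t\in\mathcal B$ (the state $\star$ has $f=0<h\sqrt n$), so condition~\ref{cond:chernoff} of the corollary applies verbatim and yields the same $\epsilon,C_2,h$. Thus \cref{lem:nazolemma} applies to $(Y_t)$ and gives $\tau=O(\log n)$ with $f(Y_\tau)\ge m$ \emph{deterministically in the sense of the lemma's conclusion} (i.e.\ for some such $\tau$, which one reads off as a high-probability event inside the lemma's proof — more carefully, I would phrase the lemma's conclusion as ``$\Pr[f(Y_\tau)\ge m]\ge 1-n^{-\Omega(1)}$ for some $\tau=O(\log n)$'', which is how it is used here).

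Finally I would combine the two pieces: on $\mathcal G$ we have $X_s=Y_s$ for all $s\le\tau$, hence $f(X_\tau)=f(Y_\tau)\ge m$; and $\Pr[\mathcal G^c\mid X_0\in\mathcal B]\le n^{-\Omega(1)}$. Together with $\Pr[f(Y_\tau)<m]\le n^{-\Omega(1)}$ this gives $\Pr[f(X_\tau)\ge m\mid X_0\in\mathcal B]\ge 1-n^{-\Omega(1)}$, as claimed. The main obstacle I anticipate is making the coupling/freezing construction genuinely compatible with the Markov structure required by \cref{lem:nazolemma} — in particular, ensuring that the modified transitions of $(Y_t)$ do not accidentally violate the conditional probability bounds (the conditioning events in \ref{state:nazo_sqrt} and \ref{state:nazo_cher} are over $f$-values, and I must check the frozen state $\star$ never falls into the ``dangerous'' range $[h\sqrt n,m)$), and being careful that the union bound over the $O(\log n)$ steps is legitimate even though $\tau$ is itself random (this is handled by first fixing the deterministic upper bound $T=O(\log n)$ from the lemma and unioning over $t\le T$). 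Everything else is bookkeeping.
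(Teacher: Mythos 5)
Your plan mirrors the paper's: form an auxiliary chain absorbed upon a bad escape, apply \cref{lem:nazolemma} to it, and control escapes via a union bound over the $O(\log n)$ steps using condition~\ref{cond:whp}. The error is the choice $f(\star)=0$ for the absorbing dummy state, which makes \cref{lem:nazolemma} inapplicable to $(Y_t)$. Since $f(\star)=0\leq m$, the state $\star$ lies inside the conditioning region $\{f\leq m\}$ where hypothesis~\ref{state:nazo_sqrt} of \cref{lem:nazolemma} must hold, but $\star$ is absorbing with $f(\star)=0<h\sqrt n$, so $\Pr[f(Y_{t+1})<h\sqrt n\mid Y_t=\star]=1$; your claim that ``taking the max of these two constants (still strictly below $1$)'' gives a valid $C_1<1$ is false, since one of the two probabilities is exactly $1$. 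You anticipated a potential issue with hypothesis~\ref{state:nazo_cher} and the range $[h\sqrt n,m)$, but the breakdown is in hypothesis~\ref{state:nazo_sqrt}.

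The paper's fix is to give the dummy states $f$-value $n\geq m$, placing them \emph{outside} $\{f\leq m\}$. Concretely it works on $\mathcal B\cup\{a,b\}$ with $a$ (bad escape) and $b$ (success, i.e.~the transition hits $\{f\geq m\}$) both absorbing and $f'(a)=f'(b)=n$; hypotheses~\ref{state:nazo_sqrt} and~\ref{state:nazo_cher} of \cref{lem:nazolemma} then only need to be verified on $\mathcal B$, where conditions~\ref{cond:nazo_sqrt} and~\ref{cond:chernoff} supply them verbatim. The lemma's conclusion now says the chain hits $\{a,b\}$ by some $\tau=O(\log n)$, but this no longer distinguishes bad escape from success; that separation is supplied by exactly the union bound you already wrote, via condition~\ref{cond:whp}, giving $\Pr[X'_\tau=a]\leq O(n^{-C_2}\log n)=n^{-\Omega(1)}$ and hence $\Pr[X'_\tau=b]\geq 1-n^{-\Omega(1)}$. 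So your argument becomes correct after (i) setting $f(\star)=n$ rather than $0$ and (ii) also absorbing upon success so the auxiliary chain is genuinely confined to $\mathcal B$ plus dummies; at that point it coincides with the paper's proof.
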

\begin{proof}
Let $\Omega'=\mathcal{B}\cup \{a,b\}$ be the state space with two special states $a$ and $b$.
We consider a Markov chain $(X'_t)_{t+1}^\infty$ on $\Omega'$ by
\begin{align*}
\Pr[X'_{t+1}=x \mid X'_t=y] =\begin{cases}
\Pr[X_{t+1}=x \mid X_t=y] & \text{if $x,y\in \mathcal{B}$},\\
\Pr[X_{t+1}\not\in\mathcal{B}\land f(X_{t+1})< m \mid X_t=y] & \text{if $x=a$ and $y\in\mathcal{B}$}, \\
\Pr[f(X_{t+1})\geq m \mid X_t=y] &\text{if $x=b$ and $y\in\mathcal{B}$}, \\
1 & \text{if $x=y\in\{a,b\}$}.
\end{cases}
\end{align*}
In other words, the special state $a$ corresponds to the event ``$f(x)<m$ and $x\not\in\mathcal{B}$~", and $b$ does ``$f(x)\geq m$".

Suppose that $X'_0\in \mathcal{B}$ and let $\tau'=\min\{t:X'_t\not\in\mathcal{B}\}>0$ be the stopping time.
Then, the above definition of $X'_t$ naturally yields a coupling $(X_t,X'_t)_{t<\tau'}$ satisfying $X_t=X'_t$ for $t<\tau'$.

Let $f':\Omega'\to\{0,\ldots,n\}$ be a function given by
\begin{align*}
f'(x) = \begin{cases}
f(x) & \text{if $x\in\mathcal{B}$},\\
n & \text{if $x\in \{a,b\}$}.
\end{cases}
\end{align*}
Then, the Markov chain $(X'_t)$ on $\Omega'$ and the function $f'$ satisfies the conditions \ref{state:nazo_sqrt} and \ref{state:nazo_cher} of \cref{lem:nazolemma}.
Hence, for some $\tau=O(\log n)$, it holds that $X'_\tau \in \{a,b\}$.
We insist that $X'_\tau=b$, that is, $f(X_\tau)\geq m$.
Indeed, from the condition~\ref{cond:whp}, we have
\begin{align*}
\Pr[X'_\tau=a \mid X'_0\in\mathcal{B}]&\leq \tau\cdot O(n^{-c_2}) \\
&\leq n^{-\Omega(1)}.
\end{align*}
\end{proof}

We say a function $f:\{0,1\}^M\to\mathbb{R}$ is \emph{monotone increasing} if $f(\mathbf{x})\leq f(\mathbf{y})$ whenever $\mathbf{x}=(x_1,\ldots,x_M),\mathbf{y}=(y_1,\ldots,y_M)\in\{0,1\}^M$ satisfies $x_i\leq y_i$ for every $i=1,\ldots,M$.

\begin{lemma}[FKG inequality, Theorem 21.5 of \cite{FK16}]\label{lem:FKG}
For a given set $[M]=\{1,2, \ldots, M\}$, let $\Ind_1, \Ind_2, \ldots, \Ind_M$ be independent binary random variables.
Then for any two monotone increase functions $f,g: \{0,1\}^{M}\to \mathbb{R}$, it holds that
\begin{align*}
\E[f(\mathbf{\Ind})g(\mathbf{\Ind})]&\geq \E[f(\mathbf{\Ind})]\E[g(\mathbf{\Ind})]
\end{align*}
where $\mathbf{\Ind}=(\Ind_1, \Ind_2, \ldots, \Ind_M)\in \{0,1\}^{M}$.
\end{lemma}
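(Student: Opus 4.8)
The plan is to prove \cref{lem:FKG} by induction on the number $M$ of independent binary variables; this is the classical Harris/FKG argument for product measures.

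I would start with the base case $M=1$. Writing $p\defeq\Pr[\Ind_1=1]$, a one-line computation gives
\begin{align*}
\E[f(\Ind_1)g(\Ind_1)]-\E[f(\Ind_1)]\E[g(\Ind_1)]=p(1-p)\bigl(f(1)-f(0)\bigr)\bigl(g(1)-g(0)\bigr),
\end{align*}
and since $f,g$ are monotone increasing both of the last two factors are nonnegative, so the difference is nonnegative. (This is Chebyshev's sum inequality in disguise.)

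For the inductive step I would assume the statement for $M-1$ variables and condition on $\Ind_M$. Write $\mathbf{\Ind}'=(\Ind_1,\ldots,\Ind_{M-1})$, and for $x\in\{0,1\}$ let $f_x,g_x\colon\{0,1\}^{M-1}\to\mathbb{R}$ be the restrictions of $f,g$ obtained by fixing the last coordinate to $x$; these are again monotone increasing, so the induction hypothesis gives $\E[f_x(\mathbf{\Ind}')g_x(\mathbf{\Ind}')]\geq F(x)G(x)$ where $F(x)\defeq\E[f_x(\mathbf{\Ind}')]$ and $G(x)\defeq\E[g_x(\mathbf{\Ind}')]$. Averaging over $\Ind_M$ and using independence of $\Ind_M$ from $\mathbf{\Ind}'$ yields
\begin{align*}
\E[f(\mathbf{\Ind})g(\mathbf{\Ind})]=\E_{\Ind_M}\bigl[\E[f_{\Ind_M}(\mathbf{\Ind}')g_{\Ind_M}(\mathbf{\Ind}')]\bigr]\geq\E_{\Ind_M}\bigl[F(\Ind_M)G(\Ind_M)\bigr].
\end{align*}
Now $F$ and $G$ are monotone increasing functions of the single binary variable $\Ind_M$, so the base case applied to $F,G$ gives $\E_{\Ind_M}[F(\Ind_M)G(\Ind_M)]\geq\E[F(\Ind_M)]\E[G(\Ind_M)]=\E[f(\mathbf{\Ind})]\E[g(\mathbf{\Ind})]$; chaining the two inequalities closes the induction.

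The only step requiring a moment's thought — the closest thing to an obstacle — is verifying that $F$ and $G$ are monotone increasing in $\Ind_M$: this holds because for each fixed value of $\mathbf{\Ind}'$ the map $x\mapsto f(\mathbf{\Ind}',x)$ is nondecreasing, and a convex combination of nondecreasing functions (here the expectation over $\mathbf{\Ind}'$) is nondecreasing. Everything else is elementary arithmetic, so I expect the whole proof to be short; alternatively one could simply cite it as the Harris inequality for product measures, exactly as the statement does with \cite{FK16}.
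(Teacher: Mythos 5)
The paper does not prove this lemma at all --- it simply cites Theorem 21.5 of \cite{FK16}, the standard reference for the Harris/FKG inequality on product spaces. Your self-contained argument (base case $M=1$ via the identity $\E[fg]-\E[f]\E[g]=p(1-p)(f(1)-f(0))(g(1)-g(0))$, followed by induction on $M$ by conditioning on the last coordinate, using independence and the fact that a nonnegative combination of nondecreasing functions is nondecreasing) is exactly the classical proof and is correct; there is nothing in the paper to compare it against beyond the citation.
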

\begin{lemma}[Janson's inequality (Theorem 21.12 of \cite{FK16})]\label{lem:Janson}
For a given set $[M]=\{1,2, \ldots, M\}$, let $\Ind_1, \Ind_2, \ldots, \Ind_M$ be independent binary random variables.
%
Now, let $(\Es_1, \Es_2, \ldots, \Es_N)$ be a family of $N$ subsets of $[M]$ ($\Es_i\subseteq [M]$ for every $i\in [N]$) and let
\begin{align*}
Y
\defeq \sum_{i\in [N]}\prod_{e\in \Es_i}\Ind_e.
\end{align*}
Then, it holds for any $t \leq \E[Y]$ that
\begin{align*}
\Pr\left[Y\leq \E[Y]-t\right]&\leq \exp\left(-\frac{t^2}{2\nabla}\right)
\end{align*}
where
\begin{align*}
\nabla&\defeq \sum_{\substack{i\in N, j\in N:\\\Es_i\cap \Es_j\neq \emptyset}}
\E\left[\left(\prod_{e\in \Es_i} \Ind_e\right)\left( \prod_{e'\in \Es_j} \Ind_{e'}\right)\right].
\end{align*}
\end{lemma}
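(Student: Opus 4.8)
The statement is the classical \emph{Janson inequality} (lower-tail form), so in the paper the natural move is simply to cite Theorem~21.12 of~\cite{FK16}, from which the displayed bound follows verbatim; the remarks below sketch how one would reprove it. Write $X_i\defeq\prod_{e\in\Es_i}\Ind_e$, so that $Y=\sum_{i\in[N]}X_i$, and observe that each $X_i$ is a monotone increasing function of the independent bits $(\Ind_e)_{e\in[M]}$. Put $\mu\defeq\E[Y]$. The plan is the standard Laplace-transform (exponential-moment) argument: first prove that for all sufficiently small $\lambda\ge0$
\begin{align*}
\E\!\left[\mathrm{e}^{-\lambda Y}\right]\le\exp\!\left(-\lambda\mu+\tfrac{\lambda^2}{2}\,\nabla\right),
\end{align*}
and then apply Markov's inequality to $\mathrm{e}^{-\lambda Y}$, which gives $\Pr[Y\le\mu-t]\le\mathrm{e}^{\lambda(\mu-t)}\,\E[\mathrm{e}^{-\lambda Y}]\le\exp(-\lambda t+\tfrac{\lambda^2}{2}\nabla)$; optimizing $\lambda=t/\nabla$ (which is admissible since $t\le\mu\le\nabla$) yields the claimed $\exp(-t^2/(2\nabla))$.

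The heart of the argument is the exponential-moment bound. I would prove it by a recursion on $j$ applied to $\E[\mathrm{e}^{-\lambda(X_1+\cdots+X_j)}]$: at each step $X_j$ is increasing while $\mathrm{e}^{-\lambda(X_1+\cdots+X_{j-1})}$ is decreasing in the underlying bits, so the FKG/Harris inequality (\cref{lem:FKG}) lets one lower-bound $\E[X_j\,\mathrm{e}^{-\lambda(X_1+\cdots+X_{j-1})}]$ by the ``independent'' value, extracting the main term $-\lambda\E[X_j]$, while the correction is governed exactly by the pairwise products $\E[X_iX_j]$ over \emph{dependent} index pairs --- i.e.\ by $\nabla$. (Equivalently one may bound the second derivative of $\lambda\mapsto\log\E[\mathrm{e}^{-\lambda Y}]$ by $\nabla$.) It is worth noting that $\nabla$ already contains the diagonal terms: since $\Ind_e^2=\Ind_e$ we have $\prod_{e\in\Es_i}\Ind_e\prod_{e'\in\Es_i}\Ind_{e'}=\prod_{e\in\Es_i}\Ind_e=X_i$, so the diagonal contributes $\sum_i\E[X_i]=\mu$ and hence $\nabla\ge\mu$; this is exactly what makes the bound non-vacuous on the whole range $0\le t\le\mu$ in which it is stated.

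The main obstacle is carrying out that recursion cleanly: one must fix the correct conditioning order, invoke FKG on the right pair of monotone functions at each stage, and ensure the induction loses nothing beyond the quadratic term $\tfrac{\lambda^2}{2}\nabla$, all while tracking the interval of $\lambda$ for which the estimate remains valid. Since this is entirely classical, in the final version I would not reproduce it and would instead cite~\cite{FK16}; the applications of this lemma elsewhere in the paper (via union bounds over the subsets $S_0,\dots,S_\ell$) are then routine.
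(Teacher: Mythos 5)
You take the same approach as the paper: Lemma~\ref{lem:Janson} is presented purely as a cited probabilistic tool (Theorem~21.12 of~\cite{FK16}) with no proof given, and you correctly conclude that citing is the right move. Your accompanying sketch of the Laplace-transform route is the classical one, but it contains a direction error in the FKG step. Writing $S_{j-1}\defeq X_1+\cdots+X_{j-1}$, the recursion $\E[\mathrm{e}^{-\lambda S_j}]=\E[\mathrm{e}^{-\lambda S_{j-1}}]-(1-\mathrm{e}^{-\lambda})\E[X_j\,\mathrm{e}^{-\lambda S_{j-1}}]$ requires a \emph{lower} bound on $\E[X_j\,\mathrm{e}^{-\lambda S_{j-1}}]$ to upper-bound the left side, whereas FKG (\cref{lem:FKG}) applied to the increasing $X_j$ and the decreasing $\mathrm{e}^{-\lambda S_{j-1}}$ gives the reversed inequality $\E[X_j\,\mathrm{e}^{-\lambda S_{j-1}}]\le\E[X_j]\,\E[\mathrm{e}^{-\lambda S_{j-1}}]$, i.e.\ an \emph{upper} bound. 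The standard fix is to first condition on $X_j=1$, split $S_{j-1}$ (or $Y$) into the part sharing bits with $\Es_j$ and the part on disjoint bits, and then apply FKG on the conditioned space to the two \emph{decreasing} exponentials, which indeed produces a lower bound and lets the dependent-pair quantity $\nabla$ emerge. Since the lemma is cited rather than proved, this does not affect the paper.
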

\begin{lemma}[The Kim-Vu concentration (Main Theorem of \cite{KV00})] \label{lem:Kim-Vu}
For a given set $[M]=\{1,2, \ldots, M\}$, let $\Ind_1, \Ind_2, \ldots, \Ind_M$ be independent binary random variables.
%
Now, let $\mathcal{E}\subseteq 2^{[M]}$ be a collection of subsets of $[M]$ ($\Es\subseteq [M]$ for all $\Es\in \mathcal{E}$) and let
\begin{align*}
Y=\sum_{\Es\in \mathcal{E}}w(\Es)\prod_{e\in \Es}\Ind_e,
\end{align*}
where $w(\Es)$ are positive coefficients.
For a subset $A\subseteq [M]$, define $Y_{A}$ as
\begin{align*}
Y_{A}=\sum_{\substack{\Es\in\mathcal{E}:\\\Es\supseteq A}} w(\Es)\prod_{e\in \Es\setminus A}\Ind_e.
\end{align*}
If the polynomial $Y$ has degree at most $k$ (i.e.~$\max_{F\in\mathcal{E}}|F|\leq k$), then for any positive $\lambda>1$, it holds that
\begin{align*}
\Pr\left[|Y-\E[Y]|\geq \sqrt{k! \max_{A\subseteq [M]}\E[Y_{A}]\max_{A\subseteq [M]:A\neq \emptyset}\E[Y_{A}]}(8\lambda)^k\right]\leq 2\exp(2+(k-1)\log M-\lambda).
\end{align*}
\end{lemma}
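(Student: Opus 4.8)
The plan is to prove this concentration inequality by induction on the degree $k$ of $Y$, via a martingale (``typical bounded differences'') argument in the spirit of \cite{KV00}. For $e\in[M]$ write $\partial_e Y\defeq Y_{\{e\}}=\sum_{\Es\ni e}w(\Es)\prod_{f\in \Es\setminus\{e\}}\Ind_f$ for the partial derivative in coordinate $e$. Two elementary facts drive the recursion: $Y$ is multilinear, so $Y=\Ind_e\,\partial_e Y+R_e$ with neither $\partial_e Y$ nor $R_e$ depending on $\Ind_e$; and $(\partial_e Y)_A=Y_{A\cup\{e\}}$ for every $A$, so each $\partial_e Y$ is a polynomial of degree at most $k-1$ with positive coefficients whose degree-$0$ and degree-$1$ control parameters are both bounded by $\mu_1\defeq\max_{\emptyset\ne A\subseteq[M]}\E[Y_A]$.

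For the base case $k\le 1$ we have $Y=w(\emptyset)+\sum_{e}w(\{e\})\Ind_e$, a constant plus a sum of independent variables with ranges of length $w(\{e\})\le\mu_1$, so the additive Chernoff bound (\cref{lem:chernoff-additive}) gives $\Pr[|Y-\E Y|\ge s]\le 2\exp(-\Omega(\min\{s^2/(\mu_1\E[Y]),\,s/\mu_1\}))$; taking $s=\sqrt{\E[Y]\,\mu_1}\cdot 8\lambda$ makes both exponents of order $\lambda$, with ample room in the constants.

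For the inductive step I would expose $\Ind_1,\dots,\Ind_M$ one coordinate at a time and form the Doob martingale $Z_i=\E[Y\mid\Ind_1,\dots,\Ind_i]$. Multilinearity and independence give $Z_i-Z_{i-1}=(\Ind_i-\E\Ind_i)\,\E[\partial_i Y\mid\Ind_{<i}]$, so $|Z_i-Z_{i-1}|\le\E[\partial_i Y\mid\Ind_{<i}]$ and $\E[(Z_i-Z_{i-1})^2\mid\Ind_{<i}]\le\E[\Ind_i]\big(\E[\partial_i Y\mid\Ind_{<i}]\big)^2$. Apply the induction hypothesis to each of the $M$ degree-$(k-1)$ polynomials $\partial_e Y$ and union-bound over $e$: outside an event of probability $2M\exp(2+(k-2)\log M-\lambda)=2\exp(2+(k-1)\log M-\lambda)$ one has $\partial_e Y\le\E[\partial_e Y]+O(\sqrt{k!}\,(8\lambda)^{k-1}\mu_1)=:B$ for all $e$ at once, the factor $M$ being exactly what upgrades $(k-2)\log M$ to the $(k-1)\log M$ of the target. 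On this event every increment is at most $B$, and the predictable quadratic variation is at most $B\sum_i\E[\Ind_i]\,\E[\partial_i Y\mid\Ind_{<i}]=B\sum_i\E[\Ind_i\partial_i Y\mid\Ind_{<i}]$; the pointwise identity $\sum_i\Ind_i\partial_i Y=\sum_{\Es}w(\Es)|\Es|\prod_{f\in \Es}\Ind_f\le kY$, together with stopping the exposure at the first time $Z_i$ leaves a window of width $O(\E[Y]\cdot\mathrm{poly}(\lambda))$, bounds this by $O(kB\,\E[Y])$. A Freedman-type martingale inequality then yields $\Pr[|Y-\E Y|\ge s]\le 2\exp(-\Omega(\min\{s^2/(kB\E[Y]),\,s/B\}))$ on the good event; with $s=\sqrt{k!\,\mu_0\mu_1}\,(8\lambda)^k$, where $\mu_0\defeq\max_{A}\E[Y_A]\ge\mu_1$, the spare factor $8\lambda$ between $s$ and $B$ and the factor $\mu_0/\E[Y]\ge 1$ make both exponents of order $\lambda$; combining with the probability of the bad event closes the induction.

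The step I expect to be the main obstacle is controlling the conditional quadratic variation: it is naturally expressed through $\E[Y\mid\Ind_{<i}]=Z_{i-1}$, i.e.\ partial averages of exactly the quantity being concentrated, so one must introduce the stopping time above (equivalently, bootstrap the concentration of $Y$ from the simultaneous concentration of all its first partial derivatives) and check that the post-stopping contribution is negligible. Propagating the constants $\sqrt{k!}$, $8\lambda$ and the additive term $2+(k-1)\log M$ faithfully through the recursion is the remaining (essentially bookkeeping) work.
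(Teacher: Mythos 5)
This lemma is imported verbatim from Kim and Vu \cite{KV00}; the paper supplies no proof of its own, only the citation, so there is nothing in the paper for your argument to be compared against. What you have written is a sketch of a proof from scratch. Its skeleton — induction on the degree $k$, a coordinate-exposure Doob martingale, applying the induction hypothesis to the $M$ partial derivatives $\partial_e Y$ and paying a factor $M$ in the failure probability to lift $(k-2)\log M$ to $(k-1)\log M$, and a Freedman-type tail for the martingale — is the recognizable shape of the Kim--Vu argument, and the bookkeeping for $B$, $\mu_0$, $\mu_1$, and the union bound is set up sensibly.

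The gap is exactly where you flag it, but it is more than bookkeeping. You want to bound the predictable quadratic variation by something like $kB\,\E[Y]$ via the pointwise identity $\sum_i I_i\partial_i Y\le kY$, but the quantity that actually appears is $\sum_i\E[I_i\partial_i Y\mid I_{<i}]$ with the conditioning $\sigma$-algebra \emph{changing with $i$}; this is not a conditional expectation of $\sum_i I_i\partial_i Y$ and does not inherit the pointwise bound by $kY$. Each summand is the polynomial $I_i\partial_i Y$ evaluated with $I_j$ replaced by $\E[I_j]$ for $j\ge i$, so the sum is controlled by the random partial sums $Z_{i-1}=\E[Y\mid I_{<i}]$ only after a genuine extra argument, and $Z_{i-1}$ is precisely the quantity whose concentration you are trying to prove. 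Your proposed stopping time is the standard way to break this circularity, but carrying it out is the hard core of the theorem: one must show the walk rarely escapes the window before exposure completes, quantify the post-stopping contribution, and verify that the window width, the increment bound $B$, and the deviation target $\sqrt{k!\,\mu_0\mu_1}(8\lambda)^k$ all fit together with the $(8\lambda)$ factor to spare at every level of the induction. As written, the sketch asserts the conclusion of that step rather than proving it. For the purposes of the present paper the right move is what the authors do — cite Kim--Vu (or, for cleaner constants, one of the later expositions such as Schudy--Sviridenko or Vu's survey) rather than reprove the inequality.
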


\subsection[Competitive dynamical systems]{Competitive dynamical systems on $\mathbb{R}^2$} \label{sec:competitive_system}
In this paper, we consider discrete-time dynamic systems given by a map: For a map $T:S\to S$ and $\mathbf{x}\in S$, we discuss whether the orbit $\{T^n(\mathbf{x})\}_{n\geq 0}$ converges to a fixed point or not.
For general dynamical systems, it is typically difficult to predicate the asymptotic behavior of an orbit since it sometimes exhibits chaos.
This section is devoted to introduce \emph{planar competitive dynamical systems}, which includes the induced dynamical systems explored in this paper.
Our dynamical systems are defined on $\mathbb{R}^2$.
The definitions and results in this section follow from~\cite{HS05}.

For two points $\mathbf{x}=(x_1,x_2)$ and $\mathbf{y}=(y_1,y_2)$, we write $\mathbf{x}\leq_K \mathbf{y}$ if both $x_1\leq y_1$ and $y_2 \leq x_2$ hold.
For $S\subseteq \mathbb{R}^2$, a map $T:S \to S$ is \emph{competitive} if $T$ is monotone with respect to the relation $\leq_K$ (i.e.~$T(\mathbf{x})\leq_K T(\mathbf{y})$ whenever $\mathbf{x}\leq_K \mathbf{y}$).
We use $\mathbf{x}\leq \mathbf{y}$ for the usual component-wise comparison (i.e.~$\mathbf{x}\leq\mathbf{y}$ if $x_i\leq y_i$ for $i=1,2$).
Throughout this section, we let
\begin{align*}
S=\{(x,y)\in\mathbb{R}^2:x\geq 0,y\geq 0,x+y\leq 1\}
\end{align*}
and our map $T:S\to S$ is supposed to satisfy the following conditions: 
\begin{enumerate}[label=(C\arabic*)]
\item $T$ is competitive. \label{cond:map_condition1}
\item $T$ is injective. \label{cond:map_condition2}
\item $T$ is $C^1$. \label{cond:map_condition3} 
\item The inverse map $T^{-1}$ is monotone with respective to the ordinal relation $\leq$. \label{cond:map_condition4}
\end{enumerate}

A point $\mathbf{x}$ is a \emph{fixed point} if $T(\mathbf{x})=\mathbf{x}$.
The following result asserts the convergence of the orbit $\{T^n(\mathbf{x})\}_{n\geq 0}$ for any initial point $\mathbf{x}\in S$:
\begin{theorem} \label{thm:orbit_convergence}
Suppose that a map $T:S\to S$ satisfies the conditions \ref{cond:map_condition1} to \ref{cond:map_condition4}.
Then, for any point $\mathbf{x}\in S$, the limit $\lim_{n\to\infty}T^n(\mathbf{x})$ exists and the limit is a fixed point of $T$.
\end{theorem}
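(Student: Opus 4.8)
The plan is to recognize this as the convergence theorem for planar competitive maps of Hirsch--Smith~\cite{HS05}: I would either cite it directly, in which case the only work is to check that hypotheses \ref{cond:map_condition1}--\ref{cond:map_condition4} are exactly those of the cited statement, or reproduce its short proof, which I sketch now. The starting point is an elementary trichotomy: for any two distinct points $\mathbf{x}=(x_1,x_2)$ and $\mathbf{y}=(y_1,y_2)$ in $\mathbb{R}^2$, inspecting the signs of $y_1-x_1$ and $y_2-x_2$ shows that $\mathbf{x}$ and $\mathbf{y}$ are comparable in at least one of the two partial orders: the coordinatewise order $\leq$ or the order $\leq_K$. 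I would then prove that, for every $\mathbf{x}\in S$, the orbit $\mathbf{x}^{(n)}\defeq T^n(\mathbf{x})$ is eventually monotone in one of these two orders. Since $S$ is compact, a sequence in $S$ that is eventually $\leq$-monotone or eventually $\leq_K$-monotone has each coordinate eventually monotone and bounded in $[0,1]$, hence converges; continuity of $T$ (from \ref{cond:map_condition3}) then gives $T(\lim_n\mathbf{x}^{(n)})=\lim_n T(\mathbf{x}^{(n)})=\lim_n\mathbf{x}^{(n)}$, so the limit is a fixed point, which is the assertion.

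To establish the eventual monotonicity I would split into cases. First, if some consecutive pair $\mathbf{x}^{(m)},\mathbf{x}^{(m+1)}$ is $\leq_K$-comparable, say $\mathbf{x}^{(m)}\leq_K\mathbf{x}^{(m+1)}$, then applying the competitive map $T$ repeatedly (condition \ref{cond:map_condition1}) gives $\mathbf{x}^{(n)}\leq_K\mathbf{x}^{(n+1)}$ for all $n\geq m$, so the tail of the orbit is $\leq_K$-monotone and we are done (the reverse $\leq_K$ direction is symmetric). Otherwise every consecutive distinct pair is $\leq$-comparable; if two consecutive points coincide the orbit has already reached a fixed point. If $\mathbf{x}^{(m)}\leq\mathbf{x}^{(m+1)}$ with $\mathbf{x}^{(m)}\neq\mathbf{x}^{(m+1)}$, I claim $\mathbf{x}^{(m+1)}\leq\mathbf{x}^{(m+2)}$: the only alternative consistent with $\leq$-comparability of the pair $(\mathbf{x}^{(m+1)},\mathbf{x}^{(m+2)})$ is $\mathbf{x}^{(m+1)}\geq\mathbf{x}^{(m+2)}$, i.e.\ $T(\mathbf{x}^{(m)})\geq T(\mathbf{x}^{(m+1)})$; applying $T^{-1}$ (well-defined on these points since they lie in the image of $T$, and $\leq$-monotone by \ref{cond:map_condition4}) yields $\mathbf{x}^{(m)}\geq\mathbf{x}^{(m+1)}$, contradicting $\mathbf{x}^{(m)}\leq\mathbf{x}^{(m+1)}$ with $\mathbf{x}^{(m)}\neq\mathbf{x}^{(m+1)}$. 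Thus the direction of $\leq$-comparability propagates forward along the orbit, so the orbit is globally $\leq$-monotone; the symmetric argument starting from $\mathbf{x}^{(m)}\geq\mathbf{x}^{(m+1)}$ covers the last case. In every case the orbit converges, and the limit is a fixed point by the continuity step.

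I do not expect a genuine obstacle: the argument is pure bookkeeping with the two orders, where \ref{cond:map_condition1} and \ref{cond:map_condition4} are exactly what make the two ``monotone-propagation'' steps go through, \ref{cond:map_condition2} is what turns $T^{-1}$ into a genuine map, \ref{cond:map_condition3} supplies the continuity, and compactness of $S$ (here the fixed triangle) is only used to convert coordinatewise monotonicity into convergence. The one point deserving a line of care is the trichotomy that any two points are $\leq$-comparable or $\leq_K$-comparable, together with the verification that $T^{-1}$ is applied only to orbit points (each $\mathbf{x}^{(n)}$ with $n\geq1$ equals $T(\mathbf{x}^{(n-1)})$, so $T^{-1}(\mathbf{x}^{(n)})$ is meaningful). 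If one prefers to invoke \cite{HS05} verbatim, the task reduces to matching \ref{cond:map_condition1}--\ref{cond:map_condition4} against the hypotheses there, which is essentially what the verifications inside the proofs of \cref{prop:3Mvectorfield} and \cref{prop:2Cvectorfield} already do for the two concrete induced systems.
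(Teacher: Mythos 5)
Your proposal is correct. The first option you mention—cite the Hirsch--Smith result directly and verify its hypotheses—is exactly what the paper does: it invokes Theorem 5.28 of \cite{HS05}, whose hypothesis reads ``$T$ competitive and $\mathbf{x}\leq\mathbf{y}$ whenever $T(\mathbf{x})\ll T(\mathbf{y})$'', and derives the second condition from \ref{cond:map_condition2} and \ref{cond:map_condition4} via $T(\mathbf{x})\ll T(\mathbf{y})\Rightarrow T(\mathbf{x})\leq T(\mathbf{y}) \Rightarrow \mathbf{x}=T^{-1}(T(\mathbf{x}))\leq T^{-1}(T(\mathbf{y}))=\mathbf{y}$. (Note the match to \ref{cond:map_condition1}--\ref{cond:map_condition4} is not verbatim, so a line of verification is needed, as the paper supplies.) The second option you actually develop—the $\leq$/$\leq_K$ trichotomy plus forward propagation of the comparability direction—is a genuinely different, self-contained route that reproves the cited planar convergence theorem in this compact two-dimensional setting. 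I checked the details and they hold: the trichotomy is correct, $\leq_K$-propagation uses only \ref{cond:map_condition1}, $\leq$-propagation uses \ref{cond:map_condition2} (to make $T^{-1}$ a map on the orbit) and \ref{cond:map_condition4} (its $\leq$-monotonicity) to derive the contradiction $\mathbf{x}^{(m)}\geq\mathbf{x}^{(m+1)}$ from the reversed pair, compactness of the triangle upgrades coordinatewise monotone boundedness to convergence, and continuity from \ref{cond:map_condition3} makes the limit a fixed point. What each buys: the paper's version is a one-line proof deferring the dynamical-systems work to the citation; your self-contained argument exposes the mechanism and avoids the black box at the cost of an extra paragraph.
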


\begin{proof}
For two points $\mathbf{x}=(x_1,x_2)$ and $\mathbf{y}=(y_1,y_2)$, we write $\mathbf{x}\ll \mathbf{y}$ if $x_1<y_1$ and $x_2<y_2$.
We use the following known result:
\begin{theorem}[Theorem 5.28 of \cite{HS05}] \label{thm:competitive_map_known_result}
Suppose that a competitive map $T:S\to S$ satisfies the following:
\begin{center}
$\mathbf{x}\leq \mathbf{y}$ for any $\mathbf{x},\mathbf{y}\in S$ of $T(\mathbf{x}) \ll T(\mathbf{y})$.
\end{center}
Then,  for any $\mathbf{x}\in S$, the sequence $(T^n(\mathbf{x}))_{n\geq 0}$ converges to some fixed point of $T$
\end{theorem}

For any points $\mathbf{x},\mathbf{y}\in S$ of $T(\mathbf{x}) \ll T(\mathbf{y})$, the conditions \ref{cond:map_condition2} and \ref{cond:map_condition4} yield that $\mathbf{x}=T^{-1}(T(\mathbf{x})) \leq T^{-1}(T(\mathbf{y})) = \mathbf{y}$.
Therefore, we can directly apply \cref{thm:competitive_map_known_result}.
\end{proof}

The following results provide useful criterions for the conditions \ref{cond:map_condition1} and \ref{cond:map_condition4} of $T$:
\begin{lemma} \label{lem:criterion_C1}
A map $T:S\to S$ is competitive if, for any point $\mathbf{x}\in S$, the Jacobian matrix $J$ at $\mathbf{x}$ is of the form
\begin{align*}
J=\left(\begin{array}{cc}
j_{11} & j_{12} \\
j_{21} & j_{22}
\end{array} \right)
\end{align*}
with $j_{11},j_{22}\geq 0$ and $j_{12},j_{21}\leq 0$.
\end{lemma}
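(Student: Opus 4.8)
The plan is to reduce the $\leq_K$-monotonicity of $T$ to a one-dimensional calculus argument along a straight line segment, exploiting the convexity of $S$. Fix $\mathbf{x}=(x_1,x_2)$ and $\mathbf{y}=(y_1,y_2)$ in $S$ with $\mathbf{x}\leq_K\mathbf{y}$, i.e.\ $x_1\leq y_1$ and $y_2\leq x_2$; the goal is to show $T(\mathbf{x})\leq_K T(\mathbf{y})$. I would consider the segment $\gamma(t)\defeq(1-t)\mathbf{x}+t\mathbf{y}$ for $t\in[0,1]$. Since $S$ is convex, $\gamma(t)\in S$ for every $t$, and $\gamma'(t)=\mathbf{y}-\mathbf{x}$ is constant with $\gamma_1'(t)=y_1-x_1\geq 0$ and $\gamma_2'(t)=y_2-x_2\leq 0$ — this sign pattern of $\gamma'$ is exactly the content of $\mathbf{x}\leq_K\mathbf{y}$.

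Next I would differentiate $g\defeq T\circ\gamma$. By condition~\ref{cond:map_condition3} ($T$ is $C^1$), $g$ is $C^1$ on $[0,1]$ and the chain rule gives $g'(t)=J(\gamma(t))\,\gamma'(t)$, where $J=\bigl(\begin{smallmatrix}j_{11}&j_{12}\\ j_{21}&j_{22}\end{smallmatrix}\bigr)$ denotes the Jacobian of $T$ at $\gamma(t)$. Written out,
\begin{align*}
g_1'(t) &= j_{11}(y_1-x_1) + j_{12}(y_2-x_2), \\
g_2'(t) &= j_{21}(y_1-x_1) + j_{22}(y_2-x_2).
\end{align*}
By hypothesis $j_{11},j_{22}\geq 0$ and $j_{12},j_{21}\leq 0$; combined with $y_1-x_1\geq 0$ and $y_2-x_2\leq 0$, every term on the right of the first line is nonnegative and every term on the right of the second line is nonpositive, so $g_1'(t)\geq 0$ and $g_2'(t)\leq 0$ for all $t\in[0,1]$. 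Integrating over $[0,1]$ yields $g_1(1)\geq g_1(0)$ and $g_2(1)\leq g_2(0)$, that is, $T(\mathbf{x})=g(0)\leq_K g(1)=T(\mathbf{y})$, which is the asserted competitiveness.

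The only point that actually needs care — and the "obstacle", such as it is — is that the connecting path must stay inside the domain: an axis-parallel path between two $\leq_K$-ordered points of the triangle $S$ need not remain in $S$, so one genuinely uses that $S$ is convex to keep $\gamma$ in $S$. (Equivalently, one may first apply the reflection $(x_1,x_2)\mapsto(x_1,-x_2)$, under which $\leq_K$ becomes the coordinatewise order and $J$ becomes a matrix with all nonnegative entries, reducing the claim to the standard fact that a $C^1$ map with nonnegative Jacobian is coordinatewise monotone along segments.) Beyond this bookkeeping the argument is just the chain rule plus the sign hypotheses, so I do not anticipate any substantive difficulty.
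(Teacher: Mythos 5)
Your proof is correct, and it is a genuinely different route from the paper's. The paper establishes $T(a,b)\leq_K T(c,d)$ by moving along a two-step axis-parallel path, inserting the intermediate point $(c,b)$ and appealing to coordinatewise monotonicity in each variable separately: $T_1(a,b)\leq T_1(c,b)\leq T_1(c,d)$ and $T_2(a,b)\geq T_2(a,d)\geq T_2(c,d)$. Your argument instead integrates $\frac{d}{dt}\,T(\gamma(t))=J(\gamma(t))\gamma'(t)$ along the straight segment $\gamma$. The observation you make about the domain is not merely bookkeeping --- it exposes a real (if easily repairable) gap in the paper's version: the intermediate points $(c,b)$ and $(a,d)$ used there need not lie in the triangle $S$ (e.g.\ $(a,b)=(0,1)$, $(c,d)=(1,0)$ gives $(c,b)=(1,1)\notin S$), whereas the lemma's hypothesis only controls $J$ on $S$. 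The paper's argument implicitly relies on $T$ being defined and having the right Jacobian sign pattern on a larger set (which does hold in its applications, since the maps are polynomials), but your straight-line version works verbatim from the lemma's stated hypotheses because $S$ is convex. In short, same sign computation, but your choice of path makes the proof self-contained and strictly more careful than the paper's.
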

\begin{proof}
Let $T(\mathbf{x})=(T_1(\mathbf{x}),T_2(\mathbf{x}))$ for $\mathbf{x}=(x_1,x_2)\in S$.
From the assumption on the Jacobian matrix, the function $T_i$ is nondecreasing on $x_i$ and is nonincreasing on $x_{3-i}$.
With this in mind, for any $(a,b),\,(c,d)\in S$ of $(a,b)\leq_K (c,d)$, we have
\begin{align*}
&T_1(a,b) \leq T_1(c,b) \leq T_1(c,d),\\
&T_2(a,b) \geq T_2(a,d) \geq T_2(c,d).
\end{align*}
In other words, $T(a,b)\leq_K T(c,d)$.
\end{proof}

\begin{lemma} \label{lem:criterion_C4}
Suppose that a map $T:S\to S$ satisfies \ref{cond:map_condition1} to \ref{cond:map_condition3}, and that the Jacobian matrix of $T$ at $\mathbf{x}$ has a positive determinant for any $\mathbf{x}\in S\setminus \{(0,1)\}$.
Then, $T$ satisfies \ref{cond:map_condition4}.
\end{lemma}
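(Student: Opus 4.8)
The plan is to compute the Jacobian of $T^{-1}$, show it has entrywise‑nonnegative entries wherever it is defined, and then promote this local monotonicity to global monotonicity using the shape of $T(S)$. First I would pin down the sign pattern of $DT$. Since $T$ is competitive (\ref{cond:map_condition1}) and $C^1$ (\ref{cond:map_condition3}), moving a point in the two ``positive'' $\leq_K$‑directions $+\mathbf e_1$ and $-\mathbf e_2$ and differentiating the relation $T(\mathbf x)\leq_K T(\mathbf y)$ shows that at every $\mathbf x\in S$ the Jacobian $J(\mathbf x)=\begin{pmatrix}j_{11}&j_{12}\\ j_{21}&j_{22}\end{pmatrix}$ satisfies $j_{11},j_{22}\geq 0$ and $j_{12},j_{21}\leq 0$. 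Combined with the hypothesis $\det J(\mathbf x)>0$ for $\mathbf x\in S\setminus\{(0,1)\}$, the inverse matrix
\[
J(\mathbf x)^{-1}=\frac{1}{\det J(\mathbf x)}\begin{pmatrix}j_{22}&-j_{12}\\ -j_{21}&j_{11}\end{pmatrix}
\]
has all four entries $\geq 0$. By the Inverse Function Theorem (\cref{thm:IFT}), near each such $\mathbf x$ the map $T$ is a $C^1$‑diffeomorphism onto a neighbourhood of $T(\mathbf x)$ with $D(T^{-1})|_{T(\mathbf x)}=J(\mathbf x)^{-1}$; since $T$ is globally injective (\ref{cond:map_condition2}) and continuous on the compact set $S$, it is a homeomorphism onto $T(S)$, so these local inverses assemble into a single continuous map $T^{-1}\colon T(S)\to S$ that is $C^1$ with entrywise‑nonnegative Jacobian on $T(S\setminus\{(0,1)\})$.

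Next I would analyse $T(S)$. Using the sign pattern of $J$, the three boundary arcs of $T(S)$ are coordinatewise monotone: differentiating along the bottom, left and hypotenuse edges of $S$ gives image‑tangents $(j_{11},j_{21})$, $(j_{12},j_{22})$ and $(j_{11}-j_{12},\,j_{21}-j_{22})$, each of which is of the form (nonnegative, nonpositive) or (nonpositive, nonnegative), so each arc is, up to reparametrization, the graph of a nonincreasing function of $x$. Because $\det J>0$, the homeomorphism $T\colon S\to T(S)$ is orientation‑preserving and hence respects the cyclic order of the boundary; tracking the images of the three vertices, one sees that $T(S)$ is precisely the ``lens'' lying between a lower nonincreasing graph (the bottom arc together with the left arc) and an upper nonincreasing graph (the hypotenuse arc). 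For such a region, any line of nonnegative slope — in particular the segment $[\mathbf u,\mathbf v]$ for $\mathbf u\leq\mathbf v$ in $T(S)$, including the vertical case — meets $T(S)$ in a single interval, so $[\mathbf u,\mathbf v]\subseteq T(S)$.

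To finish, fix $\mathbf u\leq\mathbf v$ in $T(S)$ and set $\sigma(t)=(1-t)\mathbf u+t\mathbf v$; after avoiding the single point $T(0,1)$ (by a small perturbation, or simply by continuity of $T^{-1}$ on all of $T(S)$) the path $T^{-1}\circ\sigma$ is $C^1$ and
\[
T^{-1}(\mathbf v)-T^{-1}(\mathbf u)=\int_0^1 J(\sigma(t))^{-1}(\mathbf v-\mathbf u)\,dt\ \geq\ \mathbf 0
\]
componentwise, since $J^{-1}$ has nonnegative entries and $\mathbf v-\mathbf u\geq\mathbf 0$. Thus $T^{-1}$ is monotone with respect to $\leq$, i.e.\ \ref{cond:map_condition4} holds.

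The delicate part is the global geometry of $T(S)$ in the second paragraph — confirming that the boundary arcs are arranged into the lens described and handling the degenerate point $(0,1)$, where $\det J$ may vanish so that neither the Inverse Function Theorem nor the sign control on the boundary velocity is available. The positive‑determinant hypothesis is exactly what makes this work: it rules out the folding of $T(S)$ that a merely competitive injective map could exhibit (for instance $(x,y)\mapsto(-y,-x)$ is competitive and injective but has $\det=-1$ and a non‑monotone inverse), and it forces $T$ to be orientation‑preserving. Everything else — the one‑sided differentiation yielding the sign pattern, the application of \cref{thm:IFT}, and the line integral — is routine.
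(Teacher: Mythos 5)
Your argument follows the paper's core route — compute $D(T^{-1})$ via the Inverse Function Theorem, observe entrywise nonnegativity, and conclude monotonicity — and your derivation of the sign pattern of $DT$ from competitiveness plus $C^1$-regularity is a correct supporting step that the paper leaves implicit. Where you go beyond the paper's two-line argument is in justifying the passage from ``nonnegative partials'' to ``monotone function'': the paper applies this implication directly, but $T^{-1}$ is defined only on $T(S)$, which need not be convex (since $T$ need not be surjective), so the line integral $\int_0^1 D(T^{-1})|_{\sigma(t)}(\mathbf{v}-\mathbf{u})\,dt$ requires the segment $[\mathbf{u},\mathbf{v}]$ to actually lie in $T(S)$. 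The ``lens'' analysis you supply — nonincreasing boundary arcs together with orientation preservation from $\det J>0$ — is the right way to establish this, and you correctly identify $\det J>0$ as the hypothesis that rules out a folded image. The treatment remains informal at a few points (the degenerate behaviour at $(0,1)$ where $\det J$ may vanish, vertical or horizontal portions of the boundary arcs, and the cyclic arrangement of the three arcs all deserve a bit more care; also, the Jacobian in your integrand should be evaluated at $T^{-1}(\sigma(t))$ rather than at $\sigma(t)$), but the plan is sound and in effect fills a real gap in the paper's terse proof rather than taking a different route.
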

\begin{proof}
Let $U\defeq T^{-1}$ and $U(\mathbf{x})=(U_1(\mathbf{x}),U_2(\mathbf{x}))$ for $\mathbf{x}=(x_1,x_2)\in S$.
By the Inverse Function Theorem (\cref{thm:IFT}), the Jacobian matrix $K$ of $U$ at $\mathbf{x}\in S\setminus\{(0,1)\}$ is given by the inverse of that of $T$.
Thus, we have
\begin{align*}
\frac{\partial U_i}{\partial x_j}(\mathbf{x})\geq 0
\end{align*}
for any $\mathbf{x}\in S\setminus \{(0,1)\}$.
Hence the functions $U_1(x_1,x_2)$ and $U_2(x_1,x_2)$ are nondecreasing on both $x_1$ and $x_2$.
Therefore for any two points $(a,b),(c,d)\in S$ of $(a,b)\leq (c,d)$, we have $U_1(a,b) \leq U_1(c,d)$ and $U_2(a,b)\leq U_2(c,d)$ (note that if $(a,b)=(0,1)$ then $(c,d)$ must be $(0,1)$ and we are done).
\end{proof}

\end{document}